\newcommand{\I}{{\bf 1}}
\newtheorem{proposition}{Proposition}[section]
\newtheorem{theorem}[proposition]{Theorem}
\newtheorem{corollary}[proposition]{Corollary}
\newtheorem{lemma}[proposition]{Lemma}
\newtheorem{remark}[proposition]{Remark}
\newcommand{\nc}{\newcommand}
\nc{\BHd}{\mathbb{H}^d}
\nc{\R}{{\mathbb R}}
\nc{\bS}{{\mathbb S}^{d-1}}
\nc{\N}{{\mathbb N}}
\nc{\BB}{{\mathbb B}}
\nc{\C}{{\mathbb C}}
\nc{\Z}{{\mathbb Z}}
\nc{\BP}{\mathbf{P}}
\nc{\BH}{\mathbb{H}}
\nc{\BQ}{\mathbf{Q}}
\nc{\bN}{{\mathbf N}}
\nc{\BX}{{\mathbb X}}
\nc{\BY}{{\mathbb Y}}
\nc{\cB}{{\mathcal B}}
\nc{\cE}{{\mathcal E}}
\nc{\cK}{{\mathcal K}}
\nc{\cC}{{\mathcal C}}
\nc{\cL}{{\mathcal L}}
\nc{\cM}{{\mathcal M}}
\nc{\cR}{{\mathcal R}}
\nc{\cX}{{\mathcal X}}
\nc{\cY}{{\mathcal Y}}
\nc{\dint}{{\rm d}}
\nc{\D}{\Delta}
\nc{\g}{\gamma}
\nc{\cI}{\mathcal{I}}
\nc{\cZ}{\mathcal{Z}}
\nc{\cum}{\operatorname{cum}}
\nc{\sZ}{{\mathscr{Z}}}
\nc{\origin}{o}
\nc{\bI}{\mathbf{1}}
\nc{\E}{\mathbf{E}}
\newcommand{\defeq}{\mathrel{\mathop:}=}
\nc{\sfp}{\mathsf{p}}
\newcommand{\horo}{\mathbb{B}}
\DeclareMathOperator{\Cov}{Cov}
\nc{\Vol}{V_d}
\DeclareMathOperator{\Var}{Var}
\DeclareMathOperator{\calI}{\mathcal{I}}
\nc{\Ih}{{\mathcal{I}_d}}
\DeclareMathOperator{\hb}{\mathbb{B}_{\rm h}^{\text{$d$}}}
\DeclareMathOperator{\msfp}{\mathsf{p}}
\DeclareMathOperator{\TG}{G}%typical grain
\numberwithin{equation}{section}
\begin{document}
\title{Boolean models in hyperbolic space}

%\date{}

\renewcommand{\thefootnote}{\fnsymbol{footnote}}

\author{Daniel Hug\footnotemark[1], G\"unter Last\footnotemark[2], and Matthias Schulte\footnotemark[3]\,}

\footnotetext[1]{Institute of Stochastics, Karlsruhe Institute of Technology, daniel.hug@kit.edu}

\footnotetext[2]{Institute of Stochastics, Karlsruhe Institute of Technology, guenter.last@kit.edu}

\footnotetext[3]{Institute of Mathematics, Hamburg University of Technology, matthias.schulte@tuhh.de}

\maketitle

\begin{abstract}
The union of the particles of a stationary Poisson process of compact (convex) sets in Euclidean space is called Boolean model and is a classical topic of stochastic geometry. In this paper, Boolean models in hyperbolic space are considered, where one takes the union of the particles of a stationary Poisson process in the space of  compact (convex) subsets of the hyperbolic space. Geometric functionals such as the volume of the intersection of the Boolean model with a compact convex observation window are studied. In particular, the asymptotic behavior for balls with increasing radii as observation windows is investigated. Exact and asymptotic formulas for expectations, variances, and covariances are shown and univariate and multivariate central limit theorems are derived. Compared to the Euclidean framework, some new phenomena can be observed.

\medskip

{\bf Keywords}. {Boolean model, hyperbolic space, geometric functional, mean value, variance, asymptotic normality.}\\
{\bf MSC}. Primary 60D05; Secondary 52A22, 52A55, 60F05, 60G55.
\end{abstract}

%{\tiny \setlength{\baselineskip}{0pt}
%\begin{center}
%\begin{spacing}{0.05}\setlength{\baselineskip}{0pt}
%\tableofcontents
%\end{spacing}
%\end{center}
%}

\section{Introduction}

Boolean models are an important class of random sets in Euclidean
space. We study them in hyperbolic space and consider geometric
quantities such as volume or surface area within an observation window. In
particular we derive asymptotic results for balls with increasing
radii as observation windows. In comparison to the Euclidean
framework, we observe new phenomena that emerge from fundamental
properties of hyperbolic space.

To introduce the Boolean model in the Euclidean setting, let
$\eta_{{\rm Euc}}$ be a Poisson process on the space of compact convex
subsets of $\mathbb{R}^d$. The points of this point process are
compact convex sets that are referred to as particles or grains, and
$\eta_{{\rm Euc}}$ is called a  particle process. The union set
$$
Z_{{\rm Euc}} := \bigcup_{K\in\eta_{\rm Euc}} K
$$
is a random set in $\mathbb{R}^d$ that is known as the Boolean model derived from $\eta_{{\rm Euc}}$. Usually, $\eta_{\rm Euc}$ is assumed to be stationary, that is,
$$
\eta_{{\rm Euc}} \overset{d}{=} \eta_{{\rm Euc}}+t \quad \text{ for each }  t\in\mathbb{R}^d,
$$
where $\overset{d}{=}$ stands for equality in distribution and
$\eta_{{\rm Euc}}+t$ is the particle process obtained by translating
each particle of $\eta_{{\rm Euc}}$ by $t$. The stationarity of
$\eta_{\rm Euc}$ implies that $Z_{\rm Euc}$ is also stationary in the
sense that its distribution is invariant under translations.
The Boolean model is a fundamental object of stochastic geometry (see \cite{LP18,SW08})
and continuum percolation (see \cite{MeesterRoy}) and can serve as a model for disordered spatial
structures in physics and other applied sciences; see, e.g.,  \cite{AKM2009,BS2022,CSKM13,HHKM2014,Jeulin2021,Mecke2000,MS2000,Mecke2001,MS2002,Molchanov1997,RRSKG2021}.

Throughout this paper, we work in the $d$-dimensional hyperbolic space $\mathbb{H}^d$, the unique simply connected Riemannian manifold of constant sectional curvature $-1$. There exist several models for hyperbolic space such as the Poincar\'e halfspace model, the Poincar\'e ball model (also known as the conformal ball model), the hyperboloid model, and the Beltrami--Klein model, which are all isometric and, thus, equivalent, whence we do not restrict to any particular of these models (for an introduction to hyperbolic geometry, see, e.g., \cite{Benedetti.1992,Ratcliffe}). We denote by $\mathcal{K}^d$ the set of compact convex subsets of $\mathbb{H}^d$ and let $\eta$ be a Poisson process in $\mathcal{K}^d$. From the particle process $\eta$ we derive the Boolean model
$$
Z := \bigcup_{K\in\eta} K,
$$
i.e., the random set $Z$ in $\mathbb{H}^d$ is the union of all particles of $\eta$ (see Figure \ref{Fig1}).
\begin{figure}[ht]
\centering
    \hspace*{-1cm}
 \includegraphics[width=0.5\textwidth, angle=0]{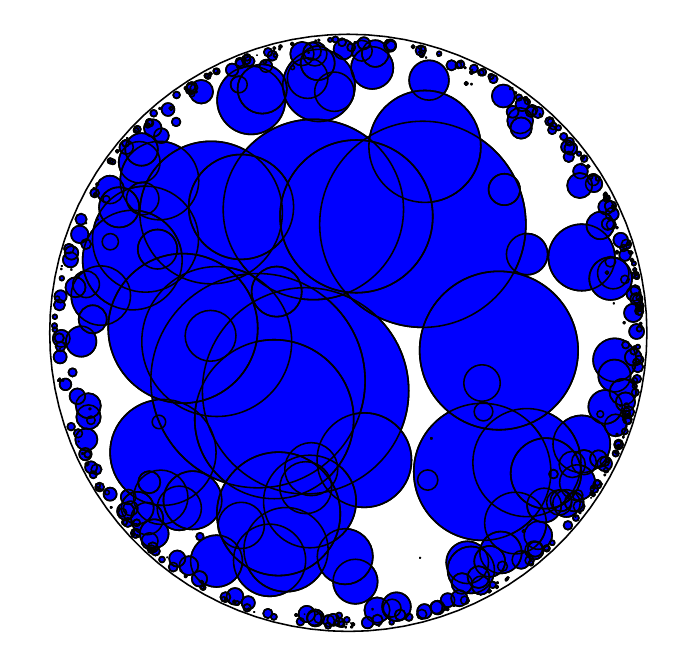}\includegraphics[width=0.5\textwidth, angle=0]{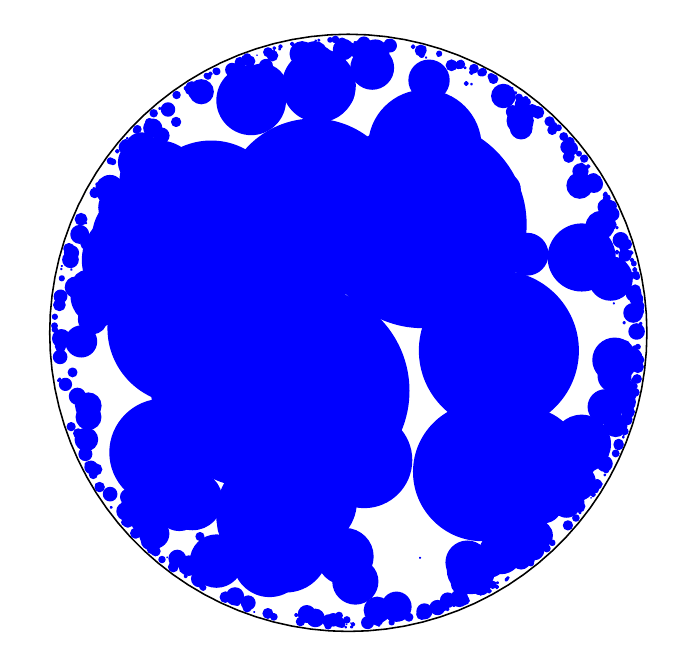}
 \caption{Realization of a Poisson particle process of random balls (left panel) and its union set (right panel) in the conformal ball model of $\mathbb{H}^2$.  The intensity of the underlying Poisson process is $\gamma=500/\mathcal{H}^2(\mathbb{B}_5)$, the radius distribution is uniform in $[0,1]$. }\label{Fig1}
\end{figure}

As in the Euclidean case one would like $Z$ to have some invariance properties. To this end, we denote by $\Ih$ the class of all isometries of $\mathbb{H}^d$ and require that
\begin{equation}\label{eq:stationaryeta}
\eta \overset{d}{=} \varrho \eta \quad \text{ for each }   \varrho\in\Ih.
\end{equation}
Here, by $\varrho \eta$ we mean the particle process obtained by
applying $\varrho$ to every particle of $\eta$. Then the Boolean model
$Z$ becomes also invariant under isometries, which means that
$Z\overset{d}{=}\varrho Z$ for all $\varrho\in\Ih$. Any particle process satisfying \eqref{eq:stationaryeta} will be called stationary.

Let us briefly summarize the main findings of this work. First we initiate a study of  general stationary processes of compact particles in hyperbolic space. For the intensity measure $\Lambda$ of such a stationary particle process and a given center function we derive a crucial decomposition in terms of the uniquely defined distribution of the so-called typical particle (or typical grain) ${\rm G}$, an intensity parameter $\gamma$, and an integration over isometries (see Theorem \ref{thmdisintegration}), which is frequently applied throughout the work. The typical particle ${\rm G}$ is a random compact set centered at a fixed point $\sfp$, which can be considered as the counterpart of the origin in the Euclidean case. The intensity measure $\Lambda$ is locally finite if and only if $\gamma$ is finite and the typical particle satisfies a basic first moment condition (see Theorem \ref{thmchar}).

In the main part of this paper we consider the Boolen model $Z$ constructed from an underlying stationary Poisson process $\eta$ of compact convex particles in $\mathbb{H}^d$ with finite intensity $\gamma$. A different way to construct $\eta$ is to take i.i.d.\ copies of the typical particle and to move them by random isometries coming from an isometry invariant Poisson process on $\Ih$. We are interested in geometric quantities $\phi$ such as
the volume or the surface area of the Boolean model within a compact convex observation
window $W\in\mathcal{K}^d$, whence we consider $\phi(Z\cap W)$. We
often choose for $W$ a $d$-dimensional ball $\mathbb{B}_R$ around
$\mathsf{p}$ with radius $R$ and let $R\to\infty$. More generally, we study functionals $\phi$ defined on finite unions of compact convex sets that are assumed to be geometric, which means that they are measurable, additive, locally bounded, and isometry invariant. Sometimes continuity is also required. Our findings are subject to natural moment conditions on $\overline{\Vol}({\rm G})$, the volume of the $1$-parallel set of the typical particle ${\rm G}$. In particular, we contribute to the following topics:

\begin{itemize}
\item {\em Expectations.} We derive an exact formula for $\mathbf{E}\phi(Z\cap W)$ and an asymptotic formula for $\mathbf{E}\phi(Z\cap \mathbb{B}_R)/\Vol(\mathbb{B}_R)$ as $R\to\infty$. The latter involves an isometry invariant integration over the space of horoballs (see Theorem \ref{thm:mean}). In the cases where $\phi$ is volume, surface area, a general intrinsic volume or the Euler characteristic more explicit expressions are provided.
\item {\em Variances and covariances.} For two geometric functionals $\phi$ and $\psi$, we establish an exact formula for $\mathbf{Cov}(\phi(Z\cap W),\psi(Z\cap W))$ and compute the limit of $\mathbf{Cov}(\phi(Z\cap \mathbb{B}_R),\psi(Z\cap \mathbb{B}_R))/\Vol(\mathbb{B}_R)$ as $R\to\infty$, which involves again an integration over horoballs. Special emphasis is given to volume and surface area, for which exact and asymptotic variance and covariance formulas are derived (see Theorems \ref{th:localcovariance} and \ref{th:asympcovariance}). Moreover, we provide conditions which ensure that $\mathbf{Var}(\phi(Z\cap \mathbb{B}_R))$ is of order $\Vol(\mathbb{B}_R)$ (see Theorem \ref{thm:lower_bound_variance}), which is an essential step towards establishing central limit theorems.
\item {\em Central limit theorems.} We prove a univariate central limit theorem for $\phi(Z\cap \mathbb{B}_R)$ as $R\to\infty$ as well as a multivariate central limit theorem for several geometric functionals. In the univariate case we obtain rates of convergence for the Wasserstein and the Kolmogorov distance.
\end{itemize}

Our proofs rely on general tools for Poisson processes such as the Mecke formula, the Fock space representation and the Malliavin-Stein method and additionally on geometric arguments. The same probabilistic tools for Poisson processes have previously been employed for the analysis of Euclidean Boolean models in e.g.\ \cite{BST,HLS,LP18,SY19}, but the geometric arguments need to be adapted and extended significantly. One  reason is the presence of asymptotically non-vanishing boundary effects discussed further below which lead to additional and different contributions in comparison with the Euclidean framework. Another reason is that in Euclidean space it is often exploited that space can be  partitioned by congruent half-open cubes. In hyperbolic space, we show the existence of a covering of the whole space by ``identical sets'' such that not too many of them overlap, which is obvious in $\mathbb{R}^d$, but becomes non-trivial in $\mathbb{H}^d$. This economic covering result is used to establish bounds for geometric functionals
and iterated difference operators of the Boolean model in an observation window. For the results concerning expectations of general  intrinsic volumes, iterated kinematic formulas are required which take a convenient form if a suitable renormalisation of the intrinsic volumes is used.

The Euclidean counterparts of our results outlined above are
well known. For an introduction to Boolean models in $\mathbb{R}^d$
and formulas for exact and asymptotic expectations, we refer e.g.\ to 
\cite[Chapter 9]{SW08}, while the covariance structure and central limit theorems
for general geometric functionals were first considered in
\cite{HLS}. A recent survey is provided in \cite{HLW2023}. Previously, central limit theorems for volume, surface
area and related functionals were explored e.g.\ in
\cite{Baddeley,Heinrich2005,HeinrichMolchanov,Mase,Molchanov1995}. The
findings of \cite{HLS} were further refined in \cite{HKLS,LP18,SY19}. 
Poisson cylinder processes were treated in \cite{BST,HeinrichSpiess2009,HeinrichSpiess2013}.

To illustrate new phenomena that arise in hyperbolic space, let us consider the asymptotic variance of the volume. We will show that
\begin{equation}\label{eqn:Asymptotic_Variance_Volume}
\lim_{R\to\infty} \frac{\Var \Vol(Z\cap \mathbb{B}_R)}{\Vol(\mathbb{B}_R)} = e^{-2\gamma\E V_d(\TG)} \int_{\BHd} \left(e^{\gamma C(\sfp,    z)}-1\right) \BP(z\in \horo_{U,T}) \, \mathcal{H}^d(\dint z)
\end{equation}
with independent random variables
$U\sim\operatorname{Uniform}(\mathbb{S}^{d-1}_\sfp)$ and
$-T\sim\operatorname{Exp}(d-1)$. Here, $\gamma$ is the intensity
parameter, $C(\cdot,\cdot)$ is the so-called covariogram function of the typical
particle (the definition has to be adjusted in hyperbolic space), and
$V_d$ stands for the volume functional in $\BHd$ (that is,
$d$-dimensional Hausdorff measure $\mathcal{H}^d$). By $\mathbb{B}_{u,r}$ we denote
the closed and convex horoball in direction $u$ with signed distance
$r$ to $\mathsf{p}$, which is obtained as the limit of an increasing
sequence of (compact convex geodesic) balls with radius $R$ and with center
in direction $u$ and distance $R+r$ from $\mathsf{p}$, as
$R\to\infty$. In other words, horoballs are closures of suitably
chosen nested sequences of compact convex metric balls (see
\cite[Theorem 3.4]{BR16} for a generic argument); see Section
\ref{sec:2.1} and Figure \ref{Fig3} for an illustration. While the
latter is a halfspace in the Euclidean case, this is no longer true in
hyperbolic space. For a Boolean model in Euclidean space $\mathbb{R}^d$ one has the
simpler description of the asymptotic variance by
$$
\lim_{R\to\infty} \frac{\Var \Vol(Z_{\rm Euc}\cap \mathbb{B}_R)}{\Vol(\mathbb{B}_R)} = e^{-2\gamma\E V_d(\TG)} \int_{\R^d} \left(e^{\gamma C(0,    z)}-1\right)  \, \mathcal{H}^d(\dint z),
$$
 which is almost the same formula as in \eqref{eqn:Asymptotic_Variance_Volume}, except for the probability involving the random horoball. This difference is caused by boundary effects (see Figure \ref{Fig2} for an illustration).
 \begin{figure}[ht]
 \includegraphics[width=0.5\textwidth, angle=0]{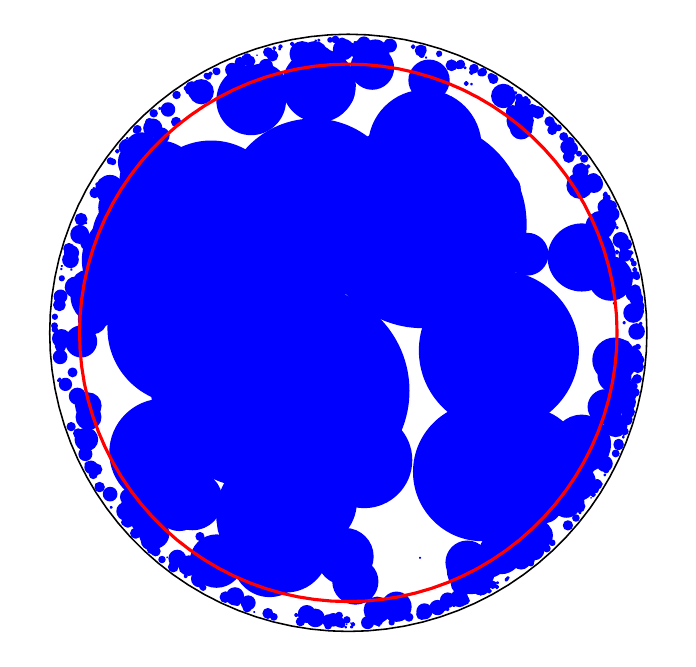}\includegraphics[width=0.5\textwidth, angle=0]{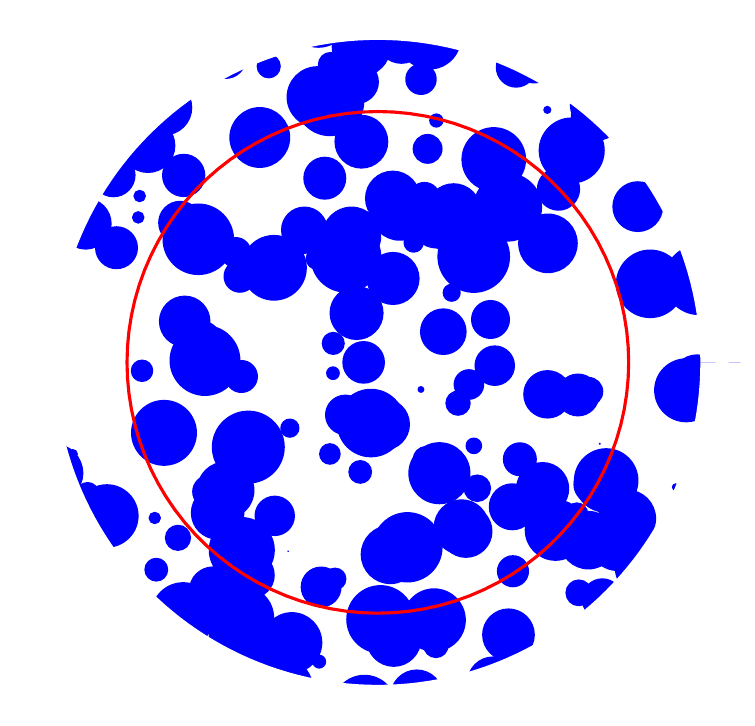}
 \caption{Visual comparison of realizations of Boolean models observed in a ball of radius $R$ in the conformal ball model of $\mathbb{H}^2$ (left panel) and in Euclidean space (right panel).}\label{Fig2}
\end{figure}
 For $Z_{\rm Euc}$ the behavior close to the boundary of the observation window $\mathbb{B}_R$ can be neglected as the fraction of the particles hitting the boundary of $\mathbb{B}_R$ of all particles intersecting $\mathbb{B}_R$ tends to zero as $R\to\infty$. In the present paper this is not true as the volume and the surface area of $\mathbb{B}_R$ in $\mathbb{H}^d$ are of the same order in $R$, which is a basic fact of hyperbolic geometry. 

The literature contains many other interesting (and partially surprising) properties of models of stochastic geometry in hyperbolic space. Examples from continuum percolation are the seminal paper \cite{BenjaminiSchramm01}, studying Poisson--Voronoi percolation in the hyperbolic plane, and \cite{Tykesson07}, dealing with the spherical Boolean model. The recent contributions \cite{HM22,HM23} on Poisson--Voronoi percolation in the hyperbolic plane explore the behavior of the critical probability for an infinite cluster as the intensity of the underlying Poisson process approaches infinity or zero.  Visibility properties of the spherical Boolean model are studied in
\cite{BJST09,TykessonCalka13}. Central and non-central limit theorems for hyperplane processes and processes of $\lambda$-geodesics or lower-dimensional flats are  derived in \cite{BHT,HHT2021,KRT23+,KRT24+}. Properties of the typical cell of a Poisson--Voronoi tessellation and the zero cell of a Poisson hyperplane tessellation in hyperbolic space are considered in \cite{GKT22,HeHu}. New phenomena arising in the study of intersection probabilities for uniform random  flats in hyperbolic space are discovered in \cite{SST24}.  The recent preprint \cite{ACELUe23} shows that the low-density Poisson--Voronoi tessellation has a non-trivial limit. In all these examples the non-amenability of hyperbolic space leads to properties which are significantly
different from the Euclidean case.  Finally, we mention that the hyperbolic plane has been identified as a natural framework for studying random geometric graphs, which serve as models of complex networks; see, e.g.,   \cite{CFS22,FHMS21,FY20} and the literature cited there (going back at least to the introduction of the KPKVB model  in \cite{KPKVB2010}).

This paper is organized as follows. In Section \ref{sec:2} we consider stationary  processes of compact particles in hyperbolic space and establish, in particular, a disintegration result for their intensity measures. Then we introduce the concept of a geometric functional in hyperbolic space and discuss intrinsic volumes as well as their localisations, the curvature measures, as important examples in Section \ref{sec:GF}. Our main results for geometric functionals of hyperbolic Boolean models within observations windows are presented in Sections \ref{sec:4}--\ref{sec:6}. Most of the proofs are postponed to later sections. Section \ref{sec:4} concerns local and asymptotic mean value formulas for geometric functionals, whose proofs are given in Section \ref{sec:10}. In Section \ref{sec:5} 
we provide formulas for local and asymptotic  variances and covariances of
geometric functionals as well as of volume and surface area. The proof for general geometric functionals is deferred to Section \ref{sec:11}, while the results for volume and surface area are shown in Sections \ref{sec:12} and \ref{sec:cov_vol_surf}. A lower variance bound is stated in Section \ref{sec:lower_bound_variance} and proven in Section \ref{sec:14}. Univariate and multivariate central limit theorems are discussed in Section \ref{sec:6} and finally established in Section \ref{sec:15}. Sections \ref{sec:7}--\ref{sec:9} contain various auxiliary
results, which could also be of independent interest, namely several new integral geometric results in hyperbolic space in Section \ref{sec:7}, an economic covering result in Section \ref{sec:8}, and some intermediate bounds on geometric functionals and difference operators in Section \ref{sec:9}.

 \section{Stationary particle processes and Boolean models}\label{sec:2}

In the present section, we first provide some general facts on stationary   processes of compact (convex) particles in hyperbolic space, which do not require any Poisson assumption. As a preparation, we first collect some useful results and facts from hyperbolic geometry. Then we introduce and discuss the Boolean model.

\subsection{Basic facts from hyperbolic geometry}\label{sec:2.1}

We consider the hyperbolic space $\BHd$ with intrinsic metric
(distance function) $d_h$. Let $\mathsf{p}$ denote a fixed point of
$\mathbb{H}^d$.  For $x\in\mathbb{H}^d$ and $r\ge 0$, we denote by
$\mathbb{B}(x,r)\defeq\{z\in\mathbb{H}^d\colon d_h(z,x)\le r\}$ the
 (geodesic) ball with center $x$ and radius $r\ge 0$; in particular, we
set $\mathbb{B}_r\defeq\mathbb{B}(\mathsf{p},r)$. We write
$\mathbb{S}^{d-1}_{\msfp}$ for the unit sphere in the tangent space
 $T_{\msfp}\mathbb{H}^d$ of $\mathbb{H}^d$ at $\msfp$ and
$\mathcal{H}^{d-1}_{\msfp}$ for the $(d-1)$-dimensional Hausdorff measure  on $T_{\msfp}\mathbb{H}^d$ (with respect to the scalar product
induced by the Riemannian metric of $\mathbb{H}^d$). The
$s$-dimensional Hausdorff measure in $\mathbb{H}^d$ (with respect to
the metric $d_h$) is denoted by $\mathcal{H}^s$ for $s\ge 0$. In
particular, $\mathcal{H}^d$ is normalized in such a way that on Borel
measurable sets it coincides with the Riemannian volume measure, for
which we write $\Vol$.  The exponential map
$\exp_\sfp:T_{\msfp}\mathbb{H}^d\to \mathbb{H}^d$ is a diffeomorphism
between the tangent space at $\sfp$ and the hyperbolic space. In the
introduction, the notion of a horoball in hyperbolic space has already
been introduced in an informal way.  For
$u\in \mathbb{S}^{d-1}_{\msfp}$ and $t\in\R$, we denote by
$\horo_{u,t}\subset\mathbb{H}^d$ the (closed and convex) horoball
which contains $\exp_{\msfp}(tu)\in\BHd$ in its boundary and satisfies
$\BB(\exp_{\sfp}((t+R)u),R)\to \horo_{u,t}$ as $R\to\infty$ (with
respect to the Fell
topology). In Section \ref{sec:4} we will introduce a measure on the space of horoballs, which is invariant with respect to the natural operation of the isometry group $\Ih$ (see below).

\begin{figure}[ht]
    \centering
 \includegraphics[width=0.5\textwidth, angle=0]{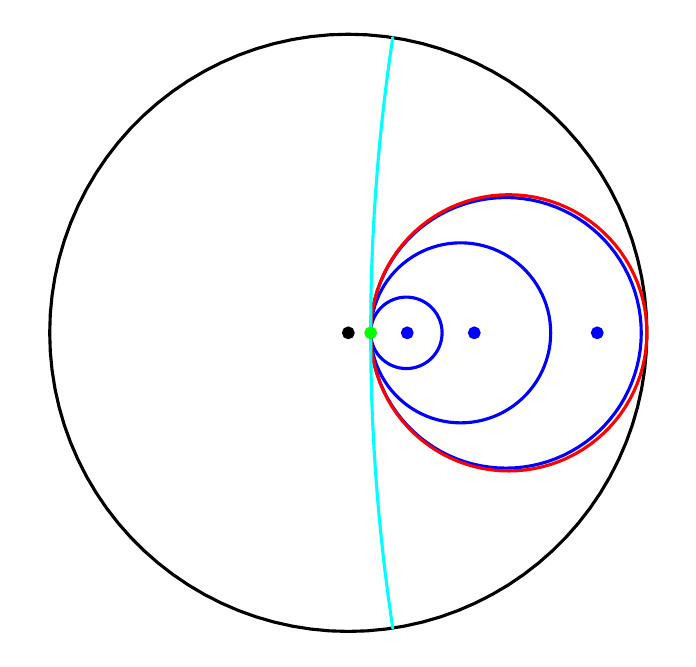}
 \caption{Nested sequence of balls (blue), common boundary  point $\exp_{\msfp}(tu)$ (green) and boundary of the limiting horoball (red) in the conformal ball model of $\mathbb{H}^2$.
Also shown is the geodesic (cyan) that touches the horoball at $\exp_{\msfp}(tu)$.}\label{Fig3}
\end{figure}

In the following, we will repeatedly use formulas for the surface area and the volume of a hyperbolic ball of radius $r$,
 which are given by
\begin{equation}\label{eq:surfaceball}
\mathcal{H}^{d-1}(\partial \mathbb{B}_r)=\omega_d \sinh^{d-1}(r),
\end{equation}
where 
$\omega_n\defeq n \kappa_n \defeq {2 \pi^{n/2}}/{\Gamma(n/2)}$, $n\in\N$,
is the surface area and   $\kappa_n$ is the volume of an $n$-dimensional unit ball in the Euclidean space $\R^n$, and
\begin{equation}\label{volbalr}
\mathcal{H}^{d}(\mathbb{B}_r)= \omega_d \int_{0}^{r} \sinh^{d-1}(s) \, \dint s.
\end{equation}
As usual, $\sinh$ and $\cosh$ denote the hyperbolic sine and cosine function.
Formula \eqref{volbalr} is a special case of the following general polar coordinate formula \eqref{eq:polarcoord} (see
  Sections 3.3 and 3.4 and especially  formulas (3.25) and (3.26) in the monograph \cite{Chavel}). For a measurable function  $f\colon \mathbb{H}^d\to[0,\infty]$, one has
 \begin{equation}\label{eq:polarcoord}
 \int_{\mathbb{H}^d}f(x)\, \mathcal{H}^d(\dint x)
 =\int_{\mathbb{S}^{d-1}_\sfp}\int_0^\infty f(\exp_\sfp(su))\sinh^{d-1}(s) \,\dint s \,\mathcal{H}^{d-1}_\sfp(\dint u).
\end{equation}
For  $d=2$ we thus get $\mathcal{H}^1(\partial\mathbb{B}_r)=2\pi\sinh(r)$ and $\mathcal{H}^2(\mathbb{B}_r)= 2 \pi(\cosh(r)-1)$. In general, it follows from \eqref{eq:surfaceball}, \eqref{volbalr} and  l'Hospital's rule that
\begin{equation}\label{eq:limit1}
\lim_{r\to\infty} \frac{\mathcal{H}^{d-1}(\partial \mathbb{B}_r)}{\mathcal{H}^{d}(\mathbb{B}_r)}= d-1 ,
\end{equation}
which is in contrast to the growth behavior in Euclidean space (see \eqref{eq:limit2} for a more general relation).

Let $\Ih$ denote the group of isometries of $\BHd$. 
With respect to the compact open topology, $\Ih$ is a locally compact topological group with countable
base  of the topology (see \cite{AM10}, \cite[pp.~46--49]{NoKoI}, \cite[Chapter 5]{Ratcliffe}, \cite[III.6, IV.1, IV.6.2]{Sakai}).  Up to a multiplicative constant there exists a uniquely
determined Haar measure $\lambda$ on $\Ih$ (see \cite[Theorem 9.2.6]{Cohn}).
Since $\Ih$ is unimodular
(see \cite[Chapter X, Proposition 1.4]{Helgason.1962} or \cite[Proposition C
4.11]{Benedetti.1992}), $\lambda$ is left invariant, right invariant,
and inversion invariant. The isometry group operates continuously on $\mathbb{H}^d$. For each $x\in\mathbb{H}^d$, the map $\varrho\mapsto \varrho x$ from $\Ih$ to $\mathbb{H}^d$ is  open and proper (inverse images of compact sets are compact). In other words, $\mathbb{H}^d$ is a homogeneous $\Ih$-space (see \cite{Nachbin,SW08}).
The measure $\mathcal{H}^{d}$ is locally finite and invariant under isometries (by its definition), that is
$\mathcal{H}^{d}(\varrho B)=\mathcal{H}^{d}(B)$
for each Borel set $B\subseteq \BH^d$ and each
$\varrho\in \Ih$, where  $\varrho B \defeq \{\varrho x \colon x \in B\}$. Here we write $\varrho x$ instead of $\varrho (x)$ for
$\varrho\in \Ih$ and $x\in \BHd$.
Hence, up to a factor, $\mathcal{H}^{d}$ is the only isometry invariant, locally finite measure
on $\mathbb{H}^d$; see \cite[p.~138, Theorem 1 (Weil)]{Nachbin} or \cite[Theorem 13.3.1]{SW08} .

We will choose the normalization of
$\lambda$ such that
\begin{equation}\label{eq:hdlambda}
\mathcal{H}^{d}= \int_{\Ih} \1\{\varrho x\in \cdot\}\, \lambda(\dint \varrho)
\end{equation}
for all $x\in\BHd$. Note that the isometry invariant  measure defined on the right-hand side is locally finite, since $\varrho\mapsto \varrho x$ is proper (for each $x\in \mathbb{H}^d$).
Proceeding as in \cite{Hug.2017,Last.2010}, we consider the isotropy group 
$\Ih(\sfp) \defeq \{\varrho \in \Ih \colon \varrho \sfp=\sfp\}$ of $\sfp$, consisting of
isometries fixing $\sfp$. Since $\Ih(\sfp)$ is a compact subgroup of $\Ih$, there is a unique $\Ih(\sfp)$ invariant probability measure $\kappa(\sfp, \cdot)$ on $\Ih(\sfp)$. By
defining $\kappa(\sfp,\Ih \setminus \Ih(\sfp))\defeq 0$, we extend
$\kappa(\sfp,\cdot)$ to $\Ih$. More generally, for $x \in \BHd$ we
define
$$\Ih(\sfp,x) \defeq \{ \varrho \in \Ih \colon  \varrho \sfp=x\},$$
the set of isometries that map $\sfp$ to $x$. Choosing an arbitrary
$\varrho_x \in \Ih(\sfp,x)$, we define
\begin{align}\label{e:kappa}
\kappa(x,B) \defeq \int_{\Ih(\sfp)} \1 \{\varrho_x \circ \varrho \in B\} \, \kappa(\sfp,\dint \varrho), \quad B \in \mathcal{B}(\Ih).
\end{align}
This definition is independent of the choice of $\varrho_x$ (see
\cite{Last.2010}). The map $x\mapsto \varrho_x$ can be chosen in a
measurable way, see \cite[Section 4.5, p.~121]{Ratcliffe} for an
explicit description of a ``hyperbolic translation'' in the hyperbolic
ball model. Hence $\kappa$ is a stochastic transition kernel from
$\BHd$ to $\Ih$. Moreover, $\kappa(x,\cdot)$ is concentrated on
$\Ih(\sfp,x)$.
The kernel $\kappa$ disintegrates $\lambda$ in the sense that
\begin{align}\label{disintlambda}
\int_{\mathbb{H}^d}\int_{\Ih} \I\{\varrho\in\cdot\}\,\kappa(x,\dint \varrho)\,\mathcal{H}^d(\dint x)=\lambda.
\end{align}
To check \eqref{disintlambda}, note that for $x\in \BH^d$  and $\tau\in\Ih$ we have $\tau\varrho_x\in \Ih(\sfp,\tau x)$ and hence, applying twice \eqref{e:kappa},
\begin{equation}\label{eq:2.neu1}
\kappa(\tau x,\cdot)=\int_{\Ih} \I\{\tau\circ\varrho\in\cdot\}\,\kappa(x,\dint \varrho).
\end{equation}
Using \eqref{eq:hdlambda},  \eqref{eq:2.neu1},   Fubini's theorem and the right invariance of $\lambda$,  we get\begin{align*}
&\int_{\mathbb{H}^d}\int_{\Ih} \I\{\varrho\in\cdot\}\,\kappa(x,\dint \varrho)\,\mathcal{H}^d(\dint x)=
\int_{\Ih}\int_{\Ih} \I\{\varrho\in\cdot\}\,\kappa(\tau \sfp,\dint \varrho)\,\lambda(\dint \tau)\\
&=\int_{\Ih}\int_{\Ih(\sfp)} \I\{\tau\circ \sigma\in\cdot\}\,\kappa(\sfp ,\dint \sigma)\,\lambda(\dint \tau)
=\int_{\Ih(\sfp)}\int_{\Ih} \I\{\tau\in\cdot\}\,\lambda(\dint \tau)\,\kappa(\sfp ,\dint \sigma)=\lambda,
\end{align*}
since $\kappa(\sfp,\cdot)$ is a probability measure.

\subsection{Stationary particle processes}\label{subsec2.2}

Let $\cC^d$ denote the space of nonempty compact
  subsets of $\mathbb{H}^d$, endowed with the Hausdorff metric or  the Fell topology, which is a locally compact Hausdorff space with countable base of the topology.
By a particle process on $\BH^d$ we mean a point process $\zeta$ on  $\cC^d$.
We require $\zeta$ to be locally finite in the sense that
$\zeta([C])<\infty$ almost surely for each
$C\in \cC^d$.  Here $[C]$ denotes the set of all $K\in\cC^d$
with $K\cap C\neq\varnothing$.
More formally we can introduce $\zeta$ as a measurable mapping
from $\Omega$ to $\mathbf{N}$, where
$(\Omega,\mathcal{F},\BP)$ is the underlying probability space
and $\mathbf{N}$ is the space of all measures on
$\cC^d$ which take values in $\N_0\cup\{\infty\}$ and are finite if evaluated at $[C]$  for
each compact set $C$. Here measurability refers to the smallest
$\sigma$-field on $\mathbf{N}$ making the mappings $\mu\mapsto\mu(A)$
(from $\mathbf{N}$ to $[0,\infty]$) measurable for
each Borel set $A\subseteq \cC^d$.
Later we will mostly consider a particle process $\zeta$ concentrated on
the space $\cK^d$ of nonempty compact convex subsets of
$\mathbb{H}^d$, that is $\BP(\zeta(\cC^d\setminus \cK^d)=0)=1$.
The isometry group $\Ih$ acts continuously on $\cC^d$.
For $\mu\in\mathbf{N}$
and  $\varrho\in \Ih$ let $\varrho \mu$ denote the image measure of
$\mu$ under $\varrho$. Then the map $(\varrho,\mu)\mapsto \varrho\mu$
is measurable. A particle process $\zeta$ is called {\em stationary}
if $\varrho\zeta\overset{d}{=}\zeta$ for each
$\varrho\in \Ih$.

Suppose that $\zeta$ is a stationary particle process on $\BH^d$.
Then the {\em intensity measure} $\Lambda\defeq\E\zeta$
is isometry invariant, that is we have
$\Lambda(A)=\Lambda(\varrho A)$ for each $\varrho\in \Ih$
and each measurable $A\subseteq \cC^d$,
where $\varrho A\defeq\{\varrho K\colon K\in A\}$. In the main body of this
work we will assume that $\zeta$ is a Poisson process.
Then the distribution of $\zeta$ is completely determined by $\Lambda$;
see \cite[Proposition 3.2]{LP18}. In this case
$\Lambda$ is locally finite, which means that
$\Lambda([C])<\infty$ for each compact set $C\subset\mathbb{H}^d$.
In the following we shall
establish an important disintegration of $\Lambda$.
To do so we need to fix a measurable {\em center function}
$c_h\colon \cC^{d} \rightarrow \BHd$ which is isometry covariant in the
sense that
\begin{align}\label{eq:c_h_property}
c_h(\varrho K)= \varrho c_h(K), \quad K \in \cC^{d},  \varrho \in \Ih.
\end{align}
An example is provided by the circumcenter (see \cite{HeHu}), which
also satisfies $c_h( K)\in K$ for $K \in \cK^{d}$.
Let $\mathbb{B}\subset\BH^d$ be a ball of volume $1$.
The number
\begin{equation}\label{eq:gammadef}
\gamma \defeq
\int_{\cC^{d}} \I\{c_h(K)\in \mathbb{B}\} \, \Lambda(\dint K)
\end{equation}
is said to be the {\em intensity} of $\zeta$. 
By stationarity, this definition does not depend on the
center  of  $\mathbb{B}$. 
We shall mostly assume that $\gamma$ is positive and finite.
Since $\Lambda$ is invariant under isometries and by \eqref{eq:c_h_property}, the measure
$\int_{\cC^{d}} \I\{c_h(K)\in\cdot \}\,\Lambda(\dint K)$ is invariant under isometries
as well. If $\gamma<\infty$, then a simple covering argument shows that this measure is also locally finite, hence
\begin{align}\label{e:intmeasure}
\int_{\cC^{d}} \I\{c_h(K)\in\cdot \}\,\Lambda(\dint K)=\gamma\,\mathcal{H}^d.
\end{align}
In particular, this implies that the value of $\gamma$ is independent of the specific choice of a set $\mathbb{B}$ of volume $1$, once we know that $\gamma<\infty$. Moreover, considering $A_n\defeq\{ K\in\cC^d\colon c_h(K)\in \BB_n\} $ we deduce from \eqref{e:intmeasure} that $\Lambda(A_n)=\gamma\mathcal{H}^d(\BB_n)<\infty$, which shows that $\Lambda$ is a $\sigma$-finite measure.
In Lemma \ref{l:2.3} we shall see that the value of $\gamma$
is also independent of the center function $c_h$.
Let
$$
\cC_{\sfp}^{d} \defeq \{K \in \cC^{d}\colon \ c_h(K)=\sfp\}
$$
be the set of all compact sets with center at $\sfp$.

\begin{theorem}\label{thmdisintegration}
  Let $\zeta$ be a stationary particle process on $\BHd$ with
  intensity $\gamma \in (0,\infty)$ (with respect to the given center
  function $c_h$).  Then there exists a unique   probability measure
$\BQ$ on $\cC^d$ which is concentrated
on $\cC_\sfp^{d}$, invariant under isometries fixing $\sfp$, and satisfies
\begin{equation}\label{eqdisintegration}
\Lambda =\gamma \int_{\cC^{d}}\int_{\Ih} \I\{\varrho K\in\cdot\} \,\lambda(\dint \varrho) \,\BQ(\dint K).
   \end{equation}
   This measure is given by
\begin{align}\label{e:BQ}
\BQ = \frac{1}{\gamma} \int_{\cC^{d}} \int_{\Ih}
 \1 \{\varrho^{-1} K \in \cdot\}w(c_h(K))\, \kappa(c_h(K),\dint \varrho) \, \Lambda( \dint K),
\end{align}
where $w\colon\BH^d\to [0,\infty)$ is any measurable function with 
\begin{equation}\label{eq:integralone}
\int_{\mathbb{H}^d} w(x)\,\mathcal{H}^d(\dint x)=1.
\end{equation}
\end{theorem}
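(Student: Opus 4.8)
The plan is to verify that the measure $\BQ$ defined by \eqref{e:BQ} has the three asserted properties (concentration on $\cC^d_\sfp$, invariance under $\Ih(\sfp)$, and the disintegration \eqref{eqdisintegration}), and then to establish uniqueness. First, concentration on $\cC^d_\sfp$: since $\kappa(c_h(K),\cdot)$ is concentrated on $\Ih(\sfp, c_h(K))$, any $\varrho$ in its support satisfies $\varrho\sfp = c_h(K)$, hence $\varrho^{-1}c_h(K)=\sfp$; by the covariance property \eqref{eq:c_h_property} we get $c_h(\varrho^{-1}K)=\varrho^{-1}c_h(K)=\sfp$, so $\varrho^{-1}K\in\cC^d_\sfp$. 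Thus $\BQ$ is concentrated on $\cC^d_\sfp$ regardless of the choice of $w$. That $\BQ$ is a probability measure follows by plugging $\cdot = \cC^d$ into \eqref{e:BQ}: the inner $\kappa$-integral is $1$ (it is a probability measure), leaving $\gamma^{-1}\int w(c_h(K))\,\Lambda(\dint K)$, which by \eqref{e:intmeasure} equals $\gamma^{-1}\cdot\gamma\int_{\BHd} w(x)\,\mathcal{H}^d(\dint x) = 1$ using \eqref{eq:integralone}.

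Next I would prove \eqref{eqdisintegration} by substituting \eqref{e:BQ} into the right-hand side of \eqref{eqdisintegration} and simplifying. This gives
\[
\gamma\int_{\cC^d}\int_{\Ih}\I\{\varrho K\in\cdot\}\,\lambda(\dint\varrho)\,\BQ(\dint K)
= \int_{\cC^d}\int_{\Ih}\int_{\Ih}\I\{\sigma\varrho^{-1}K'\in\cdot\}\,\lambda(\dint\sigma)\,w(c_h(K'))\,\kappa(c_h(K'),\dint\varrho)\,\Lambda(\dint K').
\]
For fixed $K'$ and fixed $\varrho$ with $\varrho\sfp=c_h(K')$, the left invariance of $\lambda$ turns $\int_\Ih\I\{\sigma\varrho^{-1}K'\in\cdot\}\,\lambda(\dint\sigma)$ into $\int_\Ih\I\{\sigma K'\in\cdot\}\,\lambda(\dint\sigma)$, which no longer depends on $\varrho$; the $\kappa$-integral then contributes its total mass $1$, and the $K'$-integral collapses to $\int_{\cC^d}\left(\int_\Ih\I\{\sigma K'\in\cdot\}\,\lambda(\dint\sigma)\right)w(c_h(K'))\,\Lambda(\dint K')$. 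To identify this with $\Lambda$, I apply the disintegration \eqref{disintlambda} of $\lambda$ via $\kappa$ combined with \eqref{e:intmeasure}: writing $\sigma = \tau\varrho_{c_h(K')}$ and using that $c_h(\tau\varrho_{c_h(K')} K')$ ranges correctly, one reduces to $\int_\Ih \I\{\tau\,(\varrho_{c_h(K')}^{-1}\cdots)\}$; more cleanly, one shows directly that $K'\mapsto \int_\Ih \I\{\sigma K'\in\cdot\}\,\lambda(\dint\sigma)\,w(c_h(K'))$ integrated against $\Lambda$ reproduces $\Lambda$ by testing against the $\Lambda$-disintegration itself — the key identity being that for any isometry-invariant reference, $\int_\Ih \I\{\sigma K'\in\cdot\}\,\lambda(\dint\sigma) = \int_{\Ih}\I\{\tau K_0\in\cdot\}\lambda(\dint\tau)$ when $K_0 = \varrho^{-1}K'$ for $\varrho\in\Ih(\sfp,c_h(K'))$, and then invoking \eqref{e:intmeasure} to absorb $w$.

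For uniqueness, suppose $\BQ'$ is another probability measure concentrated on $\cC^d_\sfp$, invariant under $\Ih(\sfp)$, and satisfying \eqref{eqdisintegration}. Given any test set $A\subseteq\cC^d$, apply \eqref{eqdisintegration} for $\BQ'$ to the set $\{K\in\cC^d : c_h(K)\in B,\ \varrho_{c_h(K)}^{-1}K\in A\}$ (or a similar set adapted to recover $\BQ'(A)$), and use the $\Ih(\sfp)$-invariance of $\BQ'$ together with the disintegration \eqref{disintlambda} to invert the relation and express $\BQ'(A)$ in terms of $\Lambda$; this forces $\BQ'$ to coincide with the formula \eqref{e:BQ}, hence $\BQ'=\BQ$. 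Equivalently, one can argue that both $\BQ$ and $\BQ'$ are recovered from $\Lambda$ by the same explicit inversion formula. The main obstacle I anticipate is the careful bookkeeping of the $\kappa$-kernel and the measurable choice $x\mapsto\varrho_x$: one must check that the expressions are well-defined independently of this choice (which is asserted in the excerpt after \eqref{e:kappa}), that Fubini's theorem applies (guaranteed by $\sigma$-finiteness of $\Lambda$, established before the theorem), and that the invariance manipulations of $\lambda$ (left, right, and inversion invariance, all available by unimodularity) are used on the correct side. The geometric content is light; the work is in threading the disintegration identities \eqref{eq:hdlambda}, \eqref{disintlambda}, \eqref{e:intmeasure}, and \eqref{eq:2.neu1} together in the right order.
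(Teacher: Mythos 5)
Your overall strategy is the same as the paper's: show that the measure defined by \eqref{e:BQ} has the stated properties and satisfies \eqref{eqdisintegration}, then prove uniqueness by inverting \eqref{eqdisintegration} with the help of the kernel $\kappa$ and \eqref{disintlambda}. Your opening steps are correct and match the paper: concentration on $\cC^d_\sfp$ via the covariance of $c_h$, total mass one via \eqref{e:intmeasure} and \eqref{eq:integralone}, and the reduction of the right-hand side of \eqref{eqdisintegration} to $\int_{\cC^d}\int_{\Ih}\I\{\sigma K\in\cdot\}\,\lambda(\dint\sigma)\,w(c_h(K))\,\Lambda(\dint K)$ after the $\kappa$-integral collapses (the invariance you need for replacing $\sigma\varrho^{-1}$ by $\sigma$ is the right, not left, invariance of $\lambda$ -- harmless, since $\Ih$ is unimodular). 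However, you announce but never verify the $\Ih(\sfp)$-invariance of the measure \eqref{e:BQ}. This is part of the existence claim and does not follow from the properties you do check; it is short (for $\tau\in\Ih(\sfp)$ one has $\int\I\{\varrho\circ\tau\in\cdot\}\,\kappa(x,\dint\varrho)=\kappa(x,\cdot)$ by \eqref{e:kappa} and the invariance of the Haar probability measure $\kappa(\sfp,\cdot)$), but it must be carried out.

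The more serious gap is the final identification $\int_{\cC^d}\int_{\Ih}\I\{\sigma K\in\cdot\}\,\lambda(\dint\sigma)\,w(c_h(K))\,\Lambda(\dint K)=\Lambda$. Your proposed justification does not work: ``testing against the $\Lambda$-disintegration itself'' is circular, since \eqref{eqdisintegration} is exactly what is being proved, and \eqref{e:intmeasure} cannot ``absorb $w$'' here, because it only describes the image of $\Lambda$ under $c_h$, whereas the inner integral $\int_{\Ih}\I\{\sigma K\in\cdot\}\,\lambda(\dint\sigma)$ depends on $K$ through more than $c_h(K)$. The ingredient you are missing is the isometry invariance of $\Lambda$ as a measure on $\cC^d$: after Fubini one writes, for fixed $\sigma$, $\int\I\{\sigma K\in\cdot\}w(c_h(K))\,\Lambda(\dint K)=\int\I\{K\in\cdot\}w(c_h(\sigma^{-1}K))\,\Lambda(\dint K)$, uses $c_h(\sigma^{-1}K)=\sigma^{-1}c_h(K)$, and then the inversion invariance of $\lambda$ together with \eqref{eq:hdlambda} and \eqref{eq:integralone} gives $\int_{\Ih}w(\sigma^{-1}c_h(K))\,\lambda(\dint\sigma)=1$, collapsing the expression to $\Lambda$ (Fubini being justified by the $\sigma$-finiteness of $\Lambda$, as you note). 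Your uniqueness sketch points in the right direction -- testing \eqref{eqdisintegration} for $\BQ'$ on sets of the form $\{K\colon c_h(K)\in B,\ \varrho_{c_h(K)}^{-1}K\in A\}$ and inverting via \eqref{disintlambda}, \eqref{e:kappa} and the $\Ih(\sfp)$-invariance of $\BQ'$ -- and is in substance the same inversion the paper performs by applying the identity \eqref{eqdisintegration0} to $f(\sigma,K)=w(\sigma c_h(K))\I\{K\in\cdot\}$, but as written it remains a sketch whose bookkeeping (in particular, the passage from $\varrho_{c_h(K)}^{-1}K$ to an integral over $\kappa(\sfp,\cdot)$ so that the $\Ih(\sfp)$-invariance of $\BQ'$ can be applied) still has to be completed.
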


\begin{proof}
Since  $\kappa(c_h(K),\cdot)$ is a probability measure, it follows from
 \eqref{e:intmeasure}  that $\BQ$ is  a probability measure.  The identity $\BQ(\cC^d\setminus \cC_{\sfp}^{d})=0$
is a direct consequence of \eqref{e:BQ} since $\kappa(c_h(K),\cdot)$ is concentrated on $\Ih(\sfp,c_h(K))$ and $c_h$ is isometry covariant. For the asserted $\Ih(\sfp)$ invariance we need
to take into account that
\begin{align*}
\int \I\{\varrho\circ\tau\in\cdot\}\,\kappa(x,\dint\varrho)=\kappa(x,\cdot),
\quad \tau \in \Ih(\sfp),\,x\in\BH^d,
\end{align*}
which is implied by \eqref{e:kappa} and
the invariance of the Haar measure $\kappa(\sfp,\cdot)$.

Next we verify that \eqref{eqdisintegration} holds if $\BQ$ is chosen
as at \eqref{e:BQ}. Using the right invariance of $\lambda$, the fact
that $\kappa(c_h(K),\cdot)$ is a probability measure, Fubini's
theorem, the isometry invariance of $\Lambda$, the inversion
invariance of $\lambda$, and  \eqref{eq:c_h_property}, and finally
again Fubini's theorem, we get
\begin{align*}
    &\gamma\int_{\cC^d_\sfp}\int_{\Ih}  \I\{\varphi K\in\cdot\} \, \lambda(\dint \varphi)\, \BQ(\dint K)\\
    &=\int_{\cC^d}\int_{\Ih}\int_{\Ih}\I\{\varphi\circ \sigma^{-1} K\in\cdot\}\,\lambda(\dint\varphi)\, w(c_h(K))\,\kappa(c_h(K),\dint \sigma)\, \Lambda(\dint K)\\
    &=\int_{\cC^d}\int_{\Ih}\int_{\Ih}\I\{\varrho  K\in\cdot\}\,\lambda(\dint\varrho)\, w(c_h(K))\,\kappa(c_h(K),\dint \sigma)\, \Lambda(\dint K)\\
    &=\int_{\cC^d}\int_{\Ih} \I\{\varrho  K\in\cdot\}\,\lambda(\dint\varrho)\, w(c_h(K))\, \Lambda(\dint K)\\
    &=\int_{\Ih}\int_{\cC^d} \I\{\varrho  K\in\cdot\}w(c_h(K))
    \, \Lambda(\dint K)\,\lambda(\dint\varrho)\\
    &=\int_{\Ih}\int_{\cC^d}\I\{K\in\cdot\} w(c_h(\varrho^{-1}K))
    \, \Lambda(\dint K)\,\lambda(\dint\varrho)\\
    &=\int_{\cC^d} \I\{K\in\cdot\}\int_{\Ih}w(\varrho c_h( K))
    \, \lambda(\dint\varrho)\, \Lambda(\dint K)\\
    &=\int_{\cC^d} \I\{K\in\cdot\} \, \Lambda(\dint K),
\end{align*}
where we also used that $\Lambda$ is $\sigma$-finite.

Suppose that $\BQ^*$ is a probability measure on $\cC^d$ which  is concentrated on $\cC_\sfp^{d}$, invariant under isometries fixing $\sfp$, and satisfies  \eqref{eqdisintegration}.   Using   Fubini's theorem,   $c_h(K)=\sfp$ for $\BQ^*$-a.e.~$K\in\cC^{d}$, \eqref{eq:2.neu1} with $x=\sfp$, the right invariance of $\lambda$, and the  $\Ih(\sfp)$ invariance  of $\BQ^*$, we get
\begin{align}\label{eqdisintegration0}
&\int_{\cC^{d}}\int_{\Ih}  \I\{(\varrho,\varrho^{-1}K)\in\cdot\}\,\kappa(c_h(K),\dint \varrho)\,\Lambda(\dint K)\nonumber\\
&
=\gamma\int_{\cC^{d}}\int_{\Ih}\int_{\Ih}  \I\{(\varrho,\varrho^{-1}\circ \sigma K)\in\cdot\}\,\kappa(c_h(\sigma K),\dint \varrho)\,\lambda(\dint\sigma)\,\BQ^*(\dint K)\nonumber\\
&
=\gamma\int_{\cC^{d}}\int_{\Ih}\int_{\Ih}  \I\{(\sigma\circ\varrho,(\sigma\circ\varrho)^{-1}\circ \sigma K)\in\cdot\}\,\kappa(\sfp,\dint \varrho)\,\lambda(\dint\sigma)\,\BQ^*(\dint K)\nonumber\\
&
=\gamma\int_{\cC^{d}}\int_{\Ih}\int_{\Ih}  \I\{(\sigma, \varrho^{-1}  K)\in\cdot\}\,\lambda(\dint\sigma)\,\kappa(\sfp,\dint \varrho)\,\BQ^*(\dint K)\allowdisplaybreaks\nonumber\\
&
=\gamma\int_{\Ih}\int_{\Ih} \int_{\cC^{d}} \I\{(\sigma,   K)\in\cdot\}\,\BQ^*(\dint K)\,\lambda(\dint\sigma)\,\kappa(\sfp,\dint \varrho) \allowdisplaybreaks\nonumber\\
&
=\gamma\int_{\cC^{d}}\int_{\Ih}   \I\{(\sigma,   K)\in\cdot\}\,\lambda(\dint\sigma)\,\BQ^*(\dint K) .
\end{align}
Applying \eqref{eqdisintegration0} to the function $f(\sigma,K)\defeq w(\sigma c_h(K))\I\{K\in \cdot\}$, we conclude that
  $\BQ^*=\BQ$.
\end{proof}

\begin{remark}{\rm 
Let $\overline{\kappa}$ be an arbitrary probability kernel from $\BHd$ to $\Ih$. If $\overline{\kappa}(x,\cdot)$ is concentrated on $\Ih(\sfp,x)$ for $\mathcal{H}^d$-almost all $x\in\BHd$, then the measure 
\begin{align}\label{e:BQ2a}
\overline{\BQ} = \frac{1}{\gamma} \int_{\cC^{d}} \int_{\Ih}
 \1 \{\varrho^{-1} K \in \cdot\}w(c_h(K))\, \overline{\kappa}(c_h(K),\dint \varrho) \, \Lambda( \dint K)
\end{align}
is equal to $\BQ$. Hence, while $\kappa$ as given at \eqref{e:kappa} is a natural choice, any other kernel $\overline{\kappa}$ leads to the same measure. In fact, we can choose $\overline{\kappa}(x,\cdot)=\delta_{\varrho_x}$ provided that $\varrho_x\in \Ih(\sfp,x) $ is a measurable function of $x\in \BHd$.

In order to show that $\overline{\BQ}=\BQ$, we start from \eqref{e:BQ2a} and replace $\Lambda$ by the right side of \eqref{eqdisintegration}  (using also that $\BQ$ is concentrated on $\cC_\sfp^{d}$), then we use that $c_h(\varphi K)=\varphi \sfp$ for $K\in \cC_{\sfp}^d$, \eqref{disintlambda}, \eqref{e:kappa}, the fact that $\varrho^{-1}\circ \varrho_x\in \Ih(\sfp)$ if $\varrho\in \Ih(\sfp,x)$ together with Fubini's theorem and the $\Ih(\sfp)$ invariance of $\kappa(\sfp,\cdot)$, \eqref{eq:integralone}, and again Fubini's theorem and the $\Ih(\sfp)$ invariance of $\BQ$. Thus we obtain
\begin{align*}
\overline{\BQ}&=\int_{\cC_\sfp^{d}}\int_{\Ih}\int_{\Ih} 
\1\{\varrho^{-1}(\varphi K)\in\cdot\}\, 
\overline{\kappa}(c_h(\varphi K),\dint \varrho)
w(c_h(\varphi K))\, \lambda(\dint \varphi)\, \BQ(\dint K)\\
&=\int_{\cC_\sfp^{d}}\int_{\Ih}\int_{\Ih} 
\1\{\varrho^{-1}\circ \varphi K\in\cdot\} \,
\overline{\kappa}( \varphi(\sfp),\dint \varrho)
w(\varphi(\sfp))\, \lambda(\dint \varphi)\, \BQ(\dint K)\\
&=\int_{\cC_\sfp^{d}}\int_{\mathbb{H}^d}\int_{\Ih} \int_{\Ih}
\1\{\varrho^{-1}\circ \varphi K\in\cdot\} \,
\overline{\kappa}( \varphi(\sfp),\dint \varrho)
w(\varphi(\sfp))\, \kappa(x,\dint\varphi)\, \mathcal{H}^d(\dint x)\, \BQ(\dint K)\\
&=\int_{\cC_\sfp^{d}}\int_{\mathbb{H}^d}\int_{\Ih(\sfp)} 
\int_{\Ih}\1\{(\varrho^{-1}\circ \varrho_x)\circ \varphi K\in\cdot\} \, 
\overline{\kappa}( x,\dint \varrho)
w(x)\, \kappa(\sfp,\dint\varphi)\, \mathcal{H}^d(\dint x)\, \BQ(\dint K)\\
&=\int_{\cC_\sfp^{d}}\int_{\mathbb{H}^d }
\int_{\Ih}\int_{\Ih(\sfp)}\1\{ \varphi K\in\cdot\}  \, \kappa(\sfp,\dint\varphi)\, 
\overline{\kappa}( x,\dint \varrho)
w(x)\, \mathcal{H}^d(\dint x)\, \BQ(\dint K)\\
&=\int_{\cC_\sfp^{d}}\int_{\mathbb{H}^d }
 \int_{\Ih(\sfp)}\1\{ \varphi K\in\cdot\}  \, \kappa(\sfp,\dint\varphi) 
w(x)\, \mathcal{H}^d(\dint x)\, \BQ(\dint K)\\
&=
 \int_{\Ih(\sfp)}\int_{\cC_\sfp^{d}}\1\{ \varphi K\in\cdot\}  
\, \BQ(\dint K)\, \kappa(\sfp,\dint\varphi) =\BQ.
\end{align*}
In particular, it thus follows that $\overline{\BQ}$ is invariant under isometries fixing $\sfp$. 
}    
\end{remark}

The probability measure $\BQ$ given in \eqref{e:BQ}
is said to be the distribution of the {\em typical particle}.
It is convenient to introduce a random element
$\TG$ with this distribution. Intuitively,  the typical particle
is obtained by choosing one of the particles of $\zeta$ at random
and moving it to $p$. In contrast to the Euclidean case, a canonical selection of an isometry mapping $\sfp$ to  $c_h(K)$ is not available 
in the hyperbolic setting. Therefore we  use the probability kernel
$\kappa(c_h(K),\cdot)$ for a uniform random choice of an isometry mapping $\sfp$ to $c_h(K)$.

In the following theorem, we state equivalent conditions for a particle process to have a locally finite intensity measure. For this, we denote by $\BB(A,r)\defeq\{x\in\BHd\colon d_h(A,x)\le r\}$  the parallel set of  $\varnothing\neq A\subseteq \BHd$ at distance $r\ge 0$, where $d_h(A,x)\defeq\inf\{d_h(a,x)\colon a\in A\}$. Recall that $\Vol$ denotes  the volume functional, which is given by the Riemannian volume measure or the Hausdorff measure on $\BHd$. We introduce the short notation
$$
\overline{\Vol}(A)\defeq\Vol(\mathbb{B}(A,1)),
$$
which will be used throughout this work.

\begin{theorem}\label{thmchar}
Let $\zeta$ be a stationary particle process on $\BHd$. Let $\gamma\in (0,\infty]$ be defined as at \eqref{eq:gammadef} (for a given center function  $c_h$). Then the following conditions are equivalent.
\begin{enumerate}
\item[{\rm (a)}] $\Lambda([C])<\infty$ for all compact sets $C\subset\mathbb{H}^d$.
\item[{\rm (b)}] $\Lambda([\mathbb{B}_1])<\infty$ (instead of $\mathbb{B}_1$ any compact set with nonempty interior may be chosen).
\item[{\rm (c)}] $\gamma<\infty$ and $ \E\overline{\Vol}(\TG)<\infty$.
\end{enumerate}
\end{theorem}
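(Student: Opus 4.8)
The plan is to prove the cycle of implications (a) $\Rightarrow$ (b) $\Rightarrow$ (c) $\Rightarrow$ (a). The implication (a) $\Rightarrow$ (b) is immediate since $\mathbb{B}_1$ is compact, and the parenthetical remark in (b) follows because any compact set with nonempty interior contains an isometric copy of some small ball $\mathbb{B}_\varepsilon$ and is contained in some large ball $\mathbb{B}_\rho$, so by a covering argument (finitely many isometric copies of $\mathbb{B}_\varepsilon$ cover $\mathbb{B}_\rho$, using stationarity of $\Lambda$) the finiteness of $\Lambda([\mathbb{B}_1])$ is equivalent to the finiteness of $\Lambda([C])$ for every such $C$.

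For (b) $\Rightarrow$ (c), I would first argue that $\gamma<\infty$: the ball $\mathbb{B}_1$ has volume $v\defeq\Vol(\mathbb{B}_1)>1$ (or we rescale), and a particle $K$ with $c_h(K)\in\mathbb{B}$ for a unit-volume ball $\mathbb{B}$ centered at $\sfp$ certainly satisfies $K\cap\mathbb{B}_1\neq\varnothing$ once $\mathbb{B}\subseteq \mathbb{B}_1$ and $c_h(K)\in K$ — but since the center function need not satisfy $c_h(K)\in K$ in general, one instead notes that $\{c_h(K)\in\mathbb{B}\}$ together with local finiteness forces $\gamma\le \Lambda(\{K\colon c_h(K)\in\mathbb{B}\})$, and by covering $\mathbb{B}$ with finitely many translates of a unit-volume ball inside $\mathbb{B}_1$, together with \eqref{e:intmeasure} and the trivial bound $\{c_h(K)\in \mathbb{B}_1\}\subseteq$ something controlled by $[\mathbb{B}_1]$ after enlarging — more cleanly: use Theorem \ref{thmdisintegration} once we know $\gamma<\infty$, so the real point is the moment condition. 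For the bound $\E\overline{\Vol}(\TG)<\infty$, apply the disintegration \eqref{eqdisintegration} to the set $A=[\mathbb{B}_1]$:
\begin{equation*}
\Lambda([\mathbb{B}_1]) = \gamma\int_{\cC^d}\int_{\Ih} \1\{\varrho K\cap\mathbb{B}_1\neq\varnothing\}\,\lambda(\dint\varrho)\,\BQ(\dint K).
\end{equation*}
The inner integral, using \eqref{eq:hdlambda} to convert the $\lambda$-integral over isometries into an $\mathcal{H}^d$-integral over the orbit point $\varrho\sfp$, is at least a constant multiple of $\Vol(\mathbb{B}(K,1))=\overline{\Vol}(K)$ when $K\in\cC_\sfp^d$: indeed, if $\varrho\sfp\in K$ — or more robustly, if $\varrho\sfp$ lies within distance $\tfrac12$ of $K$ and $K$ meets $\mathbb{B}_{1/2}$ — then $\varrho K$ meets $\mathbb{B}_1$, so $\int_{\Ih}\1\{\varrho K\cap\mathbb{B}_1\neq\varnothing\}\lambda(\dint\varrho)\ge c\,\Vol(\mathbb{B}(K,\tfrac12))\ge c'\,\overline{\Vol}(K)$ after adjusting the parallel-set radius by a fixed comparison constant (parallel sets at different fixed radii have comparable volumes in $\BHd$, which should be recorded as a small lemma or cited). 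Hence $\gamma\,\E\overline{\Vol}(\TG)\le C\,\Lambda([\mathbb{B}_1])<\infty$.

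For (c) $\Rightarrow$ (a) I would run the same disintegration in reverse. By stationarity it suffices to bound $\Lambda([\mathbb{B}_\rho])$ for arbitrary $\rho$, and covering $[\mathbb{B}_\rho]$ via the economic covering of Section \ref{sec:8} (or just noting $\mathbb{B}_\rho$ is covered by finitely many isometric copies of $\mathbb{B}_1$) reduces this to $\Lambda([\mathbb{B}_1])$. Applying \eqref{eqdisintegration} with $A=[\mathbb{B}_1]$ and \eqref{eq:hdlambda} gives
\begin{equation*}
\Lambda([\mathbb{B}_1]) = \gamma\int_{\cC^d_\sfp}\int_{\BHd}\1\{(\text{the isometry sending }\sfp\mapsto x)K\cap\mathbb{B}_1\neq\varnothing\}\,\mathcal{H}^d(\dint x)\,\BQ(\dint K);
\end{equation*}
the subtlety is that "$\varrho K$ meets $\mathbb{B}_1$" depends on $\varrho$ not only through $\varrho\sfp$, so one bounds the indicator by $\1\{d_h(\sfp, \varrho K)\le 1\}$ which, since $\varrho$ is an isometry fixing the relation "distance from $K$", equals $\1\{d_h(\varrho^{-1}\sfp, K)\le 1\}=\1\{\varrho^{-1}\sfp\in \mathbb{B}(K,1)\}$; integrating over $\varrho\in\Ih(\sfp,x)$ against $\kappa(x,\cdot)$ and then over $x$ against $\mathcal{H}^d$ and using \eqref{disintlambda} collapses this to $\int_{\Ih}\1\{\varrho^{-1}\sfp\in\mathbb{B}(K,1)\}\lambda(\dint\varrho) = \mathcal{H}^d(\mathbb{B}(K,1))=\overline{\Vol}(K)$ by \eqref{eq:hdlambda} and inversion invariance of $\lambda$. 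Therefore $\Lambda([\mathbb{B}_1]) = \gamma\,\E\overline{\Vol}(\TG)<\infty$, which is in fact an exact identity and not just a bound, and (a) follows.

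The main obstacle is the bookkeeping around the center function: because $c_h$ is only isometry covariant and need not place the center inside the particle, one cannot directly say "$\varrho K$ meets $\mathbb{B}_1$ iff $\varrho\sfp$ is near $\mathbb{B}_1$", and one must carefully pass between the event "$\varrho K\cap\mathbb{B}_1\neq\varnothing$" and a clean function of $\varrho^{-1}\sfp$ using the isotropy decomposition \eqref{e:kappa}--\eqref{disintlambda}. The comparison between volumes of parallel sets at different radii (needed only for the inequality direction, not the exact identity) is the other mild technical point; it follows from the polar-coordinate formula \eqref{eq:polarcoord} and the fact that $\mathbb{B}(A,r)\subseteq\mathbb{B}(A,1)\subseteq\mathbb{B}(A,r)$-dilates are controlled by a dimension-dependent constant in $\BHd$.
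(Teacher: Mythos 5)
There is a genuine gap, and it sits exactly at the step your own text hedges on: in (b) $\Rightarrow$ (c) you never actually prove that $\Lambda([\mathbb{B}_1])<\infty$ forces $\gamma<\infty$. Your suggestion ``use Theorem \ref{thmdisintegration} once we know $\gamma<\infty$'' is circular, since the disintegration \eqref{eqdisintegration} has $\gamma\in(0,\infty)$ as a hypothesis and therefore cannot be invoked to establish finiteness of $\gamma$; and, as you correctly observe yourself, there is no inclusion of the form $\{K\colon c_h(K)\in \mathbb{B}\}\subseteq[C]$ for a fixed compact $C$, because an isometry covariant center need not lie in, or even near, the particle, so no covering or monotonicity trick applied to $\Lambda([\mathbb{B}_1])$ alone can bound $\gamma$ as defined in \eqref{eq:gammadef}. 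This missing implication is the real content of the theorem in that direction, and the paper supplies it with a specific device that your proposal lacks: writing $\mathbb{B}$ for a ball of volume $1$, it bounds
\begin{equation*}
\Lambda([\mathbb{B}(\mathbb{B},1)])\;\ge\;\int_{\cC^d} V_d(\mathbb{B}(K,1))^{-1}\,V_d(\mathbb{B}(K,1)\cap\mathbb{B})\,\Lambda(\dint K),
\end{equation*}
expresses $V_d(\mathbb{B}(K,1)\cap\mathbb{B})=\int_{\Ih}\1\{\varrho\, c_h(K)\in \mathbb{B}(K,1)\cap\mathbb{B}\}\,\lambda(\dint\varrho)$ by \eqref{eq:hdlambda} applied at the point $x=c_h(K)$, and then uses the covariance of $c_h$, Fubini, the stationarity of $\Lambda$, and the invariance of $\lambda$ to collapse the right-hand side to $\int\1\{c_h(K)\in\mathbb{B}\}\,\Lambda(\dint K)=\gamma$. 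The weight $V_d(\mathbb{B}(K,1))^{-1}$ is precisely what neutralizes the fact that $c_h(K)$ may be far from $K$; nothing in your sketch replaces it.

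The remaining pieces of your cycle are essentially the paper's argument and are fine, with one simplification you should make. Once $\gamma<\infty$ is secured, the inner integral in your formula for $\Lambda([\mathbb{B}_1])$ should be computed exactly, not estimated: $\varrho K\cap\mathbb{B}_1\neq\varnothing$ if and only if $\varrho^{-1}\sfp\in\mathbb{B}(K,1)$, so by inversion invariance of $\lambda$ and \eqref{eq:hdlambda} it equals $\overline{\Vol}(K)$, giving the identity $\Lambda([\mathbb{B}_1])=\gamma\,\E\overline{\Vol}(\TG)$ (the paper's computation in the proof of this theorem). This makes the lower-bound paragraph with the side condition ``$K$ meets $\mathbb{B}_{1/2}$'' and the parallel-set comparison lemma unnecessary; as written that paragraph is also muddled, since the relevant condition is on $\varrho^{-1}\sfp$, not $\varrho\sfp$, and $d_h(\varrho\sfp,K)\le 1/2$ says nothing about $\varrho K\cap\mathbb{B}_1$. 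The identity then yields both the moment bound in (b) $\Rightarrow$ (c) and your (c) $\Rightarrow$ (a) step, and the covering reductions you use for (a) $\Leftrightarrow$ (b) and for $\Lambda([\mathbb{B}_\rho])$ coincide with the paper's.
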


\begin{proof}
(a) clearly implies (b). For the converse, assume that (b) holds. If $C\subset\mathbb{H}^d$  is compact, then there exist isometries $\varrho_1,\ldots,\varrho_m\in\Ih$ such that $C\subseteq \bigcup_{i=1}^m\varrho_i \mathbb{B}_1$. Then
$$
[C]\subseteq \bigcup_{i=1}^m[\varrho_i \mathbb{B}_1]=\bigcup_{i=1}^m \varrho_i[ \mathbb{B}_1],
$$
hence $\Lambda([C])<\infty$   follows from the isometry invariance of $\Lambda$ and subadditivity.

Next we show that (c) implies (b). If $\gamma<\infty$, then Theorem \ref{thmdisintegration} holds. Hence
\begin{align}
\Lambda([\mathbb{B}_1])&=\int_{\cC^d} \I\{\mathbb{B}_1\cap K\neq \varnothing\}\, \Lambda(\dint K)=\gamma\E\int_{\Ih} \I\{\mathbb{B}_1\cap \varrho \TG\neq\varnothing\}\, \lambda(\dint \varrho)\nonumber\\
&=\gamma\E\int_{\Ih} \I\{\varrho\mathbb{B}_1\cap \TG\neq\varnothing\}\, \lambda(\dint \varrho)
=\gamma\E\int_{\Ih} \I\{\varrho\msfp \in \mathbb{B}(\TG,1)\}\, \lambda(\dint \varrho)\nonumber\\
&=\gamma\E \mathcal{H}^d( \mathbb{B}(\TG,1))=\gamma \E\overline{\Vol}(\TG)<\infty.\label{eq:doppelstern}
\end{align}
Here we used the inversion invariance of $\lambda$ for the third equality.

Finally we show that (a) implies (c). If $\mathbb{B}$ denotes again a ball of volume $1$, then
\begin{align}
\Lambda([\mathbb{B}(\mathbb{B},1)])&=\int_{\cC^d} \I\{K\cap \mathbb{B}(\mathbb{B},1)\neq \varnothing\}\, \Lambda(\dint K)=\int_{\cC^d} \I\{ \mathbb{B}(K,1)\cap\mathbb{B}\neq \varnothing\}\, \Lambda(\dint K)\notag\\
&\ge \int_{\cC^d} V_d(\mathbb{B}(K,1))^{-1} V_d(\mathbb{B}(K,1)\cap \mathbb{B}) \, \Lambda(\dint K)\notag\\
&= \int_{\cC^d}\int_{\Ih}V_d(\mathbb{B}(K,1))^{-1}\I\{\varrho c_h(K)\in \mathbb{B}(K,1)\cap \mathbb{B}\}\, \lambda(\dint \varrho)\, \Lambda(\dint K)\allowdisplaybreaks\notag\\
&= \int_{\cC^d}\int_{\Ih}V_d(\mathbb{B}(K,1))^{-1}\I\{ c_h(\varrho K)\in \mathbb{B}(K,1) \}\I\{ c_h(\varrho K)\in \mathbb{B} \}\, \lambda(\dint \varrho)\, \Lambda(\dint K)\allowdisplaybreaks\notag\\
&= \int_{\Ih}\int_{\cC^d} V_d(\mathbb{B}(\varrho^{-1} K,1))^{-1}\I\{ c_h(  K)\in \mathbb{B}(\varrho^{-1} K,1) \}\I\{ c_h(  K)\in \mathbb{B} \}\, \Lambda(\dint K)\, \lambda(\dint \varrho)\allowdisplaybreaks\notag\\
&= \int_{\cC^d}\int_{\Ih} V_d(\mathbb{B}( K,1))^{-1}\I\{ \varrho c_h(  K)\in \mathbb{B}( K,1) \}\I\{ c_h(  K)\in \mathbb{B} \}\, \lambda(\dint \varrho)\, \Lambda(\dint K)\notag\\
&= \int_{\cC^d}\I\{ c_h(  K)\in \mathbb{B} \}\, \Lambda(\dint K)=\gamma, \notag
\end{align}
hence (a) implies that  $\gamma<\infty$. Therefore, relation \eqref{eq:doppelstern} yields that $\E\overline{\Vol}(\TG)<\infty$.
\end{proof}

We finish this subsection by remarking that the intensity measure $\Lambda$  of  a stationary particle process $\zeta$ on $\BHd$ with intensity $\gamma \in (0,\infty)$ (with respect to a given center function) is diffuse. We also show that $\gamma$ is in fact independent of the chosen (isometry covariant) center function.

\begin{lemma}\label{l:2.3}
Let $\zeta$ be a stationary particle process on $\BHd$ with intensity $\gamma \in (0,\infty)$ (with respect to a given center function $c_h$) and intensity measure $\Lambda=\E\zeta$. Then the following is true.
\begin{enumerate}
    \item[{\rm (a)}] $\Lambda$ is diffuse.
    \item[{\rm (b)}] $\gamma$ is independent of the chosen center function.
\end{enumerate}
\end{lemma}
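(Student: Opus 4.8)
For part (a), the plan is to use the disintegration \eqref{eqdisintegration} from Theorem \ref{thmdisintegration}. Since $\Lambda = \gamma \int_{\cC^d}\int_{\Ih} \I\{\varrho K \in \cdot\}\,\lambda(\dint\varrho)\,\BQ(\dint K)$, it suffices to show that for every fixed $K \in \cC_\sfp^d$ the pushforward measure $\int_{\Ih}\I\{\varrho K \in \cdot\}\,\lambda(\dint\varrho)$ assigns zero mass to each singleton $\{L\}$, $L \in \cC^d$; then integrating against $\BQ$ and multiplying by $\gamma$ preserves diffuseness. Fix $K$ and $L$. If no isometry maps $K$ to $L$ the mass is zero trivially. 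Otherwise $\{\varrho \in \Ih : \varrho K = L\}$ is a coset $\varrho_0 \cdot \mathrm{Stab}(K)$ of the stabilizer $\mathrm{Stab}(K) = \{\sigma \in \Ih : \sigma K = K\}$, which is a \emph{compact} subgroup of $\Ih$ (it is closed, and it fixes the circumcenter of $K$ so it lies in a conjugate of $\Ih(\sfp)$, which is compact). The point is that $\lambda(\mathrm{Stab}(K)) = 0$: a Haar measure on a non-compact locally compact group gives measure zero to every compact subgroup (equivalently, to every proper closed subgroup of infinite index; here $\mathrm{Stab}(K)$ has infinite index since $\Ih/\mathrm{Stab}(K)$ surjects onto the orbit of $K$, which is non-compact because it already contains translates of $K$ marching off to infinity). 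Hence each singleton has $\Lambda$-measure zero, and since $\cC^d$ has a countable base, $\Lambda$ is diffuse.

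For part (b), let $c_h$ and $c_h'$ be two isometry-covariant center functions with associated intensities $\gamma$ and $\gamma'$. By \eqref{e:intmeasure} applied to each, $\int_{\cC^d}\I\{c_h(K)\in\cdot\}\,\Lambda(\dint K) = \gamma\,\mathcal{H}^d$ and $\int_{\cC^d}\I\{c_h'(K)\in\cdot\}\,\Lambda(\dint K) = \gamma'\,\mathcal{H}^d$, where we note that the first equality of \eqref{e:intmeasure} was derived using only stationarity, isometry covariance, and finiteness of $\gamma$ (and the same covering argument gives local finiteness for $c_h'$ once we know $\gamma' < \infty$; but even if $\gamma' = \infty$ a priori the argument below forces $\gamma' = \gamma$). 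The plan is to integrate the function $\I\{c_h(K) \in \cdot\}$ against the $c_h$-disintegration and compare. Concretely, evaluating both displayed measures on a Borel set $B$ of finite positive $\mathcal{H}^d$-measure gives $\gamma\,\mathcal{H}^d(B) = \Lambda(\{K : c_h(K) \in B\})$; but we want to relate this to $\gamma'$. Instead, apply \eqref{e:intmeasure} for $c_h$ together with the disintegration \eqref{eqdisintegration}: for the $c_h'$-based quantity,
\[
\gamma'\,\mathcal{H}^d(\mathbb{B}) = \int_{\cC^d}\I\{c_h'(K)\in\mathbb{B}\}\,\Lambda(\dint K) = \gamma\int_{\cC^d}\int_{\Ih}\I\{c_h'(\varrho K)\in\mathbb{B}\}\,\lambda(\dint\varrho)\,\BQ(\dint K),
\]
and by isometry covariance $c_h'(\varrho K) = \varrho c_h'(K)$, so the inner integral is $\int_{\Ih}\I\{\varrho c_h'(K)\in\mathbb{B}\}\,\lambda(\dint\varrho) = \mathcal{H}^d(\mathbb{B}) = 1$ by \eqref{eq:hdlambda}, for $\BQ$-a.e.\ $K$ (note $c_h'(K) \in \BHd$ is a genuine point). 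Since $\BQ$ is a probability measure, the right-hand side equals $\gamma$, whence $\gamma' = \gamma$.

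The main obstacle is part (a): one must correctly identify that the relevant "bad" set for a singleton is a coset of the stabilizer subgroup, and then invoke the (standard but easily overlooked) fact that Haar measure on a non-compact group vanishes on compact subgroups. A clean way to see this last fact without case analysis: if $G$ is a subgroup with $\lambda(G) > 0$, then by inner regularity $G$ contains a compact set of positive measure, and a classical argument (difference set has non-empty interior) shows $G$ is open, hence closed; an open subgroup of positive finite Haar measure must have finite index, contradicting non-compactness of $\Ih$ via the non-compact orbit. Everything else — the reduction through the disintegration, the countable base argument, and the computation in (b) — is routine bookkeeping with the formulas already established in Theorems \ref{thmdisintegration} and \ref{thmchar}.
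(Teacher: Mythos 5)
Your part (b) is correct and essentially coincides with the paper's argument: the paper integrates a weight function $w$ with $\int w\,\dint\mathcal{H}^d=1$ against $\Lambda$ and combines the disintegration \eqref{eqdisintegration} with \eqref{eq:hdlambda}; your choice $w=\I\{\cdot\in\mathbb{B}\}$ is a special case (indeed then the first identity is just the definition \eqref{eq:gammadef} of $\gamma'$), and your remark that no a priori finiteness of $\gamma'$ is needed is also fine.

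Part (a), however, has a genuine gap at its key step. After correctly reducing to showing $\lambda(\{\varrho\in\Ih\colon \varrho K=L\})=0$ and identifying this set as a coset $\varrho_0\,\mathrm{Stab}(K)$ of the compact subgroup $\mathrm{Stab}(K)\subseteq\Ih(\sfp)$ (for $K\in\cC^d_\sfp$), you justify $\lambda(\mathrm{Stab}(K))=0$ by the assertion that Haar measure on a non-compact locally compact group annihilates every compact subgroup, equivalently every proper closed subgroup of infinite index. That is false in general: in $G=\Z\times K$ with $K$ a compact group, the subgroup $\{0\}\times K$ is compact, closed, proper, of infinite index, and has positive finite Haar measure; it is moreover open, which also refutes the step in your ``clean way'' that an open subgroup of positive finite Haar measure must have finite index. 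The fact you need is nevertheless true here, but for a reason you never invoke at this point, and it makes the stabilizer/coset detour superfluous: by the normalization \eqref{eq:hdlambda}, $\int_{\Ih}\I\{\varrho\sfp=x\}\,\lambda(\dint\varrho)=\mathcal{H}^d(\{x\})=0$ for every $x\in\BHd$. This is exactly how the paper argues: since $\varrho G=K$ with $G\in\cC^d_\sfp$ forces $\varrho\sfp=c_h(K)$ by covariance of the center function, one bounds $\I\{\varrho G=K\}\le\I\{\varrho\sfp=c_h(K)\}$, and the inner integral vanishes; no structure theory of compact subgroups is needed (nor is the countable base remark --- diffuseness is just the vanishing of all atoms). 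If you prefer to keep your route, repair it either by noting $\mathrm{Stab}(K)\subseteq\Ih(\sfp)$ and $\lambda(\Ih(\sfp))=\mathcal{H}^d(\{\sfp\})=0$, or by the group-theoretic argument that a subgroup of positive Haar measure is open and hence contains the non-compact identity component of $\Ih$, so it cannot be compact.
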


\begin{proof}
(a) Let $K\in\cC^d$. Then Theorem \ref{thmdisintegration} implies
\begin{align*}
    \Lambda(\{K\})&=\gamma \int_{\cC^d}\int_{\Ih} \I\{\varrho G=K\}\, \lambda (\dint \varrho)\, \BQ(\dint G)\\
    &\le \gamma \int_{\cC^d}\int_{\Ih} \I\{ \varrho \sfp=c_h(K)\}\, \lambda (\dint \varrho)\, \BQ(\dint G)=0,
\end{align*}
where we used that if 
$\varrho G=K$ 
and $G\in \cC^d_{\sfp}$, then $\varrho \sfp=\varrho c_h(G)=c_h(\varrho G)=c_h(K)$ as well as \eqref{eq:hdlambda} to see that the inner integral is zero.

(b) Let $c'$ be another center function with corresponding intensity $\gamma'\in (0,\infty)$ and typical particle $\BQ'$. Let $w\colon\BH^d\to [0,\infty)$ be a measurable function satisfying \eqref{eq:integralone}.
 Using first \eqref{e:BQ} for $c'$, $\gamma'$, and $\BQ'$, and then \eqref{eqdisintegration}    with respect to the center function $c$ and corresponding intensity $\gamma$ and distribution $\BQ$ of the typical particle, we get
\begin{align*}
    \gamma'&= \gamma'\BQ'(\cC^d)=\int_{\cC^d}w(c'(K))\, \Lambda(\dint K)
    =\gamma\int_{\cC^d_{\sfp}}\int_{\Ih} w(c'(\varrho G))\, \lambda(\dint \varrho)\, \BQ(\dint G)\\
    &=\gamma\int_{\cC^d_{\sfp}}\int_{\Ih} w(\varrho c'(  G))\, \lambda(\dint \varrho)\, \BQ(\dint G)
    =\gamma,
\end{align*}
where we used that $\int_{\Ih} w(\varrho c'(G))\, \lambda(\dint \varrho)=1$ for $G\in \cC^d_{\sfp}$.
\end{proof}

\begin{remark} {\rm The distribution of the typical particle depends on the choice of the center function.}
\end{remark}

\subsection{Boolean models}\label{sec:2.3neu}

 Based on the results from Subsection \ref{subsec2.2}, we introduce stationary
Boolean models in hyperbolic space as the union sets of stationary
Poisson particle processes. Then we provide an alternative viewpoint
according to which the Boolean model is derived from a stationary
Poisson process in the space of isometries by independent
marking with centered particles.

Let $\eta$ be a stationary Poisson particle process in $\BH^d$.
This means that $\eta$ is both a stationary particle process
and a Poisson process. The intensity measure $\Lambda\defeq\E \eta$
is then locally finite, that is $\Lambda([C])<\infty$ for
each compact set $C\subset\BH^d$.
Then each of the equivalent conditions of Theorem
\ref{thmchar} is satisfied. In particular $\eta$ has a finite
intensity $\gamma$. To avoid trivialities we assume throughout
that $\gamma>0$.  We fix a center function $c_h$ and let
$\TG$ denote a typical particle with distribution $\BQ$;
see Theorem \ref{thmdisintegration}.

Define
$$
Z\defeq\bigcup_{K\in\eta}K,
$$
where $K\in\eta$ means that $K$ is in the
support of $\eta$. Since $\Lambda$ is diffuse,
$\eta$ is simple (i.e., without multiple points).
Then $Z$ is a {\em random closed set}.
Indeed, just as in the Euclidean case
$Z$ is a measurable mapping from $\Omega$ into the space
of closed subsets of $\BH^d$, equipped with the Fell topology;
see \cite[Section 12.2]{SW08}. Moreover, since $\eta$ is stationary, $Z$ is
stationary as well, that is $\varrho Z\overset{d}{=}Z$ for $\varrho\in\Ih$.
We refer to $Z$ as a stationary (hyperbolic) {\em Boolean model}.
Let $C\subset \mathbb{H}^d$ be a compact set. It follows from
the definition of $Z$ and the disintegration \eqref{eqdisintegration} that
\begin{align}\label{eq:argument}
\BP(Z\cap C= \varnothing)
=\exp\left(-\gamma\int_{\cC^d_{\sfp}}\int_{\Ih}\I\{\varrho G\cap C\neq\varnothing\}\, \lambda(\dint\varrho)\,
\BQ(\dint G)\right).
\end{align}
As in the Euclidean case this determines the {\em capacity functional}
$C\mapsto \BP(Z\cap C\neq\varnothing)$ of $Z$ and hence the distribution of $Z$. Conversely, the distribution of $Z$ determines the intensity measure $\Lambda$ of the underlying Poisson particle process $\eta$ and therefore the distribution of $\eta$ (use \cite[Lemma 2.3.1]{SW08} and note that up to the minus sign the expression in the argument of the exponential function in \eqref{eq:argument} is $\Lambda(\mathcal{F}_C)$).

\begin{remark}\rm In the Euclidean case, a Boolean model is mostly defined
in terms of an independent marking of a stationary Poisson process on $\R^d$.
In the present hyperbolic setting we can proceed as follows.
Suppose we are given $\gamma\in(0,\infty)$ and a
probability measure $\BQ$  on $\cC^d_\sfp$ which is invariant under
isometries fixing $\sfp$ and such that
$\int\overline{\Vol}(K)\,\BQ(\dint K)<\infty$.
Let $\Upsilon$ be a stationary Poisson process
in $\Ih$ with intensity measure $\gamma\cdot \lambda$.
Let $\Psi$ be an
independent $\BQ$-marking of $\Upsilon$; see \cite[Chap.~5]{LP18}.
By the marking theorem $\Psi$ is a
Poisson process on $\Ih\times \cC^d_\sfp$ with intensity measure
$\gamma \lambda\otimes \BQ$. The map $T\colon\Ih\times \cC^d_\sfp\to \cC^d$
given by $(\varrho,G)\mapsto \varrho G$ is measurable. Let
$\widetilde\eta\defeq T(\Psi)$ denote the image measure of $\Psi$ under
$T$. By the mapping theorem (\cite[Theorem 5.1]{LP18})
$\widetilde\eta$ is a Poisson process on $\cC^d$ with
intensity measure
$$
\gamma\int_{\cC^d_\sfp}\int_{\Ih}\I\{\varrho G\in\cdot\}\, \lambda(\dint\varrho)\, \BQ(\dint G)
=\Lambda.
$$
Therefore $\widetilde\eta\overset{d}{=}\eta$, so that
$$
\widetilde Z\defeq \bigcup_{(\varrho,K)\in\Psi}\varrho K
$$
has the same distribution as $Z$.
\end{remark}

\section{Geometric functionals}\label{sec:GF}

For the analysis of the Boolean model $Z$, geometric functionals are applied to the intersection of $Z$ with observation windows. In this section we deal with general properties requested of such functionals and with a class of examples that arise from a Steiner formula in hyperbolic space. In the following, we will focus on Boolean models $Z$ which are induced by stationary particle processes $\eta$ that are supported by convex particles, that is, $\BP(\eta(\cC^d\setminus\cK^d)=0)=1$ (if not stated otherwise).

\medskip

Recall that $\cK^d$ denotes the set  of compact convex subsets of
$\mathbb{H}^d$. All common notions of convexity coincide in hyperbolic space (but may differ in more general Riemannian spaces); see \cite[Section 1]{Waltersurvey1981}, \cite[p.~222]{Sakai}, \cite{Ballmann}. Together with the Hausdorff metric, $\cK^d$ is a complete, separable metric space in which the empty set is an isolated point. Let $\cR^d$ denote the set of all finite unions of sets from $\cK^d$ (the \emph{convex ring}).  
We consider a function $\phi:\cR^d\to\mathbb{R}$. It is said to be {\em invariant} under all isometries of  $\BHd$ if
$$
\phi(\varrho A) = \phi(A) \quad \text{for }  A\in\mathcal{R}^d, \varrho\in\calI_d.
$$
We say that $\phi$ is {\em additive} (a valuation) if $\phi(\varnothing)=0$ and
\begin{equation}\label{eq:propval}
\phi(U\cup V) =\phi(U) + \phi(V) - \phi(U\cap V)
\end{equation}
for all $U,V\in\cR^d$. Similarly, a function $\phi:\cK^d\to\R$ is said to be additive (a valuation) if \eqref{eq:propval} holds
for all $U,V\in\cK^d$ with $U\cup V\in\cK^d$ and $\phi(\varnothing)=0$. If  $\phi\colon \mathcal{R}^d\to\R$ is additive and the restriction  of $\phi$ to $\cK^d$ is isometry invariant, then $\phi$ is isometry invariant on $\cR^d$. In order to check that an additive map $\phi:\mathcal{R}^d\to\mathbb{R}$ is measurable, it is sufficient to show that the restriction of $\phi$ to $\cK^d$ is measurable (compare \cite[Theorem 14.4.4]{SW08}).  Moreover, we call $\phi$ {\em locally bounded} if
\begin{equation}\label{eqn:M_phi}
M(\phi) \defeq \sup\{|\phi(K)|\colon K\in\mathcal{K}^d, \varrho\in\mathcal{I}_d, K\subseteq \varrho\mathbb{B}_1\}<\infty.
\end{equation}
In the following,  a measurable, isometry invariant, additive, and locally bounded function $\phi:\mathcal{R}^d\to\mathbb{R}$ will be called a {\em geometric functional} on $\mathcal{R}^d$. Finally we remark that if $\phi:\cK^d\to\R$ is additive and continuous, then $\phi$ has an additive extension to $\cR^d$. The preceding statements are well known in Euclidean space (see \cite[Chapter 14.4]{SW08}) and can be transferred to hyperbolic space by considering the Beltrami--Klein (projective disc) model (as explained in \cite[Chapter 6]{Ratcliffe}).

\medskip

Prominent examples of geometric functionals  are volume and surface area.  
These are just two geometric functionals from a list of $d+1$ natural basic functionals that arise from a common principle, the {\em Steiner formula} in hyperbolic space. Other functionals that are defined via   Crofton type integrals can be expressed as linear combinations of these natural basic functionals. Whereas in Euclidean space Hadwiger's celebrated characterization theorem shows that all geometric and continuous  functionals are linear combinations of these basic functionals,  it is unknown whether a corresponding fact holds in hyperbolic space with $d\ge 3$ (see \cite[Section 3]{Klain2006} for the hyperbolic plane). If continuity is relaxed to upper semicontinuity, then other geometric functionals are known (in hyperbolic space  the floating area considered in \cite[Theorem 1.3]{BW2018} is an interesting example).

To describe the Steiner formula, let $\varnothing\neq A\in\cK^d$.  As in Section \ref{sec:2}, let $\BB(A,r)=\{x\in\BHd\colon d_h(A,x)\le r\}$
 be the parallel set of  $A$  at distance $r\ge 0$. For $j\in\{0,\ldots,d-1\}$, let
$$
l_{d,j}(r)\defeq\binom{d-1}{j}\int_0^r \cosh^j(t)\sinh^{d-1-j}(t)\, \dint t,\quad r\ge 0.
$$
It is easy to check that the functions $l_{d,0},\ldots,l_{d,d-1}$ are linearly independent.
 The Steiner formula \eqref{eq:Steiner} involves the existence of (uniquely determined) continuous, isometry invariant, and additive functionals $V_0,\ldots,V_{d-1}$ on $\cK^d$
such that
\begin{align}\label{eq:Steiner}
V_d(\BB(A,r))=V_d(A)+\sum_{j=0}^{d-1}l_{d,j}(r)V_j(A),\quad r\ge 0,
\end{align}
for all $\varnothing \neq A\in\cK^d$. 
As a consequence, for $i\in\{0,\ldots,d\}$  the functionals $V_i$ are locally bounded. Moreover, $V_i$ is  increasing with respect to set inclusion \cite[Corollary 9]{Sol2005}.
A more general local Steiner formula involving curvature measures  $C_j(A,\cdot)$, $j\in\{0,\ldots,d-1\}$, is established in \cite[Theorem 2.7]{Kohlmann1991}. These measures are Borel measures on $\mathbb{H}^d$ that are concentrated on $\partial A$, weakly continuous \cite[Theorem 2.4]{Kohlmann1994}, additive with respect to $A$ (which can be verified as in the Euclidean framework or by using properties of the unit normal cycle associated with $A$), and isometry covariant in the sense that $C_j(\varrho A,\varrho B)=C_j(A,B)$ for  Borel sets  $B\subseteq\mathbb{H}^d$ and  $\varrho\in\Ih$.  In the following, we will only  use  the top order curvature measure $C_{d-1}(A,\cdot)$. These properties of the curvature measures immediately imply the corresponding  properties of the functionals $V_j(A)=C_j(A,\mathbb{H}^d)$. As usual, we define $V_i(\varnothing):=0$ and $C_i(\varnothing,\cdot)$ as the zero measure for $i\in\{0,\ldots,d\}$.

As in the Euclidean case, $V_0,\ldots,V_{d}$ are called
{\em intrinsic volumes} (the normalization differs from the one in Euclidean space and the functionals are not ``intrinsic''). Alternatively, the functionals are called {\em total curvature integrals} of $A$. They can be additively extended to the convex ring, which can be shown as in \cite{Groemer1978} (see in particular the comment in \cite[Section 5]{Groemer1978} on spherical space).
If $A$ is convex and has nonempty interior (for which we write  $A^\circ\neq\varnothing$), then
\begin{equation}\label{eq:3:curvd-1}
C_{d-1}(A,\cdot)=\mathcal{H}^{d-1}(\partial A\cap\cdot),
\end{equation}
hence
\begin{equation*}
V_{d-1}(A)=\mathcal{H}^{d-1}(\partial A)
\end{equation*}
(in the Euclidean case the corresponding relation involves an additional factor $1/2$ on the right side, due to the different normalization of $V_{d-1}$) and $V_0(A)$ is the (total) Gauss curvature integral of $A$ (in Euclidean space the normalization of $V_0$ is chosen such that $V_0$ is equal to the Euler characteristic $\chi$, in hyperbolic space the relation between $V_0$ and $\chi$ is described below).
If $A\subset \BHd$ is compact, convex, and at most $(d-1)$-dimensional, then
\begin{equation}\label{eq:3:curvd-1lower}
C_{d-1}(A,\cdot)=2\mathcal{H}^{d-1}(\partial A\cap\cdot),
\end{equation}
and correspondingly we have $V_{d-1}(A)=2\mathcal{H}^{d-1}(\partial A)$. If $A$ has dimension at most $d-2$, then we get $C_{d-1}(A,\cdot)=\mathcal{H}^{d-1}(\partial A\cap\cdot)=0$.

Let $r,s\ge 0$. From $\mathbb{B}_{r+s}=\mathbb{B}(\mathbb{B}_r,s)$,  \eqref{eq:polarcoord}, and the Steiner formula \eqref{eq:Steiner}, we obtain
\begin{equation}\label{eq:neua}
\omega_d\int_0^{r+s}\sinh^{d-1}(t)\, \dint t=V_d(\mathbb{B}_r)+\sum_{j=0}^{d-1}\ell_{d,j}(s)V_j(\mathbb{B}_r).
\end{equation}
Taking in \eqref{eq:neua} the derivative with respect to $s$, using the hyperbolic angle sum formula for $\sinh$ on the left-hand side and the linear independence of the functions $s\mapsto \ell'_{d,i}(s)$, $i=0,\ldots,d-1$, we obtain
$$
V_j(\mathbb{B}_r)
=\omega_d\, \cosh^{d-1-j}(r)\sinh^j(r),\quad j\in\{0,\ldots,d-1\}.
$$
As a consequence, for instance by an application of l'Hospital's rule, we get
\begin{equation}\label{eq:limit2}
\lim_{r\to\infty}\frac{V_j(\mathbb{B}_r)}{V_d(\mathbb{B}_r)}=d-1,\quad j\in\{0,\ldots,d-1\},
\end{equation}
which extends \eqref{eq:limit1} and is in striking contrast to the Euclidean situation. In fact, the strict lower bound
$$
\frac{V_j(A)}{V_d(A)}>d-1
$$
for $A\in\cK^d$ with nonempty interior and $j\in\{0,\ldots,d-1\}$ is established in \cite[Corollary 3.2]{GalSol2005}.

The {\em Euler characteristic} $\chi$ is another important functional
on the convex ring $\cR^d$. It is the additive functional
determined by $\chi(\varnothing)=0$ and
$\chi(A)=1$ for a nonempty  $A\in\cK^d$.
The Euler characteristic $\chi$ can be expressed as a linear combination of some of the intrinsic
volumes.
For $d=2$, the relation
\begin{align}\label{eq:chi2}
2\pi\chi(A)=V_0(A)-V_2(A)
\end{align}
holds for $A\in \mathcal{R}^2$.
Replacing $V_0$ by $\chi$ in the $2$-dimensional Steiner formula, for $A\in \mathcal{K}^2$ we get
$$
V_2(\BB(A,r))=2\pi(\cosh(r)-1)\chi(A)+\sinh(r)V_1(A) +\cosh(r)V_2(A),\qquad r\ge 0.
$$

It will be convenient to use a suitable renormalization of the intrinsic volumes, since this will simplify integral geometric formulas.
For this we put
\begin{align}\label{eq:vk0}
V_k^0(A)\defeq \frac{\omega_{d+1}}{\omega_{k+1}\omega_{d-k}}\,\binom{d-1}{k} V_k(A),\quad k\in\{0,\ldots,d-1\},
\end{align}
and $V_d^0(A)\defeq V_d(A)$ for $A\in\mathcal{R}^d$. 
Using the Legendre
duplication formula and setting $\kappa_0:=1$, we can rewrite the coefficient on the right-hand
side of \eqref{eq:vk0} as
\begin{equation}\label{eq:sec2rel}
 \frac{\omega_{d+1}}{\omega_{k+1}\omega_{d-k}}\, \binom{d-1}{k}=\frac{\pi}{d}\frac{\kappa_k\kappa_{d-1-k}}{\kappa_d}.
\end{equation}
For $d=2$ we get $V^0_i=V_i$ for $i=0,1,2$. In arbitrary dimension $d\ge 2$ and for $A\in\mathcal{R}^d$, the relation between the Euler characteristic and the intrinsic volumes can be expressed  in the form
\begin{equation}\label{eq:euler}
\chi(A)=\frac{2}{\omega_{d+1}}\sum_{l\ge 0}^{\lfloor \frac{d}{2}\rfloor} \epsilon ^l\, V_{2l}^0(A),
\end{equation}
upon setting $\epsilon=-1$; relation \eqref{eq:euler} can be deduced from the
more general result \cite[Theorem 3.1]{Kohlmann1991} by introducing
the current normalization.  The Euclidean case is obtained for
$\epsilon=0$, whereas $\epsilon=1$ yields the relation in the
spherical case, if the normalization of $V_i$ is determined by
\eqref{eq:Steiner} with $(\cosh t,\sinh t)$ replaced by $(1,t)$ in the
Euclidean case and by $(\cos t, \sin t)$ in the spherical case.

The main point of introducing the renormalization is the simplification that results for the statement of  the {\em kinematic formulas} which hold for the intrinsic volumes (a first version is given in \cite{Santalo} with different notation, see also the arguments in \cite{Fu2011,Fu2012}). In terms of $V_0,\ldots,V_d$ and for $A,B\in\mathcal{R}^d$, the kinematic formulas read
$$
\int V_d(A\cap \varrho B)\, \lambda(\dint \varrho)=V_d(A)V_d(B),
$$
$$
\int V_{d-1}(A\cap \varrho B)\, \lambda(\dint \varrho)=V_{d-1}(A)V_d(B)+V_d(A)V_{d-1}(B)
$$
and
\begin{align}
\int V_k(A\cap \varrho B)\, \lambda(\dint \varrho)&=V_k(A)V_d(B)+V_d(A)V_k(B)\label{eq3:kin1}\\
&\quad +\sum_{j=k+1}^{d-1}\frac{1}{d\kappa_d}\binom{d-k}{j-k}\frac{\kappa_{d-k}}{\kappa_k}
\frac{\kappa_{j}}{\kappa_{d-j}}\frac{\kappa_{d+k-j}}{\kappa_{j-k}}V_j(A)V_{d+k-j}(B),\nonumber
\end{align}
for $k\in\{0,\ldots,d-2\}$. The formulas for $V_d$ and $V_{d-1}$ can be subsumed to \eqref{eq3:kin1} if this formula is suitably  interpreted. 
For a more general version of the kinematic formula, see \cite[Theorem, p. 1152]{Fu1990} and the discussion in \cite[Section 2]{BFS2014}.
Using the modified functionals $V_k^0$, the kinematic formulas turn into
 \begin{align}\label{eq:kinematic}
\int V_k^0(A\cap \varrho B)\, \lambda(\dint\varrho)=\sum_{i+j=d+k}V_i^0(A)V_j^0(B),\quad k\in\{0,\ldots,d\},
\end{align}
where the summation is extended over all $i,j\in \{0,\ldots,d\}$ such that $i+j=d+k$.
In view of iterations of these formulas that will be needed, the benefit of working with the renormalized functionals is apparent.

From the preceding formulas, a kinematic formula for the Euler characteristic can be deduced.

\section{Mean value formulas}\label{sec:4}

Throughout this section, $\eta$ is a stationary Poisson particle process concentrated on convex particles and $Z$ denotes the induced stationary Boolean model. Some of our general mean value formulas involve an integration over the space of horoballs (see Section \ref{sec:2.1}), with respect to an isometry invariant measure on this space.
Recall that for $u\in \mathbb{S}^{d-1}_{\msfp}$ and $t\in\R$,  we write $\horo_{u,t}\subset\mathbb{H}^d$ for the closed and convex horoball for which $\exp_{\msfp}(tu)\in\partial \horo_{u,t}$  is the unique point on $\partial \horo_{u,t}$ which minimizes the distance to $\sfp$ and that satisfies $\exp_{\msfp}(su)\in  \horo_{u,t}$ for $s\ge t$.  Moreover, $\sfp\in \horo_{u,t}$ if and only if $t\le 0$. The set of horoballs of $\BH^d$ is an isometry invariant Borel subset of the space of all closed subsets of $\BH^d$ (with respect to the Fell topology), which will be denoted by $\hb$. 
We define an infinite (but $\sigma$-finite) 
measure $ {\mu}_{\rm hb}$ on Borel sets of horoballs by
\begin{equation}\label{eq:muhb}
 {\mu}_{\rm hb}\defeq\frac{d-1}{\omega_d}\int_{\mathbb{S}^{d-1}_{\msfp}}\int_\R\1\{\horo_{u,t}\in\cdot\}e^{(d-1)t}\, \dint t\, \mathcal{H}^{d-1}_{\msfp}(\dint u).
\end{equation}
It is shown in Proposition 2.2 and the subsequent remark (for $\lambda=1$) in \cite{Solanes2005a} that  ${\mu}_{\rm hb}$
is the unique (up to a constant factor) isometry invariant measure on $\hb$.

Recall that a function $\phi:\mathcal{R}^d\to\mathbb{R}$ is said to be geometric if it is measurable, isometry invariant, additive, and locally bounded (see the beginning of Section \ref{sec:GF}). The following theorem provides a very general series representation for the mean value of an additive function of $Z$ restricted to an observation window $W$. For increasing balls as observation windows,  we prove the existence of the asymptotic mean and obtain a series representation which involves an integration over horoballs.

\begin{theorem}\label{thm:mean}
\begin{itemize}
\item [{\rm (a)}] If $W\in\mathcal{K}^d$ and $\phi:\mathcal{R}^d\to\mathbb{R}$ is measurable, additive, and locally bounded, then
\begin{equation}\label{eqn:limit_meana}
\E \phi(Z\cap W) = \sum_{n=1}^\infty \frac{(-1)^{n-1}}{n!} \int_{(\mathcal{K}^d)^n} \phi(K_1\cap\hdots\cap K_n\cap W) \, \Lambda^n(\dint(K_1,\hdots,K_n)).
\end{equation}
\item [{\rm (b)}] Let $\phi:\mathcal{R}^d\to\mathbb{R}$ be a geometric functional that is continuous on $\mathcal{K}^d$. Then the limit
\begin{equation}\label{eqn:limit_mean}
m_{\phi,Z}\defeq\lim_{R\to\infty} \frac{\E \phi(Z\cap \mathbb{B}_R)}{\Vol(\mathbb{B}_R)}
\end{equation}
exists and is given by
\begin{align}
m_{\phi,Z}
& = \gamma  \sum_{n=1}^\infty \frac{(-1)^{n-1}}{n!} \int_{\hb}
 \int_{\mathcal{K}^d_\sfp} \int_{(\mathcal{K}^d)^{n-1}} \phi(G\cap K_2 \cap \hdots\cap K_n\cap B)\nonumber \\
&\qquad \times  \Lambda^{n-1}(\dint(K_2,\hdots,K_n)) \, \BQ(\dint G) \,  \mu_{\rm hb}(\dint B).\label{eq:specmeanlimit}
\end{align}
\end{itemize}
\end{theorem}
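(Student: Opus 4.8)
The plan is to prove part~(a) first by a standard inclusion--exclusion / Mecke argument for the Poisson process, then derive part~(b) by inserting the disintegration \eqref{eqdisintegration}, rescaling the ball $\mathbb{B}_R$ by an isometry, and passing to the limit, where the geometry of hyperbolic space forces the appearance of the horoball measure $\mu_{\rm hb}$.

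For part~(a), I would start from the identity $\phi(Z\cap W)=\phi\!\left(\bigcup_{K\in\eta}(K\cap W)\right)$ and use the additivity of $\phi$ on the convex ring together with the inclusion--exclusion formula: for finitely many convex bodies $L_1,\dots,L_m$ one has $\phi(L_1\cup\dots\cup L_m)=\sum_{\varnothing\neq I\subseteq\{1,\dots,m\}}(-1)^{|I|-1}\phi\!\left(\bigcap_{i\in I}L_i\right)$. Applying this to $L_i=K_i\cap W$ over the (a.s.\ finite, since $\Lambda([W])<\infty$) collection of particles hitting $W$, taking expectations, and using the multivariate Mecke equation for the Poisson process $\eta$ turns the sum over subsets into the series $\sum_{n\geq1}\frac{(-1)^{n-1}}{n!}\int_{(\cK^d)^n}\phi(K_1\cap\dots\cap K_n\cap W)\,\Lambda^n(\dint(K_1,\dots,K_n))$. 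The interchange of expectation and summation needs a justification: using local boundedness of $\phi$ (the constant $M(\phi)$) together with a covering of $W$ by finitely many translates of $\mathbb{B}_1$, one bounds $|\phi(K_1\cap\dots\cap K_n\cap W)|$ and the $n$-th term of the series by something summable in $n$; this is precisely the kind of estimate the ``economic covering'' machinery of Section~\ref{sec:8} is designed to supply, and invoking it (or the cruder bound $M(\phi)$ times the number of covering sets) gives absolute convergence. This step is essentially the hyperbolic analogue of the Euclidean computation in \cite[Chapter~9]{SW08} and \cite{HLS}.

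For part~(b), apply \eqref{eqn:limit_meana} with $W=\mathbb{B}_R$ and substitute the disintegration $\Lambda=\gamma\int_{\cK^d_\sfp}\int_{\Ih}\mathbf{1}\{\varrho G\in\cdot\}\,\lambda(\dint\varrho)\,\BQ(\dint G)$ into the first of the $n$ integration variables; after relabelling, the $n$-th term becomes
$$
\frac{(-1)^{n-1}\gamma}{n!}\int_{\cK^d_\sfp}\int_{\Ih}\int_{(\cK^d)^{n-1}}\phi(\varrho G\cap K_2\cap\dots\cap K_n\cap\mathbb{B}_R)\,\Lambda^{n-1}(\dint(K_2,\dots,K_n))\,\lambda(\dint\varrho)\,\BQ(\dint G).
$$
Now use the invariance of $\phi$ and of $\Lambda$ under the isometry $\varrho^{-1}$ to replace $\mathbb{B}_R$ by $\varrho^{-1}\mathbb{B}_R$: since $\varrho\mathsf{p}$ ranges over $\BHd$ as $\varrho$ ranges over $\Ih$ (with $\lambda$ disintegrating as in \eqref{disintlambda} via $\mathcal{H}^d$ and the compact fibre group $\Ih(\sfp)$), the body $\varrho^{-1}\mathbb{B}_R$ is a ball of radius $R$ whose centre is at hyperbolic distance $d_h(\mathsf{p},\varrho\mathsf{p})$ from $\mathsf{p}$, in a direction that is uniform on $\mathbb{S}^{d-1}_\sfp$. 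Dividing by $\Vol(\mathbb{B}_R)$ and writing the $\mathcal{H}^d$-integral over the centre in polar coordinates \eqref{eq:polarcoord}, the weight $\sinh^{d-1}(s)/\Vol(\mathbb{B}_R)$ appears; substituting $s=R+t$ and using $\sinh^{d-1}(R+t)/\Vol(\mathbb{B}_{R})\to (d-1)e^{(d-1)t}/\omega_d$ as $R\to\infty$ (which follows from \eqref{volbalr} and l'Hospital, cf.\ \eqref{eq:limit1}), together with $\BB(\exp_\sfp((R+t)u),R)\to\horo_{u,t}$ in the Fell topology, identifies the limiting measure on centres-and-radii with exactly $\mu_{\rm hb}$ as defined in \eqref{eq:muhb}. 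Continuity of $\phi$ on $\cK^d$ is what lets us pass $\phi(\varrho G\cap\cdots\cap\varrho^{-1}\mathbb{B}_R)\to\phi(G\cap\cdots\cap B)$ under the integral for the Fell-convergent sequence $\varrho^{-1}\mathbb{B}_R\to B$.

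The main obstacle is justifying the interchange of the limit $R\to\infty$ with the infinite series and the multiple integrals — i.e.\ producing an integrable dominating function uniform in $R$. Unlike the Euclidean case, the ratio $\sinh^{d-1}(R+t)/\Vol(\mathbb{B}_R)$ does not decay, so the contribution of particles near $\partial\mathbb{B}_R$ does not vanish; one must show instead that it is uniformly controlled. The bound will come from the local boundedness of $\phi$ plus the economic covering of Section~\ref{sec:8}: the number of particles of $\eta$ (equivalently, the $\Lambda$-mass of configurations) that meet a given ball $B$ is controlled by $\gamma\,\E\overline{\Vol}(\TG)$ times a covering number of $B$, and the $n$-fold integral in the $n$-th series term is then dominated by $(\gamma\,\E\overline{\Vol}(\TG))^{n-1}/(n-1)!$ times a constant depending only on $M(\phi)$, uniformly in $R$ after the rescaling, because the rescaled window always has the same local geometry. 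Combined with the dominated convergence theorem this yields \eqref{eq:specmeanlimit}; the finiteness of $\gamma\,\E\overline{\Vol}(\TG)$ is guaranteed by Theorem~\ref{thmchar}. I expect this domination step, rather than the algebraic manipulations, to occupy the bulk of the proof in Section~\ref{sec:10}.
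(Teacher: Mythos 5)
Your proposal is correct and takes essentially the same route as the paper: part (a) by inclusion--exclusion plus the multivariate Mecke formula, with the domination coming from the covering-based bound of Lemma \ref{lem:bound_phi} and the integral-geometric factorization of Lemma \ref{lem:intelem}, and part (b) by inserting the disintegration, shifting the isometry onto the window so that one averages $\phi(G\cap K_2\cap\hdots\cap K_n\cap\BB(x,R))$ over centres $x$, and identifying the limit of $\sinh^{d-1}(R+t)/\Vol(\mathbb{B}_R)$ with the horoball measure $\mu_{\rm hb}$, exactly the content of Lemmas \ref{lem:LimitEW_New} and \ref{lem:limits_intersection_horoballs}. The only point you gloss over is that Fell convergence $\varrho^{-1}\mathbb{B}_R\to B$ does not by itself give $\phi(L\cap\varrho^{-1}\mathbb{B}_R)\to\phi(L\cap B)$ at configurations where $L$ and the limiting horoball touch but can be separated by a hyperplane; the paper observes that these configurations form a $\mu_{\rm hb}$-null set (Lemma \ref{lem:limits_intersection_horoballs}), and your argument needs the same remark.
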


\begin{remark}{\rm The proof will show that the right-hand side of
    \eqref{eq:specmeanlimit} is
    still finite if we drop the alternating sign and replace the
    integrand by its absolute value, in particular, the series converges  absolutely. Also note that if $\phi$ is isometry invariant and continuous, then $\phi$ is locally bounded.}
\end{remark}

Let us now compare the findings of Theorem \ref{thm:mean} with the Euclidean case. We denote the stationary Boolean model in $\mathbb{R}^d$ by $Z_{\rm Euc}$ and  apart from that use the same notation as in the hyperbolic framework. The exact mean value formula \eqref{eqn:limit_meana} in Theorem \ref{thm:mean} is precisely as in the Euclidean case (see, e.g.,  \cite[Theorem 9.1.2]{SW08}). For additive functionals of Boolean models in $\mathbb{R}^d$ it is well known (see, e.g.,   \cite[Theorem 9.2.1]{SW08}) that the limit in \eqref{eqn:limit_mean} exists and equals
$$
m_{\phi,Z_{\rm Euc}}=\E \phi(Z_{\rm Euc}\cap [0,1)^d).
$$
As the half-open cube $[0,1)^d$ is not an element of the convex ring, $\phi(Z_{\rm Euc}\cap [0,1)^d)$ has to be understood as the expression for the closed cube minus the contribution from its upper boundary which as a union of closed faces belongs to the convex ring as well. Moreover, in the Euclidean case one can take homothetic rescalings of any compact convex set with positive volume. Similarly to \eqref{eq:specmeanlimit},  for a geometric functional $\phi$ in the Euclidean setup we have
\begin{equation}\label{eqn:mean_Euclidean}
m_{\phi,Z_{\rm Euc}}
 = \gamma  \sum_{n=1}^\infty \frac{(-1)^{n-1}}{n!}
 \int_{\cK_\sfp^d} \int_{(\mathcal{K}^d)^{n-1}} \phi(G\cap K_2 \cap \hdots\cap K_n)\,\Lambda^{n-1}(\dint(K_2,\hdots,K_n)) \, \BQ(\dint G),
\end{equation}
where we may choose $\sfp=o\in \R^d$.  This formula can be shown along
the lines of the proof of \cite[Theorem 3.1]{HLS}. We are not aware of
a reference in the literature, although more explicit mean value
formulas for large classes of $\phi$ were derived by integral
geometric formulas; see \cite{HLW2023,SW16}.  
Equations  \eqref{eq:specmeanlimit} and \eqref{eqn:mean_Euclidean} differ in two
ways. The hyperbolic case involves an integration over horoballs and
requires the additional assumption of continuity of $\phi$. In order
to understand these differences, let us give a sketch of the proof of
\eqref{eqn:mean_Euclidean} and explain why it does not carry over to
the hyperbolic framework. For the $n$-th summand in the exact mean
value formula, which is as at \eqref{eqn:limit_meana}, we have
\begin{align*}
 & \int_{(\mathcal{K}^d)^n} \phi(K_1\cap\hdots\cap K_n\cap \mathbb{B}_R) \, \Lambda^n(\dint(K_1,\hdots,K_n)) \\
& = \gamma\int_{\cK_\sfp^d} \int_{\mathbb{R}^d} \int_{(\mathcal{K}^d)^{n-1}} \phi((x+G)\cap K_2\cap\hdots\cap K_n\cap \mathbb{B}_R) \, \Lambda^{n-1}(\dint(K_2,\hdots,K_n)) \, \dint x\, \BQ(\dint G) \allowdisplaybreaks\\
& = \gamma\int_{\cK_\sfp^d}  \Vol(\{x\in\mathbb{R}^d: x+G\subseteq \mathbb{B}_R \}) \\
&\hspace{2cm} \quad \times\int_{(\mathcal{K}^d)^{n-1}} \phi(G\cap K_2\cap\hdots\cap K_n) \, \Lambda^{n-1}(\dint(K_2,\hdots,K_n)) \, \BQ(\dint G) \allowdisplaybreaks\\
& \quad + \gamma\int_{\cK_\sfp^d} \int_{\mathbb{R}^d} \mathbf{1}\{(x+G)\cap \partial\mathbb{B}_R\neq\varnothing \} \\
& \hspace{2cm} \quad \times \int_{(\mathcal{K}^d)^{n-1}} \phi((x+G)\cap K_2\cap\hdots\cap K_n\cap \mathbb{B}_R)\, \Lambda^{n-1}(\dint(K_2,\hdots,K_n)) \, \dint x \, \BQ(\dint G).
\end{align*}
Since $\Vol(\{x\in\mathbb{R}^d \colon x+G\subseteq \mathbb{B}_R \})/\Vol(\mathbb{B}_R)\to 1$ as $R\to\infty$ for $\BQ$-a.e.\ $G\in\mathcal{K}^d$, the first summand on the right-hand side divided by $\Vol(\mathbb{B}_R)$ converges to the integral on the right-hand side of \eqref{eqn:mean_Euclidean}. On the other hand, the fact that
$$
\Vol(\{x\in\mathbb{R}^d\colon (x+G)\cap \partial\mathbb{B}_R\neq\varnothing \}) \slash \Vol(\mathbb{B}_R)\to 0 \quad \text{as} \quad R\to\infty
$$
for $\BQ$-a.e.\ $K\in\cK^d_\sfp$ can be used to establish that the second term vanishes as $R\to\infty$.

In the hyperbolic framework, the $n$-th term of \eqref{eqn:limit_meana} can be rewritten as
\begin{align*}
& \int_{(\mathcal{K}^d)^n} \phi(K_1\cap\hdots\cap K_n\cap \mathbb{B}_R) \, \Lambda^n(\dint(K_1,\hdots,K_n)) \\
& = \gamma \int_{\mathcal{K}^d_{\mathsf{p}}} \int_{\mathcal{I}_d} \int_{(\mathcal{K}^d)^{n-1}} \phi(\varrho G\cap K_2\cap\hdots\cap K_n\cap \mathbb{B}_R) \, \Lambda^{n-1}(\dint(K_2,\hdots,K_n)) \, \lambda(\dint\varrho) \, \BQ(\dint G).
\end{align*}
Now we have that
$\lambda(\{\varrho\in\mathcal{I}_d\colon \varrho
G\cap\partial\mathbb{B}_R\neq\varnothing \})$ is of the order of the
surface area of $\mathbb{B}_R$ if $G$ has nonempty interior. Since for
$R\to\infty$ the volume and the surface area of $\mathbb{B}_R$ are of
the same order (see \eqref{eq:limit1}),  
the integral involving intersections with the boundary 
does not vanish in the hyperbolic setup.
Instead a finer analysis leads to the integral over the horoballs with respect to
$\mu_{\rm hb}$. This argument also requires the continuity of $\phi$.

 In the following, we briefly write
 $$
 v_i\defeq\E V_i(\TG),\quad i\in\{0,\ldots,d\}.
 $$
 As a consequence of the previous theorem, we obtain mean value formulas for the volume $V_{d}$ and  the surface area $V_{d-1}$. 

\begin{corollary}\label{cor:expectation_volume_surface_area}
If $W\in\cK^d$, then
\begin{align}
\E V_{d}(Z\cap W)&=V_d(W)\left(1-e^{-\gamma v_d}\right),\label{eq4.3a}\\
\E V_{d-1}(Z\cap W)&=V_d(W)\gamma v_{d-1} e^{-\gamma v_d}+V_{d-1}(W)\nonumber
\left(1-e^{-\gamma v_d}\right)
\end{align}
and
\begin{align}
m_{V_d,Z}&=1-e^{-\gamma v_d}, \nonumber\\
m_{V_{d-1},Z}&=\gamma v_{d-1} e^{-\gamma v_d}+(d-1)\left(1-
e^{-\gamma v_d}\right). \label{eqn:mVd-1}
\end{align}
\end{corollary}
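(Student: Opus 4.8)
The plan is to derive Corollary \ref{cor:expectation_volume_surface_area} directly from Theorem \ref{thm:mean} by specializing to $\phi=V_d$ and $\phi=V_{d-1}$ and exploiting the additivity of the intrinsic volumes over the convex ring together with the kinematic formulas \eqref{eq:kinematic}. For the volume $V_d$ the quickest route is actually to avoid the series entirely: since $V_d(Z\cap W)=\int_W \I\{x\in Z\}\,\mathcal{H}^d(\dint x)$, Fubini gives $\E V_d(Z\cap W)=\int_W \BP(x\in Z)\,\mathcal{H}^d(\dint x)$, and $\BP(x\in Z)=1-\BP(Z\cap\{x\}=\varnothing)$ is computed from the capacity functional \eqref{eq:argument} with $C=\{x\}$. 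Using the disintegration \eqref{eqdisintegration} and \eqref{eq:hdlambda}, one finds $\gamma\int_{\cK^d_\sfp}\int_{\Ih}\I\{\varrho G\ni x\}\,\lambda(\dint\varrho)\,\BQ(\dint G)=\gamma\int_{\cK^d_\sfp}\mathcal{H}^d(G)\,\BQ(\dint G)=\gamma v_d$ (independent of $x$), whence $\BP(x\in Z)=1-e^{-\gamma v_d}$ and \eqref{eq4.3a} follows. The asymptotic formula $m_{V_d,Z}=1-e^{-\gamma v_d}$ is then immediate from \eqref{eqn:limit_mean} since $V_d(Z\cap\mathbb{B}_R)/V_d(\mathbb{B}_R)$ has constant expectation $1-e^{-\gamma v_d}$; alternatively, it is the special case $\phi=V_d$ of \eqref{eq:specmeanlimit} after noting $\mu_{\rm hb}$ and the kinematic formula for $V_d$ collapse the series to its leading interpretable form.

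For the surface area $V_{d-1}$ I would work from the exact series \eqref{eqn:limit_meana}. The $n$-th term, via the disintegration, becomes $\gamma\int_{\cK^d_\sfp}\int_{\Ih}\int_{(\cK^d)^{n-1}} V_{d-1}(\varrho G\cap K_2\cap\cdots\cap K_n\cap W)\,\Lambda^{n-1}(\dint(K_2,\ldots,K_n))\,\lambda(\dint\varrho)\,\BQ(\dint G)$. The key geometric input is the additivity of $V_{d-1}$ on $\cR^d$ together with the fact that, for convex bodies, $V_{d-1}$ of an intersection splits nicely under the kinematic integral \eqref{eq:kinematic} with $k=d-1$: $\int V_{d-1}^0(A\cap\varrho B)\,\lambda(\dint\varrho)=V_{d-1}^0(A)V_d^0(B)+V_d^0(A)V_{d-1}^0(B)$, i.e.\ in the unnormalized form $\int V_{d-1}(A\cap\varrho B)\,\lambda(\dint\varrho)=V_{d-1}(A)V_d(B)+V_d(A)V_{d-1}(B)$. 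Iterating over $K_2,\ldots,K_n$ and using $\int V_d(A\cap\varrho B)\,\lambda(\dint\varrho)=V_d(A)V_d(B)$, each $n$-th term reduces to an expression of the form $\gamma v_{d-1}V_d(W)\frac{(\gamma v_d)^{n-1}}{(n-1)!}$ plus $\gamma v_d\cdot(\text{lower-order-in-}V_{d-1}\text{ piece})$; the combinatorics must be tracked carefully because at each stage the $V_{d-1}$ factor can sit on any one of the intersecting bodies or on $W$. Summing the alternating series and recognizing the exponential generating function $\sum_{n\ge1}(-1)^{n-1}(\gamma v_d)^{n-1}/(n-1)!=e^{-\gamma v_d}$ and $\sum_{n\ge1}(-1)^{n-1}(\gamma v_d)^{n-1}/\big((n-1)!\big)$-type tails yields $\E V_{d-1}(Z\cap W)=V_d(W)\gamma v_{d-1}e^{-\gamma v_d}+V_{d-1}(W)(1-e^{-\gamma v_d})$; the $V_{d-1}(W)$ term arises precisely from the summands where the surviving $V_{d-1}$ factor is carried by the window $W$.

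For the asymptotic constant $m_{V_{d-1},Z}$ I would instead invoke Theorem \ref{thm:mean}(b) with $\phi=V_{d-1}$, which is continuous on $\cK^d$, so \eqref{eq:specmeanlimit} applies. The horoball integral is evaluated using the same kinematic reduction: after integrating out $K_2,\ldots,K_n$ one is left with $\gamma\sum_n\frac{(-1)^{n-1}}{n!}\int_{\hb}\int_{\cK^d_\sfp}\big[V_{d-1}(G\cap B)\,(\gamma v_d)^{n-1}\cdot(\text{a combinatorial factor})+\cdots\big]$, and the two structurally different contributions — the $V_{d-1}$ factor on $G$ versus on the horoball $B$ — produce, respectively, the term $\gamma v_{d-1}e^{-\gamma v_d}$ and the term $(d-1)(1-e^{-\gamma v_d})$. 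The factor $d-1$ is exactly what the measure $\mu_{\rm hb}$ contributes: for a fixed convex body the ratio $V_{d-1}/V_d$ of an increasing ball tends to $d-1$ by \eqref{eq:limit2}, and the horoball limit encodes this, so $\int_{\hb}V_{d-1}(\text{trace on }B)\,\mu_{\rm hb}(\dint B)$ carries the weight $d-1$ relative to the corresponding volume integral. The main obstacle I anticipate is bookkeeping: verifying the combinatorial identities that let the alternating double/triple sums collapse to closed form, and making sure the horoball integrals are finite and can legitimately be evaluated term by term — but both are controlled by the absolute convergence remark following Theorem \ref{thm:mean} and by the moment hypothesis $\E\overline{V_d}(\TG)<\infty$, which dominates all the $v_i$.
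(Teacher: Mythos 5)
Your argument is correct, and for the two local formulas it essentially coincides with the paper's proof: the paper also evaluates the $n$-th term of the exact series \eqref{eqn:limit_meana} via the disintegration of $\Lambda$ and the iterated kinematic identities (Lemma \ref{lem:intelem} for $V_d$, Lemma \ref{lem:intelem2} for $V_{d-1}$, which is exactly your ``the $V_{d-1}$ factor sits on exactly one of the $n+1$ bodies'' bookkeeping, the factor $n/n!=1/(n-1)!$ coming from symmetry), and then sums the exponential series. Your shortcut for \eqref{eq4.3a} via $\E V_d(Z\cap W)=\int_W\BP(x\in Z)\,\mathcal{H}^d(\dint x)$ and the capacity functional is precisely the alternative the paper records in the remark following the corollary; it is simpler but specific to the volume. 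Where you genuinely diverge is $m_{V_{d-1},Z}$: the paper obtains it for free from the exact formula by setting $W=\mathbb{B}_R$, dividing by $V_d(\mathbb{B}_R)$ and invoking \eqref{eq:limit2}, whereas you go through the horoball representation \eqref{eq:specmeanlimit}. Your route works (the paper sanctions it in the remark after the proof), but note that your ``$\mu_{\rm hb}$ carries weight $d-1$'' heuristic has to be made precise as the identity $\int_{\hb}V_{d-1}(L\cap B)\,\mu_{\rm hb}(\dint B)=V_{d-1}(L)+(d-1)V_d(L)$ together with $\int_{\hb}V_d(L\cap B)\,\mu_{\rm hb}(\dint B)=V_d(L)$, i.e.\ Corollary \ref{lem:LimitEW2} combined with Lemma \ref{lem:limits_intersection_horoballs} and Corollary \ref{cor:horint1}(a); with those in hand, applying Lemma \ref{lem:intelem2} to $A_0=G\cap B$ (which is compact convex, so the kinematic formula applies even though $B$ itself is not compact) and resumming indeed reproduces \eqref{eqn:mVd-1}. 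The paper's route buys simplicity (no horoball integrals at all once the exact formula is known), while yours illustrates how the general limit formula of Theorem \ref{thm:mean}(b) specializes, at the cost of needing the extra integral-geometric input and the term-by-term interchange, which is justified by the absolute convergence noted after Theorem \ref{thm:mean}.
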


Note that the formulas for the volume also follow  from the basic Fubini-type relation
$$
\E\Vol(Z\cap W)=\BP(\sfp\in Z)\Vol(W),
$$
since
$$
\BP(\sfp\in Z)=1-\BP(\sfp\notin Z) =1-e^{-\Lambda([\{\sfp\}])}
$$
and
$$
\Lambda([\{\sfp\}])=\gamma \int_{\mathcal{K}^d_{\sfp}}\int_{\Ih}\I\{\sfp\in \varrho G\}\,
\lambda(\dint\varrho)\, \BQ(\dint G)=\gamma v_d.
$$
This argument is easier than applying Theorem \ref{thm:mean}, but it
heavily relies on the properties of the volume and cannot be used for
general geometric functionals as they are considered in Theorem
\ref{thm:mean}. Note that in case of the volume functional both,  Theorem
\ref{thm:mean} and the Fubini type argument work for general compact particles.

In the following, we use the abbreviation
\begin{align*}
v_j^0\defeq \int_{\mathcal{K}^d_{\sfp}} V^0_j(G)\,\BQ(\dint G)= \E V^0_j(\TG),\quad j\in\{0,\ldots,d\},
\end{align*}
where $v_d=v_d^0$ by definition. Next we present mean value formulas for general intrinsic volumes. In order to simplify the formulas, we consider the renormalisation from \eqref{eq:vk0}.

\begin{theorem}\label{Thmmeanvalues} If $k\in\{0,\ldots,d-1\}$ and $W\in \mathcal{K}^d$, then
\begin{align}\label{eq:infinitev}\notag
\E V^0_k(Z\cap W)&=V^0_k(W)\big(1-e^{-\gamma v_d}\big)\\
&\qquad+e^{-\gamma v_d}\sum^d_{m=k+1}V^0_m(W)\sum^{m-k}_{s=1}\frac{(-1)^{s-1}\gamma^s}{s!}
\sum_{\substack{m_1,\dots ,m_s=k\\ m_1+\cdots +m_s=sd+k-m}}^{d-1} v^0_{m_1}\cdots v^0_{m_s}.
\end{align}
\end{theorem}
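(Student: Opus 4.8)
The plan is to apply Theorem \ref{thm:mean}(a) to the functional $\phi = V_k^0$, which is measurable, additive and locally bounded on $\mathcal{R}^d$, and then to evaluate each summand in \eqref{eqn:limit_meana} by repeatedly using the disintegration \eqref{eqdisintegration} of $\Lambda$ together with the renormalised kinematic formula \eqref{eq:kinematic}. So, first I would write
\begin{align*}
\E V^0_k(Z\cap W)=\sum_{n=1}^{\infty}\frac{(-1)^{n-1}}{n!}\int_{(\mathcal{K}^d)^n}V_k^0(K_1\cap\cdots\cap K_n\cap W)\,\Lambda^n(\dint(K_1,\ldots,K_n)).
\end{align*}
The $n$-th integrand is symmetric in $K_1,\ldots,K_n$, so I can peel off each $\Lambda$ in turn. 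Using $\Lambda=\gamma\int_{\mathcal{K}^d_\sfp}\int_{\Ih}\1\{\varrho G\in\cdot\}\,\lambda(\dint\varrho)\,\BQ(\dint G)$ on the variable $K_i$ and then the kinematic formula \eqref{eq:kinematic} in the form $\int_{\Ih}V^0_k(A\cap\varrho G)\,\lambda(\dint\varrho)=\sum_{i+j=d+k}V_i^0(A)V_j^0(G)$ (with the convention that $V_m^0(\varnothing)=0$), each integration over $\Ih$ against $\lambda$ converts the intersection with one more particle into a sum over the ``degree'' $m_i\in\{k,\ldots,d\}$ of that particle, multiplied by $\gamma\,v^0_{m_i}$ (after integrating $V^0_{m_i}(G)$ against $\BQ$) — but a particle of degree $d$ contributes the full $V_d^0(G)=v_d$ and effectively does nothing to the chain except multiply by $\gamma v_d$, whereas a particle of degree $\le d-1$ genuinely lowers the running index. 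This is the computational heart of the argument.

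The bookkeeping I expect to carry out is: in the $n$-th term, suppose $s$ of the $n$ particles have degree $\le d-1$ (there are $\binom{n}{s}$ choices, and by symmetry they may be taken to be the last $s$) and the remaining $n-s$ have degree $d$. The $n-s$ degree-$d$ particles contribute a factor $(\gamma v_d)^{n-s}$; the $s$ nontrivial particles, carrying degrees $m_1,\ldots,m_s\in\{k,\ldots,d-1\}$, reduce the index from $k$ through a telescoping application of \eqref{eq:kinematic}, and a short induction on $s$ (this is the standard iterated-kinematic-formula computation) shows that the chain $\int\cdots\int V_k^0(W\cap\varrho_1 G_1\cap\cdots\cap\varrho_s G_s)\,\lambda^{\otimes s}$ equals $\sum_{m}V_m^0(W)\sum v^0_{m_1}\cdots v^0_{m_s}$ where the inner sum runs over $m_1,\ldots,m_s\in\{k,\ldots,d-1\}$ with $m_1+\cdots+m_s=sd+k-m$, and $m$ ranges over those values in $\{k,\ldots,d\}$ for which this index set is nonempty (note $m=k$ forces $s=0$, and $m\ge k+1$ forces $s\le m-k$). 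Collecting everything, the coefficient of $V_m^0(W)$ becomes
\begin{align*}
\sum_{n\ge \max(s,1)}^{\infty}\frac{(-1)^{n-1}}{n!}\binom{n}{s}(\gamma v_d)^{n-s}\cdot\gamma^s\!\!\!\sum_{\substack{m_1,\ldots,m_s=k\\ m_1+\cdots+m_s=sd+k-m}}^{d-1}\!\!\! v^0_{m_1}\cdots v^0_{m_s},
\end{align*}
and resumming $\sum_{n\ge s}\frac{(-1)^{n-1}}{n!}\binom{n}{s}(\gamma v_d)^{n-s}$ over $n$ gives, for $s=0$, $1-e^{-\gamma v_d}$ and, for $s\ge 1$, $(-1)^{s-1}e^{-\gamma v_d}/s!$; plugging these in yields exactly \eqref{eq:infinitev}.

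The main obstacle is twofold: justifying the rearrangements (splitting the $n$-fold $\Lambda$-integral by degree, interchanging the order of the various $\lambda$- and $\BQ$-integrations, and the final resummation over $n$) requires absolute convergence, which I would get from the local boundedness of $V_k^0$ together with the finiteness of $\E\overline{V_d}(\TG)$ (Theorem \ref{thmchar}(c)) bounding the number of particles meeting $W$ — this is the same input that underlies Theorem \ref{thm:mean}(a); and verifying the iterated kinematic identity with the correct index constraints, which is a clean induction once one observes that \eqref{eq:kinematic} applied to $A\cap\varrho G$ with $\deg G=m_s\le d-1$ sends running index $m'$ to $m'+d-m_s$ while a degree-$d$ particle leaves it fixed. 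I would also remark that, unlike in Theorem \ref{thm:mean}(b), no continuity assumption on $\phi$ is needed here because we work with the fixed window $W\in\mathcal{K}^d$ and only use part (a); the intersection $K_1\cap\cdots\cap K_n\cap W$ stays in $\mathcal{K}^d$ so the intrinsic volumes $V_m^0$ are applied only to convex sets, and their additive extension to $\mathcal{R}^d$ is invoked solely through \eqref{eqn:limit_meana}.
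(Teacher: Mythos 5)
Your proposal is correct and follows essentially the same route as the paper: apply Theorem \ref{thm:mean}(a) to $V_k^0$, evaluate each $n$-fold $\Lambda$-integral via the disintegration \eqref{eqdisintegration} and the iterated kinematic formula \eqref{eq:kinematic}, and then resum by splitting according to the number $s$ of particle indices strictly below $d$ (the paper carries out this last step as in \cite[Theorem 9.1.3]{SW08}, arriving at the same $e^{-\gamma v_d}$ factors you obtain). Your peel-off induction for the iterated kinematic identity and your closed-form evaluation of $\sum_{n\ge s}\tfrac{(-1)^{n-1}}{n!}\binom{n}{s}(\gamma v_d)^{n-s}$ are just minor repackagings of the paper's computation.
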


The previous theorem leads quickly to the following formula
for the asymptotic density \eqref{eqn:limit_mean} for $\phi=V_k$.

\begin{corollary}\label{corr:asym} If $k\in\{0,\ldots,d-1\}$, then
\begin{align*}
m_{V_k,Z}&=(d-1)\big(1-e^{-\gamma v_d}\big)\\
&\qquad
+e^{-\gamma v_d}\sum^d_{m=k+1} b_{m,k}\sum^{m-k}_{s=1}\frac{(-1)^{s-1}\gamma^s}{s!}
\sum_{\substack{m_1,\dots ,m_s=k\\ m_1+\cdots +m_s=sd+k-m}}^{d-1} v^0_{m_1}\cdots v^0_{m_s},
\end{align*}
where $b_{m,k}\defeq (d-1) \frac{\kappa_m\kappa_{d-1-m}}{\kappa_k\kappa_{d-1-k}}$
for $0\le k<m<d$ and $b_{d,k}\defeq  \frac{d}{\pi}\frac{\kappa_d}{\kappa_k\kappa_{d-1-k} }$.
\end{corollary}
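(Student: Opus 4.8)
The plan is to specialise Theorem~\ref{Thmmeanvalues} to the observation window $W=\mathbb{B}_R$, divide by $\Vol(\mathbb{B}_R)$, and let $R\to\infty$. Throughout, set $c_k\defeq\frac{\omega_{d+1}}{\omega_{k+1}\omega_{d-k}}\binom{d-1}{k}$ for $k\in\{0,\dots,d-1\}$ and $c_d\defeq 1$, so that $V_k^0=c_kV_k$ for every $k\in\{0,\dots,d\}$ by \eqref{eq:vk0}. Since $V_k$ is a continuous geometric functional, Theorem~\ref{thm:mean}(b) already guarantees that $m_{V_k,Z}$ exists, and clearly $m_{V_k,Z}=c_k^{-1}\lim_{R\to\infty}\E V_k^0(Z\cap\mathbb{B}_R)/\Vol(\mathbb{B}_R)$; hence it suffices to evaluate this last limit.

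First I would insert $W=\mathbb{B}_R$ into Theorem~\ref{Thmmeanvalues} and divide by $\Vol(\mathbb{B}_R)=V_d(\mathbb{B}_R)$. Writing $\Sigma_m\defeq\sum_{s=1}^{m-k}\frac{(-1)^{s-1}\gamma^s}{s!}\sum_{\substack{m_1,\dots,m_s=k\\ m_1+\cdots+m_s=sd+k-m}}^{d-1}v^0_{m_1}\cdots v^0_{m_s}$, which is a finite sum not depending on $R$, the normalised right-hand side becomes
$$
\frac{V_k^0(\mathbb{B}_R)}{V_d(\mathbb{B}_R)}\big(1-e^{-\gamma v_d}\big)+e^{-\gamma v_d}\sum_{m=k+1}^d\frac{V_m^0(\mathbb{B}_R)}{V_d(\mathbb{B}_R)}\,\Sigma_m.
$$
Thus the only quantities left to pass to the limit are the coefficients $V_m^0(\mathbb{B}_R)/V_d(\mathbb{B}_R)$. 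For $m<d$, relation \eqref{eq:limit2} gives $V_m(\mathbb{B}_R)/V_d(\mathbb{B}_R)\to d-1$, hence $V_m^0(\mathbb{B}_R)/V_d(\mathbb{B}_R)=c_mV_m(\mathbb{B}_R)/V_d(\mathbb{B}_R)\to c_m(d-1)$, whereas for $m=d$ the ratio equals $1$ because $V_d^0=V_d$. Therefore
$$
\lim_{R\to\infty}\frac{\E V_k^0(Z\cap\mathbb{B}_R)}{V_d(\mathbb{B}_R)}=c_k(d-1)\big(1-e^{-\gamma v_d}\big)+e^{-\gamma v_d}\Bigg(\sum_{m=k+1}^{d-1}c_m(d-1)\,\Sigma_m+\Sigma_d\Bigg).
$$

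It remains to multiply through by $c_k^{-1}$ and to recognise the coefficients. Using the Legendre duplication rewriting \eqref{eq:sec2rel} one has $c_j=\frac{\pi}{d}\frac{\kappa_j\kappa_{d-1-j}}{\kappa_d}$ for $j\in\{0,\dots,d-1\}$, so that $c_k^{-1}c_m(d-1)=(d-1)\frac{\kappa_m\kappa_{d-1-m}}{\kappa_k\kappa_{d-1-k}}=b_{m,k}$ for $0\le k<m<d$, while $c_k^{-1}=\frac{d}{\pi}\frac{\kappa_d}{\kappa_k\kappa_{d-1-k}}=b_{d,k}$, and the leading term becomes $(d-1)(1-e^{-\gamma v_d})$. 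Substituting these identities yields exactly the formula asserted in the corollary. I expect no genuine obstacle here: the existence of the limit is supplied by Theorems~\ref{thm:mean} and \ref{Thmmeanvalues}, the termwise passage to the limit is trivial since only finitely many terms are involved, and the one mildly technical point is the bookkeeping of the $\omega$- and $\kappa$-coefficients, which is purely routine once \eqref{eq:sec2rel} is invoked.
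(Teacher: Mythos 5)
Your proposal is correct and follows essentially the same route as the paper, which also derives the corollary directly from Theorem \ref{Thmmeanvalues} by letting $W=\mathbb{B}_R$, applying \eqref{eq:limit2} to the ratios $V_m^0(\mathbb{B}_R)/V_d(\mathbb{B}_R)$, and converting the normalization constants via \eqref{eq:vk0} and \eqref{eq:sec2rel}. The only cosmetic difference is that you additionally invoke Theorem \ref{thm:mean}(b) for the existence of the limit, which is unnecessary since the termwise limit of the finite sum already yields it.
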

\begin{proof} Rather than using Theorem \ref{thm:mean} (b), we obtain the assertion directly  from Theorem \ref{Thmmeanvalues}  via \eqref{eq:limit2}, keeping in mind the normalization \eqref{eq:vk0} and using relation \eqref{eq:sec2rel}.
\end{proof}

\begin{remark} \rm
    The special case $k=d-1$ of Corollary \ref{corr:asym} recovers \eqref{eqn:mVd-1} provided in Corollary \ref{cor:expectation_volume_surface_area}. For an explicit comparison note that $v^0_m$ in the statement of Corollary \ref{corr:asym} is defined in terms of the normalized intrinsic volumes.
\end{remark}

\begin{remark} \rm
    The formula in Corollary \ref{corr:asym}  is similar to the Euclidean counterpart, provided for instance in \cite[Theorem 9.1.4]{SW08}. However, in the hyperbolic setup the sum over $m\in\{k+1,\ldots,d\}$ does not reduce to just the case $m=d$ and all summands contribute asymptotically.
\end{remark}

\begin{corollary} If $d=2$ and $W\in\mathcal{K}^2$, then
\begin{equation}\label{eq:Miles1}
\E\chi(Z\cap W)=(1-e^{-\gamma v_2})+V_1(W) e^{-\gamma v_2}\frac{\gamma v_1}{2\pi}
+V_2(W) e^{-\gamma v_2}\left( \gamma +\frac{\gamma v_2}{2\pi}
-   \frac{(\gamma v_1)^2}{4\pi}\right).
\end{equation}
\end{corollary}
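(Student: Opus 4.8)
The plan is to derive \eqref{eq:Miles1} from Theorem \ref{Thmmeanvalues} applied in dimension $d=2$, combined with the relation \eqref{eq:chi2} between the Euler characteristic and the intrinsic volumes $V_0,V_1,V_2$. First I would recall that for $d=2$ the renormalized intrinsic volumes coincide with the unnormalized ones, i.e.\ $V_i^0=V_i$ and hence $v_i^0=v_i$ for $i\in\{0,1,2\}$, as noted in the text right after \eqref{eq:sec2rel}. Using \eqref{eq:chi2} in the form $2\pi\chi(A)=V_0(A)-V_2(A)$ for $A\in\mathcal{R}^2$ and additivity, we get $2\pi\,\E\chi(Z\cap W)=\E V_0(Z\cap W)-\E V_2(Z\cap W)$. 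Now I would plug in the formula from Theorem \ref{Thmmeanvalues} with $k=0$ for $\E V_0^0(Z\cap W)=\E V_0(Z\cap W)$, and the volume formula \eqref{eq4.3a} from Corollary \ref{cor:expectation_volume_surface_area} for $\E V_2(Z\cap W)$.

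Concretely, for $d=2$ and $k=0$, Theorem \ref{Thmmeanvalues} gives
$$
\E V_0(Z\cap W)=V_0(W)(1-e^{-\gamma v_2})+e^{-\gamma v_2}\sum_{m=1}^2 V_m(W)\sum_{s=1}^{2-m}\frac{(-1)^{s-1}\gamma^s}{s!}\sum_{\substack{m_1,\dots,m_s\in\{0,1\}\\ m_1+\cdots+m_s=2s-m}} v_{m_1}\cdots v_{m_s}.
$$
For $m=1$ the inner sum over $s$ runs only over $s=1$, forcing $m_1=1$, contributing $\gamma v_1$; so the $V_1(W)$-term is $e^{-\gamma v_2}V_1(W)\gamma v_1$. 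For $m=2$ we get $s\in\{1,2\}$: $s=1$ forces $m_1=0$ giving $\gamma v_0$; $s=2$ forces $m_1=m_2=1$ giving $-\tfrac{\gamma^2}{2}v_1^2$; so the $V_2(W)$-term is $e^{-\gamma v_2}V_2(W)\big(\gamma v_0-\tfrac{\gamma^2}{2}v_1^2\big)$. Then I would substitute $v_0=\E V_0(\TG)$; since in $\mathbb{H}^2$ one has $2\pi\chi(G)=V_0(G)-V_2(G)$ and $\chi(G)=1$ for nonempty compact convex $G$, it follows that $V_0(G)=2\pi+V_2(G)$, hence $v_0=2\pi+v_2$. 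Dividing the resulting expression for $\E V_0(Z\cap W)-\E V_2(Z\cap W)$ by $2\pi$ and collecting the coefficients of $V_1(W)$ and $V_2(W)$ should yield exactly \eqref{eq:Miles1}; in particular the $V_2(W)$-coefficient becomes $\tfrac{1}{2\pi}e^{-\gamma v_2}(\gamma(2\pi+v_2)-\tfrac{\gamma^2}{2}v_1^2)-\tfrac{1}{2\pi}(1-e^{-\gamma v_2})\cdot 2\pi$ — wait, one must be careful that $V_2(W)$ also appears in $\E V_2(Z\cap W)=V_2(W)(1-e^{-\gamma v_2})$, so the final $V_2(W)$-coefficient is $e^{-\gamma v_2}(\gamma+\tfrac{\gamma v_2}{2\pi}-\tfrac{(\gamma v_1)^2}{4\pi})$ after combining. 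The constant term $V_0(W)(1-e^{-\gamma v_2})/(2\pi)$ with $V_0(W)=2\pi\chi(W)$, together with the fact that $V_0$ does not appear in $\E V_2$, accounts for the leading $(1-e^{-\gamma v_2})$ (using $\chi(W)=1$ for $W\in\mathcal{K}^2$).

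The computation is entirely routine bookkeeping; the only place demanding a little care is the correct enumeration of the multi-indices $(m_1,\dots,m_s)$ in the innermost sum of Theorem \ref{Thmmeanvalues} and the identity $v_0=2\pi+v_2$, which must be invoked to eliminate $v_0$ in favor of the quantities appearing in \eqref{eq:Miles1}. I expect no genuine obstacle — it is a direct specialization — so the ``hard part'' is merely organizing the algebra so that the coefficients of $V_1(W)$ and $V_2(W)$ match the stated formula, and remembering that $\E\chi=\frac{1}{2\pi}(\E V_0(Z\cap W)-\E V_2(Z\cap W))$ rather than something involving $V_1$.
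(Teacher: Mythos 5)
Your proposal is correct and takes essentially the same route as the paper: specialize Theorem \ref{Thmmeanvalues} to $d=2$, $k=0$ (your multi-index bookkeeping reproduces exactly the paper's intermediate formula for $\E V_0(Z\cap W)$), subtract $\E V_2(Z\cap W)$ from \eqref{eq4.3a}, and use \eqref{eq:chi2} together with $v_0=2\pi+v_2$. The only blemish is the parenthetical claim ``$V_0(W)=2\pi\chi(W)$'': in $\mathbb{H}^2$ the correct relation is $V_0(W)-V_2(W)=2\pi\chi(W)$, i.e.\ $V_0(W)=2\pi+V_2(W)$, and it is precisely the extra $V_2(W)\left(1-e^{-\gamma v_2}\right)$ produced by this substitution that cancels the $-V_2(W)\left(1-e^{-\gamma v_2}\right)$ coming from $-\E V_2(Z\cap W)$, giving the $V_2(W)$-coefficient you state.
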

\begin{proof}
Recall that in the case $d=2$ we have $V_i^0=V_i$ for $i=0,1,2$. Hence also $v^0_i=v_i$. Then, for $k=0$, Equation \eqref{eq:infinitev} simplifies to
\begin{align}\label{eq:5.24}
\E V_0(Z\cap W)&=V_0(W)(1-e^{-\gamma v_2})+V_1(W) e^{-\gamma v_2}\gamma v_1
+V_2(W) e^{-\gamma v_2}\gamma v_0\nonumber\\
&\qquad -V_2(W) e^{-\gamma v_2}\frac{(\gamma v^0_1)^2}{2}.
\end{align}
Relations \eqref{eq:chi2} and \eqref{eq4.3a} yield
$$
2\pi \E\chi(Z\cap W)=\E V_0(Z\cap W)- \E V_2(Z\cap W)=\E V_0(Z\cap W)-V_2(W)(1-e^{-\gamma v_2}).
$$
Inserting  \eqref{eq:5.24} and using that $V_0(W)-V_2(W)=2\pi$ and $v_0 =2\pi+v_2$ by \eqref{eq:chi2}, we obtain the asserted formula \eqref{eq:Miles1}.
\end{proof}

Equations \eqref{eq:Miles1} and \eqref{eq:limit2} show that
the asymptotic density of the Euler characteristic is given by
\begin{align}\label{eq:Miles2}\notag
m_{\chi,Z}=\gamma e^{-\gamma v_2}-\gamma^2 e^{-\gamma v_2}\frac{v_1^2}{4\pi}+\gamma e^{-\gamma v_2}\frac{1}{2\pi}\left(v_1+v_2\right).
\end{align}
This should be compared with a classical formula by Miles \cite{Miles76}
(see also \cite[Section 9.1]{SW08}), stating that in the Euclidean case (recall that our normalization of $V_1$ differs from the one in Euclidean space by a factor $2$ so that $\E V_1(\TG)=\E\mathcal{H}^1(\partial\TG)$  assuming that $\TG$ has nonempty interior a.s.)
\begin{align*}
m_{\chi,Z_{\rm Euc}}=
\gamma e^{-\gamma v_2}-\gamma^2 e^{-\gamma v_2}\frac{\E \mathcal{H}^1(\partial\TG)^2}{4\pi}.
\end{align*}

\section{Covariance formulas}\label{sec:5}

As in the preceding section, let $\eta$ be a stationary Poisson particle process concentrated on convex particles. Let $Z$ denote the induced stationary Boolean model. 
For a measurable, additive, and locally bounded functional $\phi:\mathcal{R}^d\to\mathbb{R}$, we define $\phi^*:\mathcal{R}^d\to\mathbb{R}$ by
\begin{equation}\label{eqn:phi_star}
\phi^*(A)\defeq\E\phi(Z\cap A) - \phi(A)\quad \text{for } A\in \mathcal{R}^d.
\end{equation}

The second order results described in Theorem \ref{Thmcovasymp} require a strengthening of the first order integrability condition $\E\overline{\Vol}(\TG)<\infty$ given in Theorem \ref{thmchar}.

\begin{theorem}\label{Thmcovasymp}
Assume that $\E\overline{\Vol}(\TG)^2<\infty$.
\begin{itemize}
\item [{\rm (a)}] If $\phi,\psi:\mathcal{R}^d\to\mathbb{R}$ are measurable, additive, and locally bounded functionals and $W\in\cK^d$, then
$$\E\phi(Z\cap W)^2,\E\psi(Z\cap W)^2<\infty$$
and
\begin{align}
&\Cov(\phi(Z\cap W),\psi(Z\cap W)) \nonumber \\
& =  \sum_{n=1}^\infty \frac{1}{n!} \int_{(\cK^{d})^n} (\phi^*\cdot\psi^*)(K_1\cap\hdots\cap K_n \cap W) \,\Lambda^{n}(\dint (K_1,\hdots,K_n)). \label{eqn:Exact_Covariance}
\end{align}
\item [{\rm (b)}] For a  measurable, additive, and locally bounded  functional $\phi:\mathcal{R}^d\to\mathbb{R}$,
$$
\limsup_{R\to\infty} \frac{\Var \phi(Z\cap \mathbb{B}_R)}{\Vol(\mathbb{B}_R)}<\infty.
$$
\item [{\rm (c)}] If $\phi,\psi:\mathcal{R}^d\to\mathbb{R}$ are geometric functionals that are continuous on $\mathcal{K}^d$, then
\begin{align*}
& \lim_{R\to\infty} \frac{\Cov(\phi(Z\cap \mathbb{B}_R),\psi(Z\cap \mathbb{B}_R))}{\Vol(\mathbb{B}_R)} \\
& = \gamma\sum_{n=1}^\infty \frac{1}{n!} \int_{\hb}  \int_{\mathcal{K}^d_\sfp} \int_{(\mathcal{K}^d)^{n-1}} (\phi^*\cdot\psi^*)(G\cap K_2\cap\hdots\cap K_n\cap B)  \\
& \qquad \times \Lambda^{n-1}(\dint (K_2,\hdots,K_n)) \, \BQ(\dint G) \, \mu_{\rm hb}(\dint B).
\end{align*}
\end{itemize}
\end{theorem}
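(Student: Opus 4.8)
\emph{Sketch of proof.} For (a), the square-integrability of $F\defeq\phi(Z\cap W)$ and $G\defeq\psi(Z\cap W)$ is obtained by writing $Z\cap W$ as the (a.s.\ finite) union of the convex sets $K\cap W$, $K\in\eta$ with $K\cap W\neq\varnothing$, expanding $\phi$ and $\psi$ by inclusion--exclusion, and estimating the outcome by means of the economic covering of Section~\ref{sec:8} together with the moment bounds of Section~\ref{sec:9}; this is where the hypothesis $\E\overline{\Vol}(\TG)^2<\infty$ enters. Once $F,G\in L^2(\BP)$, the Fock space representation of Poisson functionals (\cite[Chapter~18]{LP18}) yields $\Cov(F,G)=\sum_{n\ge 1}\frac1{n!}\langle T_nF,T_nG\rangle_{L^2(\Lambda^n)}$, where $T_nF(K_1,\dots,K_n)=\E D^n_{K_1,\dots,K_n}F$. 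Since inserting a particle $K$ into $\eta$ replaces $Z$ by $Z\cup K$, the additivity \eqref{eq:propval} of $\phi$ gives $D_K\phi(Z\cap W)=\phi(K\cap W)-\phi(Z\cap K\cap W)$, and iterating (induction on $n$) gives $D^n_{K_1,\dots,K_n}\phi(Z\cap W)=(-1)^n\big(\phi(Z\cap K_1\cap\cdots\cap K_n\cap W)-\phi(K_1\cap\cdots\cap K_n\cap W)\big)$, hence $T_n\phi(Z\cap W)=(-1)^n\phi^*(K_1\cap\cdots\cap K_n\cap W)$ with $\phi^*$ as in \eqref{eqn:phi_star}. Plugging this (and the analogue for $\psi$) into the Fock space formula, the signs cancel and \eqref{eqn:Exact_Covariance} follows.

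Part (b) is derived from the exact variance formula of part (a) with $W=\mathbb{B}_R$. Disintegrating $\Lambda$ in the first argument by Theorem~\ref{thmdisintegration} provides one isometry integral that localizes $K_1$: $\int_{\Ih}\1\{\varrho G\cap\mathbb{B}_R\neq\varnothing\}\,\lambda(\dint\varrho)$ equals the $\mathcal{H}^d$-measure of a parallel body of $\mathbb{B}_R$ and is $O(\Vol(\mathbb{B}_R))$ uniformly over $G$ with $\overline{\Vol}(G)$ controlled. The remaining $\Lambda^{n-1}$-integrations, together with the fact that $\phi^*$ is bounded on sets contained in a translate of $\mathbb{B}_1$, are estimated by terms of order $c^{n-1}/(n-1)!$ using Section~\ref{sec:8} and the difference-operator bounds of Section~\ref{sec:9}; summing over $n$ yields $\Var\phi(Z\cap\mathbb{B}_R)\le C\,\Vol(\mathbb{B}_R)$ for all $R$. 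The same estimates, combined with the Cauchy--Schwarz inequality, bound $\Vol(\mathbb{B}_R)^{-1}\frac1{n!}\int_{(\cK^d)^n}|\phi^*\cdot\psi^*|(K_1\cap\cdots\cap K_n\cap\mathbb{B}_R)\,\Lambda^n(\dint(K_1,\dots,K_n))$ by a summable sequence independent of $R$, which will justify interchanging $\lim_{R\to\infty}$ with $\sum_n$ in (c).

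For (c), fix $n$ and rewrite the $n$-th summand of \eqref{eqn:Exact_Covariance} with $W=\mathbb{B}_R$: disintegrate $\Lambda(\dint K_1)$ by Theorem~\ref{thmdisintegration}; substitute $K_i\mapsto\varrho K_i$ for $i\ge 2$ and use the isometry invariance of $\Lambda$ and of $\phi^*\cdot\psi^*$; then use the inversion invariance of $\lambda$ and its disintegration \eqref{disintlambda} by the kernel $\kappa$. Since $\varrho\mathbb{B}_R=\mathbb{B}(\varrho\sfp,R)$ depends only on $\varrho\sfp$, the rotational part integrates out and the summand becomes $\frac{\gamma}{n!}\int_{\cK^d_\sfp}\int_{\BHd}\int_{(\cK^d)^{n-1}}(\phi^*\cdot\psi^*)(G\cap K_2\cap\cdots\cap K_n\cap\mathbb{B}(x,R))\,\Lambda^{n-1}(\dint(K_2,\dots,K_n))\,\mathcal{H}^d(\dint x)\,\BQ(\dint G)$. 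Passing to polar coordinates \eqref{eq:polarcoord} around $\sfp$ and substituting the radial variable $s=R+t$, the window $\mathbb{B}(\exp_\sfp((R+t)u),R)$ converges in the Fell topology to the horoball $\horo_{u,t}$ as $R\to\infty$, while by \eqref{eq:limit1} the density satisfies $\sinh^{d-1}(R+t)/\Vol(\mathbb{B}_R)\to\frac{d-1}{\omega_d}e^{(d-1)t}$. Using the continuity of $\phi,\psi$ (hence of $\phi^*,\psi^*$, which follows together with the exact mean formula \eqref{eqn:limit_meana}), the continuity of the intersection map for the generic configuration, and dominated convergence in $t$ and in $(G,u,K_2,\dots,K_n)$ (with dominating functions from Sections~\ref{sec:8} and~\ref{sec:9} and integrability from the second moment hypothesis), the $n$-th summand divided by $\Vol(\mathbb{B}_R)$ converges to $\frac{\gamma}{n!}\int_{\hb}\int_{\cK^d_\sfp}\int_{(\cK^d)^{n-1}}(\phi^*\cdot\psi^*)(G\cap K_2\cap\cdots\cap K_n\cap B)\,\Lambda^{n-1}(\dint(K_2,\dots,K_n))\,\BQ(\dint G)\,\mu_{\rm hb}(\dint B)$, where the $(u,t)$-integral has been recognized as integration against $\mu_{\rm hb}$ by \eqref{eq:muhb}. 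Summing over $n$ and interchanging with $\lim_{R\to\infty}$ (legitimate by the uniform bound from (b)) completes the proof.

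The principal difficulty is the limit in part (c): one must establish the convergence of the shrinking-curvature balls $\mathbb{B}(\exp_\sfp((R+t)u),R)$ to horoballs simultaneously with the convergence of the volume densities, produce dominating functions that are uniform in $R$ and summable in $n$ for both the radial integral and the $\Lambda^{n-1}$-integrals, and discard the boundary-touching configurations where $G\cap K_2\cap\cdots\cap K_n$ meets $\partial\horo_{u,t}$. This is also the only place where the continuity of $\phi$ and $\psi$ is genuinely used, in contrast to parts (a) and (b), and the supporting geometric estimates are exactly the content of Sections~\ref{sec:7}--\ref{sec:9}.
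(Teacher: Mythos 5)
Your proposal is correct and follows essentially the same route as the paper: Fock space representation plus the identity $\E D^n_{K_1,\dots,K_n}\phi(Z\cap W)=(-1)^n\phi^*(K_1\cap\cdots\cap K_n\cap W)$ for (a), disintegration of $\Lambda$ together with the bound $|\phi^*(K)|\le C M(\phi)\overline{\Vol}(K)$ and the second-moment estimates of Sections~\ref{sec:8}--\ref{sec:9} for (b), and reduction of the $n$-th summand to an integral of $(\phi^*\cdot\psi^*)(G\cap K_2\cap\cdots\cap K_n\cap\BB(x,R))$ over $\BHd$ followed by the limit to horoballs for (c). The polar-coordinate argument you carry out inline in (c) (convergence of $\BB(\exp_\sfp((R+t)u),R)$ to $\horo_{u,t}$, the density limit $\sinh^{d-1}(R+t)/\Vol(\BB_R)\to\frac{d-1}{\omega_d}e^{(d-1)t}$, and domination) is exactly the content of the paper's Lemma~\ref{lem:LimitEW_New}, which the paper applies after verifying \eqref{eqn:continuous_ae} and \eqref{eqn:assumption_ball} for $K\mapsto\phi^*(G\cap K_2\cap\cdots\cap K_n\cap K)$, just as you indicate.
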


For Boolean models in Euclidean space parts (a) and (b) of Theorem \ref{Thmcovasymp} are implicitly provided in \cite[Section 3]{HLS}, while the limit of the covariances is computed in \cite[Theorem 3.1]{HLS}. There one obtains the formula from part (c) without the integration over horoballs and without assuming continuity on $\cK^d$. This means that we observe the same differences  between the hyperbolic and the Euclidean framework for the asymptotic covariances as for the asymptotic mean values, which can be explained as next to Theorem \ref{thm:mean}.

\medskip

Using the Chebyshev inequality, we obtain from Theorem \ref{Thmcovasymp} (b) the following weak law of large numbers, where $\overset{\BP}{\longrightarrow}$ denotes convergence in probability.

\begin{corollary}
Assume that $\E\overline{\Vol}(\TG)^2<\infty$. If $\phi:\mathcal{R}^d\to\mathbb{R}$ is a measurable, additive,  and locally bounded functional such that the limit $m_{\phi,Z}$ in \eqref{eqn:limit_mean} exists, then
$$
\frac{\phi(Z\cap \mathbb{B}_R)}{\Vol(\mathbb{B}_R)} \overset{\BP}{\longrightarrow} m_{\phi,Z} \quad \text{as} \quad R\to\infty.
$$
\end{corollary}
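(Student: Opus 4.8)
The plan is to derive this corollary as a routine consequence of the variance bound in Theorem~\ref{Thmcovasymp}(b) combined with Chebyshev's inequality, so that the ``proof'' amounts to unpacking the definitions. First I would fix $\varepsilon>0$ and apply Chebyshev's inequality to the random variable $\phi(Z\cap\mathbb{B}_R)/\Vol(\mathbb{B}_R)$, writing
\begin{align*}
\BP\!\left(\left|\frac{\phi(Z\cap\mathbb{B}_R)}{\Vol(\mathbb{B}_R)}-\frac{\E\phi(Z\cap\mathbb{B}_R)}{\Vol(\mathbb{B}_R)}\right|\ge\varepsilon\right)
\le\frac{1}{\varepsilon^2}\,\frac{\Var\phi(Z\cap\mathbb{B}_R)}{\Vol(\mathbb{B}_R)^2}
=\frac{1}{\varepsilon^2\,\Vol(\mathbb{B}_R)}\cdot\frac{\Var\phi(Z\cap\mathbb{B}_R)}{\Vol(\mathbb{B}_R)}.
\end{align*}
By Theorem~\ref{Thmcovasymp}(b) the last fraction stays bounded as $R\to\infty$, and since $\Vol(\mathbb{B}_R)\to\infty$ by \eqref{volbalr}, the right-hand side tends to $0$. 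Hence $\phi(Z\cap\mathbb{B}_R)/\Vol(\mathbb{B}_R)-\E\phi(Z\cap\mathbb{B}_R)/\Vol(\mathbb{B}_R)\overset{\BP}{\longrightarrow}0$.

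Next I would invoke the hypothesis that the limit $m_{\phi,Z}$ in \eqref{eqn:limit_mean} exists, i.e.\ $\E\phi(Z\cap\mathbb{B}_R)/\Vol(\mathbb{B}_R)\to m_{\phi,Z}$ as $R\to\infty$, which is a deterministic convergence and in particular convergence in probability. Adding the two convergences (Slutsky, or simply the triangle inequality for the metric of convergence in probability) gives
$$
\frac{\phi(Z\cap\mathbb{B}_R)}{\Vol(\mathbb{B}_R)}
=\left(\frac{\phi(Z\cap\mathbb{B}_R)}{\Vol(\mathbb{B}_R)}-\frac{\E\phi(Z\cap\mathbb{B}_R)}{\Vol(\mathbb{B}_R)}\right)+\frac{\E\phi(Z\cap\mathbb{B}_R)}{\Vol(\mathbb{B}_R)}\overset{\BP}{\longrightarrow}0+m_{\phi,Z}=m_{\phi,Z},
$$
which is the assertion. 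One should also note at the outset that the second moment $\E\phi(Z\cap\mathbb{B}_R)^2$ is finite (so that $\Var\phi(Z\cap\mathbb{B}_R)$ is well defined), which is guaranteed by Theorem~\ref{Thmcovasymp}(a) under the standing assumption $\E\overline{\Vol}(\TG)^2<\infty$.

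There is essentially no obstacle here: the entire content of the corollary is contained in Theorem~\ref{Thmcovasymp}(b), and the only point requiring a word of care is that convergence in probability is being combined with convergence of a deterministic sequence, which is immediate. The mild subtlety worth recording is that the existence of $m_{\phi,Z}$ must be assumed (it does not follow from additivity and local boundedness alone without further structure such as continuity, as in Theorem~\ref{thm:mean}(b)); the corollary is phrased precisely to take this as a hypothesis. If one wishes, the same argument upgrades to convergence in $L^1$ or even $L^2$, since $\E\big(\phi(Z\cap\mathbb{B}_R)/\Vol(\mathbb{B}_R)-m_{\phi,Z}\big)^2\to0$ by the same two ingredients, but for the stated weak law convergence in probability via Chebyshev is all that is needed.
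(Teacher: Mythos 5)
Your proof is correct and follows exactly the route the paper indicates: Chebyshev's inequality combined with the variance bound of Theorem \ref{Thmcovasymp} (b), the fact that $\Vol(\mathbb{B}_R)\to\infty$, and the assumed convergence of the normalized means to $m_{\phi,Z}$. Nothing further is needed; your closing remark about $L^2$-convergence is also accurate.
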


 In the following, we use the \emph{covariogram function} $C_W:\BHd\times\BHd\to [0,\infty)$ of a convex body $W\in\cK^d$, which is defined by
$$
C_W(x,z)\defeq\lambda(\{\varrho\in\Ih\colon \varrho x,\varrho z\in W\}),\quad x,z\in\mathbb{H}^d.
$$
The \emph{mean covariogram of the typical particle} $\TG$ is the function $C:\BHd\times\BHd\to [0,\infty)$ given by
\begin{equation}\label{eqn:covariogram}
C(x,z)\defeq\E C_{\TG}(x,z)=\E\lambda(\{\varrho\in\Ih\colon\varrho x\in \TG,\varrho z\in\TG\})=\frac{1}{\gamma}\int_{\cK^d}\mathbf{1}\{x,z\in K\}\, \Lambda(\dint K)
\end{equation}
for $ x,z\in\BHd$.
Note that $C_W(x,z)=C_W(\tau x,\tau z)$, for $x,z\in\BHd$ and $\tau\in \Ih$, and the same relation holds for $C$. Hence, the covariogram function of $W$ and the mean covariogram of the typical particle both depend  only on the relative position of their arguments. Moreover, both functions depend continuously on their arguments. This  follows from the
fact that isometries are continuous and that for each
$y\in\BH^d$ and each $K\in\mathcal{K}$ we have that
$\varrho y\notin \partial K$ for $\lambda$-a.e.\ $\varrho\in \Ih$. Finally, we point out that the dominated convergence theorem implies that  $C(\sfp,z)\to 0$ as $d_h(\sfp,z)\to\infty$.

\medskip

  We denote by $\operatorname{Uniform}(\mathbb{S}^{d-1}_\sfp)$ the uniform distribution on $\mathbb{S}^{d-1}_\sfp$ and by $\operatorname{Exp}(a)$ the exponential distribution with parameter $a>0$. 

\begin{theorem}\label{cor:variance_volume}
Assume that $\E\overline{\Vol}(\TG)^2<\infty$. Then
\begin{align}\label{eqn:variance_volume}
\Var V_d(Z\cap W)
& =e^{-2\gamma v_d}   \int_{\BHd} \left(e^{\gamma C(\sfp,    z)}-1\right) C_W( \sfp, z )\,
\mathcal{H}^d(\dint z)<\infty
\end{align}
for $W\in\cK^d$,
\begin{equation}\label{eqn:variance_volume_ball}
\Var V_d(Z\cap \BB_R) = e^{-2\gamma v_d} \int_{\BHd} \left(e^{\gamma C(\sfp,    z)}-1\right)\mathcal{H}^d\left(\BB_R\cap \BB(z,R)\right)\,
\mathcal{H}^d(\dint z)
\end{equation}
for $R>0$, and
\begin{align}
\lim_{R\to\infty} \frac{\Var V_d(Z\cap \mathbb{B}_R)}{\Vol(\mathbb{B}_R)} 
&= e^{-2\gamma v_d} \int_{\mathbb{H}^d} \left(e^{\gamma C(\sfp,z)} -1 \right) \int_{\hb}  \mathbf{1}\{\sfp,z\in B\} \, \mu_{\rm hb}(\dint B) \,
 \mathcal{H}^d(\dint z) \label{eqn:asymptotic_variance_volumeproto}\\
&= e^{-2\gamma v_d} \int_{\BHd} \left(e^{\gamma C(\sfp,    z)}-1\right) \BP(z\in \horo_{U,T}) \, \mathcal{H}^d(\dint z)<\infty \label{eqn:asymptotic_variance_volume}
\end{align}
with independent $U\sim\operatorname{Uniform}(\mathbb{S}^{d-1}_\sfp)$ and $-T\sim\operatorname{Exp}(d-1)$.
\end{theorem}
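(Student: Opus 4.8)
\textbf{Proof plan for Theorem \ref{cor:variance_volume}.}
The plan is to obtain \eqref{eqn:variance_volume} as a special case of the exact covariance formula \eqref{eqn:Exact_Covariance} with $\phi=\psi=V_d$, and then specialize to $W=\BB_R$ and pass to the limit. First I would compute $\phi^*$ for the volume. Since $V_d$ is additive and the Fubini-type identity gives $\E V_d(Z\cap A)=\BP(\sfp\in Z)\,V_d(A)=(1-e^{-\gamma v_d})V_d(A)$ for $A\in\cR^d$ (as recorded after Corollary \ref{cor:expectation_volume_surface_area}, using $\Lambda([\{\sfp\}])=\gamma v_d$), we get $\phi^*(A)=V_d^*(A)=-e^{-\gamma v_d}V_d(A)$. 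Hence $(\phi^*\cdot\psi^*)(A)=e^{-2\gamma v_d}V_d(A)^2$, and because $V_d=\mathcal H^d$ is just integration, $V_d(A)^2=\int_{\BHd}\int_{\BHd}\I\{x\in A\}\I\{z\in A\}\,\mathcal H^d(\dint x)\,\mathcal H^d(\dint z)$. Substituting into \eqref{eqn:Exact_Covariance}, applying the disintegration \eqref{eqdisintegration} of $\Lambda^n$ and Fubini, the $n$-th summand becomes an $n$-fold integral over particles of the probability that all of $K_1,\dots,K_n$ contain both $x$ and $z$; summing the resulting series $\sum_{n\ge1}\frac1{n!}(\gamma C(\sfp,z))^n\cdot(\text{something})$ produces the factor $e^{\gamma C(\sfp,z)}-1$. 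The bookkeeping here is exactly the standard Boolean-model computation: condition on $x,z$, note $\int_{\cK^d}\I\{x,z\in K\}\Lambda(\dint K)=\gamma C(x,z)=\gamma C(\sfp,z)$ by isometry invariance, and the $W$-dependence factors out as $C_W(\sfp,z)=\lambda(\{\varrho:\varrho\sfp,\varrho z\in W\})=\mathcal H^d\{y:y\in W, \text{(partner of $z$)}\in W\}$; for $W=\BB_R$ this is $\mathcal H^d(\BB_R\cap\BB(z,R))$, using isometry invariance of $\mathcal H^d$ to move the pair $(\sfp,z)$ around. Finiteness of the series and the right to interchange sum and integral is controlled by $\E\overline{\Vol}(\TG)^2<\infty$; concretely $C(\sfp,z)\le \gamma^{-1}\int\I\{z\in K\}\Lambda(\dint K)$ is integrable in $z$ with an $L^1$ bound in terms of $\E\overline{\Vol}(\TG)$, and the extra square handles the product structure.

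For \eqref{eqn:variance_volume_ball} there is nothing new: it is \eqref{eqn:variance_volume} with $W=\BB_R$ and the identification $C_{\BB_R}(\sfp,z)=\mathcal H^d(\BB_R\cap\BB(z,R))$ just described. The substantive step is the limit \eqref{eqn:asymptotic_variance_volumeproto}. Divide \eqref{eqn:variance_volume_ball} by $\Vol(\BB_R)$; the integrand is $e^{-2\gamma v_d}(e^{\gamma C(\sfp,z)}-1)\cdot\mathcal H^d(\BB_R\cap\BB(z,R))/\Vol(\BB_R)$. The key geometric fact is that for fixed $z$,
\[
\frac{\mathcal H^d(\BB_R\cap\BB(z,R))}{\Vol(\BB_R)}\;\longrightarrow\;\int_{\hb}\I\{\sfp,z\in B\}\,\mu_{\rm hb}(\dint B)\qquad(R\to\infty),
\]
i.e. the ``relative volume of the lens formed by two balls of radius $R$ whose centers are at fixed distance'' converges to the $\mu_{\rm hb}$-measure of the set of horoballs containing both $\sfp$ and $z$. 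Intuitively: conditionally on $\sfp\in\BB_R$, a uniform point of $\BB_R$ lies near the boundary with non-vanishing probability (this is the hyperbolic boundary phenomenon, \eqref{eq:limit1}), and as $R\to\infty$ the ball $\BB(z,R)$ seen from such a point converges to a horoball; the limiting intensity measure for the direction $U$ and the signed distance $T$ of that horoball to $\sfp$ is precisely $\mu_{\rm hb}$, which after the change of variables \eqref{eq:muhb} (with the factor $e^{(d-1)t}$ coming from $\sinh^{d-1}$ asymptotics and the $(d-1)/\omega_d$ normalization) becomes the law of $\horo_{U,T}$ with $U\sim\operatorname{Uniform}(\mathbb S^{d-1}_\sfp)$ and $-T\sim\operatorname{Exp}(d-1)$, giving the second expression \eqref{eqn:asymptotic_variance_volume}. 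To make this rigorous I would parametrize both balls in geodesic polar coordinates around $\sfp$ via \eqref{eq:polarcoord}, write $\mathcal H^d(\BB_R\cap\BB(z,R))/\Vol(\BB_R)$ as an explicit integral, and show pointwise convergence of the integrand together with a uniform-in-$R$ domination (the ratio is bounded by $1$, so the outer $z$-integral is dominated by the integrable function $z\mapsto e^{-2\gamma v_d}(e^{\gamma C(\sfp,z)}-1)$, which is where $\E\overline{\Vol}(\TG)^2<\infty$ enters once more — to guarantee $\int(e^{\gamma C(\sfp,z)}-1)\mathcal H^d(\dint z)<\infty$); then dominated convergence finishes it. Equivalently, since this limit statement is exactly the $\phi=\psi=V_d$ instance of Theorem \ref{Thmcovasymp}(c), one may simply invoke that theorem and only verify that for the volume functional the general horoball integral collapses to $\int_{\hb}\I\{\sfp,z\in B\}\,\mu_{\rm hb}(\dint B)$ — because $(V_d^*\cdot V_d^*)(G\cap K_2\cap\dots\cap K_n\cap B)=e^{-2\gamma v_d}V_d(G\cap K_2\cap\dots\cap K_n\cap B)^2$ and the same series resummation as above applies inside the horoball $B$, producing $(e^{\gamma C(\sfp,z)}-1)$ times $\int_{\hb}\I\{\sfp,z\in B\}\,\mu_{\rm hb}(\dint B)$.

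I expect the main obstacle to be the geometric limit for $\mathcal H^d(\BB_R\cap\BB(z,R))/\Vol(\BB_R)$ and its clean identification with the horoball measure $\mu_{\rm hb}$: this requires a careful asymptotic analysis of the lens-shaped intersection of two large hyperbolic balls, controlling where the mass concentrates (near the part of $\partial\BB_R$ facing $z$) and checking that the limiting object is genuinely the $\mu_{\rm hb}$-measure of $\{B\in\hb:\sfp,z\in B\}$ with the normalization of \eqref{eq:muhb}; the probabilistic restatement via $U$ and $-T\sim\operatorname{Exp}(d-1)$ then follows by a direct change of variables in \eqref{eq:muhb}. Everything else — the resummation of the series, interchanging sums and integrals, and the finiteness assertions — is routine given the exact covariance formula \eqref{eqn:Exact_Covariance}, the disintegration \eqref{eqdisintegration}, and the second moment hypothesis $\E\overline{\Vol}(\TG)^2<\infty$.
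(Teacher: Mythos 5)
Your proposal is correct and follows essentially the same route as the paper: compute $V_d^*=-e^{-\gamma v_d}V_d$, plug into the exact covariance series \eqref{eqn:Exact_Covariance}, resum via the covariogram to get \eqref{eqn:variance_volume}, identify $C_{\BB_R}(\sfp,z)=\mathcal H^d(\BB_R\cap\BB(z,R))$, and pass to the limit by dominated convergence using the lens limit $\mathcal H^d(\BB_R\cap\BB(z,R))/\Vol(\BB_R)\to\int_{\hb}\I\{\sfp,z\in B\}\,\mu_{\rm hb}(\dint B)$, which the paper establishes (Corollary \ref{cor:horint1}(c) via Lemma \ref{lem:LimitEW_New}) by exactly the polar-coordinate argument you sketch, the domination being supplied by $e^{\gamma C(\sfp,z)}-1\le\gamma e^{\gamma v_d}C(\sfp,z)$ and $\int C(\sfp,z)\,\mathcal H^d(\dint z)=\E V_d(\TG)^2<\infty$. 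Your fallback via Theorem \ref{Thmcovasymp}(c) is also the paper's alternative proof of \eqref{eqn:asymptotic_variance_volumeproto}.
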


\begin{remark}\label{rem:general_volume}
{\rm Theorem \ref{cor:variance_volume} holds more generally for a stationary Poisson particle process with compact particles and - in case of \eqref{eqn:variance_volume} - a general compact observation window. The proof will be given at the end of Section \ref{sec:12}.}
\end{remark}

 For $i,j\in\{0,\ldots,d\}$ and $W\in\mathcal{K}^d$ we define the local covariances
\begin{align*}
\sigma_{i,j}(W)\defeq\Cov(V_i(Z\cap W),V_j(Z\cap W))
\end{align*}
and the asymptotic covariances
\begin{align*}
\sigma_{i,j}\defeq \lim_{R\to\infty} \frac{\Cov(V_i(Z\cap \mathbb{B}_R),V_j(Z\cap \mathbb{B}_R))}
{V_d(\mathbb{B}_R)}
\end{align*}
whenever the limit exists. In the following we shall determine explicit expressions for 
these covariances for $i,j\in\{d-1,d\}$.

First we need to introduce
some notation.
For $K\in\mathcal{K}^d$, let $C_{d-1}(K,\cdot)$ be the boundary (surface) measure  associated with $K$, that is, the top order curvature measure of $K$ in $\mathbb{H}^d$  (see Section \ref{sec:GF}). If $K^\circ\neq\varnothing$, then
$C_{d-1}(K,\cdot)=\mathcal{H}^{d-1}(\cdot\cap \partial K)$. In addition, we   define $C_d(K,\cdot)\defeq\mathcal{H}^d(\cdot\cap K)$.

For the statement of the results, we define measures $M_{i,j}$   on $(\BHd)^2$ by
\begin{align*}
M_{i,j}&\defeq\E \int_{\mathbb{H}^d}\int_{\mathbb{H}^d} \I\{(x,y)\in\cdot\}\,C_{i}(\TG,\dint x)\,\,C_{j}(\TG,\dint y),
\end{align*}
for $i,j\in\{d-1,d\}$.
The condition $\E\overline{\Vol}(\TG)^2<\infty$ and the Steiner formula \eqref{eq:Steiner} ensure that these measures are finite.

The next theorem provides rather explicit expressions for the local variances and covariances of volume and surface area. In the statements of these results we restrict the integration domains  by an indicator function (if necessary) and do not explicitly write the integration domains if they are determined by the measures.

\begin{theorem}\label{th:localcovariance}
Assume that $\E\overline{\Vol}(\TG)^2<\infty$. If $W\in\mathcal{K}^d$, then 
\begin{align}\label{eq:987} \notag
\sigma_{d-1,d}(W)&=
- e^{-2\gamma v_d} \gamma v_{d-1} \int  \Big(e^{\gamma C(\sfp, z)}-1\Big)
C_W(\sfp,z)\,\mathcal{H}^d(dz)\\\notag
&\quad+ e^{-2\gamma v_d} \gamma\int e^{\gamma C(y,z)}C_W(y,z)
\,M_{d-1,d}(\dint (y,z))\\
&\quad+e^{-2\gamma v_d}\iint \big(e^{\gamma C(y,z)}-1\big)\I\{z\in W\} \,C_{d-1}(W,\dint y)\,\mathcal{H}^d(\dint z),
\end{align}
and, with independent copies $\TG_1,\TG_2$ of the typical particle,
\begin{align}\label{eq:996}\notag
\sigma_{d-1,d-1}(W)\notag
&=e^{-2\gamma v_d}\gamma^2v_{d-1}^2\int  \Big(e^{\gamma C(\sfp, z)}-1\Big)
C_W(\sfp,z) \,\mathcal{H}^d(dz)\\\notag
&\quad -2e^{-2\gamma v_d}\gamma^2 v_{d-1}\int  e^{\gamma C(y, z)}C_W(y,z)\,M_{d-1,d}(\dint(y,z))\\\notag
&\quad -2e^{-2\gamma v_d}\gamma v_{d-1}\iint  \big(e^{\gamma C(y, z)}-1\big)\I\{z\in W\}
\, C_{d-1}(W,\dint y)\,\mathcal{H}^d(dz)\\\notag
&\quad +e^{-2\gamma v_d}\gamma \int e^{\gamma C(y, z)}C_W(y,z)
\,M_{d-1,d-1}(\dint (y,z))\\\notag
&\quad +e^{-2\gamma v_d} \gamma^2 \E \iiint e^{\gamma C(\varrho y, z)}C_W(\varrho y,z)
\I\{\varrho y\in \TG_2,z \in \varrho \TG_1\}\notag\\
&\qquad\quad\times
\,\lambda(\dint\varrho)\,C_{d-1}(\TG_1,\dint y)\,C_{d-1}(\TG_2,\dint z)  \notag \\\notag
&\quad +2e^{-2\gamma v_d}\gamma \E \iiint e^{\gamma C(\varrho y,  z)}\I\{\varrho y\in \TG,z\in \varrho W\}\, \lambda(\dint\varrho)
 \,C_{d-1}(W,\dint y)\,C_{d-1}(\TG,\dint z)\\
&\quad +e^{-2\gamma v_d}\iint \big(e^{\gamma C(y, z)}-1\big)\,C_{d-1}(W,\dint y)\,C_{d-1}(W,\dint z).
\end{align}
Moreover, all integrals involved in \eqref{eq:987} and \eqref{eq:996} are finite.
\end{theorem}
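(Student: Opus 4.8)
The plan is to derive both formulas from the exact covariance series \eqref{eqn:Exact_Covariance} in Theorem \ref{Thmcovasymp}(a), specialized to the pairs $(\phi,\psi)$ with $\phi,\psi\in\{V_{d-1},V_d\}$, by making the summands fully explicit using the structure of $\phi^*$. The essential observation is that, by the definition \eqref{eqn:phi_star} of $\phi^*$ together with the exact mean value formula \eqref{eqn:limit_meana}, one can write $\phi^*$ for the volume and the surface area in closed form. For $V_d$ we have $V_d^*(A)=\mathbf{E}V_d(Z\cap A)-V_d(A)=-e^{-\gamma v_d}V_d(A)$ by \eqref{eq4.3a}. For $V_{d-1}$, Corollary \ref{cor:expectation_volume_surface_area} gives $V_{d-1}^*(A)=\gamma v_{d-1}e^{-\gamma v_d}V_d(A)+(e^{-\gamma v_d}-1)V_{d-1}(A)$ when $A\in\cK^d$; but since $\phi^*$ must be evaluated on intersections $K_1\cap\cdots\cap K_n\cap W$ which lie in $\cR^d$, one first needs the additive extension. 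Here the point is that $V_{d-1}$ on the convex ring admits the representation $V_{d-1}(A)=\tfrac12 C_{d-1}(A,\BHd)$ via the (additive) surface measure, and more importantly that one can write $V_{d-1}^*(A)=\mathbf{E}V_{d-1}(Z\cap A)-V_{d-1}(A)$ and use that, conditionally on a point $y\in\partial$-type structure, the ``surface not covered'' has the probabilistic interpretation $\mathbf{P}(y\notin Z)=e^{-\gamma v_d}$.

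The concrete route I would take: first substitute the disintegration \eqref{eqdisintegration} into each $n$-fold integral of \eqref{eqn:Exact_Covariance}, turning $\Lambda^n$ into $\gamma^n$ times an integral over $(\cK^d_\sfp)^n\times\Ih^n$. Then I would use the factorized form of $(\phi^*\cdot\psi^*)$ on $\cR^d$: writing $\phi^*$ and $\psi^*$ as integrals against curvature measures $C_{d-1}(\cdot,\cdot)$ and $C_d(\cdot,\cdot)=\Vol(\cdot\cap\cdot)$, the product $(\phi^*\psi^*)(K_1\cap\cdots\cap K_n\cap W)$ becomes a double integral $\iint$ against $C_i$ and $C_j$ of the intersection, weighted by $e^{-2\gamma v_d}$ and appropriate powers of $\gamma v_{d-1}$. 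The key combinatorial step is the resummation over $n$: for a point $x$ and a point $z$, the number of particles $K_2,\dots,K_n$ each containing a prescribed pair (or the point) produces, after using the Mecke-type / exponential generating structure, a factor $e^{\gamma C(\cdot,\cdot)}$ (for a pair jointly covered) or $e^{\gamma C(\sfp,\cdot)}$, minus $1$ to remove the empty-selection term — exactly the pattern $(e^{\gamma C}-1)$ and $e^{\gamma C}$ seen in the statement. The covariogram $C$ enters precisely as $C(x,z)=\tfrac1\gamma\int \mathbf 1\{x,z\in K\}\,\Lambda(\dint K)$, which is \eqref{eqn:covariogram}, and $C_W$ enters from the constraint that the reference point(s) lie in $W$, i.e.\ $C_W(x,z)=\lambda(\{\varrho:\varrho x,\varrho z\in W\})$.

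I would organize the bookkeeping by separating, for each of the two curvature measures attached to $\phi^*$ and to $\psi^*$, whether it is $C_{d-1}$ or $C_d$, and whether it is taken with respect to a \emph{particle} $\TG$ (producing an $M_{i,j}$ term or an extra $\gamma$-integration over $\Ih$ against two independent typical particles) or with respect to the \emph{window} $W$ (producing a $C_{d-1}(W,\cdot)$ term and an $\mathbf 1\{z\in W\}$ constraint). This explains the seven-term structure of \eqref{eq:996}: the pair of surface measures can both sit on $W$ (last term), both on one typical particle (the $M_{d-1,d-1}$ term), one on $W$ and one on a particle (the mixed $C_{d-1}(W,\cdot)\,C_{d-1}(\TG,\cdot)$ term with a $\lambda$-integration), or on two independent particles $\TG_1,\TG_2$ (the triple-integral term with $\lambda(\dint\varrho)$); the remaining three terms come from the $\gamma v_{d-1}e^{-\gamma v_d}V_d$ part of $V_{d-1}^*$ crossed against the analogous pieces, producing the $(e^{\gamma C(\sfp,z)}-1)C_W(\sfp,z)$ volume-type integral (with coefficient $\gamma^2 v_{d-1}^2$), the $M_{d-1,d}$ cross term, and the mixed surface-volume term with $\mathbf 1\{z\in W\}C_{d-1}(W,\dint y)$. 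Formula \eqref{eq:987} is the same computation with one factor $V_{d-1}^*$ replaced by $V_d^*=-e^{-\gamma v_d}V_d$, which kills all terms in which the second curvature measure would be a surface measure on a particle or on $W$, leaving exactly the three displayed terms. Finiteness of all integrals follows from $\mathbf E\overline{\Vol}(\TG)^2<\infty$ via the Steiner formula \eqref{eq:Steiner} (which bounds $\mathbf E V_{d-1}(\TG)^2$ and the mass of $M_{i,j}$), together with the integrability already used in Theorem \ref{Thmcovasymp}(a) and the decay $C(\sfp,z)\to 0$.

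The main obstacle will be the careful resummation of the series and the justification that the ``inner'' integration over $K_2,\dots,K_n$ factorizes and re-exponentiates correctly: one must track which points are forced to lie in which of the sets $K_1,\dots,K_n,W$, apply the disintegration and Fubini repeatedly, and verify absolute convergence at each step so that the rearrangements are legitimate — this is where the second-moment hypothesis $\mathbf E\overline{\Vol}(\TG)^2<\infty$ is genuinely needed. A secondary technical point is handling the additive extension of $V_{d-1}$ to the convex ring and expressing $V_{d-1}^*$ on $\cR^d$ via the surface measure $C_{d-1}$ plus the ``covered volume'' correction; once $\phi^*$ is written as $\pm e^{-\gamma v_d}$ times an integral against $C_d$ plus (for $V_{d-1}$) a piece against $C_{d-1}$, the rest is a systematic but lengthy accounting exercise that I would defer to Sections \ref{sec:12} and \ref{sec:cov_vol_surf} as announced.
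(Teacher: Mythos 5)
Your overall strategy is the one the paper itself follows: start from the exact covariance series of Theorem \ref{Thmcovasymp} (a) with $\phi,\psi\in\{V_{d-1},V_d\}$, insert closed-form expressions for $V_d^*$ and $V_{d-1}^*$, disintegrate $\Lambda^n$ via Theorem \ref{thmdisintegration}, resum the series through the covariogram to produce the factors $e^{\gamma C}$ and $e^{\gamma C}-1$, and sort the resulting terms according to which set (a particle or the window) carries each curvature measure. Two concrete points, however, must be repaired before this plan produces the stated formulas. First, your expression for $V_{d-1}^*$ is wrong: Corollary \ref{cor:expectation_volume_surface_area} gives $V_{d-1}^*(A)=\gamma v_{d-1}e^{-\gamma v_d}V_d(A)-e^{-\gamma v_d}V_{d-1}(A)$, not $\gamma v_{d-1}e^{-\gamma v_d}V_d(A)+(e^{-\gamma v_d}-1)V_{d-1}(A)$; with your coefficient the products $\phi^*\psi^*$ no longer carry the uniform prefactor $e^{-2\gamma v_d}$, and the resummation cannot reproduce \eqref{eq:987} and \eqref{eq:996}. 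Relatedly, in the paper's normalization $V_{d-1}(A)=C_{d-1}(A,\BHd)$ (there is no factor $\tfrac12$), and your concern about an additive extension to $\cR^d$ is vacuous: since the particles and $W$ are convex, every set $K_1\cap\cdots\cap K_n\cap W$ appearing in the series is itself convex, so the formulas for $V_d^*,V_{d-1}^*$ on $\cK^d$ suffice.

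Second, and more substantively, your bookkeeping of ``which set carries the surface measure'' tacitly uses the decomposition $C_{d-1}(K_0\cap\varrho_1K_1\cap\cdots\cap\varrho_nK_n,\cdot)=\sum_i C_{d-1}(\varrho_iK_i,\cdot\cap\bigcap_{j\neq i}\varrho_jK_j)$, which fails for particular configurations (boundaries can overlap) and is only valid for $\lambda^n$-a.e.\ tuples of isometries; this is precisely Lemma \ref{lem:surface}, whose proof rests on Brothers' a.e.\ transversality theorem together with the local determination of curvature measures. Without this ingredient the splitting of $V_{d-1}$ of the intersection into boundary contributions of the individual particles and of $W$ — and hence the seven-term structure of \eqref{eq:996} and the three terms of \eqref{eq:987} — does not get off the ground. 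Once you correct $V_{d-1}^*$, invoke Lemma \ref{lem:surface}, and use the absolute-convergence bounds already established for Theorem \ref{Thmcovasymp} (a) (which justify the Fubini and resummation steps and give finiteness under $\E\overline{\Vol}(\TG)^2<\infty$), your case analysis — both surface measures on $W$, both on one particle (the $M_{d-1,d-1}$ term), on two distinct particles, mixed particle/window, plus the cross terms coming from the $\gamma v_{d-1}V_d$ part of $V_{d-1}^*$ — is indeed how the paper arrives at \eqref{eq:987} and \eqref{eq:996}.
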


 From Theorem \ref{th:localcovariance}  asymptotic variances and covariances of these functionals could in principle be derived, which are stated in the following theorem. Proceeding in this way, the proof of the relation
 $$
 \lim_{R\to\infty}
 \int \big(e^{\gamma C(\sfp, z)}-1\big)\,C_{d-1}(\mathbb{B}(\exp_{\sfp}(Ru),R) ,\dint z)=\int \big(e^{\gamma C(\sfp, z)}-1\big)\,C_{d-1}(\mathbb{B}_{u,0} ,\dint z),
$$
where $u\in\mathbb{S}^{d-1}_\sfp$,
turns out to be difficult. Here we define $C_{d-1}(\mathbb{B}_{u,0} ,\cdot)\defeq\mathcal{H}^{d-1}(\partial \mathbb{B}_{u,0}\cap\cdot)$.  For this reason, we base the proof of Theorem \ref{th:asympcovariance} on   Theorem \ref{Thmcovasymp} (c) and prepare the proof by some additional integral geometric formulas (see Section \ref{sec:7}) which may be of independent interest.

\begin{theorem}\label{th:asympcovariance}
Assume that $\E\overline{\Vol}(\TG)^2<\infty$. Let $u\in\mathbb{S}^{d-1}_\sfp$ be fixed. Then
\begin{align}\label{eq:987b}
\sigma_{d-1,d}
&=- e^{-2\gamma v_d} \gamma v_{d-1} \iint  \Big(e^{\gamma C(\sfp, z)}-1\Big) \mathbf{1}\{ \sfp,z \in B \}
\,\mathcal{H}^d(\dint z)\, \mu_{\rm hb}(\dint B) \nonumber\\\notag
&\quad+ e^{-2\gamma v_d}\gamma\iint  e^{\gamma C(y, z)}\I\{y,z\in B\}\,M_{d-1,d}(\dint(y,z))
\, \mu_{\rm hb}(\dint B) \\
&\quad+ (d-1)\, e^{-2\gamma v_d}\int \big(e^{\gamma C(\sfp, z)}-1\big)\I\{z\in \mathbb{B}_{u,0}\}
\,\mathcal{H}^d(\dint z)
\end{align}
and, with independent copies $\TG_1,\TG_2$ of the typical particle,
\begin{align}\label{eq:996b}
\sigma_{d-1,d-1}
&=e^{-2\gamma v_d}\gamma^2v_{d-1}^2\iint  \Big(e^{\gamma C(\sfp, z)}-1\Big)
\mathbf{1}\{ \sfp,z \in B \}
\,\mathcal{H}^d(\dint z)\, \mu_{\rm hb}(\dint B)\notag\\\notag
&\quad -2e^{-2\gamma v_d}\gamma^2 v_{d-1}\iint  e^{\gamma C(y, z)}\I\{y,z\in B\}\,M_{d-1,d}(\dint(y,z))
\, \mu_{\rm hb}(\dint B)\\\notag
&\quad -2(d-1)e^{-2\gamma v_d}\gamma v_{d-1}\int \big(e^{\gamma C(\sfp, z)}-1\big)\I\{z\in \mathbb{B}_{u,0}\}
\,\mathcal{H}^d(\dint z)\\\notag
&\quad +e^{-2\gamma v_d}\gamma \iint  e^{\gamma C(y, z)}\I\{y,z\in B\}\,M_{d-1,d-1}(\dint(y,z))
\, \mu_{\rm hb}(\dint B)\notag\\\notag
&\quad +e^{-2\gamma v_d} \gamma^2 \E \iiiint e^{\gamma C(\varrho y, z)} \I\{\varrho y\in \TG_2\cap B,z\in\varrho \TG_1\cap B\}\\
&\quad\qquad\times\,\lambda(\dint\varrho)\,C_{d-1}(\TG_1,\dint y)\,C_{d-1}(\TG_2,\dint z)\, \mu_{\rm hb}(\dint B) \notag \\\notag
&\quad +2(d-1)e^{-2\gamma v_d}\gamma \E \iint e^{\gamma C(\varrho \sfp,  z)}\I\{\varrho \sfp\in \TG, z\in \varrho\mathbb{B}_{u,0}\}\, \lambda(\dint\varrho)\,C_{d-1}(\TG,\dint z)\notag\\
&\quad +(d-1)e^{-2\gamma v_d}\int \big(e^{\gamma C(\sfp, z)}-1\big)\,C_{d-1}(\mathbb{B}_{u,0},\dint z).
\end{align}
Moreover, all integrals involved in \eqref{eq:987b} and \eqref{eq:996b} are finite.
\end{theorem}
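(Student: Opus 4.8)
The plan is to derive \eqref{eq:987b} and \eqref{eq:996b} from the general asymptotic covariance formula of Theorem~\ref{Thmcovasymp}(c), specialised to $(\phi,\psi)=(V_{d-1},V_d)$ and $(\phi,\psi)=(V_{d-1},V_{d-1})$ (both functionals are continuous on $\cK^d$, so that theorem applies), and then to convert the resulting series into closed form by integral geometry. The first step is to make $\phi^*,\psi^*$ explicit: from Corollary~\ref{cor:expectation_volume_surface_area} one reads off, for every $A\in\cR^d$,
\[
V_d^*(A)=-e^{-\gamma v_d}V_d(A),\qquad V_{d-1}^*(A)=e^{-\gamma v_d}\bigl(\gamma v_{d-1}V_d(A)-V_{d-1}(A)\bigr).
\]
Since $\phi\mapsto\phi^*$ is linear, the product $\phi^*\cdot\psi^*$ occurring in Theorem~\ref{Thmcovasymp}(c) is, up to the factor $e^{-2\gamma v_d}$, a linear combination of the three ``monomial'' functionals $A\mapsto V_d(A)^2$, $A\mapsto V_d(A)V_{d-1}(A)$, $A\mapsto V_{d-1}(A)^2$; only the first two occur for $(V_{d-1},V_d)$, all three for $(V_{d-1},V_{d-1})$. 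By linearity of the series operator in Theorem~\ref{Thmcovasymp}(c) it then suffices to evaluate that operator on each of the three monomial functionals, and one verifies that the resulting summands add up to the three, respectively seven, terms in \eqref{eq:987b} and \eqref{eq:996b}.

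The $V_d^2$-contribution I would not recompute, but extract from Theorem~\ref{cor:variance_volume}: because $(V_d^*)^2=e^{-2\gamma v_d}V_d^2$, the series operator evaluated on $A\mapsto V_d(A)^2$ equals $e^{2\gamma v_d}$ times the asymptotic variance of the volume, hence $\iint(e^{\gamma C(\sfp,z)}-1)\,\I\{\sfp,z\in B\}\,\mathcal H^d(\dint z)\,\mu_{\rm hb}(\dint B)$ by \eqref{eqn:asymptotic_variance_volumeproto}; this is the first term of \eqref{eq:987b} and of \eqref{eq:996b}. For the other two functionals I would insert $V_d(A)=\int\I\{z\in A\}\,\mathcal H^d(\dint z)$ and decompose the top-order curvature measure of the intersection $A=G\cap K_2\cap\cdots\cap K_n\cap B$ into the contributions of its $n+1$ ``walls'' $G,K_2,\ldots,K_n,B$, each restricted to the intersection of the remaining walls; the lower-dimensional ``edge'' contributions may be discarded since the particles are $\Lambda$-a.e.\ and the horoball is $\mu_{\rm hb}$-a.e.\ in general position. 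This writes each monomial-contribution as a finite sum of terms classified by which wall(s) carry the surface point(s). In a given term, every particle through whose interior both points pass contributes a factor $\int\I\{y,z\in K\}\,\Lambda(\dint K)=\gamma C(y,z)$ by \eqref{eqn:covariogram}; performing the $\Lambda^{n-1}$-integration and summing over $n$ with weights $1/n!$ collapses these factors into the weights $e^{\gamma C}-1$ and, after the symmetrisation over which particle is distinguished, $e^{\gamma C}$ that appear in the statement. A particle $K_j$ carrying a surface point is handled by the disintegration \eqref{eqdisintegration}, which turns it into a randomly placed independent copy of the typical particle and produces the $M_{d-1,d}$, $M_{d-1,d-1}$ and the $\gamma^2\,\E$-terms with their extra integration over $\Ih$.

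The genuinely new ingredient is the treatment of terms in which a surface point lies on $\partial B$ for the random horoball $B$. Here I would invoke the integral-geometric identities prepared in Section~\ref{sec:7} to evaluate integrals of the shape $\int_{\hb}\bigl(\int f(y)\,\mathcal H^{d-1}(\partial B\cap\dint y)\bigr)\,\mu_{\rm hb}(\dint B)$ in terms of a single fixed horoball $\BB_{u,0}$ together with the factor $d-1$; this factor reflects the relation \eqref{eq:limit1} and the isometry invariance of $\mu_{\rm hb}$, and is precisely the asymptotically non-vanishing boundary effect. It is also the reason the proof starts from Theorem~\ref{Thmcovasymp}(c) rather than from the exact formulas of Theorem~\ref{th:localcovariance}, where the analogous limit of surface measures of growing balls is awkward to control. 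Collecting all the evaluated contributions yields \eqref{eq:987b} and \eqref{eq:996b}.

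Throughout, every interchange of summation and integration and every application of Fubini's theorem has to be justified by dominated convergence, using the strengthened moment condition $\E\overline{\Vol}(\TG)^2<\infty$, the Steiner formula \eqref{eq:Steiner} (which in particular makes the measures $M_{i,j}$ finite), and the uniform geometric bounds on $\phi^*,\psi^*$ and on intersections of particles with a horoball established in Sections~\ref{sec:8}--\ref{sec:9}; the same estimates deliver the asserted finiteness of all integrals in \eqref{eq:987b} and \eqref{eq:996b}. I expect the main obstacle to be the combinatorial bookkeeping of the wall decomposition together with the resummation over $n$ — keeping track of which particle is distinguished, which carries a surface point, and how the symmetrisation produces the exponential weights — intertwined with the correct evaluation of the $\partial B$-terms via the Section~\ref{sec:7} formulas.
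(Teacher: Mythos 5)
Your plan follows the paper's own proof essentially step for step: Theorem \ref{Thmcovasymp}\,(c) together with the explicit forms of $V_d^*$ and $V_{d-1}^*$ reduces everything to the three monomial series $\widehat\sigma_{d,d}$, $\widehat\sigma_{d-1,d}$, $\widehat\sigma_{d-1,d-1}$, the volume case is taken from \eqref{eqn:asymptotic_variance_volumeproto}, the particle walls are expanded with Lemma \ref{lem:surface} and collapsed via the covariogram \eqref{eqn:covariogram} into the exponential weights after resummation, and the $\partial B$-contributions are turned into the fixed-horoball terms with factor $d-1$ exactly as in Lemma \ref{lem:integration_horoball}. The only real difference is the order of operations: the paper applies Lemma \ref{lem:integration_horoball} to the products $V_dV_{d-1}$ and $V_{d-1}^2$ \emph{before} expanding the particle walls, so the non-compact horoball never enters the decomposition of Lemma \ref{lem:surface}, whereas your variant, which treats $B$ as an additional wall, would require justifying that decomposition for horoballs (via Lemma \ref{lem:nullsets}\,(b)) and proving the standalone identity for $\int_{\hb}\int f(y)\,\mathcal{H}^{d-1}(\partial B\cap \dint y)\,\mu_{\rm hb}(\dint B)$ by the same ball-limit argument, since Section \ref{sec:7} states it only in the product form of Lemma \ref{lem:integration_horoball}.
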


A counterpart of the formula for $\sigma_{d-1,d}$ in Euclidean space is stated in \cite[Corollary 6.2]{HLS}. The Euclidean formula involves only two summands which correspond to the first two terms on the right-hand side of \eqref{eq:987b}. However, in the hyperbolic setting both terms involve an additional integration over horoballs. The  formula for $\sigma_{d-1,d-1}$ in Euclidean space (see again \cite[Corollary 6.2]{HLS}) effectively involves four terms (if the second is written as a difference of two integrals in the obvious way). The first, the second, the fourth, and the fifth contribution on the right side of \eqref{eq:996b} represent the hyperbolic analogues. The hyperbolic expressions again require an additional integration over the space of horoballs. In addition, formula \eqref{eq:996b} requires three further contributions  each of which involves a fixed horoball whose boundary contains $\sfp$. The additional summands that arise in hyperbolic space all have the prefactor $d-1$, which is due to some limit of the form \eqref{eq:limit1}.

It is clear that our approach for determining the asymptotic covariances $\sigma_{i,j}$ in principle works for all $i,j\in\{0,\ldots,d\}$. Note, however, that the case $i=0$ no longer simplifies as in the Euclidean setup, since $V_0$ is not proportional to the Euler characteristic. Since the formulas are already complicated in Euclidean space, we refrain from working out the other formulas in detail here. Some results on local covariance formulas for intrinsic volumes in Euclidean space, are derived in Appendix C of the arXiv version (v2) of \cite{HLS}.

\section{Lower bounds for variances}\label{sec:lower_bound_variance}

In order to show that asymptotic variances do not degenerate and to determine the order of variances, which is an essential step for deriving central limit theorems, one is interested in lower bounds for variances as they are established in the following result.

\begin{theorem}\label{thm:lower_bound_variance}
Assume that $\E\overline{\Vol}(\TG)^2<\infty$ and let $\phi:\mathcal{R}^d\to\mathbb{R}$ be a geometric functional. If there exists some $m\in\mathbb{N}_0$ such that
\begin{equation}\label{eqn:assumption_intersection_phi}
\int_{\mathcal{K}^d} \int_{(\mathcal{K}^d)^{m}} \mathbf{1}\{ \phi(G\cap K_1\cap\hdots\cap K_m) \neq 0\} \, \Lambda^{m}(\dint (K_1,\hdots,K_m)) \, \BQ(\dint G)>0,
\end{equation}
then
$$
\liminf_{R\to\infty} \frac{\Var \phi(Z\cap \mathbb{B}_R)}{\Vol(\mathbb{B}_R)} > 0.
$$
\end{theorem}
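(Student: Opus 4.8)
The plan is to keep only a single chaos term in the exact variance formula of Theorem~\ref{Thmcovasymp}(a), to rewrite that term by disintegrating $\Lambda$ so that a factor of order $\Vol(\mathbb{B}_R)$ becomes visible, and to reduce everything to the positivity of a fixed \emph{local} quantity, which is in turn forced by \eqref{eqn:assumption_intersection_phi}. Throughout we use that $\phi^*$ (see \eqref{eqn:phi_star}) is isometry invariant: since $Z\overset{d}{=}\varrho^{-1}Z$ and $\phi$ is isometry invariant, $\E\phi(Z\cap\varrho A)=\E\phi(\varrho^{-1}Z\cap A)=\E\phi(Z\cap A)$ for $\varrho\in\Ih$, hence $\phi^*(\varrho A)=\phi^*(A)$.

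\emph{Steps 1 and 2.} Since $\phi$ is a geometric functional and $\E\overline{\Vol}(\TG)^2<\infty$, Theorem~\ref{Thmcovasymp}(a) with $\psi=\phi$ and $W=\mathbb{B}_R\in\mathcal{K}^d$ writes $\Var\phi(Z\cap\mathbb{B}_R)$ as a convergent series of nonnegative terms. Keeping only the $n$-th term, for any fixed $n\in\mathbb{N}$ and any $\epsilon>0$,
\[
\Var\phi(Z\cap\mathbb{B}_R)\ \ge\ \frac{1}{n!}\int_{(\mathcal{K}^d)^n}\phi^*(K_1\cap\cdots\cap K_n\cap\mathbb{B}_R)^2\,\Lambda^n(\dint(K_1,\ldots,K_n))\ \ge\ \frac{\epsilon^2}{n!}\,J_{n,\epsilon}(R),
\]
where $J_{n,\epsilon}(R):=\int_{(\mathcal{K}^d)^n}\mathbf{1}\{|\phi^*(K_1\cap\cdots\cap K_n\cap\mathbb{B}_R)|\ge\epsilon\}\,\Lambda^n(\dint(K_1,\ldots,K_n))$. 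Fix $\rho\ge1$. Disintegrating the first coordinate of $\Lambda^n$ by \eqref{eqdisintegration} and restricting the isometry integration to $\{\varrho\sfp\in\mathbb{B}_{R-\rho}\}$ and the typical particle to $\{G\subseteq\mathbb{B}_\rho\}$, we have $\varrho G\cap K_2\cap\cdots\cap K_n\subseteq\mathbb{B}(\varrho\sfp,\rho)\subseteq\mathbb{B}_R$, so the intersection with $\mathbb{B}_R$ may be dropped; using the isometry invariance of $\phi^*$ and of $\Lambda$ (substituting $K_i\mapsto\varrho K_i$) and then $\int_{\Ih}\mathbf{1}\{\varrho\sfp\in\mathbb{B}_{R-\rho}\}\,\lambda(\dint\varrho)=\mathcal{H}^d(\mathbb{B}_{R-\rho})$ from \eqref{eq:hdlambda}, we obtain, for $R\ge\rho$, $J_{n,\epsilon}(R)\ge\gamma\,\mathcal{H}^d(\mathbb{B}_{R-\rho})\,b_{n,\rho,\epsilon}$ with $b_{n,\rho,\epsilon}:=\int_{\mathcal{K}^d_\sfp}\mathbf{1}\{G\subseteq\mathbb{B}_\rho\}\int_{(\mathcal{K}^d)^{n-1}}\mathbf{1}\{|\phi^*(G\cap K_2\cap\cdots\cap K_n)|\ge\epsilon\}\,\Lambda^{n-1}(\dint(K_2,\ldots,K_n))\,\BQ(\dint G)$. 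Since $\mathcal{H}^d(\mathbb{B}_{R-\rho})/\Vol(\mathbb{B}_R)\to e^{-(d-1)\rho}>0$ as $R\to\infty$ by \eqref{volbalr}, this yields $\liminf_{R\to\infty}\Var\phi(Z\cap\mathbb{B}_R)/\Vol(\mathbb{B}_R)\ge\tfrac{\epsilon^2}{n!}\gamma\,e^{-(d-1)\rho}\,b_{n,\rho,\epsilon}$, and it remains to exhibit $n,\rho,\epsilon$ with $b_{n,\rho,\epsilon}>0$.

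\emph{Step 3 (the crux).} Since each $G\in\mathcal{K}^d_\sfp$ is compact and $\{|\phi^*|\ge\epsilon\}\uparrow\{\phi^*\ne0\}$, monotone convergence reduces the task to finding $n\ge1$ with $\int_{\mathcal{K}^d_\sfp}\int_{(\mathcal{K}^d)^{n-1}}\mathbf{1}\{\phi^*(G\cap K_2\cap\cdots\cap K_n)\ne0\}\,\Lambda^{n-1}\,\BQ(\dint G)>0$, which, by disintegrating $\Lambda^n$ (isometry invariance of $\phi^*$ and $\Lambda$, and $\lambda(\Ih)=\infty$), is equivalent to $\Lambda^n(\{\phi^*(K_1\cap\cdots\cap K_n)\ne0\})>0$ for some $n$. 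Assume this fails, i.e.\ \emph{(H): for every $N\ge1$, $\phi^*$ vanishes on $\Lambda^N$-almost every $N$-fold intersection.} Fix $j\ge1$. For every $k\ge0$, (H) with $N=k+j$ gives $\phi^*(L_1\cap\cdots\cap L_k\cap K_1\cap\cdots\cap K_j)=0$ for $\Lambda^{k+j}$-a.e.\ tuple, so by Fubini, for $\Lambda^j$-a.e.\ $(K_1,\ldots,K_j)$ and \emph{all} $k\ge0$ simultaneously, $\phi^*(L_1\cap\cdots\cap L_k\cap W)=0$ for $\Lambda^k$-a.e.\ $(L_1,\ldots,L_k)$, where $W:=K_1\cap\cdots\cap K_j$. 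For such a tuple, Theorem~\ref{Thmcovasymp}(a) (with $\psi=\phi$ and window $W$) gives $\Var\phi(Z\cap W)=0$, while the case $k=0$ gives $\E\phi(Z\cap W)=\phi(W)+\phi^*(W)=\phi(W)$; hence $\phi(Z\cap W)=\phi(W)$ almost surely. But $\BP(Z\cap W=\varnothing)=e^{-\Lambda([W])}>0$ and on this event $\phi(Z\cap W)=\phi(\varnothing)=0$, so $\phi(K_1\cap\cdots\cap K_j)=0$ for $\Lambda^j$-a.e.\ $(K_1,\ldots,K_j)$, for every $j$. This contradicts \eqref{eqn:assumption_intersection_phi}: disintegrating its first particle $G=K_0$ and inserting the harmless factor $\mathbf{1}\{c_h(K_0)\in\mathbb{B}_1\}$ shows that $\Lambda^{m+1}(\{\phi(K_0\cap\cdots\cap K_m)\ne0\})\ge\gamma\,\mathcal{H}^d(\mathbb{B}_1)$ times the (positive) left-hand side of \eqref{eqn:assumption_intersection_phi}, and taking $j=m+1$ gives the required contradiction.

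The main obstacle is exactly Step 3: converting the hypothesis that $\phi$ is nonzero on a positive-measure family of intersections into the statement that $\phi^*$ is. The Fubini manoeuvre together with the positive-probability empty-window event $\{Z\cap W=\varnothing\}$ is what makes this go through, and it also shows that — unlike for the mean and covariance limits — only the exact variance formula of Theorem~\ref{Thmcovasymp}(a) and the second moment bound $\E\overline{\Vol}(\TG)^2<\infty$ are needed here, with no continuity assumption.
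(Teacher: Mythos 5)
Your proof is correct, but it follows a genuinely different route from the paper. The paper invokes the general lower variance bound for Poisson functionals from \cite{ST} (Theorem 1.1 there): it bounds $\int\E(D_K\phi(Z\cap\mathbb{B}_R))^2\,\Lambda(\dint K)$ from below by $\gamma\,\Vol(\mathbb{B}_{R-r_0})\,v_{\phi,\BQ}$ --- the positivity of $v_{\phi,\BQ}$ coming from conditioning on the event that a small particle $K_0\subseteq\mathbb{B}_{r_0}$ is hit by exactly $m$ further particles, where $m$ is chosen \emph{minimal} so that all lower-order inclusion--exclusion terms vanish a.e.\ --- and then verifies the hypothesis of \cite{ST} by an upper bound on the second-order difference operator integral of order $\Vol(\mathbb{B}_{R+1})$. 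You instead work directly with the exact Fock-space identity of Theorem \ref{Thmcovasymp}(a): a single nonnegative chaos term already produces the factor $\mathcal{H}^d(\mathbb{B}_{R-\rho})$ after disintegration, and the required positivity (that $\phi^*$ is nonzero on a $\BQ\otimes\Lambda^{n-1}$-positive set of intersections for some $n$) is extracted from \eqref{eqn:assumption_intersection_phi} by your contradiction argument, in which vanishing of all chaos terms forces $\Var\phi(Z\cap W)=0$ and $\E\phi(Z\cap W)=\phi(W)$ for a.e.\ window $W$ that is itself a finite intersection of particles, so that the positive-probability event $\{Z\cap W=\varnothing\}$ yields $\phi(W)=0$ a.e., contradicting the hypothesis; this replaces the paper's minimal-$m$ device and avoids \cite{ST} and difference-operator estimates altogether, making the argument self-contained given Theorem \ref{Thmcovasymp}(a). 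The trade-off is that your Step 3 is purely qualitative: it shows that some $b_{n,\rho,\epsilon}>0$ exists but does not tie the resulting constant to the quantity in \eqref{eqn:assumption_intersection_phi}, whereas the paper's bound comes with the explicit constant $\frac{4}{(\overline{C}+2)^2}\gamma e^{-(d-1)r_0}v_{\phi,\BQ}$. Two trivial points you should state for completeness: when $W=K_1\cap\hdots\cap K_j=\varnothing$ the conclusion $\phi(W)=0$ holds by $\phi(\varnothing)=0$ without appealing to Theorem \ref{Thmcovasymp}(a), and the a.s.\ constancy of $\phi(Z\cap W)$ uses the square-integrability guaranteed by that theorem; neither affects the validity of the argument.
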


\begin{remark}\label{rem:lower_bound_variance} \rm
The condition \eqref{eqn:assumption_intersection_phi} is satisfied for $m=0$ if $\mathbf{P}(\phi(\TG)\neq 0)>0$.
\end{remark}

Lower variance bounds for Boolean models in $\mathbb{R}^d$ under equivalent conditions to \eqref{eqn:assumption_intersection_phi} are given in \cite[Theorem 4.6]{HLW2023} and in \cite[Exercise 22.4]{LP18} (where a typo needs to be corrected), while the analogue to the simplified condition in Remark \ref{rem:lower_bound_variance} is stated in \cite[Theorem 22.9]{LP18}. In \cite[Theorem 4.1]{HLS}, the positive definiteness of the asymptotic covariance matrix of the intrinsic volumes is established provided that the typical particle is full dimensional with positive probability.

Note that Theorem \ref{thm:lower_bound_variance} in contrast to Theorem \ref{Thmcovasymp} (c) does not require continuity of $\phi$. Thus, there are cases where one can bound the variance from below, but it is unclear whether the asymptotic variance exists. This means that for the proof of Theorem \ref{thm:lower_bound_variance} one cannot proceed as in the proof of \cite[Theorem 4.1]{HLS}, where the expression for the asymptotic variance is bounded from below. The approach from \cite[Theorem 22.9 and Exercise 22.4]{LP18} cannot be applied directly to the hyperbolic framework since \cite[Equation (22.35)]{LP18} is no longer valid, but we expect that it still holds with some weight function.  Nevertheless we decided to use a completely new approach for the proof of Theorem \ref{thm:lower_bound_variance}, which relies on a lower variance bound from \cite{ST}.

\section{Central limit theorems}\label{sec:6}

For random variables $X$ and $Y$ we define the Wasserstein distance
$$
\mathbf{d}_{\rm Wass}(X,Y) = \sup_{h\in\operatorname{Lip}(1)} |\E h(X) - \E h(Y)|
$$
in case that $\E|X|,\E|Y|<\infty$, where $\operatorname{Lip}(1)$ is the set of all functions $h:\mathbb{R}\to\mathbb{R}$ such that the Lipschitz constant is at most one, while the Kolmogorov distance is given by
$$
\mathbf{d}_{\rm Kol}(X,Y) = \sup_{a\in\mathbb{R}} |\BP(X\leq a) - \BP(Y\leq a)|.
$$

As in Sections \ref{sec:4} and \ref{sec:5}, let  $\eta$ be a stationary Poisson particle process concentrated on convex particles.  The induced stationary Boolean model is denoted by $Z$.

\begin{theorem}\label{thm:CLT_univariate}
Let $\phi: \mathcal{R}^d\to\mathbb{R}$ be a geometric functional such that
\begin{equation}\label{eqn:liminf_var}
\liminf_{R\to\infty} \frac{\Var \phi(Z\cap \mathbb{B}_R)}{\Vol(\mathbb{B}_R)} >0
\end{equation}
and let $N$ denote a standard normal random variable.
\begin{itemize}
\item [{\rm (a)}] If $\E\overline{\Vol}(\TG)^2<\infty$, then
$$
\frac{\phi(Z\cap \mathbb{B}_R) - \E\phi(Z\cap \mathbb{B}_R)}{ \sqrt{\Var \phi(Z\cap \mathbb{B}_R)}} \overset{d}{\longrightarrow} N \quad \text{as} \quad R\to\infty.
$$
\item [{\rm (b)}] If $\E\overline{\Vol}(\TG)^3<\infty$, then there exist constants $C_{\rm Wass},r_0\in(0,\infty)$, depending only on $d$, $\gamma$, $\BQ$, and $\phi$, such that
$$
\mathbf{d}_{\rm Wass}\bigg( \frac{\phi(Z\cap \mathbb{B}_R) - \E\phi(Z\cap \mathbb{B}_R)}{ \sqrt{\Var \phi(Z\cap \mathbb{B}_R)}} , N \bigg) \leq \frac{C_{\rm Wass}}{\sqrt{\Vol(\mathbb{B}_R)}}
$$
for all $R\geq r_0$.
\item [{\rm (c)}] If $\E\overline{\Vol}(\TG)^4<\infty$, then there exist constants $C_{\rm Kol},r_0\in(0,\infty)$, depending only on $d$, $\gamma$, $\BQ$, and $\phi$, such that
$$
\mathbf{d}_{\rm Kol}\bigg( \frac{\phi(Z\cap \mathbb{B}_R) - \E\phi(Z\cap \mathbb{B}_R)}{ \sqrt{\Var \phi(Z\cap \mathbb{B}_R)}} , N \bigg) \leq \frac{C_{\rm Kol}}{\sqrt{\Vol(\mathbb{B}_R)}}
$$
for all $R\geq r_0$.
\end{itemize}
\end{theorem}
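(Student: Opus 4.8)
The plan is to realise $F_R\defeq\phi(Z\cap\mathbb{B}_R)$ as a functional of the underlying Poisson process $\eta$ and to apply the Malliavin--Stein method for Poisson functionals (second order Poincar\'e inequalities); see \cite{LP18} and the references therein, as well as the analogous treatment of Euclidean Boolean models in \cite{HLS,SY19}. Under the standing hypothesis $\E\overline{\Vol}(\TG)^2<\infty$ one has $F_R\in L^2$ by Theorem~\ref{Thmcovasymp}(a), and the stronger moment conditions in (b) and (c) yield $F_R\in L^3$, respectively $F_R\in L^4$, via the bounds of Section~\ref{sec:9}. Writing $\sigma_R\defeq\sqrt{\Var F_R}$, the relevant normal approximation bounds estimate $\mathbf{d}_{\rm Wass}$ and $\mathbf{d}_{\rm Kol}$ between $(F_R-\E F_R)/\sigma_R$ and a standard Gaussian $N$ by a finite sum of terms, each of which is a constant multiple of an integral, against a power of $\Lambda$, of (products of) $L^2$- or $L^4$-moments of the first and second difference operators $D_KF_R$ and $D^2_{K_1,K_2}F_R$, divided by a power of $\sigma_R$; for the Kolmogorov bound one additionally encounters a term of Berry--Esseen type, $\sigma_R^{-3}\int_{\cK^d}\E|D_KF_R|^3\,\Lambda(\dint K)$, which is what forces the fourth moment condition in (c). Thus the proof reduces to two tasks: (i) bounding the moments of $D_KF_R$ and $D^2_{K_1,K_2}F_R$, and (ii) showing that each of the resulting integrals is $O(\Vol(\mathbb{B}_R))$.

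For task (i), additivity of $\phi$ gives the closed forms
$$D_KF_R=\phi(K\cap\mathbb{B}_R)-\phi(Z\cap K\cap\mathbb{B}_R),\qquad D^2_{K_1,K_2}F_R=\phi(Z\cap K_1\cap K_2\cap\mathbb{B}_R)-\phi(K_1\cap K_2\cap\mathbb{B}_R),$$
so that $D_KF_R$ vanishes unless $K\cap\mathbb{B}_R\neq\varnothing$ and $D^2_{K_1,K_2}F_R$ vanishes unless $K_1\cap K_2\cap\mathbb{B}_R\neq\varnothing$ (forcing $K_1$ and $K_2$ to meet). Since the arguments are intersections of convex sets, one covers them economically by unit-sized pieces using the covering result of Section~\ref{sec:8}, and then applies the bounds on geometric functionals of finite unions of convex bodies from Section~\ref{sec:9}; this gives estimates of the form $|D_KF_R|\le c(\phi)\big(\overline{\Vol}(K)+\#\{L\in\eta\colon L\cap\mathbb{B}(K,1)\neq\varnothing\}\big)$ and an analogous bound for $|D^2_{K_1,K_2}F_R|$ with $\overline{\Vol}(K_1\cap K_2)$ in place of $\overline{\Vol}(K)$. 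Taking $p$-th moments ($p\le4$) and controlling the Poisson particle count via the Mecke equation (its mean being $\Lambda([\mathbb{B}(K,1)])$, which is bounded by $c\,\overline{\Vol}(K)$ thanks to Theorems~\ref{thmdisintegration} and~\ref{thmchar}), one bounds $\E|D_KF_R|^p$ by $c(\phi,p)\,\overline{\Vol}(K)^p$, and likewise for the second difference operator. Producing these bounds is the technical heart of the argument and its main obstacle: in $\mathbb{R}^d$ one would simply partition the observation window into congruent half-open cubes, but no such tiling is available in $\mathbb{H}^d$, which is exactly why the economic covering of Section~\ref{sec:8} is needed.

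For task (ii), I would disintegrate each integral by Theorem~\ref{thmdisintegration}, rewriting $\int_{\cK^d}(\cdots)\,\Lambda(\dint K)$ as $\gamma\int_{\Ih}\int_{\cK^d_\sfp}(\cdots)\,\BQ(\dint G)\,\lambda(\dint\varrho)$, and use the identity $\lambda(\{\varrho\in\Ih\colon\varrho G\cap\mathbb{B}_R\neq\varnothing\})=\mathcal{H}^d(\mathbb{B}(G,R))$ together with the bound $\mathcal{H}^d(\mathbb{B}(G,R))\le c\,\overline{\Vol}(G)\,\Vol(\mathbb{B}_R)$ for $R$ large, which again follows from the economic covering combined with the growth estimate $\Vol(\mathbb{B}_{R+1})\le c'\,\Vol(\mathbb{B}_R)$ underlying \eqref{eq:limit1} (note that the order here is exactly $\Vol(\mathbb{B}_R)$, which is the non-vanishing boundary effect, and that this is harmless for the present purpose). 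Combined with the moment bounds from task (i), each integral in the numerator of the normal approximation bounds is $O(\Vol(\mathbb{B}_R))$ as soon as the corresponding moment $\E\overline{\Vol}(\TG)^p$ is finite ($p=2$ for the second-order terms, $p=3$ for the Wasserstein case, $p=4$ for the Berry--Esseen term in the Kolmogorov case). Writing $V\defeq\Vol(\mathbb{B}_R)$ and using $\sigma_R^2\ge cV$ for $R\ge r_0$ by hypothesis~\eqref{eqn:liminf_var}, the Wasserstein bound is $O(V^{-1}\sqrt V)+O(V^{-3/2}V)=O(V^{-1/2})$ and likewise the Kolmogorov bound is $O(V^{-1/2})$, which proves (b) and (c).

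For (a), where only $\E\overline{\Vol}(\TG)^2<\infty$ is assumed and the higher-order integrals above may diverge, I would argue by truncation. For $t>0$ let $\eta_t$ be the thinning of $\eta$ consisting of the particles $K$ with $\overline{\Vol}(K)\le t$; since $\overline{\Vol}$ is isometry invariant, $\eta_t$ is again a stationary Poisson particle process with finite intensity, and its typical particle satisfies $\overline{\Vol}\le t$ almost surely, so part (b) applies to the associated Boolean model $Z_t\defeq\bigcup_{K\in\eta_t}K$ once one knows $\liminf_R\Var\phi(Z_t\cap\mathbb{B}_R)/\Vol(\mathbb{B}_R)>0$. The latter, and the comparison with $Z$, follow from the observation that $\phi(Z\cap\mathbb{B}_R)-\phi(Z_t\cap\mathbb{B}_R)$ is a Poisson functional of $\eta$ whose difference operator vanishes unless the added particle is large or meets a large particle inside $\mathbb{B}_R$; applying the variance formula of Theorem~\ref{Thmcovasymp}(a) together with the bounds from task (i) gives $\limsup_R\Var\big(\phi(Z\cap\mathbb{B}_R)-\phi(Z_t\cap\mathbb{B}_R)\big)/\Vol(\mathbb{B}_R)\le\varepsilon(t)$ with $\varepsilon(t)\to0$ as $t\to\infty$, controlled by the tail $\E[\overline{\Vol}(\TG)^2\,\I\{\overline{\Vol}(\TG)>t\}]$; by the triangle inequality for the $L^2$-norm this yields both the non-degeneracy of the variance of $\phi(Z_t\cap\mathbb{B}_R)$ for large $t$ and the required closeness to $F_R$. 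A standard approximation argument (letting $t\to\infty$ after $R\to\infty$; cf.\ the analogous reduction in \cite{HLS}), using also $\sigma_R^2\asymp\Vol(\mathbb{B}_R)$, then transfers the central limit theorem to $F_R$, completing the proof.
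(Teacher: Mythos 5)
For parts (b) and (c) your plan coincides with the paper's proof: the paper applies the second order Poincar\'e bounds of \cite{LPS} (Theorems 1.1 and 1.2 with Lemma 4.2), bounds all first and second difference operators via Lemma \ref{lem:moments_D}, disintegrates with Theorem \ref{thmdisintegration} and integrates out the isometries with Lemma \ref{lem:intelem}, and finally uses \eqref{eqn:liminf_var} exactly as you do; so there the comparison is essentially ``same approach''. The genuine divergence is part (a). The paper does \emph{not} truncate: it invokes the refined Wasserstein bound of \cite[Theorem 3.1]{BPT}, in which the cubic term is replaced by $T_3'=\int(\E|D_KF_R|^3)^{1/3}(\E\min\{|D_KF_R|^3,\sqrt 8\,|D_KF_R|^{3/2}\})^{2/3}\,\Lambda(\dint K)$; splitting the integral at $\overline{\Vol}(K)\le u$ and letting $u\to\infty$ after $R\to\infty$ makes the whole argument run under $\E\overline{\Vol}(\TG)^2<\infty$ only. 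Your alternative, thinning away particles with $\overline{\Vol}(K)>t$, applying (b) to $Z_t$ and transferring the CLT by an $L^2$-comparison, is a legitimate and more classical route; what the paper's trick buys is that one never has to compare two coupled Boolean models or track how the variance of $Z_t$ approaches that of $Z$, while your route buys independence from the specific form of the \cite{BPT} bound.

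Two points in your sketch need repair, though neither is fatal. First, the claimed pointwise estimate $|D_KF_R|\le c(\phi)\bigl(\overline{\Vol}(K)+\#\{L\in\eta\colon L\cap\mathbb{B}(K,1)\neq\varnothing\}\bigr)$ is false for general geometric functionals: already for the Euler characteristic of a union of $n$ convex bodies the value can grow quadratically in $n$, so no bound linear in the particle count can hold. The correct statement, which is what Lemma \ref{lem:bound_phi} provides via the covering of Section \ref{sec:8}, is exponential in the \emph{local} particle counts, $|\phi(K\cap Z)|\le c\,M(\phi)\sum_{x\in\mathcal{M}_K}2^{\eta([\mathbb{B}(x,1/2)])}$, and the moment bounds of Lemma \ref{lem:moments_D} then follow because Poisson counts have exponential moments; since that is the result you actually use downstream, the gap is in your re-derivation, not in the architecture. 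Second, in part (a) you cannot apply Theorem \ref{Thmcovasymp}(a) to $\phi(Z\cap\mathbb{B}_R)-\phi(Z_t\cap\mathbb{B}_R)$, because this difference is not of the form ``additive functional evaluated at $Z\cap W$''; you should instead use the Poincar\'e inequality for general square-integrable Poisson functionals together with a Cauchy--Schwarz/tail estimate (the difference operator of the comparison functional is nonzero only when the added particle is large or a large particle of $\eta$ meets $K\cap\mathbb{B}_R$), and you also need $\Var\phi(Z_t\cap\mathbb{B}_R)/\Var\phi(Z\cap\mathbb{B}_R)$ to be close to $1$ uniformly in large $R$ before letting $t\to\infty$, which your $\varepsilon(t)$-bound combined with \eqref{eqn:liminf_var} does deliver. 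A minor misattribution: the Berry--Esseen-type term $\sigma_R^{-3}\int\E|D_KF_R|^3\,\Lambda(\dint K)$ already appears in the Wasserstein bound and only needs third moments; the fourth moment hypothesis in (c) is forced by the additional Kolmogorov terms involving $\E(D_KF_R)^4$.
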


\begin{remark}
{\rm In Theorem \ref{thm:CLT_univariate} (a) we have even convergence with respect to the Wasserstein distance.}
\end{remark}

Let us compare Theorem \ref{thm:CLT_univariate} with the existing literature for Boolean models in Euclidean spaces. The analogue of part (a) was first shown in \cite[Theorem 9.3]{HLS}, generalising previous work on volume, surface area and related functionals in e.g.\ \cite{Baddeley,HeinrichMolchanov,Mase,Molchanov1995}. For the Wasserstein distance the rate of convergence as in part (b) was derived under a slightly stronger moment assumption in \cite[Theorem 9.1]{HLS} and with the same moment assumption in \cite[Theorem 22.7]{LP18} and \cite[Theorem 3.5]{BST}, where the latter result even includes Poisson cylinder processes. The counterpart to part (c) is shown in \cite[Theorem 3.5]{BST} under the same assumptions. The first normal approximation bound in Kolmogorov distance for Boolen models in $\mathbb{R}^d$ was established for the volume in \cite{Heinrich2005}.

We can also consider the multivariate situation where one is interested in several geometric functionals simultaneously.

\begin{theorem}\label{thm:CLT_multivariate}
Let $\phi_1,\hdots,\phi_m:\mathcal{R}^d\to\mathbb{R}$ be geometric functionals, $m\in\mathbb{N}$, and assume that there exists a matrix $\Sigma=(\sigma_{ij})_{i,j\in\{1,\hdots,m\}}\in\mathbb{R}^{m\times m}$ such that
\begin{equation}\label{eqn:Assumption_Convergence_Covariances}
\lim_{R\to\infty} \frac{\Cov(\phi_i(Z\cap\mathbb{B}_R), \phi_j(Z\cap\mathbb{B}_R))}{\Vol(\mathbb{B}_R)}=\sigma_{ij}
\end{equation}
for $i,j\in\{1,\hdots,m\}$. Let $\mathbf{N}_\Sigma$ denote an $m$-dimensional centered Gaussian random vector with covariance matrix $\Sigma$. If $\E\overline{\Vol}(\TG)^2<\infty$, then, as $R\to\infty$,
$$
\frac{1}{\sqrt{\Vol(\mathbb{B}_R)}} \big(\phi_1(Z\cap\mathbb{B}_R) - \E\phi_1(Z\cap\mathbb{B}_R),\hdots,\phi_m(Z\cap\mathbb{B}_R) - \E\phi_m(Z\cap\mathbb{B}_R)\big)\overset{d}{\longrightarrow} \mathbf{N}_\Sigma.
$$
\end{theorem}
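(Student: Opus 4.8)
The plan is to derive the multivariate central limit theorem from the univariate case via the Cramér--Wold device, exploiting the fact that a linear combination of geometric functionals is again a geometric functional. Fix an arbitrary vector $a=(a_1,\dots,a_m)\in\R^m\setminus\{0\}$ and set $\phi\defeq\sum_{i=1}^m a_i\phi_i$. Since each $\phi_i$ is measurable, additive, isometry invariant, and locally bounded, the same is true of $\phi$ (additivity and the bound $M(\phi)\le\sum_i|a_i|M(\phi_i)$ are immediate, and isometry invariance and measurability are preserved under finite linear combinations); hence $\phi$ is a geometric functional. By bilinearity of the covariance and assumption \eqref{eqn:Assumption_Convergence_Covariances},
\begin{equation*}
\lim_{R\to\infty}\frac{\Var\phi(Z\cap\mathbb{B}_R)}{\Vol(\mathbb{B}_R)}
=\sum_{i,j=1}^m a_ia_j\sigma_{ij}=a^\top\Sigma a=:\tau_a^2\ge 0.
\end{equation*}

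The argument then splits according to whether $\tau_a^2>0$ or $\tau_a^2=0$. If $\tau_a^2>0$, then in particular $\liminf_{R\to\infty}\Var\phi(Z\cap\mathbb{B}_R)/\Vol(\mathbb{B}_R)>0$, so Theorem~\ref{thm:CLT_univariate}(a) applies to $\phi$ (the hypothesis $\E\overline{\Vol}(\TG)^2<\infty$ is exactly the one assumed here), yielding
\begin{equation*}
\frac{\phi(Z\cap\mathbb{B}_R)-\E\phi(Z\cap\mathbb{B}_R)}{\sqrt{\Var\phi(Z\cap\mathbb{B}_R)}}\overset{d}{\longrightarrow}N.
\end{equation*}
Combining this with $\Var\phi(Z\cap\mathbb{B}_R)/\Vol(\mathbb{B}_R)\to\tau_a^2>0$ and Slutsky's theorem gives $\big(\phi(Z\cap\mathbb{B}_R)-\E\phi(Z\cap\mathbb{B}_R)\big)/\sqrt{\Vol(\mathbb{B}_R)}\overset{d}{\to}\tau_a N$, which is precisely $\langle a,\mathbf{N}_\Sigma\rangle$ in distribution. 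If $\tau_a^2=0$, then $\Var\phi(Z\cap\mathbb{B}_R)/\Vol(\mathbb{B}_R)\to 0$, so by Chebyshev's inequality $\big(\phi(Z\cap\mathbb{B}_R)-\E\phi(Z\cap\mathbb{B}_R)\big)/\sqrt{\Vol(\mathbb{B}_R)}\overset{\BP}{\to}0=\langle a,\mathbf{N}_\Sigma\rangle$ in distribution (the Gaussian vector has variance $\tau_a^2=0$ in direction $a$). In either case,
\begin{equation*}
\Big\langle a,\tfrac{1}{\sqrt{\Vol(\mathbb{B}_R)}}\big(\phi_1(Z\cap\mathbb{B}_R)-\E\phi_1(Z\cap\mathbb{B}_R),\dots,\phi_m(Z\cap\mathbb{B}_R)-\E\phi_m(Z\cap\mathbb{B}_R)\big)\Big\rangle\overset{d}{\longrightarrow}\langle a,\mathbf{N}_\Sigma\rangle.
\end{equation*}
Since $a$ was arbitrary, the Cramér--Wold theorem yields the claimed joint convergence.

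The main point requiring care — though it is not a deep obstacle — is the degenerate direction $\tau_a^2=0$: there Theorem~\ref{thm:CLT_univariate}(a) cannot be invoked (its variance hypothesis fails), so one must argue separately via Chebyshev that the corresponding linear combination converges to the constant $0$, and then check that this is consistent with $\langle a,\mathbf{N}_\Sigma\rangle$ being degenerate. A secondary point is the verification that $\phi=\sum_i a_i\phi_i$ is genuinely a geometric functional in the sense of Section~\ref{sec:GF}; this is routine, since the defining properties (measurability, additivity, isometry invariance, local boundedness) are all stable under finite linear combinations. No continuity of the $\phi_i$ is needed, matching the hypotheses, because Theorem~\ref{thm:CLT_univariate}(a) itself does not require continuity.
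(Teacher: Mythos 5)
Your proposal is correct and follows essentially the same route as the paper: both reduce to the univariate Theorem \ref{thm:CLT_univariate} via the Cram\'er--Wold device applied to the geometric functional $\sum_i a_i\phi_i$, using the covariance assumption to identify the limiting variance $a^\top\Sigma a$ and treating the degenerate direction $a^\top\Sigma a=0$ separately (the paper states the degenerate convergence without naming Chebyshev, but that is the same argument you make explicit). No gaps.
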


\begin{remark} \rm
Note that \eqref{eqn:Assumption_Convergence_Covariances} is always satisfied if $\phi_1,\hdots,\phi_m$ are additionally continuous on $\mathcal{K}^d$ because of Theorem \ref{Thmcovasymp} (c).
\end{remark}

For Boolean models in $\mathbb{R}^d$ multivariate central limit theorems were derived in \cite[Theorems 2.1, 9.1]{HLS} and in \cite[Theorem 3.7]{BST}, where the second reference also deals with Poisson cylinder processes. Our multivariate central limit theorem is completely qualitative. For the so-called $d_3$- or $d_{convex}$-distance between the centered and rescaled vector of geometric functionals in Theorem \ref{thm:CLT_multivariate} and a centered Gaussian random vector with the same covariance matrix one can derive upper bounds by evaluating Malliavin-Stein bounds for the multivariate normal approximation from \cite{SY19} similarly as in our proof of Theorem \ref{thm:CLT_univariate}. For Euclidean Boolean models this was accomplished in \cite[Theorem 3.7]{BST} for the $d_3$-distance and in case of intrinsic volumes for the $d_3$- and the $d_{convex}$-distance in \cite{SY19}. The restriction to intrinsic volumes comes from the fact that for the $d_{convex}$- distance the covariance matrix of the Gaussian random vector must have full rank. The latter issue also needs to be taken into account when dealing with the $d_{convex}$-distance for Boolean models in the hyperbolic framework as described above.

A further interesting problem is to provide a rate for the convergence of the vector considered in Theorem \ref{thm:CLT_multivariate} to its limiting multivariate normal distribution. For Boolean models in $\mathbb{R}^d$ such a rate is provided in \cite[Theorem 9.1]{HLS} and in case of intrinsic volumes in \cite[Theorem 4.2]{SY19}. The crucial ingredient for showing these results is to control the speed of convergence of the exact to the asymptotic covariances. For the Euclidean case such upper bounds are provided in \cite[Theorem 3.1]{HLS}. Since the asymptotic covariances in the hyperbolic framework are more involved than in the Euclidean case due to the integration over horoballs, it is unclear if such an upper bound is still valid.

\begin{remark} \rm
We have introduced particle processes and the Boolean model  for compact particles without assuming convexity. In this case the Boolean model intersected with a compact convex observation window is not necessarily an element of the convex ring. Hence, we cannot consider geometric functionals which are merely defined on $\mathcal{R}^d$. However, the volume is still well defined. All our results remain true for the volume of Boolean models in hyperbolic space with compact particles. In the Euclidean setup, central limit theorems for the volume of Boolean models and Poisson cylinder processes without convexity assumption were shown in \cite[Theorem 3.3]{BST}.
\end{remark}

\section{Basic integral geometric formulas}\label{sec:7}

In this section, we provide some useful integral geometric relations. We start with iterated kinematic formulas for the volume and the surface area. Then we establish a crucial asymptotic mean value formula involving the invariant measure on the space of horoballs (see Lemma \ref{lem:LimitEW_New}).

\begin{lemma}\label{lem:intelem}
If $n\in\N$ and $A_0,A_1,\ldots,A_n\subseteq\BHd$ are measurable sets, then
$$
\int_{(\mathcal{I}_d)^n} \Vol(A_0\cap\varrho_1 A_1\cap\ldots\cap \varrho_n A_n)\, \lambda^n(\dint(\varrho_1,\ldots,\varrho_n))
=\Vol(A_0)\Vol(A_1)\cdots\Vol(A_n).
$$
\end{lemma}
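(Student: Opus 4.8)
The plan is to prove this by induction on $n$, the base case $n=1$ being the core computation and the inductive step being an easy consequence of Fubini's theorem. For the base case $n=1$, I want to show that for measurable $A_0,A_1\subseteq\BHd$,
\begin{equation*}
\int_{\Ih}\Vol(A_0\cap\varrho A_1)\,\lambda(\dint\varrho)=\Vol(A_0)\Vol(A_1).
\end{equation*}
The natural approach is to write $\Vol(A_0\cap\varrho A_1)=\int_{\BHd}\1\{x\in A_0\}\1\{x\in\varrho A_1\}\,\mathcal{H}^d(\dint x)=\int_{\BHd}\1\{x\in A_0\}\1\{\varrho^{-1}x\in A_1\}\,\mathcal{H}^d(\dint x)$, then interchange the $\lambda$-integral and the $\mathcal{H}^d$-integral by Tonelli (everything is nonnegative and measurable, using that $(\varrho,x)\mapsto\varrho^{-1}x$ is measurable). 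This gives
\begin{equation*}
\int_{\BHd}\1\{x\in A_0\}\int_{\Ih}\1\{\varrho^{-1}x\in A_1\}\,\lambda(\dint\varrho)\,\mathcal{H}^d(\dint x).
\end{equation*}
By the inversion invariance of $\lambda$ the inner integral equals $\int_{\Ih}\1\{\varrho x\in A_1\}\,\lambda(\dint\varrho)=\mathcal{H}^d(A_1)=\Vol(A_1)$ by the normalization \eqref{eq:hdlambda} (which holds for every fixed $x\in\BHd$). Pulling the constant $\Vol(A_1)$ out of the remaining integral yields $\Vol(A_0)\Vol(A_1)$, as desired.

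For the inductive step, suppose the formula holds for $n-1$. Write the $n$-fold integral as an iterated integral, integrating over $\varrho_n$ first (Tonelli again, all integrands nonnegative). With $A_0,\varrho_1 A_1,\ldots,\varrho_{n-1}A_{n-1}$ fixed, set $B\defeq A_0\cap\varrho_1 A_1\cap\cdots\cap\varrho_{n-1}A_{n-1}$, which is measurable. The innermost integral is $\int_{\Ih}\Vol(B\cap\varrho_n A_n)\,\lambda(\dint\varrho_n)=\Vol(B)\Vol(A_n)$ by the base case. Substituting back, the remaining $(n-1)$-fold integral is $\Vol(A_n)\int_{(\Ih)^{n-1}}\Vol(A_0\cap\varrho_1 A_1\cap\cdots\cap\varrho_{n-1}A_{n-1})\,\lambda^{n-1}(\dint(\varrho_1,\ldots,\varrho_{n-1}))$, which by the induction hypothesis equals $\Vol(A_n)\Vol(A_0)\Vol(A_1)\cdots\Vol(A_{n-1})$, completing the induction.

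The only genuine subtlety is the measurability bookkeeping and the justification of Tonelli's theorem: one needs $\lambda$ to be $\sigma$-finite (it is, being a Haar measure on a locally compact second countable group) and the map $(\varrho,x)\mapsto\varrho^{-1}x$ to be jointly measurable (which follows from the continuity of the group action and of inversion on $\Ih$). Everything else is a direct application of the invariance properties of $\lambda$ and the defining relation \eqref{eq:hdlambda}, so I do not expect any real obstacle here; the lemma is essentially the hyperbolic analogue of the elementary fact that integrating $\Vol(A_0\cap(A_1+x))$ over $x\in\R^d$ gives $\Vol(A_0)\Vol(A_1)$.
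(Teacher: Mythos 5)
Your proposal is correct and uses essentially the same argument as the paper: rewrite the volume as an integral of indicator functions, interchange integrals by Tonelli/Fubini, and apply the inversion invariance of $\lambda$ together with the normalization \eqref{eq:hdlambda} to evaluate each inner integral as $\Vol(A_i)$. The paper carries this out directly for all $n$ factors at once rather than by induction on $n$, but that is only an organizational difference, not a different method.
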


\begin{proof}
Fubini's theorem can be applied, since $\lambda$ and $\mathcal{H}^d$ are  $\sigma$-finite measures and $\Vol$ is a nonnegative function. Hence we get
\begin{align*}
&\int_{(\mathcal{I}_d)^n} \Vol(A_0\cap\varrho_1 A_1\cap\ldots\cap \varrho_n A_n)\, \lambda^n(\dint(\varrho_1,\ldots,\varrho_n))\\
&=\int_{(\mathcal{I}_d)^n} \int_{\BHd} \mathbf{1}\{x\in A_0,x\in\varrho_1 A_1,\ldots,x\in \varrho_n A_n\}\, \mathcal{H}^d(\dint x)
\, \lambda^n(\dint(\varrho_1,\ldots,\varrho_n))\\
&=\int_{\BHd}\int_{\mathcal{I}_d}\ldots\int_{\mathcal{I}_d} \mathbf{1}\{\varrho_1^{-1}x\in A_1\}\cdots\mathbf{1}\{\varrho_n^{-1}x\in A_n\}\mathbf{1}\{x\in A_0\}\, \lambda(\dint\varrho_1)\ldots \lambda(\dint\varrho_n)\, \mathcal{H}^d(\dint x)\\
&=\int_{\BHd}\int_{\BHd}\ldots\int_{\BHd} \mathbf{1}\{x_1\in A_1\}\cdots\mathbf{1}\{x_n\in A_n\}\mathbf{1}\{x\in A_0\}\, \, \mathcal{H}^d(\dint x_1)\ldots  \mathcal{H}^d(\dint x_n)\, \mathcal{H}^d(\dint x)\\
&=\Vol(A_0)\Vol(A_1)\cdots\Vol(A_n),
\end{align*}
where we used \eqref{eq:hdlambda} and the fact that $\lambda$ is inversion invariant.
\end{proof}

The following lemma deals with a property of the top order
curvature measure $C_{d-1}(\cdot,\cdot)$ (see Section \ref{sec:GF}, in particular the
comments from \eqref{eq:Steiner} up to \eqref{eq:neua}) of convex
bodies in $\mathbb{H}^d$ that will be useful in the following.  In
Euclidean space, a corresponding more general translative version was
required and proved in \cite[Lemma 5.3]{HLS}.

\begin{lemma}\label{lem:surface} 
Let $\varrho_0$ denote the
identity in $\Ih$. If $n\in\N$ and $K_0,\ldots,K_n\in\mathcal{K}^d$, then
\begin{align*}
C_{d-1}(K_0\cap \varrho_1K_1\cap\ldots \cap \varrho_nK_n,\cdot)
=\sum^n_{i=0}\int\I\bigg\{x\in\cdot\,,x\in \bigcap_{\substack{j=0\\j \ne i}}^n\varrho_jK_j\bigg\}\,C_{d-1}(\varrho_i K_i,\dint x)
\end{align*}
for 
$\lambda^n$-a.e.\ $(\varrho_1,\ldots,\varrho_n)\in (\Ih)^n$.
\end{lemma}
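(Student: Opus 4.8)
## Proof Plan for Lemma \ref{lem:surface}

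The plan is to reduce the assertion to the well-known additivity of the top order curvature measure on the convex ring, combined with the observation that all "lower-dimensional" intersection terms that would normally appear in an inclusion-exclusion expansion vanish for $\lambda^n$-almost every tuple of isometries. Concretely, for convex bodies $L_0,\ldots,L_n\in\mathcal{K}^d$ whose intersection $L\defeq L_0\cap\cdots\cap L_n$ has nonempty interior, one has the standard boundary decomposition $\partial L\subseteq\bigcup_{i=0}^n\partial L_i$, and a point $x\in\partial L$ lies in $\partial L_i$ precisely when $L_i$ is "active" at $x$; if exactly one $L_i$ is active at $x$, then locally near $x$ the set $L$ agrees with $L_i$, so the curvature measure $C_{d-1}(L,\cdot)$ coincides with $C_{d-1}(L_i,\cdot)$ there. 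The contributions from points where two or more of the bodies are simultaneously active are supported on sets of the form $\partial L_i\cap\partial L_j$, and these have to be shown to be $\mathcal{H}^{d-1}$-null (equivalently $C_{d-1}(L_i,\cdot)$-null) for almost all choices of the isometries.

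First I would set $L_i\defeq\varrho_iK_i$ (with $\varrho_0$ the identity) and dispose of degenerate cases: if some $K_i$ is lower-dimensional, or if the intersection $L$ has empty interior, one checks directly — using \eqref{eq:3:curvd-1}, \eqref{eq:3:curvd-1lower} and the fact established in Section \ref{sec:5} that $\varrho\,y\notin\partial K$ for $\lambda$-a.e.\ $\varrho$ — that both sides reduce to the same (often vanishing) expression; the generic situation is when all $K_i$ are full-dimensional. Second, and this is the crux, I would show that for $\lambda^n$-a.e.\ $(\varrho_1,\ldots,\varrho_n)$ and for every pair $i\ne j$ one has $\mathcal{H}^{d-1}(\partial L_i\cap\partial L_j)=0$. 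This follows from a Fubini/Crofton-type argument: fixing all isometries except $\varrho_j$, the set of $\varrho_j$ for which $\varrho_jK_j$ meets a fixed $(d-1)$-dimensional set $\partial L_i$ in positive $\mathcal{H}^{d-1}$-measure is $\lambda$-null, because $\int_{\Ih}\mathcal{H}^{d-1}(\partial L_i\cap\varrho_j(\partial K_j))\,\lambda(\dint\varrho_j)$ can be computed by the invariance of $\lambda$ and the polar-coordinate/coarea structure of $\mathcal{H}^d$ to be a constant multiple of $\mathcal{H}^{d-1}(\partial L_i)\cdot\mathcal{H}^{?}$ times a factor that is finite, while $\partial L_i$ itself is $\mathcal{H}^d$-null, forcing the integrand to vanish for a.e.\ $\varrho_j$; iterating over all pairs and using that a finite intersection of conull sets is conull gives the claim. (One should handle the identity $\varrho_0$ with care, since it is not "randomized", but here one fixes $i$ or $j$ equal to $0$ and randomizes the other index, which still works.)

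Third, on the conull set of isometry tuples produced in the previous step, I would verify the stated identity pointwise. Using additivity of $C_{d-1}(\cdot,\cdot)$ with respect to its first argument (stated in Section \ref{sec:GF}, valid as in the Euclidean case via the normal cycle or via the Beltrami--Klein model) together with an inclusion-exclusion expansion of $\I\{x\in\partial(L_0\cap\cdots\cap L_n)\}$, every term involving two or more bodies simultaneously is supported on some $\partial L_i\cap\partial L_j$ and hence carries zero $C_{d-1}(L_i,\cdot)$-mass; what survives is exactly $\sum_{i=0}^n\int\I\{x\in\cdot,\ x\in\bigcap_{j\ne i}L_j\}\,C_{d-1}(L_i,\dint x)$, which is the right-hand side. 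Here one also uses that for a point $x\in\partial L_i$ lying in the interiors of all the other $L_j$, the measure $C_{d-1}(L,\cdot)$ near $x$ agrees with $C_{d-1}(L_i,\cdot)$, a local statement that can be checked in the Beltrami--Klein model where convex bodies are genuinely convex Euclidean sets and the top order curvature measure is just $\mathcal{H}^{d-1}$ restricted to the boundary (up to the normalization conventions recorded around \eqref{eq:3:curvd-1}).

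The main obstacle I anticipate is the almost-everywhere vanishing of the pairwise boundary overlaps $\mathcal{H}^{d-1}(\partial L_i\cap\partial L_j)$ uniformly over all pairs, since the identity isometry $\varrho_0$ is not subject to randomization and the bodies $L_i$ for $i\ge 1$ are coupled through the same tuple $(\varrho_1,\ldots,\varrho_n)$. The resolution is to randomize one index of each pair at a time and to invoke the translative-type integral-geometric estimate that $\int_{\Ih}\mathcal{H}^{d-1}\big(S\cap\varrho(\partial K)\big)\,\lambda(\dint\varrho)$ is controlled by $\mathcal{H}^{d-1}(S)$ for any rectifiable $S$ — an identity whose Euclidean analogue is \cite[Lemma 5.3]{HLS} and which in $\mathbb{H}^d$ follows from the kinematic formula for $V_{d-1}$ stated in Section \ref{sec:GF} applied to suitable approximations, or directly from the coarea formula and unimodularity of $\Ih$. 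Once this is in place, the rest is bookkeeping with inclusion-exclusion and additivity.
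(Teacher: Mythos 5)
Your plan is sound and, at the structural level, it is the same argument as the paper's: decompose $\partial(L_0\cap\cdots\cap L_n)$ according to which of the boundaries $\partial L_i$ a point lies on, discard all pieces where at least two boundaries are simultaneously active because they are $\mathcal{H}^{d-1}$-null for $\lambda^n$-a.e.\ tuple of isometries (which suffices since $C_{d-1}(L,\cdot)\le 2\,\mathcal{H}^{d-1}(\partial L\cap\cdot)$), and on the single-boundary pieces use the locally determined character of the top order curvature measure to replace $C_{d-1}(L_0\cap\cdots\cap L_n,\cdot)$ by $C_{d-1}(L_i,\cdot)$; the same nullity also lets one pass from $\bigcap_{j\ne i}L_j^\circ$ to $\bigcap_{j\ne i}L_j$ on the right-hand side. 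Where you genuinely differ is the justification of the key nullity: the paper invokes Brothers' rectifiability theorem \cite[Theorem 5.15]{Brothers66}, whereas your Fubini argument is more elementary and self-contained --- for fixed $i$ and randomized $\varrho_j$, $\int_{\Ih}\mathcal{H}^{d-1}\big(\partial L_i\cap\varrho_j\,\partial K_j\big)\,\lambda(\dint\varrho_j)=\int_{\partial K_j}\mathcal{H}^{d}(\partial L_i)\,\mathcal{H}^{d-1}(\dint y)=0$ by \eqref{eq:hdlambda}, invariance of $\mathcal{H}^{d-1}$ and $\mathcal{H}^d(\partial L_i)=0$, so the integrand vanishes for a.e.\ $\varrho_j$, and intersecting over the finitely many pairs gives a conull set of tuples. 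Two cautions: your closing reformulation, that the kinematic integral is merely ``controlled by'' $\mathcal{H}^{d-1}(S)$, would not suffice --- you need the integral to be exactly zero, which is what the $\mathcal{H}^d$-nullity of the boundary delivers; and if you do fall back on a kinematic formula for $V_{d-1}$, you must cite the literature version from Section \ref{sec:GF}, not Lemma \ref{lem:intelem2}, which is itself proved from this lemma. Finally, your separate treatment of lower-dimensional particles and of intersections with empty interior is unnecessary: the disjoint boundary decomposition, the bound $C_{d-1}\le 2\,\mathcal{H}^{d-1}\llcorner\partial$ (see \eqref{eq:3:curvd-1} and \eqref{eq:3:curvd-1lower}) and the local determination property handle all cases uniformly, so the case split only adds bookkeeping, though it does no harm.
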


\begin{proof} Recall that  $L^\circ$ denotes the interior of
  $L\in\cK^d$. Let $L_i=\varrho_i K_i$, $i=0,\ldots,n$, and
  $(\varrho_1,\ldots,\varrho_n)\in (\Ih)^n$. Also recall that $C_{d-1}(L,\cdot)$ is concentrated on $\partial L$ (which is true for all curvature measures). Since 
  $$
  \partial ( L_0 \cap\ldots \cap L_n )=\bigcup_{\varnothing \neq I\subseteq \{0,\ldots,n\}}\left(\bigcap_{r\in I}\partial L_r\cap\bigcap_{s\notin I}L_s^\circ\right)
  $$
is a disjoint decomposition  
  and by
  the measure property, we obtain
\begin{align}
  C_{d-1}(L_0 \cap\ldots \cap L_n,\cdot)&=
 \sum_{\varnothing \neq I\subseteq \{0,\ldots,n\}}C_{d-1}
\left(L_0\cap\ldots\cap L_n,\cdot \cap\bigcap_{r\in I}\partial L_r\cap\bigcap_{s\notin I}L_s^\circ\right)\nonumber\\
&=\sum_{\varnothing \neq I\subseteq \{0,\ldots,n\}}C_{d-1}
\left(\bigcap_{r\in I}L_r,\cdot \cap\bigcap_{r\in I}\partial L_r\cap\bigcap_{s\notin I}L_s^\circ\right),\label{eq:curvmeas}
\end{align}
where for the second equality we used the fact that the curvature
measures are locally determined and
$L_0\cap\ldots\cap L_n\cap U=\bigcap_{r\in I} L_r\cap U$ with the open
set $U=\bigcap_{s\notin I}L_s^\circ$ (see \cite[Theorem 14.3.2]{SW08},
\cite[p.~226]{Schneider}). For the top order curvature measure this property obviously follows from \eqref{eq:3:curvd-1} and \eqref{eq:3:curvd-1lower}. For $|I|\ge 2$,  
\cite[Theorem 5.15]{Brothers66} implies that
$$
\mathcal{H}^{d-1}\left(\bigcap_{r\in I}\partial L_r\right)=0
$$
holds for $\lambda^n$-a.e.\ $(\varrho_1,\ldots,\varrho_n)\in (\Ih)^n$.
But then all summands in \eqref{eq:curvmeas} with $|I|\ge 2$ vanish
for $\lambda^n$-a.e.\ $(\varrho_1,\ldots,\varrho_n)\in (\Ih)^n$, since
for $L\in\cK^d$ we have $C_{d-1}(L,\cdot)\le 2\mathcal{H}^{d-1}(\partial L\cap\cdot)$ (see \eqref{eq:3:curvd-1} and \eqref{eq:3:curvd-1lower}).  By the same reason, the values of
the curvature measures in \eqref{eq:curvmeas} remain unchanged if we
replace $L_s^\circ$, for $s\notin I$, by $L_s$, for $\lambda^n$-a.e.\
$(\varrho_1,\ldots,\varrho_n)\in (\Ih)^n$.
\end{proof}

Next we establish a basic integral geometric formula for the functional $V_{d-1}$.

\begin{lemma}\label{lem:intelem2}
If $n\in\N$ and $A_0,A_1,\ldots,A_n\in\cK^d$, 
then
\begin{align*}
\int_{(\mathcal{I}_d)^n} V_{d-1}(A_0\cap\varrho_1 A_1\cap\ldots\cap \varrho_n A_n)\, \lambda^n(\dint(\varrho_1,\ldots,\varrho_n))
=\sum_{j=0}^n V_{d-1}(A_j)\prod_{\substack{i=0\\ i\neq j}}^n V_d(A_i).
\end{align*}
\end{lemma}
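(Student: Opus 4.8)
The plan is to proceed by induction on $n$, using the surface-area kinematic formula from Lemma~\ref{lem:intelem2}'s two-particle precursor together with Lemma~\ref{lem:surface}, Lemma~\ref{lem:intelem}, and the kinematic formulas for $V_d$ and $V_{d-1}$ recorded after \eqref{eq:Steiner}. First I would observe that by Lemma~\ref{lem:surface}, for $\lambda^n$-a.e.\ $(\varrho_1,\ldots,\varrho_n)$ we may write
\begin{align*}
V_{d-1}(A_0\cap \varrho_1A_1\cap\ldots\cap\varrho_nA_n)
&=\sum_{i=0}^n \int \mathbf{1}\Big\{x\in\bigcap_{j\ne i}\varrho_jA_j\Big\}\,C_{d-1}(\varrho_iA_i,\dint x),
\end{align*}
where $\varrho_0$ is the identity. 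Integrating this over $(\varrho_1,\ldots,\varrho_n)\in(\mathcal{I}_d)^n$ with respect to $\lambda^n$ and using Fubini splits the problem into $n+1$ terms, one for each choice of the ``distinguished'' body $A_i$.

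For the term with $i=0$, the inner integral $\int\mathbf{1}\{x\in\varrho_1A_1\cap\ldots\cap\varrho_nA_n\}\,\lambda^n(\dint(\varrho_1,\ldots,\varrho_n))$ equals $\prod_{j=1}^n V_d(A_j)$ by the same argument as in the proof of Lemma~\ref{lem:intelem} (each $\varrho_j$-integral contributes a factor $\mathcal{H}^d(A_j)=V_d(A_j)$ via \eqref{eq:hdlambda} and inversion invariance of $\lambda$), and then integrating $C_{d-1}(A_0,\dint x)$ over all of $\mathbb{H}^d$ gives $V_{d-1}(A_0)$, yielding $V_{d-1}(A_0)\prod_{j=1}^n V_d(A_j)$. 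For a term with $i\ge 1$, I would first substitute $\varrho_i\mapsto$ a fresh variable and isolate the $\varrho_i$-integration: writing $y=\varrho_i^{-1}x$ and using the covariance $C_{d-1}(\varrho_iA_i,\dint x)=C_{d-1}(A_i,\varrho_i^{-1}\dint x)$ (isometry covariance of the curvature measure), the integral over $\varrho_i$ and over $x$ against $C_{d-1}(\varrho_iA_i,\cdot)$ reduces, after integrating out the isometries $\varrho_j$ ($j\ne i$, $j\ge 1$) as in the $i=0$ case (each giving a factor $V_d(A_j)$) and the constraint $x\in A_0$ (giving, via \eqref{eq:hdlambda} applied to the $\varrho_i$-integral, a factor $V_d(A_0)$ once one checks the order of integration carefully), to $V_{d-1}(A_i)\prod_{j\ne i}V_d(A_j)$. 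Summing the $n+1$ contributions gives exactly $\sum_{j=0}^n V_{d-1}(A_j)\prod_{i\ne j}V_d(A_i)$.

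The main obstacle I anticipate is the bookkeeping in the terms with $i\ge1$: one must handle the interaction between the curvature measure $C_{d-1}(\varrho_iA_i,\cdot)$ (which lives on the moving boundary $\partial(\varrho_iA_i)$) and the $d$-dimensional conditions $x\in A_0$ and $x\in\varrho_jA_j$ simultaneously, ensuring that Fubini applies (finiteness/$\sigma$-finiteness is fine since $C_{d-1}(K,\cdot)\le 2\mathcal{H}^{d-1}(\partial K\cap\cdot)$ and all measures involved are $\sigma$-finite) and that the change of variables $x\mapsto\varrho_i^{-1}x$ together with \eqref{eq:hdlambda} correctly converts the joint $(\varrho_i,x)$-integration against $C_{d-1}$ and the indicator $\mathbf{1}\{x\in A_0\}$ into the product $V_d(A_0)\,V_{d-1}(A_i)$. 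A clean way to organize this is to first prove the case $n=1$ directly (this is essentially the surface-area kinematic formula $\int V_{d-1}(A_0\cap\varrho A_1)\,\lambda(\dint\varrho)=V_{d-1}(A_0)V_d(A_1)+V_d(A_0)V_{d-1}(A_1)$, which follows from Lemma~\ref{lem:surface} with $n=1$, Lemma~\ref{lem:intelem}, and the covariance of $C_{d-1}$), and then run the induction: apply the $n=1$ identity in the variable $\varrho_n$ with $A_0\cap\varrho_1A_1\cap\ldots\cap\varrho_{n-1}A_{n-1}$ in place of $A_0$, then use Lemma~\ref{lem:intelem} for the $V_d$-term and the induction hypothesis for the $V_{d-1}$-term, checking that $\int_{(\mathcal{I}_d)^{n-1}}V_{d-1}(A_0\cap\varrho_1A_1\cap\ldots\cap\varrho_{n-1}A_{n-1})\,\lambda^{n-1}(\dint(\varrho_1,\ldots,\varrho_{n-1}))$ and $\int_{(\mathcal{I}_d)^{n-1}}V_d(A_0\cap\varrho_1A_1\cap\ldots)\,\lambda^{n-1}(\dots)$ combine to give the claimed sum. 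The latter route avoids re-deriving the a.e.\ decomposition for general $n$ and keeps the measure-theoretic subtleties confined to the base case.
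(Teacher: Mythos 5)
Your proposal is correct and, in its recommended form (prove the case $n=1$ via Lemma~\ref{lem:surface}, Fubini, the covariance of $C_{d-1}$ and \eqref{eq:hdlambda}, then induct on $n$ using Lemma~\ref{lem:intelem} for the $V_d$-term and the induction hypothesis for the $V_{d-1}$-term), it is exactly the paper's proof. Your alternative direct decomposition for general $n$ also goes through, since Lemma~\ref{lem:surface} is stated for arbitrary $n$ and the $(\varrho_i,x)$-bookkeeping you worry about is resolved precisely as in the $n=1$ computation, but it offers no real saving over the induction.
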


\begin{proof}
First we show that
\begin{equation}\label{eq:firsteq}
\int_{\mathcal{I}_d} V_{d-1}(A_0\cap\varrho A_1)\,\lambda(\dint \varrho)=V_{d-1}(A_0)V_d(A_1)+V_{d}(A_0)V_{d-1}(A_1).
\end{equation}
This is a special case of a kinematic formula for curvature measures in hyperbolic space (see \eqref{eq3:kin1} and the literature cited there). Here we provide a short direct proof.

Using Lemma \ref{lem:surface}, Fubini's theorem, the inversion invariance of $\lambda$,  the covariance property of $C_{d-1}(\cdot,\cdot)$ and \eqref{eq:hdlambda}, we get
\begin{align*}
&\int_{\mathcal{I}_d} V_{d-1}(A_0\cap\varrho A_1)\,\lambda(\dint \varrho)
=\int_{\mathcal{I}_d} C_{d-1}(A_0\cap\varrho A_1,\mathbb{H}^d)\,\lambda(\dint \varrho)\\
&=\int_{\mathcal{I}_d}\int_{\mathbb{H}^d}\I\{x\in\varrho A_1\} \, C_{d-1}(A_0,\dint x)\,  \, \lambda(\dint \varrho)
+\int_{\mathcal{I}_d}\int_{\mathbb{H}^d}\I\{x\in  A_0\} \,C_{d-1}(\varrho A_1,\dint x)\,  \, \lambda(\dint \varrho)\\
&=\int_{\mathbb{H}^d}\int_{\mathcal{I}_d}\I\{x\in\varrho A_1\} \, \lambda(\dint \varrho)\,  C_{d-1}(A_0,\dint x)\,  
+\int_{\mathcal{I}_d} \int_{\mathbb{H}^d}\I\{\varrho x\in  A_0\}\, C_{d-1}(  A_1,\dint x)\,  \lambda(\dint \varrho)\\
&=V_d(A_1)C_{d-1}(A_0,\mathbb{H}^d)+ \int_{\mathbb{H}^d}\int_{\mathcal{I}_d}\I\{\varrho x\in  A_0\}\, \lambda(\dint \varrho)\, C_{d-1}(  A_1,\dint x)\\
&=V_d(A_1)V_{d-1}(A_0)+V_d(A_0)V_{d-1}(A_1),
\end{align*}
which proves \eqref{eq:firsteq}.

The proof of the general assertion can now be completed by induction over $n$ and by using Lemma \ref{lem:intelem}.
\end{proof}

As a consequence of the preceding integral geometric formula we obtain the next asymptotic mean value. In contrast to the Euclidean counterpart, the limit involves an additional summand which vanishes in the Euclidean case.

\begin{corollary}\label{lem:LimitEW2}
If $L\in\cK^d$, then
\begin{align*}
\lim_{R\to\infty}\frac{1}{\Vol(\BB_R)}\int_{\BHd}
V_{d-1}(L\cap \BB(x,R))\, \mathcal{H}^d(\dint x)
&=V_{d-1}(L)+(d-1) V_d(L).
\end{align*}
\end{corollary}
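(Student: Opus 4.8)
The plan is to reduce the integral over $\BHd$ to an integral over the isometry group $\Ih$, so that Lemma~\ref{lem:intelem2} with $n=1$ applies verbatim. First I would record the elementary fact that for $\varrho\in\Ih$ one has $\BB(\varrho\sfp,R)=\varrho\BB_R$: indeed $\varrho$ is an isometry mapping $\sfp$ to $\varrho\sfp$, hence it carries the metric ball $\BB_R=\BB(\sfp,R)$ onto the metric ball of the same radius centred at $\varrho\sfp$. Applying the normalisation \eqref{eq:hdlambda} with the fixed point $\sfp$ to the nonnegative measurable function $x\mapsto V_{d-1}(L\cap\BB(x,R))$ then gives
\begin{align*}
\int_{\BHd}V_{d-1}(L\cap\BB(x,R))\,\mathcal{H}^d(\dint x)
&=\int_{\Ih}V_{d-1}(L\cap\BB(\varrho\sfp,R))\,\lambda(\dint\varrho)
=\int_{\Ih}V_{d-1}(L\cap\varrho\BB_R)\,\lambda(\dint\varrho).
\end{align*}
Measurability of the integrand is clear since $V_{d-1}$ is a geometric functional and $x\mapsto L\cap\BB(x,R)$ is continuous on $\cK^d$, and integrability is immediate from the monotonicity of $V_{d-1}$ (so that $V_{d-1}(L\cap\BB(x,R))\le V_{d-1}(L)$), or simply from the next step.

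Next I would invoke Lemma~\ref{lem:intelem2} with $n=1$, $A_0=L$ and $A_1=\BB_R$, which yields the exact identity
\begin{align*}
\int_{\Ih}V_{d-1}(L\cap\varrho\BB_R)\,\lambda(\dint\varrho)
=V_{d-1}(L)\,V_d(\BB_R)+V_d(L)\,V_{d-1}(\BB_R),
\end{align*}
and then divide by $\Vol(\BB_R)=V_d(\BB_R)>0$ to obtain
\begin{align*}
\frac{1}{\Vol(\BB_R)}\int_{\BHd}V_{d-1}(L\cap\BB(x,R))\,\mathcal{H}^d(\dint x)
=V_{d-1}(L)+V_d(L)\,\frac{V_{d-1}(\BB_R)}{V_d(\BB_R)}.
\end{align*}
Finally, letting $R\to\infty$ and using \eqref{eq:limit2} with $j=d-1$, which gives $V_{d-1}(\BB_R)/V_d(\BB_R)\to d-1$, produces the asserted limit $V_{d-1}(L)+(d-1)V_d(L)$.

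I do not expect a genuine obstacle: the statement is an immediate consequence of Lemma~\ref{lem:intelem2}. The only points that deserve to be spelled out are the identification $\BB(\varrho\sfp,R)=\varrho\BB_R$ together with the correct normalisation \eqref{eq:hdlambda} of $\lambda$, and the interpretation of the extra summand. That additional term $(d-1)V_d(L)$, which is absent in the Euclidean analogue, is exactly the footprint of the relation \eqref{eq:limit1}/\eqref{eq:limit2} between the surface area and the volume of large hyperbolic balls: in $\R^d$ the ratio $V_{d-1}(\BB_R)/V_d(\BB_R)$ would tend to $0$, whereas in $\BHd$ it tends to $d-1$.
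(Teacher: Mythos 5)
Your proposal is correct and follows essentially the same route as the paper: rewrite the spatial integral via the normalisation \eqref{eq:hdlambda}, apply Lemma \ref{lem:intelem2} with $n=1$, and conclude with \eqref{eq:limit2}. The only cosmetic remark is that integrability does not follow from the pointwise bound $V_{d-1}(L\cap\BB(x,R))\le V_{d-1}(L)$ alone (the integrand must also vanish off the bounded set $\BB(L,R)$), but as you note, finiteness is anyway immediate from the exact identity supplied by Lemma \ref{lem:intelem2} applied to the nonnegative integrand.
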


\begin{proof} An application of a special case of Lemma \ref{lem:intelem2} and \eqref{eq:hdlambda} yield
\begin{align*}
&\lim_{R\to\infty}\frac{1}{\Vol(\BB_R)}\int_{\BHd}
V_{d-1}(L\cap \BB(x,R))\, \mathcal{H}^d(\dint x)\\
&=\lim_{R\to\infty}\frac{1}{\Vol(\BB_R)}\left(V_{d-1}(L)\Vol(\BB_R)+V_d(L)V_{d-1}(\BB_R)\right)\\
&=V_{d-1}(L)+\lim_{R\to\infty}\frac{V_{d-1}( \BB_R)}{\Vol(\BB_R)}V_d(L),
\end{align*}
hence the assertion follows from \eqref{eq:limit2}.
\end{proof}

In Corollary \ref{lem:LimitEW2} we considered the asymptotic behavior of the integral average of the function $x\mapsto V_{d-1}(L\cap \BB(x,R))$ as $R\to\infty$. The limit could be expressed in terms of volume and surface area of $L$. The following lemma treats the situation where $V_{d-1}(L\cap\cdot)$ is replaced by a more general functional $\varphi$.

\begin{lemma}\label{lem:LimitEW_New}
Let $\varphi\colon \cK^d\cup \hb\to\R$ be a bounded,  measurable functional. Assume that
\begin{equation}\label{eqn:continuous_ae}
\lim_{R\to\infty}\varphi(\BB(\exp_\sfp((t+R)u),R))=\varphi(\horo_{u,t})
\end{equation}
for $\mathcal{H}^{d-1}_\sfp\otimes\mathcal{H}^1$ almost all $(u,t)\in \mathbb{S}^{d-1}_\sfp\times\R$ and that there exists some $r_0\in(0,\infty)$ such that
\begin{equation}\label{eqn:assumption_ball}
\varphi(K)=0 \quad \text{for all } K\in\cK^d \text{ with } K\cap \BB_{r_0}=\varnothing.
\end{equation}
Then
$$
\lim_{R\to\infty}\frac{1}{\Vol(\BB_R)}\int_{\BHd}\varphi(\BB(x,R))\, \mathcal{H}^d(\dint x)=\int_{\hb}\varphi(B)\, \mu_{\rm hb}(\dint B).
$$
\end{lemma}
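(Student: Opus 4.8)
The strategy is to decompose $\BHd$ into polar coordinates around $\sfp$ and to carry out the limit $R\to\infty$ by transporting the center of a ball of radius $R$ to a point $\exp_\sfp((t+R)u)$ at distance $t+R$ from $\sfp$, so that the ball converges to the horoball $\horo_{u,t}$. Concretely, using \eqref{eq:polarcoord} and the substitution $s = t + R$ (so $\sinh^{d-1}(s)\,\dint s$ with $s$ ranging over $(0,\infty)$ becomes $\sinh^{d-1}(t+R)\,\dint t$ with $t$ ranging over $(-R,\infty)$), I would write
\begin{align*}
\frac{1}{\Vol(\BB_R)}\int_{\BHd}\varphi(\BB(x,R))\, \mathcal{H}^d(\dint x)
&=\frac{1}{\Vol(\BB_R)}\int_{\mathbb{S}^{d-1}_\sfp}\int_{-R}^\infty
\varphi\big(\BB(\exp_\sfp((t+R)u),R)\big)\,\sinh^{d-1}(t+R)\,\dint t\,\mathcal{H}^{d-1}_\sfp(\dint u).
\end{align*}
Dividing numerator and denominator by $\mathcal{H}^{d-1}(\partial\BB_R)=\omega_d\sinh^{d-1}(R)$ and using \eqref{eq:limit1} (so $\mathcal{H}^{d-1}(\partial\BB_R)/\Vol(\BB_R)\to d-1$), the prefactor converges to $(d-1)/\omega_d$, and the integrand is multiplied by the weight $\sinh^{d-1}(t+R)/\sinh^{d-1}(R)$, which converges pointwise to $e^{(d-1)t}$ as $R\to\infty$ for each fixed $t\in\R$. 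Assumption \eqref{eqn:continuous_ae} gives pointwise convergence of $\varphi(\BB(\exp_\sfp((t+R)u),R))$ to $\varphi(\horo_{u,t})$ for a.e.\ $(u,t)$, so the integrand converges a.e.\ to $\varphi(\horo_{u,t})e^{(d-1)t}$, and the limit of the whole expression should be
$$
\frac{d-1}{\omega_d}\int_{\mathbb{S}^{d-1}_\sfp}\int_\R \varphi(\horo_{u,t})e^{(d-1)t}\,\dint t\,\mathcal{H}^{d-1}_\sfp(\dint u)=\int_{\hb}\varphi(B)\,\mu_{\rm hb}(\dint B),
$$
by the very definition \eqref{eq:muhb} of $\mu_{\rm hb}$.

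\textbf{Justifying the interchange of limit and integral.} The main obstacle is dominated convergence: the domain of $t$-integration grows with $R$, the weight $\sinh^{d-1}(t+R)/\sinh^{d-1}(R)$ grows like $e^{(d-1)t}$ for large positive $t$, and one must rule out escape of mass to $t\to+\infty$. This is exactly where the support condition \eqref{eqn:assumption_ball} enters. If $\BB(\exp_\sfp((t+R)u),R)\cap\BB_{r_0}=\varnothing$ then $\varphi$ of that ball vanishes; and the ball meets $\BB_{r_0}$ only if $d_h(\sfp,\exp_\sfp((t+R)u))\le R+r_0$, i.e.\ $t+R\le R+r_0$, i.e.\ $t\le r_0$. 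Hence for every $R$ the $t$-integrand is supported in $t\le r_0$, so only the regime $t\le r_0$ contributes and there the weight $\sinh^{d-1}(t+R)/\sinh^{d-1}(R)\le \sinh^{d-1}(r_0+R)/\sinh^{d-1}(R)$ is bounded uniformly in $R\ge 1$ (converging to $e^{(d-1)r_0}$). On the negative-$t$ side, for $t\le -1$ say, one uses $\sinh^{d-1}(t+R)/\sinh^{d-1}(R)\le Ce^{(d-1)t}$ uniformly in $R\ge 1$ (an elementary estimate on $\sinh$), which is integrable over $(-\infty,r_0]$; combined with the bound $\|\varphi\|_\infty$ this yields an $R$-independent integrable majorant $g(u,t)=\|\varphi\|_\infty\,C\,e^{(d-1)t}\mathbf{1}\{t\le r_0\}$ on $\mathbb{S}^{d-1}_\sfp\times\R$ (finite since $\mathbb{S}^{d-1}_\sfp$ has finite measure). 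Dominated convergence then applies.

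\textbf{Remaining details.} I would also note that $\varphi$ restricted to $\hb$ is well defined and that $\int_{\hb}\varphi(B)\,\mu_{\rm hb}(\dint B)$ is finite, which follows from the same majorant $g$ together with boundedness of $\varphi$; in fact the support condition \eqref{eqn:assumption_ball}, applied in the limit, forces $\varphi(\horo_{u,t})=0$ whenever $\horo_{u,t}\cap\BB_{r_0}=\varnothing$, i.e.\ whenever $t>r_0$ (recall $\sfp\in\horo_{u,t}$ iff $t\le 0$, and more generally $\horo_{u,t}$ meets $\BB_{r_0}$ iff $t\le r_0$), so the horoball integral is genuinely over $\{t\le r_0\}$ and is finite. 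Finally, measurability of $(u,t)\mapsto\varphi(\BB(\exp_\sfp((t+R)u),R))$ and of $(u,t)\mapsto\varphi(\horo_{u,t})$ follows from measurability of $\varphi$ on $\cK^d\cup\hb$ together with continuity of the maps $(u,t)\mapsto\BB(\exp_\sfp((t+R)u),R)$ and $(u,t)\mapsto\horo_{u,t}$ (the latter by the defining convergence of balls to horoballs in the Fell topology recalled in Section \ref{sec:2.1}), so Fubini is applicable throughout. I expect the verification of the uniform $\sinh$-ratio bound and the bookkeeping of the support constraint to be the only genuinely non-routine points; everything else is a direct application of the polar coordinate formula \eqref{eq:polarcoord}, the limit \eqref{eq:limit1}, and the definition \eqref{eq:muhb}.
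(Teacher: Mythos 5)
Your proposal is correct and follows essentially the same route as the paper's proof: polar coordinates with the substitution $s=t+R$, the observation that the support condition \eqref{eqn:assumption_ball} restricts the $t$-integration to $t\le r_0$, a uniform bound of the form $\sinh^{d-1}(t+R)/\Vol(\BB_R)\le c\,e^{(d-1)t}$ yielding an integrable majorant, dominated convergence together with \eqref{eqn:continuous_ae}, and the definition \eqref{eq:muhb} of $\mu_{\rm hb}$. The only cosmetic difference is that you normalize by $\omega_d\sinh^{d-1}(R)$ and invoke \eqref{eq:limit1} for the prefactor, whereas the paper bounds $\sinh^{d-1}(t+R)/\Vol(\BB_R)$ directly (citing an external lemma and l'Hospital), which changes nothing of substance.
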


\begin{proof} If $R\ge 1$ and $t\ge -R$, then  by \cite[Lemma 1 (a)]{HHT2021} we have
\begin{equation}\label{eqn:bound_sinh_vol}
\Vol(\BB_R)^{-1}\sinh^{d-1}(t+R)\le c_{0,d}e^{(d-1)t}.
\end{equation}
From \eqref{volbalr} and by an application of l'Hospital's rule, we obtain
\begin{equation}\label{eq:lab1}
\lim_{R\to\infty}\Vol(\BB_R)^{-1}\sinh^{d-1}(t+R)=\frac{d-1}{\omega_d}e^{(d-1)t}.
\end{equation}
By \eqref{eq:polarcoord} and a substitution we get
\begin{align}
& \Vol(\BB_R)^{-1}\int_{\BHd}\varphi(\BB(x,R))\, \mathcal{H}^d(\dint x) \notag \\
&=\int_{\mathbb{S}^{d-1}_\sfp}\int_0^\infty \varphi(\BB(\exp_\sfp(su),R))\sinh^{d-1}(s){\Vol(\BB_R)}^{-1}\,\dint s\,
\mathcal{H}^{d-1}_\sfp(\dint u) \notag \\
&=\int_{\mathbb{S}^{d-1}_\sfp}\int_{-R}^\infty \varphi(\BB(\exp_\sfp((t+R)u),R))\sinh^{d-1}(t+R){\Vol(\BB_R)}^{-1}\,\dint t\,
\mathcal{H}^{d-1}_\sfp(\dint u). \label{eqn:proof_limit_horoballs}
\end{align}
Since $\varphi$ is bounded, there exists a $M_\varphi\in(0,\infty)$ such that $|\varphi(K)|\leq M_\varphi$ for all $K\in\cK^d$. Together with \eqref{eqn:assumption_ball} and \eqref{eqn:bound_sinh_vol} we obtain, for $R\ge 1$ and $t\ge -R$,
\begin{align*}
&\left|\varphi(\BB(\exp_\sfp((t+R)u),R))\sinh^{d-1}(t+R){\Vol(\BB_R)}^{-1}\right|\\
&\le M_{\varphi} \mathbf{1}\{\BB(\exp_\sfp((t+R)u),R)\cap\BB_{r_0} \neq \varnothing\} c_{0,d}e^{(d-1)t}\\
&\le c_{0,d} M_{\varphi} \left(\mathbf{1}\{t\le 0\}e^{(d-1)t}+\mathbf{1}\{0<t\le r_0\}e^{(d-1)r_0}\right).
\end{align*}
This allows us to apply the dominated convergence theorem in \eqref{eqn:proof_limit_horoballs}. Since the integrand converges to
$\varphi(\BB_{u,t}) (d-1) e^{(d-1)t}/\omega_d$
for almost all $(u,t)\in \mathbb{S}^{d-1}_\sfp\times\R$ by \eqref{eqn:continuous_ae} and \eqref{eq:lab1}, the assertion follows from the definition of $\mu_{\rm hb}$ in \eqref{eq:muhb}.
\end{proof}

The following lemma deals with the special case where $\varphi=\psi(L\cap \cdot)$ with a continuous functional $\psi$ and some $L\in\cK^d$.

\begin{lemma}\label{lem:limits_intersection_horoballs}
If $L\in\cK^d$ and $\psi: \cK^d\to\R$ is a continuous functional satisfying $\psi(\varnothing)=0$, then the map $\varphi\colon \cK^d\cup \hb\to\mathbb{R}$, $K\mapsto \psi(L\cap K)$, is bounded, measurable, and satisfies \eqref{eqn:continuous_ae} and \eqref{eqn:assumption_ball}. Moreover,
\begin{equation}\label{eqn:limit_integral_horoballs}
\lim_{R\to\infty}\frac{1}{\Vol(\BB_R)}\int_{\BHd}\psi(L\cap \BB(x,R))\, \mathcal{H}^d(\dint x)=\int_{\hb}\psi(L\cap B)\, \mu_{\rm hb}(\dint B).
\end{equation}
\end{lemma}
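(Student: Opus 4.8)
The plan is to verify the four required properties of $\varphi$ and then invoke Lemma~\ref{lem:LimitEW_New}. \emph{Boundedness} is immediate: since $\psi$ is continuous on the compact set $\{K\in\cK^d\colon K\subseteq L\}$ (which is compact in the Hausdorff metric, as $L$ is compact and $\cK^d$ is locally compact), $\psi$ is bounded on this set, and $\varphi(K)=\psi(L\cap K)$ takes values only there. \emph{Measurability} follows because $K\mapsto L\cap K$ is measurable on $\cK^d\cup\hb$ (intersection of compact convex sets is a measurable operation with respect to the Fell topology, and $L\cap B\in\cK^d$ for any horoball $B$ since $L$ is compact) and $\psi$ is continuous, hence measurable. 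Property~\eqref{eqn:assumption_ball} holds with $r_0$ chosen so that $L\subseteq\BB_{r_0}$: if $K\cap\BB_{r_0}=\varnothing$ then $K\cap L=\varnothing$, so $\varphi(K)=\psi(\varnothing)=0$.

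The one property requiring an argument is~\eqref{eqn:continuous_ae}, namely
\begin{equation*}
\lim_{R\to\infty}\psi\bigl(L\cap\BB(\exp_\sfp((t+R)u),R)\bigr)=\psi(L\cap\horo_{u,t})
\end{equation*}
for $\mathcal{H}^{d-1}_\sfp\otimes\mathcal{H}^1$-a.e.\ $(u,t)$. Since $\psi$ is continuous on $\cK^d$, it suffices to show $L\cap\BB(\exp_\sfp((t+R)u),R)\to L\cap\horo_{u,t}$ in the Hausdorff metric as $R\to\infty$. By the very definition of the horoball recalled in Section~\ref{sec:2.1}, $\BB(\exp_\sfp((t+R)u),R)\to\horo_{u,t}$ in the Fell topology, and moreover this is a \emph{nested increasing} sequence of convex sets whose union has closure $\horo_{u,t}$. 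Intersecting an increasing sequence of closed convex sets converging to $\horo_{u,t}$ with the fixed compact set $L$ yields an increasing sequence of compact convex sets; for compact sets, Fell convergence together with a uniform boundedness (all contained in $L$) upgrades to Hausdorff convergence, \emph{provided} the limit $L\cap\horo_{u,t}$ is nonempty or the sequence is eventually empty. The potential subtlety — and the main obstacle — is the boundary case in which $L$ touches $\partial\horo_{u,t}$ but $L^\circ\cap\horo_{u,t}=\varnothing$, i.e.\ $L\cap\horo_{u,t}$ is a lower-dimensional ``grazing'' intersection: there $L\cap\BB(\exp_\sfp((t+R)u),R)$ could fail to converge to $L\cap\horo_{u,t}$ in Hausdorff distance (it might stay empty while the limit is a single point, or vice versa). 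I would handle this by arguing that the set of $(u,t)$ for which $L$ is tangent to $\partial\horo_{u,t}$ in this degenerate way is an $\mathcal{H}^{d-1}_\sfp\otimes\mathcal{H}^1$-null set: for fixed $u$, the map $t\mapsto\horo_{u,t}$ is a strictly monotone family of horoballs sweeping out $\BHd$, and $L$ (being compact) can be tangent ``from outside'' to $\partial\horo_{u,t}$ for at most the single threshold value $t=t(u,L)$ where $\horo_{u,t}$ first meets $L$; hence the bad set is contained in a graph $\{(u,t(u,L))\colon u\in\mathbb{S}^{d-1}_\sfp\}$, which is a null set. For all other $(u,t)$, either $L\cap\horo_{u,t}=\varnothing$ with $\exp_\sfp((t+R)u)$ eventually staying away from $L$ (so the intersections are eventually empty), or $(L\cap\horo_{u,t})^\circ\neq\varnothing$ relative to $L$, and then the increasing Fell-convergent intersections converge in Hausdorff distance by a standard convexity argument.

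Having established the four hypotheses, I would conclude by applying Lemma~\ref{lem:LimitEW_New} directly to $\varphi(K)=\psi(L\cap K)$, which gives exactly
\begin{equation*}
\lim_{R\to\infty}\frac{1}{\Vol(\BB_R)}\int_{\BHd}\psi(L\cap\BB(x,R))\,\mathcal{H}^d(\dint x)=\int_{\hb}\psi(L\cap B)\,\mu_{\rm hb}(\dint B),
\end{equation*}
which is~\eqref{eqn:limit_integral_horoballs}. The bulk of the work is therefore the a.e.\ continuity statement~\eqref{eqn:continuous_ae}; boundedness, measurability, and the support condition~\eqref{eqn:assumption_ball} are routine, and the passage to the limit is then free once Lemma~\ref{lem:LimitEW_New} is in hand.
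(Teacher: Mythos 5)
Your proposal is correct and follows essentially the same route as the paper: verify boundedness, measurability, the support condition \eqref{eqn:assumption_ball}, and the a.e.\ convergence \eqref{eqn:continuous_ae}, then apply Lemma \ref{lem:LimitEW_New}. In particular, your treatment of the exceptional set (grazing contact occurring, for each fixed $u$, at only the single threshold value $t(u,L)$, hence a null set) is exactly the paper's argument, which phrases the same degenerate case as ``$L\cap\horo_{u,t}\neq\varnothing$ but $L$ and $\horo_{u,t}$ can be separated by a hyperbolic hyperplane.''
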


\begin{proof}
Since $\{K\in\cK^d\colon K\subseteq L\}$ is a compact subset of $\cK^d$, the continuity of $\psi$ implies that $\varphi$ is bounded. Measurability is implied by \cite[Theorem 12.2.6 (a)]{SW08}. By the continuity of $\psi$, we have
$$
\lim_{R\to\infty}\varphi(\BB(\exp_\sfp((t+R)u),R))=\lim_{R\to\infty}\psi(L\cap \BB(\exp_\sfp((t+R)u),R))=\psi(L\cap \horo_{u,t})=\varphi(\horo_{u,t})
$$
for $(u,t)\in \mathbb{S}^{d-1}_\sfp\times\R$ provided that $L\cap \horo_{u,t}=\varnothing$ or if $L\cap \horo_{u,t}\neq \varnothing$ and $L,\horo_{u,t}$ cannot be
separated by a hyperbolic hyperplane. For any given $u\in \mathbb{S}^{d-1}_\sfp$,  there is only a single $t\in\R$ such that $L\cap \horo_{u,t}\neq \varnothing$ and  $L,\horo_{u,t}$ can be
separated by a hyperbolic hyperplane, hence this situation  occurs only on a null set, which proves \eqref{eqn:continuous_ae}. There exists some $r(L)\in(0,\infty)$ such that $L\subseteq \BB_{r(L)}$. Together with $\psi(\varnothing)=0$ this shows \eqref{eqn:assumption_ball} with $r_0=r(L)$. Equation \eqref{eqn:limit_integral_horoballs} is an immediate consequence of Lemma \ref{lem:LimitEW_New}.
\end{proof}

\begin{corollary}\label{cor:horint1}
\begin{itemize}
\item [{\rm (a)}] If $L\in\cK^d$, then
$$
\Vol(L)=\int_{\hb}  \Vol(L\cap B)\, \mu_{\rm hb}(\dint B).
$$
\item [{\rm (b)}] If $y,z\in\BHd$ and $\tau\in\mathcal{I}_d$, then
\begin{align*}
 \int_{\hb}  \mathbf{1}\{y,z\in \tau B\} \, \mu_{\rm hb}(\dint B)
 = \int_{\hb}  \mathbf{1}\{y,z\in B\} \, \mu_{\rm hb}(\dint B).
\end{align*}
\item [{\rm (c)}] If $y,z\in\BHd$, then
$$
\lim_{R\to\infty} \frac{\mathcal{H}^d\left(\BB(y,R) \cap \BB(z,R)\right)}{\Vol(\BB_R)} = \int_{\hb}
 \mathbf{1}\{ y,z \in B \} \, \mu_{\rm hb}(\dint B).
$$
\end{itemize}
\end{corollary}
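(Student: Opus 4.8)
The plan is to obtain all three parts from the limit relations of Lemmas~\ref{lem:limits_intersection_horoballs} and~\ref{lem:LimitEW_New} combined with the elementary integral geometric identities of this section. For part~(a) I would apply Lemma~\ref{lem:limits_intersection_horoballs} with $\psi=V_d=\Vol$, which is continuous on $\cK^d$ with $\Vol(\varnothing)=0$, and then evaluate the left-hand side of \eqref{eqn:limit_integral_horoballs}. Since $\BB(x,R)=\varrho\BB_R$ for any $\varrho\in\Ih$ with $\varrho\sfp=x$, relation \eqref{eq:hdlambda} rewrites $\int_{\BHd}\Vol(L\cap\BB(x,R))\,\mathcal{H}^d(\dint x)$ as $\int_{\Ih}\Vol(L\cap\varrho\BB_R)\,\lambda(\dint\varrho)$, which equals $\Vol(L)\Vol(\BB_R)$ by Lemma~\ref{lem:intelem} in the case $n=1$. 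Hence the average on the left of \eqref{eqn:limit_integral_horoballs} is identically $\Vol(L)$, and (a) follows at once.

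Part~(b) is immediate from the isometry invariance of $\mu_{\rm hb}$ recalled after \eqref{eq:muhb}: writing $\mathbf{1}\{y,z\in\tau B\}=\mathbf{1}\{\tau^{-1}y,\tau^{-1}z\in B\}$ and using that the image of $\mu_{\rm hb}$ under the map $B\mapsto\tau B$ on $\hb$ is again $\mu_{\rm hb}$ yields the asserted identity.

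For part~(c) the idea is to cast the numerator in the form required by Lemma~\ref{lem:LimitEW_New}. Using the symmetry $d_h(x,w)\le R\iff w\in\BB(x,R)$ and Fubini's theorem, one has
\[
\mathcal{H}^d(\BB(y,R)\cap\BB(z,R))=\int_{\BHd}\varphi(\BB(x,R))\,\mathcal{H}^d(\dint x),\qquad \varphi(K)\defeq\mathbf{1}\{y\in K\}\,\mathbf{1}\{z\in K\},
\]
for the bounded functional $\varphi\colon\cK^d\cup\hb\to\R$. This $\varphi$ is measurable (the set $\{K:y\in K\}$ is closed in the Fell topology) and satisfies \eqref{eqn:assumption_ball} with any $r_0$ large enough that $y\in\BB_{r_0}$, since then $K\cap\BB_{r_0}=\varnothing$ forces $y\notin K$ and hence $\varphi(K)=0$. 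The remaining hypothesis \eqref{eqn:continuous_ae} is verified as follows: the balls $\BB(\exp_\sfp((t+R)u),R)$ increase with $R$ — their centers lie on one geodesic ray with $d_h(\exp_\sfp((t+R_1)u),\exp_\sfp((t+R_2)u))=R_2-R_1$, so monotonicity follows from the triangle inequality — and their union lies between $\mathrm{int}\,\horo_{u,t}$ and $\horo_{u,t}$. Consequently $\mathbf{1}\{w\in\BB(\exp_\sfp((t+R)u),R)\}\to\mathbf{1}\{w\in\horo_{u,t}\}$ for every $w\notin\partial\horo_{u,t}$, so $\varphi(\BB(\exp_\sfp((t+R)u),R))\to\varphi(\horo_{u,t})$ whenever neither $y$ nor $z$ lies on $\partial\horo_{u,t}$. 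Because the horospheres with a common ideal point foliate $\BHd$, for each $u$ there is exactly one value of $t$ with $y\in\partial\horo_{u,t}$ and exactly one with $z\in\partial\horo_{u,t}$; hence the exceptional pairs $(u,t)$ form a graph over $\mathbb{S}^{d-1}_\sfp$ and thus an $\mathcal{H}^{d-1}_\sfp\otimes\mathcal{H}^1$-null set. Lemma~\ref{lem:LimitEW_New} then yields the limit in (c), with $\int_{\hb}\varphi(B)\,\mu_{\rm hb}(\dint B)=\int_{\hb}\mathbf{1}\{y,z\in B\}\,\mu_{\rm hb}(\dint B)$.

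Parts~(a) and~(b) are essentially bookkeeping, and the one step requiring genuine care is the almost-everywhere convergence \eqref{eqn:continuous_ae} needed for (c); within it, the crucial point is that the directions and levels for which $y$ or $z$ lands exactly on the limiting horosphere are negligible, which is where the horospherical foliation of $\BHd$ actually enters.
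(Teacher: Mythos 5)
Your proposal is correct and takes essentially the same route as the paper: part (a) via Lemma \ref{lem:limits_intersection_horoballs} combined with \eqref{eq:hdlambda} and Lemma \ref{lem:intelem}, and part (c) via Lemma \ref{lem:LimitEW_New} applied to $\varphi(K)=\mathbf{1}\{y\in K\}\mathbf{1}\{z\in K\}$, with the same verification of \eqref{eqn:continuous_ae} (exceptional $(u,t)$ form a null set) and \eqref{eqn:assumption_ball}, and the same symmetry rewriting of $\mathcal{H}^d(\mathbb{B}(y,R)\cap\mathbb{B}(z,R))$. For (b) you appeal directly to the isometry invariance of $\mu_{\rm hb}$, which the paper itself states is sufficient (it merely adds a self-contained alternative argument based on Lemma \ref{lem:LimitEW_New}), so this is an inessential difference.
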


\begin{proof}
By \eqref{eq:hdlambda} and Lemma \ref{lem:intelem}, we have
$$
\frac{1}{\Vol(\BB_R)}\int_{\BHd}\Vol(L\cap \BB(x,R))\, \mathcal{H}^d(\dint x)
=\Vol(L).
$$
Now the assertion (a) is implied by Lemma \ref{lem:limits_intersection_horoballs}.

Assertion (b) is a direct consequence of the isometry invariance of the measure $\mu_{\rm hb}$. Here we provide a simple direct argument based on Lemma \ref{lem:LimitEW_New}. For fixed points $x_1,x_2\in\BH^d$,  $\varphi \colon \cK^d\cup\hb\ni K\mapsto \mathbf{1}\{x_1,x_2\in K\}$ is bounded and satisfies \eqref{eqn:assumption_ball} with $r_0:=\max\{d_h(\sfp,x_1),d_h(\sfp,x_2)\}$. For fixed  $u\in \mathbb{S}^{d-1}_\sfp$,  \eqref{eqn:continuous_ae} is satisfied whenever $x_1\not\in\partial\BB_{u,t}$ and $x_2\notin\partial \BB_{u,t}$, which proves \eqref{eqn:continuous_ae}.
From Lemma \ref{lem:LimitEW_New} and the invariance of the Hausdorff measure it follows that
\begin{align*}
 \int_{\hb}  \mathbf{1}\{y,z\in \tau B\} \,  \mu_{\rm hb}(\dint B)
& = \lim_{R\to\infty} \frac{1}{V_d(\BB_R)} \int_{\BH^d}\mathbf{1}\{y,z\in \tau \BB(x,R)\} \, \mathcal{H}^d(\dint x) \\
& = \lim_{R\to\infty} \frac{1}{V_d(\BB_R)} \int_{\BH^d}\mathbf{1}\{y,z\in \BB(x,R)\} \, \mathcal{H}^d(\dint x) \\
& =  \int_{\hb}  \mathbf{1}\{y,z\in B\} \,  \mu_{\rm hb}(\dint B) ,
\end{align*}
which shows (b), while (c) is the last equality.
\end{proof}

For the analysis of asymptotic covariances involving the surface area, we need Lemma \ref{lem:integration_horoball}. The following lemma prepares the proof of Lemma \ref{lem:integration_horoball} and shows that generically the intersections of the boundaries considered in Lemma \ref{lem:nullsets},  (a) and (b), have measure zero.

\begin{lemma}\label{lem:nullsets}
Let $A\in\cK^d$ and $u\in\mathbb{S}^{d-1}_\sfp$. For $R\geq 0$, let $x_R=\exp_{\sfp}(Ru)$.  Then the following is true.
\begin{enumerate}
\item[{\rm (a)}] $\mathcal{H}^{d-1}(\varrho\partial A\cap \partial
B)=0$ for $\lambda$-a.e. $\varrho\in \mathcal{I}_d$, for every $B\in \hb$.
\item[{\rm (b)}] $\mathcal{H}^{d-1}(\partial A\cap \partial B)=0$ for $\mu_{\rm hb}$-a.e. $B\in\hb$.
\item[{\rm (c)}] $\mathcal{H}^{d-1}(\varrho A\cap \partial \mathbb{B}(x_R,R))\to  \mathcal{H}^{d-1}(\varrho A\cap \partial \mathbb{B}_{u,0})$ as $R\to\infty$, for
$\lambda$-a.e. $\varrho\in\mathcal{I}_d$.
\end{enumerate}
\end{lemma}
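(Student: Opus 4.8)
For part (a), I would fix $B\in\hb$ and argue exactly as in the proof of Lemma~\ref{lem:surface}, i.e.\ via the integral‑geometric kinematic formula for Hausdorff measures in the homogeneous space $\BHd$ (see \cite[Theorem~5.15]{Brothers66}). Since $A$ is compact, its boundary $\partial A$ is a compact $(d-1)$‑rectifiable set with $\mathcal{H}^{d-1}(\partial A)<\infty$ (if $\mathcal{H}^{d-1}(\partial A)=0$ there is nothing to prove), while the horosphere $\partial B$ is a smooth properly embedded hypersurface, hence $(d-1)$‑rectifiable with $\mathcal{H}^{d-1}(\partial B\cap C)<\infty$ for every compact $C\subseteq\BHd$. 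As $\Ih$ is $\sigma$‑compact it suffices to fix a compact set $L\subseteq\Ih$ and prove the claim for $\lambda|_L$‑a.e.\ $\varrho$. The bodies $\varrho A$, $\varrho\in L$, lie in a fixed ball $\mathbb{B}_N$, so $\varrho\partial A\cap\partial B=\varrho\partial A\cap(\partial B\cap\mathbb{B}_N)$ for all $\varrho\in L$, and applying \cite[Theorem~5.15]{Brothers66} with both exponents equal to $d-1$ (so that the intersection has dimension $d-2$) to the pair $(\partial A,\partial B\cap\mathbb{B}_N)$ gives
\[
\int_{\Ih}\mathcal{H}^{d-2}\big(\varrho\partial A\cap\partial B\cap\mathbb{B}_N\big)\,\lambda(\dint\varrho)=c_d\,\mathcal{H}^{d-1}(\partial A)\,\mathcal{H}^{d-1}(\partial B\cap\mathbb{B}_N)<\infty .
\]
Hence $\mathcal{H}^{d-2}(\varrho\partial A\cap\partial B)<\infty$ for $\lambda|_L$‑a.e.\ $\varrho$, and since a set of positive $\mathcal{H}^{d-1}$‑measure has infinite $\mathcal{H}^{d-2}$‑measure, $\mathcal{H}^{d-1}(\varrho\partial A\cap\partial B)=0$ for $\lambda|_L$‑a.e.\ $\varrho$; exhausting $\Ih$ by countably many such $L$ finishes (a).

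For part (b), I would use the coarea formula together with the explicit form \eqref{eq:muhb} of $\mu_{\rm hb}$. Since $(u,t)\mapsto\mathbb{B}_{u,t}$ is a bijection from $\mathbb{S}^{d-1}_\sfp\times\R$ onto $\hb$ and the weight $e^{(d-1)t}$ is strictly positive, it suffices to show $\mathcal{H}^{d-1}(\partial A\cap\partial\mathbb{B}_{u,t})=0$ for $\mathcal{H}^{d-1}_\sfp\otimes\mathcal{H}^1$‑a.e.\ $(u,t)$. Fix $u$, let $\xi$ be the point at infinity in direction $u$ and $b_\xi$ the corresponding Busemann function; the horospheres centered at $\xi$ are precisely the level sets of $b_\xi$, and $b_\xi$ is smooth with $|\nabla b_\xi|\equiv1$. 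Applying the coarea formula to $b_\xi$ and the $(d-1)$‑rectifiable set $\partial A$ yields
\[
\int_{\R}\mathcal{H}^{d-2}\big(\partial A\cap b_\xi^{-1}(s)\big)\,\dint s\le\mathcal{H}^{d-1}(\partial A)<\infty ,
\]
so $\mathcal{H}^{d-2}(\partial A\cap b_\xi^{-1}(s))<\infty$, hence $\mathcal{H}^{d-1}(\partial A\cap b_\xi^{-1}(s))=0$, for a.e.\ $s\in\R$. As $s$ ranges over $\R$ the sets $b_\xi^{-1}(s)$ range over $\{\partial\mathbb{B}_{u,t}:t\in\R\}$, so $\mathcal{H}^{d-1}(\partial A\cap\partial\mathbb{B}_{u,t})=0$ for a.e.\ $t$, for every $u$, and Fubini gives the claim. (Alternatively, (b) follows from (a): by isometry invariance of $\mathcal{H}^{d-1}$, part (a) gives $\mathcal{H}^{d-1}(\partial A\cap\partial(\varrho^{-1}B_0))=0$ for $\lambda$‑a.e.\ $\varrho$; since $\Ih$ acts transitively on $\hb$ with unimodular point stabilizer and $\mu_{\rm hb}$ is the associated invariant measure, Weil's integration formula turns this $\lambda$‑null statement into the desired $\mu_{\rm hb}$‑null statement.)

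For part (c), I would fix $A$ and $u$ and, using (a) with $B=\mathbb{B}_{u,0}$, restrict to the $\lambda$‑conull set of $\varrho$ with $\mathcal{H}^{d-1}(\varrho\partial A\cap\partial\mathbb{B}_{u,0})=0$; set $K=\varrho A$ and pick $r$ with $K\subseteq\mathbb{B}_r$. By the very definition of the horoball, $\mathbb{B}(x_R,R)\to\mathbb{B}_{u,0}$ in the Fell topology; moreover the geodesic sphere $\partial\mathbb{B}(x_R,R)$ has all principal curvatures equal to $\coth R$, which tends to $1$, the common value of the principal curvatures of the horosphere, so a Jacobi‑field comparison upgrades the set convergence to $C^1$‑ (indeed $C^\infty$‑) convergence of $\partial\mathbb{B}(x_R,R)\cap\mathbb{B}_{r+1}$ to $\partial\mathbb{B}_{u,0}\cap\mathbb{B}_{r+1}$. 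Fixing a smooth nearest‑point retraction $\pi$ from a tubular neighborhood of $\partial\mathbb{B}_{u,0}$ onto $\partial\mathbb{B}_{u,0}$, for $R$ large $\pi$ restricts to a diffeomorphism of $\partial\mathbb{B}(x_R,R)\cap\mathbb{B}_{r+1}$ onto its image, with displacement tending to $0$ and induced Jacobian tending to $1$, uniformly on $\mathbb{B}_{r+1}$. Writing $\mathcal{H}^{d-1}(K\cap\partial\mathbb{B}(x_R,R))=\mathcal{H}^{d-1}(K\cap\partial\mathbb{B}(x_R,R)\cap\mathbb{B}_{r+1})$ as an integral over $\partial\mathbb{B}_{u,0}\cap\mathbb{B}_{r+1}$ via this diffeomorphism, the integrand converges at every $y\in\partial\mathbb{B}_{u,0}\setminus\partial K$ to $\I\{y\in K\}$, the Jacobian factor converges to $1$, and $\mathcal{H}^{d-1}(\partial K\cap\partial\mathbb{B}_{u,0})=0$ by the choice of $\varrho$; since $\partial\mathbb{B}_{u,0}\cap\mathbb{B}_{r+1}$ has finite $\mathcal{H}^{d-1}$‑measure, dominated convergence yields $\mathcal{H}^{d-1}(K\cap\partial\mathbb{B}(x_R,R))\to\mathcal{H}^{d-1}(K\cap\partial\mathbb{B}_{u,0})$, which is the assertion.

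The main obstacle is (c): parts (a) and (b) are routine applications of the Brothers kinematic formula and the coarea formula along the lines already used for Lemma~\ref{lem:surface}, whereas in (c) one has to make rigorous that the geodesic spheres $\partial\mathbb{B}(x_R,R)$ converge to the horosphere $\partial\mathbb{B}_{u,0}$ strongly enough to control $(d-1)$‑dimensional areas of their intersections with the fixed body $K$ — i.e.\ $C^1$‑convergence on compacta together with convergence of the induced surface measures — and to couple this convergence with the null‑set statement from (a) in order to deal with the points of $\partial\mathbb{B}_{u,0}$ lying on $\partial K$.
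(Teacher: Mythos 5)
Your proposal is correct, and for parts (b) and (c) it follows a genuinely different route than the paper. Part (a) is essentially the paper's own argument: both invoke the kinematic formula of Brothers \cite[Theorem 5.15]{Brothers66} to get $\mathcal{H}^{d-2}$-finiteness of the intersection for $\lambda$-a.e.\ $\varrho$ and then use the dimension drop to conclude $\mathcal{H}^{d-1}$-nullity (the paper localizes $\partial B$ by intersecting with $\mathbb{B}_R$ and lets $R\to\infty$, you instead localize the isometries; either works). For (b) the paper gives a short Fubini argument: by the isometry invariance of $\mu_{\rm hb}$ and $\mathcal{H}^{d-1}$ one writes $\int_{\hb}\mathcal{H}^{d-1}(\partial A\cap\partial B)\,\mu_{\rm hb}(\dint B)$ as an average over $\varrho$ in a set of finite $\lambda$-measure and kills the inner integral with (a); your parenthetical alternative is this argument in disguise, while your primary argument (coarea/Eilenberg inequality along the Busemann function of the point at infinity, whose level sets are exactly the horospheres $\partial\mathbb{B}_{u,t}$, combined with the explicit form \eqref{eq:muhb} of $\mu_{\rm hb}$) is an independent and equally valid proof that does not even use (a). For (c) the paper avoids any explicit smooth-convergence statement: it uses that $\mathbb{B}(x_R,R)\cap\mathbb{B}_r\to\mathbb{B}_{u,0}\cap\mathbb{B}_r$ in the Hausdorff metric, the weak continuity and local determination of the curvature measure $C_{d-1}$ (Kohlmann), and then the Portmanteau theorem applied to the set $\varrho A$, whose boundary is a null set for the limit measure by (a). Your approach instead upgrades the convergence of the geodesic spheres to the horosphere to $C^1$-convergence on compacta and transfers the surface integral by a nearest-point projection with Jacobian tending to $1$, concluding by dominated convergence off the null set $\partial(\varrho A)\cap\partial\mathbb{B}_{u,0}$; this is correct, and the same null-set input from (a) plays the same role, but the smooth-convergence step (``principal curvatures $\coth R\to 1$ plus Jacobi-field comparison'') is only sketched and would need to be substantiated, e.g.\ by writing both hypersurfaces as graphs in Fermi coordinates over the horosphere, whereas the paper's measure-theoretic route gets this for free from the already-cited weak continuity of curvature measures.
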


\begin{proof}
(a) Clearly, $\partial A$ is $(d-1)$-rectifiable and $\partial B\cap \mathbb{B}_R$ is  $(d-1)$-rectifiable for each $R>0$ and every $B\in \hb$. Hence, for each $R>0$ and every $B\in \hb$, \cite[Theorem  5.15]{Brothers66} yields
$$
\mathcal{H}^{d-2}(\varrho \partial A\cap(\partial B\cap \mathbb{B}_R))<\infty\qquad \text{for $\lambda$-a.e. $\varrho\in \mathcal{I}_d$.}
$$
Therefore, for each $R>0$ and every $B\in \hb$ we get
$$
\mathcal{H}^{d-1}(\varrho \partial A\cap\partial B\cap \mathbb{B}_R)=0\qquad \text{for $\lambda$-a.e. $\varrho\in \mathcal{I}_d$,}
$$
which implies (a) by considering an increasing sequence of radii $R$.

(b) We use the isometry invariance of $\mu_{\rm hb}$ and $\mathcal{H}^{d-1}$ and Fubini's theorem to see that
\begin{align*}
& \int_{\hb}\mathcal{H}^{d-1}(\partial A\cap \partial B)\, \mu_{\rm hb}(\dint B)\\
&=\int_{\mathcal{I}_d}\int_{\hb}V_d(\mathbb{B}_1)^{-1}\mathbf{1}\{\varrho \sfp \in \mathbb{B}_1\}\mathcal{H}^{d-1}(\varrho \partial A\cap \partial B)\, \mu_{\rm hb}(\dint B)\, \lambda(\dint \varrho)\\
&=\int_{\hb}V_d(\mathbb{B}_1)^{-1}\int_{\mathcal{I}_d}
\mathbf{1}\{\varrho \sfp \in \mathbb{B}_1\}\mathcal{H}^{d-1}(\varrho \partial A\cap \partial B)\, \lambda(\dint \varrho)
\, \mu_{\rm hb}(\dint B).
\end{align*}
Since for each $B\in \hb$, we already know that $\mathcal{H}^{d-1}(\varrho \partial A\cap \partial B)=0$ for  $\lambda$-a.e. $\varrho\in \hb$, the inner integral is zero, which implies (b).

(c) We denote by $C_{d-1}(L,\cdot)$ the curvature measure of order $d-1$ of $L\in\cK^d$ (see Section \ref{sec:GF} and \cite[Theorem 2.7]{Kohlmann1991}).  The map $L\mapsto C_{d-1}(L,\cdot)$ is weakly continuous on $\cK^d$ \cite[Lemma 2.3, Theorem 2.4]{Kohlmann1994} and
$C_{d-1}(L,\cdot)=\mathcal{H}^{d-1}(\partial L\cap\cdot)$ if $L\in\cK^d$ has nonempty interior (as can be seen from \cite[Theorem 2.7]{Kohlmann1991}).  If $r>0$, then $\mathbb{B}(x_R,R)\cap \mathbb{B}_r\to \mathbb{B}_{u,0}\cap \mathbb{B}_r$ as $R\to\infty$ with respect to the Hausdorff metric (compare \cite[Theorem 12.2.2, Theorem 12.3.4]{SW08} or \cite[Theorem 1.8.10]{Schneider}, \cite[Lemmma 5.1.1]{SW08}, which remain true in hyperbolic space). Also note that curvature measures are locally determined (see \cite[p.~226, Note 11]{Schneider}).  Let $f:\mathbb{H}^d\to\R$ be continuous with compact support contained in $\mathbb{B}_r^\circ$, for some $r>0$. Then we obtain
\begin{align*}
\int_{\partial \mathbb{B}(x_R,R)}f(x)\, \mathcal{H}^{d-1}(\dint x)&=\int_{\mathbb{H}^d}f(x)\, C_{d-1}(\mathbb{B}(x_R,R)\cap \mathbb{B}_r,\dint x)\\
&\to \int_{\mathbb{H}^d}f(x)\, C_{d-1}(\mathbb{B}_{u,0}\cap \mathbb{B}_r,\dint x)
=\int_{\partial \mathbb{B}_{u,0}}f(x)\, \mathcal{H}^{d-1}(\dint x),
\end{align*}
as $R\to\infty$. If $\varrho\in \mathcal{I}_d$ is chosen such that $\mathcal{H}^{d-1}( \partial \varrho A\cap\partial \mathbb{B}_{u,0})=0$, then the Portmanteau theorem applied with the set $\varrho A$ shows that
$$
\mathcal{H}^{d-1}(\varrho A\cap \partial \mathbb{B}(x_R,R))\to  \mathcal{H}^{d-1}(\varrho A\cap \partial \mathbb{B}_{u,0})
$$
as $R\to\infty$. Hence the assertion follows from (a).
\end{proof}

The following lemma will be a crucial ingredient in the proof of Theorem \ref{th:asympcovariance}.

\begin{lemma}\label{lem:integration_horoball}
Let $u\in\mathbb{S}^{d-1}_\sfp$ be fixed and let $A\in\cK^d$. Then
\begin{align}
&\int_{\hb} V_{d}(A\cap B) V_{d-1}(A\cap B) \, \mu_{\rm hb}(\dint B) \nonumber \\
& = \int_{\hb} V_{d}(A\cap B) C_{d-1}(  A, B) \, \mu_{\rm hb}(\dint B) +  (d-1) \int_{\mathcal{I}_d} \mathbf{1}\{\sfp\in \varrho A\} V_{d}(\varrho A\cap \mathbb{B}_{u,0}) \, \lambda(\dint \varrho) \label{eq:Integration_horoball_1}
\end{align}
and
\begin{align}
&\int_{\hb} V_{d-1}(A\cap B)^2 \, \mu_{\rm hb}(\dint B) \nonumber\\
& = \int_{\hb} C_{d-1}(  A, B)^2 \,\mu_{\rm hb}(\dint B) \nonumber \\
&\qquad \,
 + (d-1) \int_{\mathcal{I}_d} \mathbf{1}\{\sfp\in \varrho A\} \big(2 C_{d-1}(\varrho   A, \mathbb{B}_{u,0}) + C_{d-1}(\mathbb{B}_{u,0},\varrho A )\big) \, \lambda(\dint \varrho). \label{eq:Integration_horoball_2}
\end{align}
\end{lemma}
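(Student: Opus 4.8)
The two identities are derived from a single decomposition of the top order curvature measure on intersections with horoballs, combined with the limit device of Lemma~\ref{lem:LimitEW_New}. For a horoball $B\in\hb$ the set $A\cap B$ is again compact and convex with $\partial(A\cap B)=(\partial A\cap B)\cup(A\cap\partial B)$, and $\mathcal{H}^{d-1}(\partial A\cap\partial B)=0$ for $\mu_{\rm hb}$-a.e.\ $B$ by Lemma~\ref{lem:nullsets}(b); since for the top order $C_{d-1}$ is just ($2\times$) the boundary Hausdorff measure, this yields, with the convention $C_{d-1}(B,\cdot)\defeq\mathcal{H}^{d-1}(\partial B\cap\cdot)$ for a horoball $B$,
\begin{equation}\label{eq:plan-star}
V_{d-1}(A\cap B)=C_{d-1}(A,B)+C_{d-1}(B,A)\qquad\text{for }\mu_{\rm hb}\text{-a.e.\ }B\in\hb
\end{equation}
(when $A$ is not full-dimensional both sides reduce to $C_{d-1}(A,B)$, for every $B$). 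Substituting \eqref{eq:plan-star} and, for \eqref{eq:Integration_horoball_2}, its square, and splitting off $\int_{\hb}V_d(A\cap B)C_{d-1}(A,B)\,\mu_{\rm hb}(\dint B)$ (resp.\ $\int_{\hb}C_{d-1}(A,B)^2\,\mu_{\rm hb}(\dint B)$), the assertions \eqref{eq:Integration_horoball_1}, \eqref{eq:Integration_horoball_2} become equivalent to the three identities
\begin{align*}
\int_{\hb}V_d(A\cap B)\,C_{d-1}(B,A)\,\mu_{\rm hb}(\dint B)&=(d-1)\int_{\Ih}\mathbf{1}\{\sfp\in\varrho A\}\,V_d(\varrho A\cap\mathbb{B}_{u,0})\,\lambda(\dint\varrho),\\
\int_{\hb}C_{d-1}(A,B)\,C_{d-1}(B,A)\,\mu_{\rm hb}(\dint B)&=(d-1)\int_{\Ih}\mathbf{1}\{\sfp\in\varrho A\}\,C_{d-1}(\varrho A,\mathbb{B}_{u,0})\,\lambda(\dint\varrho),\\
\int_{\hb}C_{d-1}(B,A)^2\,\mu_{\rm hb}(\dint B)&=(d-1)\int_{\Ih}\mathbf{1}\{\sfp\in\varrho A\}\,C_{d-1}(\mathbb{B}_{u,0},\varrho A)\,\lambda(\dint\varrho).
\end{align*}

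All three follow the same scheme; I outline the first. One applies Lemma~\ref{lem:LimitEW_New} to $\varphi_1(K)\defeq V_d(A\cap K)V_{d-1}(A\cap K)$ and $\varphi_2(K)\defeq V_d(A\cap K)C_{d-1}(A,K)$ on $\cK^d\cup\hb$; both are bounded (by monotonicity of $V_{d-1}$ and the estimate $C_{d-1}(A,K)\le\mathcal{H}^{d-1}(\partial(K\cap\mathbb{B}_{r(A)}))\le V_{d-1}(\mathbb{B}_{r(A)})$ from surface-area monotonicity, where $A\subseteq\mathbb{B}_{r(A)}$), measurable, vanish on sets missing $\mathbb{B}_{r(A)}$, and satisfy \eqref{eqn:continuous_ae} by continuity of $V_d,V_{d-1}$ on $\cK^d$ and a Portmanteau argument for $C_{d-1}(A,\cdot)=\mathcal{H}^{d-1}(\partial A\cap\cdot)$ using $\mathcal{H}^{d-1}(\partial A\cap\partial\mathbb{B}_{u,t})=0$ a.e. Subtracting, and using \eqref{eq:plan-star} on the horoball side, Lemma~\ref{lem:surface} with $n=1$ (so that $V_{d-1}(A\cap\varrho\mathbb{B}_R)-C_{d-1}(A,\varrho\mathbb{B}_R)=\mathcal{H}^{d-1}(\varrho\partial\mathbb{B}_R\cap A)$ for $\lambda$-a.e.\ $\varrho$) together with \eqref{eq:hdlambda}, gives
$$\int_{\hb}V_d(A\cap B)\,C_{d-1}(B,A)\,\mu_{\rm hb}(\dint B)=\lim_{R\to\infty}\frac{1}{\Vol(\mathbb{B}_R)}\int_{\Ih}V_d(A\cap\varrho\mathbb{B}_R)\,\mathcal{H}^{d-1}(\varrho\partial\mathbb{B}_R\cap A)\,\lambda(\dint\varrho).$$
Writing $\mathcal{H}^{d-1}(\varrho\partial\mathbb{B}_R\cap A)=\int_{\partial\mathbb{B}_R}\mathbf{1}\{\varrho y\in A\}\,\mathcal{H}^{d-1}(\dint y)$ and disintegrating $\lambda$ first over $a=\varrho y$ (the analogue of \eqref{disintlambda} based at $y$) and then, for fixed $a$, over $\varrho\sfp$, which is uniform on $\partial\mathbb{B}(a,R)$ — exactly as in the proofs of the integral geometric lemmas — the inner $\lambda$-integral becomes $\omega_d^{-1}\sinh^{d-1}(R)\int_{\mathbb{S}^{d-1}_a}V_d(A\cap\mathbb{B}(\exp_a(Rv),R))\,\mathcal{H}^{d-1}_a(\dint v)$, while the remaining integration over $\partial\mathbb{B}_R$ contributes $\mathcal{H}^{d-1}(\partial\mathbb{B}_R)=\omega_d\sinh^{d-1}(R)$. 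Since $\mathcal{H}^{d-1}(\partial\mathbb{B}_R)/\Vol(\mathbb{B}_R)\to d-1$ by \eqref{eq:limit1} (the hyperbolic fact responsible for the prefactor $d-1$) and $\mathbb{B}(\exp_a(Rv),R)$ increases to the horoball through $a$ in direction $v$, monotone convergence gives $(d-1)\,\omega_d^{-1}\int_A\int_{\mathbb{S}^{d-1}_a}V_d(A\cap B_{a,v})\,\mathcal{H}^{d-1}_a(\dint v)\,\mathcal{H}^d(\dint a)$, where $B_{a,v}$ is that horoball; finally the $\kappa$-disintegration \eqref{disintlambda} of $\lambda$, together with inversion invariance of $\lambda$ and isometry invariance of $V_d$ (as in the proof of Theorem~\ref{thmchar}), rewrites this as $(d-1)\int_{\Ih}\mathbf{1}\{\sfp\in\varrho A\}V_d(\varrho A\cap\mathbb{B}_{u,0})\,\lambda(\dint\varrho)$, which is the first identity.

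The second and third identities are proved word for word, replacing one or both factors $V_d(A\cap\cdot)$ by $C_{d-1}(A,\cdot)$ or by $K\mapsto\mathcal{H}^{d-1}(\partial K\cap A)$; in the cases involving the latter, the hypothesis \eqref{eqn:continuous_ae} and the limit of the shrinking boundary spheres $\partial\mathbb{B}(\exp_a(Rv),R)$ towards the limiting horosphere are exactly Lemma~\ref{lem:nullsets}(c). The main obstacle is precisely these two limit exchanges — verifying the hypotheses of Lemma~\ref{lem:LimitEW_New} for the product functionals (boundedness via surface-area monotonicity and \eqref{eqn:continuous_ae} via Lemma~\ref{lem:nullsets}(c)) and the dominated/monotone convergence inside the spherical averages of the ball-surface integrals. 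A shorter alternative avoids Lemma~\ref{lem:LimitEW_New} altogether: both sides of the flag identity
$$\int_{\hb}\int_{\partial B}G(y,B)\,\mathcal{H}^{d-1}(\dint y)\,\mu_{\rm hb}(\dint B)=(d-1)\int_{\Ih}G(\varrho\sfp,\varrho\mathbb{B}_{u,0})\,\lambda(\dint\varrho)$$
are locally finite isometry invariant measures on the homogeneous space of flags $\{(y,B):B\in\hb,\ y\in\partial B\}$ (which has compact isotropy), hence proportional, and the constant $d-1$ is pinned down by the test function $G=\mathbf{1}\{y\in\mathbb{B}_1\}$ via $\int_{\hb}\mathcal{H}^{d-1}(\partial B\cap\mathbb{B}_1)\,\mu_{\rm hb}(\dint B)=(d-1)\Vol(\mathbb{B}_1)$ — itself a consequence of Corollary~\ref{lem:LimitEW2}, Lemma~\ref{lem:limits_intersection_horoballs}, \eqref{eq:plan-star}, and the fact $\int_{\hb}\mathbf{1}\{y\in B\}\,\mu_{\rm hb}(\dint B)=1$ (constant in $y$ by homogeneity, equal to its value at $y=\sfp$). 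Taking $G(y,B)$ equal to $V_d(A\cap B)\mathbf{1}\{y\in A\}$, $C_{d-1}(A,B)\mathbf{1}\{y\in A\}$ and $C_{d-1}(B,A)\mathbf{1}\{y\in A\}$ and inverting $\varrho$ yields the three identities directly.
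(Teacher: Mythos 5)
Your proposal is correct in substance, and while it rests on the same key ingredients as the paper (Lemmas \ref{lem:LimitEW_New}, \ref{lem:limits_intersection_horoballs}, \ref{lem:surface}, \ref{lem:nullsets} and the ratio \eqref{eq:limit1}/\eqref{eq:limit2}), it organizes them differently. The paper splits $V_{d-1}$ only on the ball side, inside the averages $I(R),J(R)$, and evaluates the boundary contributions $I_2(R),J_2(R)$ by fixing the ball at $x_R=\exp_\sfp(Ru)$ via \eqref{eq:hdlambda} and using the stabilizer of $x_R$, which acts transitively on $\partial\BB(x_R,R)$, to pull out the factor $V_{d-1}(\BB(x_R,R))$; you instead split on the horoball side first (your identity $V_{d-1}(A\cap B)=C_{d-1}(A,B)+\mathcal{H}^{d-1}(\partial B\cap A)$, valid $\mu_{\rm hb}$-a.e.\ by Lemma \ref{lem:nullsets}(b), with some care in the lower-dimensional cases) and evaluate the resulting cross terms by disintegrating $\lambda$ over the flag consisting of boundary point and ball centre --- the dual of the paper's isotropy trick, producing the factor $d-1$ from the same surface-to-volume ratio, and your reduction and final expressions agree with the paper's. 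Your closing alternative, identifying $\int_{\hb}\int_{\partial B}G\,\dint\mathcal{H}^{d-1}\dint\mu_{\rm hb}$ with $(d-1)\int_{\Ih}G(\varrho\sfp,\varrho\mathbb{B}_{u,0})\,\lambda(\dint\varrho)$ by uniqueness of invariant measures on the flag space $\{(y,B)\colon y\in\partial B\}$ and pinning the constant with $G=\mathbf{1}\{y\in\mathbb{B}_1\}$, is genuinely different from anything in the paper and is attractive because it removes all limit interchanges from the boundary terms; its price is that the flag space must be set up as a homogeneous $\Ih$-space (topology, measurability and local finiteness of both measures), which the paper does not develop. Two cosmetic slips: the conditional of $\lambda$ given $\varrho y=a$ is a probability measure, so the factor $\sinh^{d-1}(R)$ in your intermediate display is spurious (your final limit formula is the correct one); and the boundedness of $C_{d-1}(A,K)$ follows simply from $C_{d-1}(A,K)\le V_{d-1}(A)$, whereas the chain you wrote actually bounds the other factor $\mathcal{H}^{d-1}(\partial K\cap A)$.
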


\begin{proof} 
For $R>0$ we define
$$
I(R)\defeq \int_{\mathbb{H}^d} V_{d}(A\cap \BB(x,R)) V_{d-1}(A\cap \BB(x,R)) \, \mathcal{H}^d(\dint x)
$$
and
$$
J(R)\defeq\int_{\mathbb{H}^d} V_{d-1}(A\cap \BB(x,R))^2 \, \mathcal{H}^d(\dint x) .
$$
Since $V_i$ is continuous and $V_i(\varnothing)=0$, for $i\in\{d-1,d\}$,  Lemma \ref{lem:limits_intersection_horoballs}   implies that
$$ \int_{\hb} V_{d}(A\cap B) V_{d-1}(A\cap B) \, \mu_{\rm hb}(\dint B)  = \lim_{R\to\infty} \frac{I(R)}{V_d(\mathbb{B}_R)}
$$
and
$$
\int_{\hb} V_{d-1}(A\cap B)^2 \,\mu_{\rm hb}(\dint B) = \lim_{R\to\infty} \frac{J(R)}{V_d(\mathbb{B}_R)} .
$$
By Lemma \ref{lem:surface} and \eqref{eq:hdlambda},
$$
V_{d-1}( A\cap \BB(x,R)) = C_{d-1}(  A,\BB(x,R)) + C_{d-1}(\BB(x,R), A)\quad\text{for $\mathcal{H}^d$-a.e.\ }x\in\mathbb{H}^d.
$$
Hence,  $I(R)=I_1(R)+I_2(R)$ with
\begin{align*}
I_1(R)&\defeq
\int_{\mathbb{H}^d} V_{d}(  A\cap \BB(x,R)) C_{d-1}(  A,\BB(x,R))  \, \mathcal{H}^d(\dint x),\\
I_2(R)&\defeq
\int_{\mathbb{H}^d} V_{d}(  A\cap \BB(x,R)) C_{d-1}(\BB(x,R), A) \, \mathcal{H}^d(\dint x),
\end{align*}
and $J(R)=J_1(R)+J_2(R)$ with
\begin{align*}
J_1(R)&\defeq
 \int_{\mathbb{H}^d}  C_{d-1}(  A,\BB(x,R))^2 \,\mathcal{H}^d(\dint x),\\
 J_2(R)&\defeq \int_{\mathbb{H}^d}
C_{d-1}(\BB(x,R), A) \big(
2  C_{d-1}(  A,\BB(x,R))+C_{d-1}(\BB(x,R), A)\big)
 \,\mathcal{H}^d(\dint x).
\end{align*}

First we show that
\begin{align*}
 \lim_{R\to\infty} \frac{I_1(R)}{V_d(\mathbb{B}_R)}
& = \int_{\hb} V_{d}(A\cap B) C_{d-1}( A, B) \, \mu_{\rm hb}(\dint B).
\end{align*}
This follows from Lemma \ref{lem:LimitEW_New} with $\varphi(K)\defeq V_d(A\cap K)C_{d-1}(A,K)$ for $K\in\cK^d\cup \hb$, once we have verified that the assumptions of the lemma are satisfied. Clearly, $\varphi$ is measurable, since $(x,K)\mapsto \I_K(x)$ is measurable (see \cite[Theorem 12.2.7]{SW08}) and by Fubini's theorem (see also \cite[Lemma 12.1.2]{SW08}). Since  $0\le \varphi(K)\le V_d(A)V_{d-1}(A)$, we see that $\varphi$ is bounded. Choosing $r_0>0$ such that $A\subseteq \mathbb{B}_{r_0}$, we get  $\varphi(K)=0$ if $K\cap \mathbb{B}_{r_0}=\varnothing$, hence \eqref{eqn:assumption_ball} holds.
Since $V_d(\partial \mathbb{B}_{u,t})=0$, the dominated (or the monotone) convergence theorem yields that  $V_d(A\cap \BB(\exp_\sfp((t+R)u),R))\uparrow V_d(A\cap\horo_{u,t})$ as $R\to\infty$. Moreover, Lemma \ref{lem:nullsets} (b) and the definition of $\mu_{\rm hb}$ show that, for almost all $(u,t)$, we have  $\mathcal{H}^{d-1}(\partial A\cap  \partial \mathbb{B}_{u,t})=0$, and hence also $C_{d-1}(  A, \partial \mathbb{B}_{u,t})=0$. Therefore the dominated (or the monotone) convergence theorem implies that $C_{d-1}(A,\BB(\exp_\sfp((t+R)u),R))\uparrow C_{d-1}(A,\horo_{u,t})$ as $R\to\infty$. Thus we have verified condition \eqref{eqn:continuous_ae}.

By the same reasoning, we obtain
\begin{align*}
\lim_{R\to\infty} \frac{J_1(R)}{V_d(\mathbb{B}_R)}
& = \int_{\hb} C_{d-1}( A, B)^2 \,\mu_{\rm hb}(\dint B).
\end{align*}
Let $u\in\mathbb{S}^{d-1}_\sfp$ be arbitrarily fixed. For $R\geq 0$ we let $x_R=\exp_{\sfp}(Ru)$.
For every $y\in\partial \mathbb{B}(x_R,R)$ there exists a $\tau_y\in\mathcal{I}_d$ such that $\tau_y y=\sfp$ and $\tau_y x_R=x_R$. By an application of \eqref{eq:hdlambda} with $x=x_R$, using the inversion and left invariance of $\lambda$, the isometry invariance of the volume functional, and $\tau_y\BB(x_R,R)=\BB(x_R,R)$, we get
\begin{align*}
I_2(R)& = \int_{\mathcal{I}_d}V_{d}(\varrho A\cap \BB(x_R,R))C_{d-1}(\BB(x_R,R),\varrho A)\, \lambda(\dint\varrho)\\
&=\int_{\mathbb{H}^d} \int_{\mathcal{I}_d} \mathbf{1}\{y\in \varrho A\} V_{d}(\varrho A\cap \BB(x_R,R)) \, \lambda(\dint \varrho) \, C_{d-1}(\BB(x_R,R),\dint y) \\
& = \int_{\mathbb{H}^d} \int_{\mathcal{I}_d}  \mathbf{1}\{\sfp\in \tau_y\varrho A\} V_{d}(\tau_y\varrho A\cap \BB(x_R,R))  \, \lambda(\dint \varrho) \,
   C_{d-1}(\BB(x_R,R),\dint y) \\
& = \int_{\mathbb{H}^d}\int_{\mathcal{I}_d} \mathbf{1}\{\sfp\in \varrho A\} V_{d}(\varrho A\cap \BB(x_R,R)) \, \lambda(\dint \varrho) \,  C_{d-1}(\BB(x_R,R),\dint y) \\
& = V_{d-1}(\BB(x_R,R)) \int_{\mathcal{I}_d} \mathbf{1}\{\sfp\in \varrho A\} V_{d}(\varrho A\cap \BB(x_R,R)) \, \lambda(\dint \varrho).
\end{align*}
Similarly, we obtain
\begin{align*}
&J_2(R)\\
& = \int_{\mathcal{I}_d} \int_{\mathbb{H}^d} \mathbf{1}\{y\in \varrho A\} \big(2 C_{d-1}(\varrho   A, \BB(x_R,R)) + C_{d-1}(\BB(x_R,R),\varrho A )\big) \, C_{d-1}(\BB(x_R,R),\dint y) \, \lambda(\dint \varrho) \allowdisplaybreaks\\
& = \int_{\mathbb{H}^d} \int_{\mathcal{I}_d} \mathbf{1}\{\sfp\in \varrho A\} \big(2  C_{d-1}(\varrho   A, \BB(x_R,R)) + C_{d-1}(\BB(x_R,R),\varrho A )\big) \, \lambda(\dint \varrho) \,  C_{d-1}(\BB(x_R,R),\dint y)\\
& = V_{d-1}( \BB(x_R,R)) \int_{\mathcal{I}_d} \mathbf{1}\{\sfp\in \varrho A\} \big(2  C_{d-1}(\varrho   A, \BB(x_R,R)) + C_{d-1}(\BB(x_R,R),\varrho A )\big) \, \lambda(\dint \varrho).
\end{align*}
Now it follows from \eqref{eq:limit2}, the dominated (or the monotone) convergence theorem, the fact that $V_d(\varrho A\cap \partial \mathbb{B}_{u,0})=0$  for all $\varrho\in \mathcal{I}_d$, and hence $ V_d(\varrho A\cap \BB(x_R,R) )\uparrow V_d(\varrho A\cap \mathbb{B}_{u,0})$,  as $R\to\infty$, that
\begin{align*}
 \lim_{R\to\infty} \frac{I_2(R)}
{V_d(\mathbb{B}_R)}
& =  (d-1) \lim_{R\to\infty}\int_{\mathcal{I}_d} \mathbf{1}\{\sfp\in \varrho A\} V_{d}(\varrho A\cap \BB(x_R,R)) \, \lambda(\dint \varrho) \\
& = (d-1) \int_{\mathcal{I}_d} \mathbf{1}\{\sfp\in \varrho A\} V_{d}(\varrho A\cap \BB_{u,0}) \, \lambda(\dint \varrho).
\end{align*}
Moreover,
\begin{align*}
\lim_{R\to\infty} \frac{J_2(R)}{V_d(\mathbb{B}_R)}
& =  (d-1) \lim_{R\to\infty}\int_{\mathcal{I}_d} \mathbf{1}\{\sfp\in \varrho A\} \big(2  C_{d-1}(\varrho   A, \BB(x_R,R)) + C_{d-1}(\BB(x_R,R),\varrho A )\big) \, \lambda(\dint \varrho) \\
& = (d-1) \int_{\mathcal{I}_d} \mathbf{1}\{\sfp\in \varrho A\} \big(2 C_{d-1}(\varrho   A, \BB_{u,0}) + C_{d-1}(\BB_{u,0},\varrho A)
\big) \, \lambda(\dint \varrho).
\end{align*}
Here we used Lemma \ref{lem:nullsets} (a) for the first summand and Lemma \ref{lem:nullsets} (c)
for the second summand. The dominated convergence theorem can be applied, since $\{\varrho\in\Ih\colon \sfp\in\varrho A\}$ is compact and hence has finite $\lambda$ measure.

Combination of the preceding limit relations proves \eqref{eq:Integration_horoball_1} and \eqref{eq:Integration_horoball_2}.
\end{proof}

\section{Economic covering of hyperbolic space}\label{sec:8}

Euclidean space can be covered by congruent cubes such that any two cubes have disjoint interiors. Removing the upper right boundaries of the cubes, we can even partition Euclidean space.
This fact is used in a crucial way in various arguments in the study of particle processes and random closed sets in Euclidean space (see, e.g., \cite[Sections 4.1, 4.3, 9.2]{SW08}, \cite[Section 3]{HLS}). In hyperbolic space there is no comparable, simple choice of a regular decomposition of space into congruent cells such that each cell intersects only a fixed number of other cells. In the present section, we provide a collection of congruent balls such that the hyperbolic space is covered by these balls and at the same time the number of balls containing any given point of the hyperbolic space is bounded independently of the given point. This result can be used as a substitute of the (almost disjoint) partitioning of Euclidean space.

In the following, we write $|A|$ for the cardinality of a finite set $A$.

\begin{lemma}\label{lem:covering}
There exist a countable set $\mathcal{M}\subset \BHd$ and a constant $c_d\in\mathbb{N}$ such that
$$
\bigcup_{x\in \mathcal{M}} \mathbb{B}(x,1/2) = \BHd
$$
and
\begin{equation}\label{eqn:M_locally_finite}
|\{x\in \mathcal{M}\colon y\in\mathbb{B}(x,1/2) \}| \leq c_d \quad \text{\rm for all }   y\in\BHd.
\end{equation}
\end{lemma}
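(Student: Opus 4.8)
The plan is to construct $\mathcal{M}$ as a maximal $1/2$-separated subset of $\mathbb{H}^d$ and then control overlaps via a volume-packing argument that exploits the fact that balls of a \emph{fixed} radius in $\mathbb{H}^d$ still have finite, uniformly bounded volume.

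\textbf{Construction of $\mathcal{M}$.} First I would invoke Zorn's lemma to obtain a set $\mathcal{M}\subset\BHd$ that is maximal with respect to the property that $d_h(x,x')\ge 1/2$ for all distinct $x,x'\in\mathcal{M}$. (Alternatively, one builds $\mathcal{M}$ greedily, which automatically yields countability; a maximal separated set is countable anyway because $\BHd$ is separable and the open balls $\mathbb{B}(x,1/4)$, $x\in\mathcal{M}$, are pairwise disjoint, so there are at most countably many of them.) Maximality immediately gives the covering property: if some $y\in\BHd$ lay outside every $\mathbb{B}(x,1/2)$, then $d_h(y,x)>1/2\ge 1/2$ for all $x\in\mathcal{M}$, so $\mathcal{M}\cup\{y\}$ would still be $1/2$-separated, contradicting maximality. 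Hence $\bigcup_{x\in\mathcal{M}}\mathbb{B}(x,1/2)=\BHd$.

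\textbf{The overlap bound.} Fix $y\in\BHd$ and let $N\defeq\{x\in\mathcal{M}\colon y\in\mathbb{B}(x,1/2)\}$, so every $x\in N$ satisfies $d_h(x,y)\le 1/2$, i.e. $x\in\mathbb{B}(y,1/2)$. For distinct $x,x'\in N$ the open balls $\mathbb{B}(x,1/4)$ and $\mathbb{B}(x',1/4)$ are disjoint (since $d_h(x,x')\ge 1/2$), and by the triangle inequality each such ball is contained in $\mathbb{B}(y,1/2+1/4)=\mathbb{B}(y,3/4)$. By isometry invariance of $\mathcal{H}^d$ (see \eqref{eq:hdlambda} and the surrounding discussion), $\mathcal{H}^d(\mathbb{B}(x,1/4))=\mathcal{H}^d(\mathbb{B}_{1/4})$ for every $x$, and $\mathcal{H}^d(\mathbb{B}(y,3/4))=\mathcal{H}^d(\mathbb{B}_{3/4})$; both are finite positive constants by \eqref{volbalr}. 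Disjointness and containment give
$$
|N|\,\mathcal{H}^d(\mathbb{B}_{1/4})=\sum_{x\in N}\mathcal{H}^d(\mathbb{B}(x,1/4))=\mathcal{H}^d\Big(\bigcup_{x\in N}\mathbb{B}(x,1/4)\Big)\le \mathcal{H}^d(\mathbb{B}_{3/4}),
$$
so $|N|\le c_d\defeq\big\lfloor \mathcal{H}^d(\mathbb{B}_{3/4})/\mathcal{H}^d(\mathbb{B}_{1/4})\big\rfloor\in\mathbb{N}$, a bound independent of $y$. This establishes \eqref{eqn:M_locally_finite} and, since $c_d$ is finite, also confirms that $\mathcal{M}$ is locally finite.

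\textbf{Main obstacle.} There is essentially no deep obstacle here; the argument is the standard Vitali/packing dichotomy for metric spaces of bounded geometry, and the only point that genuinely uses hyperbolicity (as opposed to being automatic in $\R^d$) is that balls of fixed radius have \emph{uniformly bounded} volume — which fails for growing radii in $\BHd$ but is exactly what \eqref{volbalr} and isometry invariance guarantee for the fixed radii $1/4$ and $3/4$. The one thing to be careful about is countability of $\mathcal{M}$: I would make it explicit via the disjoint family $\{\mathbb{B}(x,1/4)\}_{x\in\mathcal{M}}$ together with separability of $\BHd$, rather than relying on a greedy construction, so that the argument is clean regardless of how the maximal set is produced.
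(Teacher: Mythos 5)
Your proof is correct, but it takes a genuinely different route from the paper. You build $\mathcal{M}$ abstractly as a maximal $1/2$-separated set (Zorn or greedy), get the covering from maximality, and get the bounded multiplicity from a packing argument: the balls $\mathbb{B}(x,1/4)$, $x\in N$, are essentially disjoint, sit inside $\mathbb{B}(y,3/4)$, and all have the same volume by isometry invariance, so $|N|\le \mathcal{H}^d(\mathbb{B}_{3/4})/\mathcal{H}^d(\mathbb{B}_{1/4})$. The paper instead works in the Poincar\'e half-space model and writes down an explicit countable family of centers, namely $(7/8)^\ell(2ak,1)$ with $k\in\mathbb{Z}^{d-1}$, $\ell\in\mathbb{Z}$; it sandwiches each hyperbolic ball of radius $1/2$ between two explicit coordinate boxes (Lemma \ref{lem:Proof_Covering}) and then reduces both the covering property and the multiplicity bound to elementary lattice counting (Lemma \ref{le:8.3}). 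Your argument is shorter and more robust in that it uses only homogeneity of $\BHd$ (isometry invariance of $\mathcal{H}^d$) and the finiteness of the volume of fixed-radius balls, so it would transfer verbatim to any homogeneous space; the paper's construction is fully explicit, avoids any appeal to maximality or choice, and exhibits the self-similar layered structure of the covering. Two small points to tidy in your write-up: the paper's $\mathbb{B}(x,r)$ denotes closed balls, so either work with open balls of radius $1/4$ (as you indicate parenthetically) or note that closed balls with centers at distance exactly $1/2$ meet only in a set of $\mathcal{H}^d$-measure zero, so the volume comparison is unaffected; and the packing inequality bounds every finite subset of $N$, which yields finiteness of $N$ itself with the same constant.
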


For the proof of Lemma \ref{lem:covering} we work with  the Poincar\'e half-space model of $d$-dimensional  hyperbolic space, i.e., we consider the upper half-space $\mathbf{H}^d\defeq\mathbb{R}^{d-1}\times (0,\infty)$ equipped with the metric
$$
{\rm d}_{\mathbf{H}^d}(x,y) \defeq 2 \operatorname{arsinh}\bigg(\frac{\|x-y\|}{2\sqrt{x_d y_d}}\bigg)
$$
for $x=(x_1,\hdots,x_d)$, $y=(y_1,\hdots,y_d)\in\mathbf{H}^d$ with the usual Euclidean norm $\|\cdot\|$ (see e.g.\ \cite[Theorem 4.6.1]{Ratcliffe}). For $x\in \mathbf{H}^d$ and $r>0$ we denote by
$$
\mathbb{B}_{\mathbf{H}^d}(x,r)\defeq\{ y\in\mathbf{H}^d\colon {\rm d}_{\mathbf{H}^d}(x,y) \leq r \}
$$
the ball around $x$ with radius $r$ with respect to ${\rm d}_{\mathbf{H}^d}$. Moreover, we use the abbreviation
$$
C^{d-1}(z,s) \defeq \bigtimes_{i=1}^{d-1} [z_i-s,z_i+s]
$$
for $z=(z_1,\hdots,z_{d-1})\in\mathbb{R}^{d-1}$ and $s>0$.

For the proof of Lemma \ref{lem:covering} it is sufficient to construct a corresponding set $\mathcal{M}$ in $\mathbf{H}^d$. Such a set is provided in Lemma \ref{le:8.3}. As a preparation we first establish an auxiliary result in which a geodesic ball in the half-space model is compared to `rectangular boxes'.

\begin{lemma}\label{lem:Proof_Covering}
For all $z\in\mathbb{R}^{d-1}$ and $u\in(0,\infty)$,
$$
C^{d-1}(z, a u) \times [7u/8,u] \subseteq \mathbb{B}_{\mathbf{H}^d}((z,u), 1/2 )\subseteq C^{d-1}(z, 32 u) \times [u/16^2,16^2u]
$$
with
$$
a\defeq\sqrt{\frac{1}{d-1}\bigg(\frac{7\operatorname{sinh}(1/4)^2}{2}-\frac{1}{64}\bigg)}.
$$
\end{lemma}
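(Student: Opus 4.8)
The plan is to exploit the explicit formula for the distance in the half-space model together with the monotonicity of $\operatorname{arsinh}$, reducing both inclusions to elementary inequalities between Euclidean quantities. Write $x=(z,u)$ for the center and let $y=(w,v)\in\mathbf{H}^d$ with $w\in\mathbb{R}^{d-1}$, $v>0$. By definition, $y\in\mathbb{B}_{\mathbf{H}^d}(x,1/2)$ is equivalent to $\|x-y\|^2\le 4uv\sinh^2(1/4)$, i.e.
$$
\|w-z\|^2+(v-u)^2\le 4uv\sinh^2(1/4).
$$
So the entire argument is about this one inequality.

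For the \emph{left inclusion}, suppose $v\in[7u/8,u]$ and $\|w-z\|_\infty\le au$, so $\|w-z\|^2\le (d-1)a^2u^2$. Since $v\le u$ we have $(v-u)^2\le (u/8)^2=u^2/64$, and since $v\ge 7u/8$ we have $4uv\ge 4u\cdot(7u/8)=7u^2/2$, hence $4uv\sinh^2(1/4)\ge (7/2)\sinh^2(1/4)\,u^2$. Thus it suffices that
$$
(d-1)a^2u^2+\frac{u^2}{64}\le \frac{7\sinh^2(1/4)}{2}u^2,
$$
which upon dividing by $u^2$ is exactly the defining relation for $a$ (note $a$ is chosen so that equality holds, so this is tight but valid; one should just check $a$ is real, i.e. that $\tfrac{7\sinh^2(1/4)}{2}>\tfrac{1}{64}$, which is true since $\sinh(1/4)>1/4$ gives $\tfrac{7}{2}\sinh^2(1/4)>\tfrac{7}{32}>\tfrac{1}{64}$). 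For the \emph{right inclusion}, start from $\|w-z\|^2+(v-u)^2\le 4uv\sinh^2(1/4)$. First, to control the last coordinate: $(v-u)^2\le 4uv\sinh^2(1/4)$, and writing $t=v/u>0$ this reads $(t-1)^2\le 4t\sinh^2(1/4)$, a quadratic in $t$ whose roots are $t=1+2\sinh^2(1/4)\pm 2\sinh(1/4)\sqrt{1+\sinh^2(1/4)}=\bigl(\sinh(1/4)\pm\sqrt{1+\sinh^2(1/4)}\bigr)^2=e^{\pm 1/2}$ (using $\operatorname{arsinh}$'s logarithmic form); hence $v/u\in[e^{-1/2},e^{1/2}]\subseteq[1/16^2,16^2]$, giving $v\in[u/16^2,16^2u]$ with plenty of room. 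Then $\|w-z\|^2\le 4uv\sinh^2(1/4)\le 4u\cdot 16^2u\cdot\sinh^2(1/4)=1024\sinh^2(1/4)\,u^2$, so $\|w-z\|\le 32\sinh(1/4)u\le 32u$ (since $\sinh(1/4)<1$), which yields $w\in C^{d-1}(z,32u)$.

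The only mild subtlety — and the step I would be most careful about — is matching the crude powers of $16$ and the factor $32$ to the actual bounds $e^{\pm1/2}$ and $32\sinh(1/4)$; these are deliberately wasteful so that the boxes in the subsequent covering argument (Lemma \ref{le:8.3}) have convenient dyadic-type scaling, so I would simply verify each inequality $\sinh(1/4)<1$, $e^{1/2}<16^2$, $e^{-1/2}>16^{-2}$ and move on. Everything else is a direct substitution into the distance formula, so there is no real obstacle; the proof is a short computation once the defining inequality $\|w-z\|^2+(v-u)^2\le 4uv\sinh^2(1/4)$ is written down.
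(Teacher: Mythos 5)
Your proof is correct. The first inclusion is the same computation as in the paper: bound $\|x-(z,u)\|^2$ by $(d-1)a^2u^2+u^2/64$, bound $4uv$ below by $7u^2/2$, and observe that the definition of $a$ makes the resulting inequality hold (with equality, which is fine since the ball is closed). For the second inclusion you take a genuinely different, and in fact sharper, route: the paper argues by contrapositive, splitting into the three cases $x_d<u/16^2$, $x_d>16^2u$, and $(x_1,\dots,x_{d-1})\notin C^{d-1}(z,32u)$, and in each case crudely estimates the distance to be $>1$; you instead work directly with the equivalent inequality $\|w-z\|^2+(v-u)^2\le 4uv\sinh^2(1/4)$, solve the quadratic in $t=v/u$ exactly to get $v/u\in[e^{-1/2},e^{1/2}]$ (the roots $\cosh(1/2)\pm\sinh(1/2)=e^{\pm 1/2}$ check out), and then deduce $\|w-z\|\le 32\sinh(1/4)u\le 32u$. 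Your version extracts the optimal height ratio and a sharper horizontal bound, from which the stated generous box follows at once; the paper's case analysis avoids solving any quadratic and makes transparent that the constants $16^2$ and $32$ are chosen with large slack, but both arguments are short, elementary uses of the explicit half-space distance formula and yield the same inclusion. The small verifications you flag ($\sinh(1/4)<1$, $e^{1/2}<16^2$, $e^{-1/2}>16^{-2}$, and $\tfrac72\sinh^2(1/4)>\tfrac1{64}$ so that $a$ is real) are all true, so there is no gap.
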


\begin{proof}
For $x=(x_1,\hdots,x_d)\in C^{d-1}(z, a u) \times [7u/8,u]$, we have
$$
\|x-(z,u)\|\leq \sqrt{(d-1) a^2 u^2 +\frac{u^2}{64}} = \sqrt{\frac{7}{2}} \operatorname{sinh}(1/4) u
$$
and $x_d\geq 7u/8$ so that
\begin{align*}
{\rm d}_{\mathbf{H}^d}\big(x, (z,u) \big) & = 2 \operatorname{arsinh}\bigg( \frac{\|x-(z,u)\|}{2\sqrt{x_d u}} \bigg) \\
& \leq 2 \operatorname{arsinh}\bigg( \frac{\sqrt{\frac{7}{2}} \operatorname{sinh}(1/4) u}{2\sqrt{\frac{7u}{8} u}} \bigg) = 2 \operatorname{arsinh}( \operatorname{sinh}(1/4) )=\frac{2}{4}=\frac{1}{2},
\end{align*}
which shows the first inclusion of the statement.

In the following let $x=(x_1,\hdots,x_d)\notin C^{d-1}(z, 32 u) \times [u/16^2,16^2u] $. For $x_d< u/16^2$, we have
$$
{\rm d}_{\mathbf{H}^d}\big(x, (z,u) \big) = 2 \operatorname{arsinh}\bigg( \frac{\|x-(z,u)\|}{2\sqrt{x_d u}} \bigg) \geq 2 \operatorname{arsinh}\bigg( \frac{\frac{255}{256}u}{2\sqrt{u \frac{u}{16^2}}} \bigg)= 2\operatorname{arsinh}\bigg( \frac{255}{32} \bigg) > 1,
$$
while for $x_d>16^2 u$, we have $x_d-u\ge \frac{1}{2}x_d$ and
\begin{align*}
{\rm d}_{\mathbf{H}^d}\big(x, (z,u) \big) & = 2 \operatorname{arsinh}\bigg( \frac{\|x-(z,u)\|}{2\sqrt{x_d u}} \bigg)
\geq 2 \operatorname{arsinh}\bigg( \frac{x_d-u}{2\sqrt{ x_d  u}} \bigg) \\
& \geq 2 \operatorname{arsinh}\bigg( \frac{x_d}{4\sqrt{ x_d u}} \bigg) =  2 \operatorname{arsinh}\bigg( \frac{\sqrt{x_d}}{4\sqrt{u}} \bigg) \ge 2\operatorname{arsinh}\bigg( \frac{16}{4} \bigg) > 1.
\end{align*}
In the remaining case, we have $(x_1,\hdots,x_{d-1})\notin C^{d-1}(z, 32 u)$ and $x_d\in [u/16^2,16^2u]$, and thus we obtain
\begin{align*}
{\rm d}_{\mathbf{H}^d}\big(x, (z,u) \big) \ge 2\operatorname{arsinh}\bigg( \frac{\|(x_1,\hdots,x_{d-1})-z\|}{2\sqrt{ x_d u}} \bigg) \geq 2\operatorname{arsinh}\bigg( \frac{32u}{2\sqrt{16^2u^2}} \bigg)=2\operatorname{arsinh}(1) > 1.
\end{align*}
Combining the previous three inequalities proves the second inclusion.
\end{proof}

\begin{lemma}\label{le:8.3}
Let the constant $a\in (0,\infty)$ be as in Lemma \ref{lem:Proof_Covering}. Then
$$
\bigcup_{k\in\mathbb{Z}^{d-1}, \ell\in\mathbb{Z}} \mathbb{B}_{\mathbf{H}^d}\bigg(\bigg(\frac{7}{8}\bigg)^\ell (2ak,1),\frac{1}{2}\bigg) = \mathbf{H}^d
$$
and there exists a constant $N_0\in\mathbb{N}$ such that
$$
\bigg|\bigg\{ k\in\mathbb{Z}^{d-1}, \ell\in\mathbb{Z}\colon y \in \mathbb{B}_{\mathbf{H}^d}\bigg( \bigg(\frac{7}{8}\bigg)^\ell (2ak,1),\frac{1}{2}\bigg) \bigg\}\bigg|\leq N_0
$$
for all $y\in\mathbf{H}^d$.
\end{lemma}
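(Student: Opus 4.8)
The plan is to read off both statements from the two-sided inclusion in Lemma~\ref{lem:Proof_Covering}, applied with $u\defeq(7/8)^\ell$ and $z\defeq(7/8)^\ell(2ak)=2auk$. With this choice the ball $\mathbb{B}_{\mathbf{H}^d}((7/8)^\ell(2ak,1),1/2)$ contains the box $C^{d-1}(2auk,au)\times[7u/8,u]$ and is contained in the box $C^{d-1}(2auk,32u)\times[u/16^2,16^2u]$, so everything reduces to elementary counting for lattices of cubes in $\mathbb{R}^{d-1}$ and for geometric progressions in $(0,\infty)$.

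For the covering, I would fix $y=(y_1,\dots,y_d)\in\mathbf{H}^d$ and first use that the intervals $[(7/8)^{\ell+1},(7/8)^\ell]=[7u/8,u]$, $\ell\in\mathbb{Z}$, cover $(0,\infty)$, so there is $\ell\in\mathbb{Z}$ with $y_d\in[7u/8,u]$ for $u=(7/8)^\ell$. For this fixed $u$ the cubes $C^{d-1}(2auk,au)$, $k\in\mathbb{Z}^{d-1}$, are the translates of $[-au,au]^{d-1}$ along the lattice $2au\,\mathbb{Z}^{d-1}$ and hence cover $\mathbb{R}^{d-1}$, so there is $k\in\mathbb{Z}^{d-1}$ with $(y_1,\dots,y_{d-1})\in C^{d-1}(2auk,au)$. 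Then $y$ lies in the inner box associated with this pair $(k,\ell)$, hence in the corresponding ball, which proves the first assertion.

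For the bounded overlap, I would fix $y\in\mathbf{H}^d$ and suppose $y\in\mathbb{B}_{\mathbf{H}^d}((7/8)^\ell(2ak,1),1/2)$ for some $(k,\ell)$. By the second inclusion of Lemma~\ref{lem:Proof_Covering}, $y_d\in[u/256,256u]$ with $u=(7/8)^\ell$, i.e.\ $(7/8)^\ell\in[y_d/256,256y_d]$; taking logarithms and using $\log(8/7)>0$ confines $\ell$ to a real interval of length $2\log(256)/\log(8/7)$, so $\ell$ takes at most $L_0\defeq\lfloor 2\log(256)/\log(8/7)\rfloor+1$ values, a bound independent of $y$. For each such $\ell$, the same inclusion gives $|y_i-2auk_i|\le 32u$ for $i=1,\dots,d-1$, so each $k_i$ ranges over an interval of length $32/a$ and thus takes at most $M_0\defeq\lfloor 32/a\rfloor+1$ values; hence the number of admissible pairs $(k,\ell)$ is at most $N_0\defeq L_0M_0^{d-1}$, depending only on $d$.

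The argument is essentially bookkeeping once Lemma~\ref{lem:Proof_Covering} is in hand, so I do not expect a genuine obstacle; the only points needing care are that the cube half-width $au$ and the ratio $7/8$ built into the inner box are exactly the ones making the cubes (respectively the intervals) cover the relevant space, and that all the counting bounds above are uniform in $y$. Finally, Lemma~\ref{lem:covering} follows by transporting $\mathcal{M}\defeq\{(7/8)^\ell(2ak,1)\colon k\in\mathbb{Z}^{d-1},\ \ell\in\mathbb{Z}\}$ from $\mathbf{H}^d$ to $\mathbb{H}^d$ through the isometry identifying the two models, and setting $c_d\defeq N_0$.
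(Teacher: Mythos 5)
Your proposal is correct and follows essentially the same route as the paper: both statements are read off from the two-sided box inclusions of Lemma \ref{lem:Proof_Covering}, with the inner boxes tiling $\mathbf{H}^d$ (giving the covering) and the outer boxes giving a uniform bound on the admissible indices, counting $\ell$ via the last coordinate and $k$ via the first $d-1$ coordinates. The only difference is cosmetic: you make the counts explicit ($L_0$ via logarithms, $M_0=\lfloor 32/a\rfloor+1$ per coordinate), whereas the paper bounds the $\ell$-range by the cruder inclusion into $[(7/8)^{\ell+50},(7/8)^{\ell-50}]$ and leaves the $k$-count as a finite supremum after rescaling.
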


\begin{proof}
By Lemma \ref{lem:Proof_Covering} we have
\begin{align*}
& \bigcup_{k\in\mathbb{Z}^{d-1}, \ell\in\mathbb{Z}} \mathbb{B}_{\mathbf{H}^d}\bigg( \bigg(\frac{7}{8}\bigg)^\ell (2ak,1),\frac{1}{2}\bigg) \\
& \supseteq \bigcup_{k\in\mathbb{Z}^{d-1}, \ell\in\mathbb{Z}} C^{d-1}\bigg(2a \bigg(\frac{7}{8}\bigg)^\ell k, a\bigg(\frac{7}{8}\bigg)^\ell\bigg) \times \bigg[\bigg(\frac{7}{8}\bigg)^{\ell+1}, \bigg(\frac{7}{8}\bigg)^\ell \bigg]  = \mathbf{H}^d,
\end{align*}
which is the first statement. For $y=(y_1,\hdots,y_d)\in\mathbf{H}^d$, it follows from Lemma \ref{lem:Proof_Covering} that
\begin{align*}
& \bigg|\bigg\{ k\in\mathbb{Z}^{d-1}, \ell\in\mathbb{Z}\colon y \in \mathbb{B}_{\mathbf{H}^d}\bigg( \bigg(\frac{7}{8}\bigg)^\ell (2ak,1),\frac{1}{2}\bigg) \bigg\}\bigg| \\
& \leq \bigg|\bigg\{ k\in\mathbb{Z}^{d-1}, \ell\in\mathbb{Z}\colon y \in C^{d-1}\bigg( 2a\bigg(\frac{7}{8}\bigg)^\ell k, 32\bigg(\frac{7}{8}\bigg)^\ell \bigg) \times \bigg[\frac{1}{16^2} \bigg(\frac{7}{8}\bigg)^\ell, 16^2 \bigg(\frac{7}{8}\bigg)^\ell \bigg] \bigg\}\bigg|.
\end{align*}
Because of $\big[\frac{1}{16^2} \big(\frac{7}{8}\big)^\ell, 16^2 \big(\frac{7}{8}\big)^\ell \big] \subseteq \big[\big(\frac{7}{8}\big)^{\ell+50}, \big(\frac{7}{8}\big)^{\ell-50} \big]$ the number of choices for $\ell$ such that $y_d \in \big[\frac{1}{16^2} \big(\frac{7}{8}\big)^\ell, 16^2 \big(\frac{7}{8}\big)^\ell \big]$ is bounded by 101. For each of these choices for $\ell$ we have
\begin{align*}
& \bigg|\bigg\{ k\in\mathbb{Z}^{d-1}\colon (y_1,\hdots,y_{d-1}) \in C^{d-1}\bigg( 2a \bigg(\frac{7}{8}\bigg)^\ell k, 32 \bigg(\frac{7}{8}\bigg)^\ell \bigg)  \bigg\}\bigg| \\
& \leq \sup_{v\in\mathbb{R}^{d-1}} \bigg|\bigg\{ k\in\mathbb{Z}^{d-1}\colon v \in C^{d-1}\bigg( 2a \bigg(\frac{7}{8}\bigg)^\ell k, 32 \bigg(\frac{7}{8}\bigg)^\ell \bigg)  \bigg\}\bigg| \\
& = \sup_{v\in\mathbb{R}^{d-1}} \big|\big\{ k\in\mathbb{Z}^{d-1}\colon v \in C^{d-1}( 2ak, 32)  \big\}\big|.
\end{align*}
The observation that the right-hand side is finite and does not depend on $\ell$ completes the proof.
\end{proof}

\section{Auxiliary bounds for additive functionals}\label{sec:9}

In this section, we establish several auxiliary bounds for a measurable, additive, and locally bounded functional $\phi: \mathcal{R}^d\to\mathbb{R}$. Hence, in particular these results apply to all geometric functionals. We employ the same arguments as in \cite{BST,HLS} for the Euclidean case, but instead of covering $\mathbb{R}^d$ by translated unit cubes we rely on Lemma \ref{lem:covering}. Recall that $Z$ is a Boolean model generated by a stationary Poisson particle process $\eta$ on $\BH^d$ concentrated on convex particles.

Throughout this section, $\mathcal{M}$ and $c_d$ are as in Lemma \ref{lem:covering}. For $K\in\mathcal{K}^d$, we define
\begin{equation*}
\mathcal{M}_K\defeq\{z\in \mathcal{M}\colon \mathbb{B}(z,1/2)\cap K\neq \varnothing \}.
\end{equation*}
As before, we write $|A|$ for the cardinality of a set $A$.

\begin{lemma}\label{lem:bound_covering_K}
If $K\in\mathcal{K}^d$, then
\begin{equation*}
|\mathcal{M}_K| \leq \frac{c_d}{\Vol(\mathbb{B}_{1/2})}\, \overline{\Vol}(K).
\end{equation*}
\end{lemma}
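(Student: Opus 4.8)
The idea is to estimate the cardinality of $\mathcal{M}_K$ by a volume-packing argument in the $1$-parallel set $\mathbb{B}(K,1)$ of $K$. For each $z\in\mathcal{M}_K$ the ball $\mathbb{B}(z,1/2)$ meets $K$, hence every point of $\mathbb{B}(z,1/2)$ lies within distance $1$ of $K$, so $\mathbb{B}(z,1/2)\subseteq\mathbb{B}(K,1)$. Consequently $\sum_{z\in\mathcal{M}_K}\Vol(\mathbb{B}(z,1/2))$ counts, with multiplicity, a subset of $\mathbb{B}(K,1)$.

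First I would write, using that each $\mathbb{B}(z,1/2)$ with $z\in\mathcal{M}_K$ is contained in $\mathbb{B}(K,1)$ and the indicator identity,
\begin{align*}
|\mathcal{M}_K|\,\Vol(\mathbb{B}_{1/2})
&=\sum_{z\in\mathcal{M}_K}\Vol(\mathbb{B}(z,1/2))
=\sum_{z\in\mathcal{M}_K}\int_{\mathbb{H}^d}\I\{y\in\mathbb{B}(z,1/2)\}\,\mathcal{H}^d(\dint y)\\
&=\int_{\mathbb{B}(K,1)}\Big(\sum_{z\in\mathcal{M}_K}\I\{y\in\mathbb{B}(z,1/2)\}\Big)\,\mathcal{H}^d(\dint y),
\end{align*}
where I used that $\Vol(\mathbb{B}(z,1/2))=\Vol(\mathbb{B}_{1/2})$ by isometry invariance of $\mathcal{H}^d$, that $\mathcal{M}_K$ is at most countable (so the interchange of sum and integral is justified by Tonelli), and that the inner summand vanishes for $y\notin\mathbb{B}(K,1)$.

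Next I would bound the inner sum pointwise. For any $y\in\mathbb{H}^d$,
\[
\sum_{z\in\mathcal{M}_K}\I\{y\in\mathbb{B}(z,1/2)\}
\le\big|\{z\in\mathcal{M}\colon y\in\mathbb{B}(z,1/2)\}\big|\le c_d
\]
by \eqref{eqn:M_locally_finite} of Lemma \ref{lem:covering}, since $\mathcal{M}_K\subseteq\mathcal{M}$. Plugging this in gives
\[
|\mathcal{M}_K|\,\Vol(\mathbb{B}_{1/2})\le c_d\,\mathcal{H}^d(\mathbb{B}(K,1))=c_d\,\overline{\Vol}(K),
\]
and dividing by $\Vol(\mathbb{B}_{1/2})$ yields the claim. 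There is essentially no obstacle here: the only points needing care are the containment $\mathbb{B}(z,1/2)\subseteq\mathbb{B}(K,1)$ (which follows from the triangle inequality for $d_h$ and the definition $d_h(K,y)=\inf_{a\in K}d_h(a,y)$) and the countability of $\mathcal{M}$, both of which are immediate from the setup.
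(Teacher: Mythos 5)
Your proof is correct and follows essentially the same route as the paper: both arguments use the containment $\mathbb{B}(z,1/2)\subseteq\mathbb{B}(K,1)$ for $z\in\mathcal{M}_K$, rewrite $|\mathcal{M}_K|\Vol(\mathbb{B}_{1/2})$ as an integral of the sum of indicators over $\mathbb{B}(K,1)$, and bound that sum pointwise by $c_d$ via \eqref{eqn:M_locally_finite}. The only cosmetic difference is that you justify the interchange of sum and integral explicitly (Tonelli, countability of $\mathcal{M}$), which the paper leaves implicit.
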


\begin{proof}
Let $K\in\mathcal{K}^d$ be given.
 If  $x\in \mathcal{M}_K$, then $\mathbb{B}(x,1/2)\subseteq \mathbb{B}(K,1)$. Hence,
\begin{align*}
|\mathcal{M}_K| & = \frac{1}{\Vol(\mathbb{B}_{1/2})} \sum_{x\in \mathcal{M}_K} \Vol(\mathbb{B}(x,1/2)) \\
&= \frac{1}{\Vol(\mathbb{B}_{1/2})} \int_{\mathbb{B}(K,1)} \sum_{x\in \mathcal{M}_K} \mathbf{1}\{y\in\mathbb{B}(x,1/2)\} \, \mathcal{H}^d(\dint y) \\
& \leq \frac{1}{\Vol(\mathbb{B}_{1/2})} \int_{\mathbb{B}(K,1)} \sum_{x\in \mathcal{M}} \mathbf{1}\{y\in\mathbb{B}(x,1/2)\} \, \mathcal{H}^d(\dint y)\\
& \leq\frac{1}{\Vol(\mathbb{B}_{1/2})} \int_{\mathbb{B}(K,1)} c_d \, \mathcal{H}^d(\dint y)
 = \frac{c_d}{\Vol(\mathbb{B}_{1/2})} \mathcal{H}^d(\mathbb{B}(K,1))\\
& = \frac{c_d}{\Vol(\mathbb{B}_{1/2})} \,\overline{\Vol}(K),
\end{align*}
where we used \eqref{eqn:M_locally_finite} in the second inequality.
\end{proof}

Recall that $[K]=\{L\in\mathcal{K}^d\colon L\cap K\neq\varnothing\}$ for $K\in\mathcal{K}^d$. For a given map $\phi: \mathcal{R}^d\to\mathbb{R}$, let  $M(\phi)$ be defined as in \eqref{eqn:M_phi}.

\begin{lemma}\label{lem:bound_phi}
Let $\phi: \mathcal{R}^d\to\mathbb{R}$ be measurable, additive, and locally bounded. Then there exist constants $c_{1,d},c_{2,d}\in(0,\infty)$, depending only on $d$, such that
\begin{equation}\label{eq:lem:bound_phi:1}
|\phi(K)| \leq c_{1,d} M(\phi) \,\overline{\Vol}(K)
\end{equation}
and
$$
|\phi(K\cap Z)| \leq c_{2,d} M(\phi) \sum_{x\in \mathcal{M}_K} 2^{\eta([\mathbb{B}(x,1/2)])}
$$
for $K\in\mathcal{K}^d$. Moreover, one has $c_{1,d}=c_{2,d}c_d/\Vol(\mathbb{B}_{1/2})$.
\end{lemma}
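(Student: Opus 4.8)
The plan is to substitute, for the partition of Euclidean space into unit cubes used in the Euclidean treatments, the economic covering $\mathcal{M}$ from Lemma~\ref{lem:covering}, and to run both estimates through the inclusion--exclusion formula
$$
\phi\Big(\bigcup_{i=1}^m A_i\Big)=\sum_{\varnothing\neq I\subseteq\{1,\dots,m\}}(-1)^{|I|-1}\,\phi\Big(\bigcap_{i\in I}A_i\Big),\qquad A_1,\dots,A_m\in\mathcal{R}^d,
$$
which follows from \eqref{eq:propval} by induction on $m$ (all the unions and intersections occurring stay in $\mathcal{R}^d$). The point is that any intersection of one of the balls $\mathbb{B}(x,1/2)$, $x\in\mathcal{M}$, with further sets is a compact convex set contained in $\mathbb{B}(x,1/2)\subseteq\mathbb{B}(x,1)=\varrho\mathbb{B}_1$ for a suitable $\varrho\in\mathcal{I}_d$ with $\varrho\sfp=x$, hence has absolute $\phi$-value at most $M(\phi)$; so both estimates reduce to bounding the number of surviving (nonempty-intersection) index sets in the inclusion--exclusion sums. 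Before that I would record a \emph{cluster bound}: there is a constant $N_1=N_1(d)\in\mathbb{N}$ such that every ball $\mathbb{B}(x_0,1/2)$, $x_0\in\mathcal{M}$, meets at most $N_1$ of the balls $\{\mathbb{B}(x,1/2):x\in\mathcal{M}\}$. This is proved by the volume-counting argument of Lemma~\ref{lem:bound_covering_K}: all such balls lie in $\mathbb{B}(x_0,3/2)$, and integrating \eqref{eqn:M_locally_finite} over $\mathbb{B}(x_0,3/2)$ gives the count $\le c_d\Vol(\mathbb{B}_{3/2})/\Vol(\mathbb{B}_{1/2})$. Fixing an ordering of the countable set $\mathcal{M}$, it follows that for each $x_0\in\mathcal{M}$ there are at most $2^{N_1-1}$ index sets $I\subseteq\mathcal{M}$ with $\min I=x_0$ and $\bigcap_{x\in I}\mathbb{B}(x,1/2)\neq\varnothing$, since such an $I$ is a subset of the $\le N_1$ balls meeting $\mathbb{B}(x_0,1/2)$ and contains $x_0$.

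For \eqref{eq:lem:bound_phi:1} I would apply the inclusion--exclusion formula to $\phi(K)=\phi\big(\bigcup_{x\in\mathcal{M}_K}(K\cap\mathbb{B}(x,1/2))\big)$, noting that $\mathcal{M}_K$ is finite by Lemma~\ref{lem:bound_covering_K} and covers $K$. Terms with empty intersection vanish; each surviving term is the $\phi$-value of a compact convex set contained in $\mathbb{B}(\min I,1/2)$, hence of modulus $\le M(\phi)$; and grouping the surviving $I$ by $\min I$ and invoking the cluster bound shows there are at most $|\mathcal{M}_K|\,2^{N_1-1}$ of them. Thus $|\phi(K)|\le|\mathcal{M}_K|\,2^{N_1-1}\,M(\phi)$, and inserting $|\mathcal{M}_K|\le\frac{c_d}{\Vol(\mathbb{B}_{1/2})}\overline{\Vol}(K)$ from Lemma~\ref{lem:bound_covering_K} gives \eqref{eq:lem:bound_phi:1} with $c_{1,d}=\frac{c_d\,2^{N_1-1}}{\Vol(\mathbb{B}_{1/2})}$. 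Since the second estimate produces $c_{2,d}=2^{N_1-1}$, this is precisely the asserted relation $c_{1,d}=c_{2,d}\,c_d/\Vol(\mathbb{B}_{1/2})$.

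For the second estimate I would iterate the scheme once more. As $\eta$ is a.s.\ locally finite, each $K\cap Z\cap\mathbb{B}(x,1/2)$ is a finite union of convex bodies, hence lies in $\mathcal{R}^d$, so inclusion--exclusion over $\mathcal{M}_K$ yields $\phi(K\cap Z)=\sum_{\varnothing\neq I\subseteq\mathcal{M}_K}(-1)^{|I|-1}\,\phi\big(K\cap Z\cap\bigcap_{x\in I}\mathbb{B}(x,1/2)\big)$. For a surviving $I$ with $x_0=\min I$, the set $K\cap Z\cap\bigcap_{x\in I}\mathbb{B}(x,1/2)$ lies in $Z\cap\mathbb{B}(x_0,1/2)=\bigcup\{L\cap\mathbb{B}(x_0,1/2):L\in\eta,\ L\cap\mathbb{B}(x_0,1/2)\neq\varnothing\}$, a union of $\eta([\mathbb{B}(x_0,1/2)])$ convex bodies; a second inclusion--exclusion over these particles, each term again a convex set in $\mathbb{B}(x_0,1/2)$ and hence bounded by $M(\phi)$, gives $\big|\phi\big(K\cap Z\cap\bigcap_{x\in I}\mathbb{B}(x,1/2)\big)\big|\le 2^{\eta([\mathbb{B}(x_0,1/2)])}\,M(\phi)$. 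Summing over $I$ grouped by $x_0=\min I$ and using the bound $2^{N_1-1}$ on the number of such $I$ yields $|\phi(K\cap Z)|\le 2^{N_1-1}\,M(\phi)\sum_{x\in\mathcal{M}_K}2^{\eta([\mathbb{B}(x,1/2)])}$, i.e.\ the assertion with $c_{2,d}=2^{N_1-1}$.

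The main obstacle is exactly this combinatorial control: the inclusion--exclusion sums have a priori exponentially many nonzero terms, and everything hinges on the geometric fact --- furnished by the economic covering of Lemma~\ref{lem:covering} --- that the clusters of covering balls with a common point have size bounded by a dimensional constant, which is the hyperbolic substitute for the triviality that a unit cube meets only boundedly many other cubes of the Euclidean lattice. Once $N_1(d)$ is available, the rest is bookkeeping; the only further points to check are that every set fed into $\phi$ genuinely belongs to $\mathcal{R}^d$ (so that \eqref{eq:propval} and its iterate apply) and that the ``small'' sets really fit into some $\varrho\mathbb{B}_1$ so that $M(\phi)$ may be invoked, both of which are immediate in $\mathbb{H}^d$ from the triangle inequality and the homogeneity of $\mathcal{I}_d$.
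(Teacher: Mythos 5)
Your proposal is correct and follows essentially the same route as the paper's proof: inclusion--exclusion over the covering balls indexed by $\mathcal{M}_K$, the cluster bound from Lemma \ref{lem:bound_covering_K} (the paper's constant $c_0=c_d\Vol(\mathbb{B}_{3/2})/\Vol(\mathbb{B}_{1/2})$ plays the role of your $N_1$) to control the number of surviving index sets, and a second inclusion--exclusion over the particles of $\eta$ hitting $\mathbb{B}(x_0,1/2)$ for the bound on $\phi(K\cap Z)$. The only differences are cosmetic (grouping surviving sets by $\min I$ and the constant $2^{N_1-1}$ versus the paper's $2^{c_0}$), and your constants satisfy the asserted relation $c_{1,d}=c_{2,d}c_d/\Vol(\mathbb{B}_{1/2})$ just as in the paper.
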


\begin{proof}
From Lemma \ref{lem:bound_covering_K} it follows that
\begin{equation}\label{eqn:bound_intersections_covering}
|\mathcal{M}_{\mathbb{B}(x,1/2)}| \leq \frac{c_d}{\Vol(\mathbb{B}_{1/2})} \,\overline{\Vol}(\mathbb{B}(x,1/2)) = \frac{c_d \Vol(\mathbb{B}_{3/2})}{\Vol(\mathbb{B}_{1/2})}=: c_0
\end{equation}
for all $x\in \mathcal{M}$.

Clearly,  $|\mathcal{M}_K|<\infty$ and $K\subseteq \bigcup_{x\in \mathcal{M}_K} \mathbb{B}(x,1/2)$. Thus, by the additivity of $\phi$,
$$
\phi(K) = \phi\left( K \cap \bigcup_{x\in \mathcal{M}_K} \mathbb{B}(x,1/2)\right) = \sum_{\varnothing\neq U\subseteq \mathcal{M}_K} (-1)^{|U|-1} \phi\left(K\cap \bigcap_{x\in U} \mathbb{B}(x,1/2)\right).
$$
Since $\phi$ is locally bounded, this implies
$$
|\phi(K)| \leq M(\phi) \sum_{\varnothing\neq U\subseteq \mathcal{M}} \mathbf{1}\left\{ \bigcap_{x\in U} \mathbb{B}(x,1/2)\cap K \neq\varnothing \right\}.
$$
From \eqref{eqn:bound_intersections_covering} we know that every element of $\mathcal{M}$ is contained in at most $2^{c_0}$ subsets $U$ of $\mathcal{M}$ with $\bigcap_{x\in U} \mathbb{B}(x,1/2)\neq\varnothing$ so that
\begin{align*}
\sum_{\varnothing\neq U\subseteq \mathcal{M}}  \mathbf{1}\left\{ \bigcap_{x\in U} \mathbb{B}(x,1/2)\cap K \neq\varnothing \right\} & \leq 2^{c_0} \sum_{x\in \mathcal{M}}  \mathbf{1}\{ \mathbb{B}(x,1/2)\cap K \neq\varnothing \} = 2^{c_0} |\mathcal{M}_K| \\
& \leq \frac{2^{c_0} c_d }{\Vol(\mathbb{B}_{1/2})}\, \overline{\Vol}(K),
\end{align*}
where in the last step Lemma \ref{lem:bound_covering_K} was used. This leads to
$$
|\phi(K)| \leq \frac{2^{c_0} c_d }{\Vol(\mathbb{B}_{1/2})}\, M(\phi) \overline{\Vol}(K)=:c_{1,d}  M(\phi) \overline{\Vol}(K),
$$
which proves the first inequality.

As above, the additivity of $\phi$ yields
\begin{align*}
\phi(K\cap Z) &= \phi\left( K\cap Z \cap \bigcup_{x\in \mathcal{M}_K} \mathbb{B}(x,1/2)\right) \\
&= \sum_{\varnothing\neq U\subseteq \mathcal{M}_K} (-1)^{|U|-1} \phi\left(K\cap Z\cap \bigcap_{x\in U} \mathbb{B}(x,1/2)\right).
\end{align*}
For every $\varnothing\neq U\subseteq \mathcal{M}_K$, by the additivity and the local boundedness of $\phi$, we get
\begin{align*}
\left|\phi\left(K\cap Z\cap \bigcap_{x\in U} \mathbb{B}(x,1/2) \right)\right| & = \left| \sum_{\varnothing\neq I\subseteq \eta, |I|<\infty} (-1)^{|I|-1} \phi\left(K\cap \bigcap_{L\in I} L \cap \bigcap_{x\in U} \mathbb{B}(x,1/2)\right) \right| \\
& \leq M(\phi) \sum_{\varnothing\neq I\subseteq \eta, |I|<\infty} \mathbf{1}\left\{ \bigcap_{L\in I} L \cap \bigcap_{x\in U} \mathbb{B}(x,1/2) \neq \varnothing \right\} \\
& \leq M(\phi) \min_{x\in U} 2^{\eta([\mathbb{B}(x,1/2)])}.
\end{align*}
Together with the fact that each element of $\mathcal{M}$ is contained in at most $2^{c_0}$ subsets $U$ of $\mathcal{M}$ with $\bigcap_{x\in U} \mathbb{B}(x,1/2)\neq\varnothing$, we see that
\begin{align*}
|\phi(K\cap Z)| &\leq M(\phi) 2^{c_0} \sum_{x\in \mathcal{M}} 2^{\eta([\mathbb{B}(x,1/2)])} \mathbf{1}\left\{ \mathbb{B}(x,1/2)\cap K \neq\varnothing \right\}\\
&=c_{2,d} M(\phi) \sum_{x\in \mathcal{M}_K} 2^{\eta([\mathbb{B}(x,1/2)])},
\end{align*}
where $c_{2,d}\defeq 2^{c_0} $, which completes the proof.
\end{proof}

For a geometric functional $\phi$ and $W\in\mathcal{K}^d$, the random variable $\phi(Z\cap W)$ is a function of the Poisson process $\eta$. Thus, its $n$-th order difference operator is given by
$$
D^n_{K_1,\hdots,K_n}\phi(Z\cap W) = \sum_{I\subseteq \{1,\hdots,n\}} (-1)^{n-|I|} \phi\bigg(\big(Z\cup\bigcup_{i\in I}K_i\big)\cap W\bigg)
$$
for $K_1,\hdots,K_n\in\mathcal{K}^d$ and $n\in\mathbb{N}$ (see e.g.\ \cite[Equation (18.3)]{LP18}). The iterated difference operators will play a crucial role for our analysis of the variance and covariance structure and for the proof of the univariate central limit theorem. Since $\phi$ is additive and one has the recursive formula $D^n_{K_1,\hdots,K_n}\phi(Z\cap W)=D_{K_n}(D^{n-1}_{K_1,\hdots,K_{n-1}}\phi(Z\cap W))$ for $n\geq 2$, one can show by induction that
\begin{equation}\label{eqn:formula_D_n}
D^n_{K_1,\hdots,K_n}\phi(Z\cap W) = (-1)^n \big( \phi(Z\cap K_1\cap\hdots\cap K_n \cap W) - \phi(K_1 \cap \hdots\cap K_n \cap W) \big).
\end{equation}

\begin{lemma}\label{Lemma5.5}
Let $\phi: \mathcal{R}^d\to\mathbb{R}$ be measurable, additive, and locally bounded. Then the following is true for $\phi^*$ as defined in \eqref{eqn:phi_star}.
\begin{itemize}
\item [{\rm (a)}] If $W\in\mathcal{K}^d$, $n\in\N$,  and $K_1,\hdots,K_n\in\mathcal{K}^d$, then
$$
\E D^n_{K_1,\hdots,K_n}\phi(Z\cap W) = (-1)^{n} \phi^*(K_1\cap\hdots\cap K_n \cap W).
$$
\item [{\rm (b)}] There exists a constant $C\in(0,\infty)$, depending only on $d$, $\gamma$, and $\BQ$, such that
$$
|\phi^*(K)|\leq C M(\phi)\, \overline{\Vol}(K)
$$
for all  $K\in\mathcal{K}^d$.
\end{itemize}
\end{lemma}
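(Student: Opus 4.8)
The plan is to obtain both parts directly from the pointwise estimates in Lemma \ref{lem:bound_phi}, the covering bound in Lemma \ref{lem:bound_covering_K}, the representation \eqref{eqn:formula_D_n} of the iterated difference operator, and the elementary identity $\E\,2^{\eta([C])} = \exp(\Lambda([C]))$, which holds for every compact $C\subset\BHd$ because $\eta$ is a Poisson process with intensity measure $\Lambda$.

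First I would record a uniform integrability input. Since $\Lambda$ is isometry invariant and $\BB(x,1/2)$ is an isometric copy of $\BB_{1/2}$ for every $x\in\BHd$, we have $\Lambda([\BB(x,1/2)]) = \Lambda([\BB_{1/2}])$, which is finite because $\Lambda$ is locally finite. Hence, setting $c_3\defeq e^{\Lambda([\BB_{1/2}])}$, we get $\E\,2^{\eta([\BB(x,1/2)])} = c_3 < \infty$ uniformly in $x$. Because $\mathcal{M}_K$ is a deterministic finite set, the second inequality of Lemma \ref{lem:bound_phi} combined with Lemma \ref{lem:bound_covering_K} yields
\begin{equation*}
\E\,|\phi(Z\cap K)| \le c_{2,d}\,M(\phi)\sum_{x\in\mathcal{M}_K}\E\,2^{\eta([\BB(x,1/2)])} = c_{2,d}\,c_3\,M(\phi)\,|\mathcal{M}_K| \le \frac{c_{2,d}\,c_3\,c_d}{\Vol(\BB_{1/2})}\,M(\phi)\,\overline{\Vol}(K) < \infty
\end{equation*}
for every $K\in\cK^d$, so in particular $\phi^*$ is well defined on $\cK^d$ (and then, by additivity, on all of $\cR^d$).

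For part (a), I would take expectations in \eqref{eqn:formula_D_n} with $K_1\cap\dots\cap K_n\cap W$ in the role of the observation set, use the integrability just established to split the expectation, and invoke the definition \eqref{eqn:phi_star} of $\phi^*$; this gives $\E D^n_{K_1,\dots,K_n}\phi(Z\cap W) = (-1)^n\bigl(\E\phi(Z\cap K_1\cap\dots\cap K_n\cap W) - \phi(K_1\cap\dots\cap K_n\cap W)\bigr) = (-1)^n\phi^*(K_1\cap\dots\cap K_n\cap W)$. For part (b), I would bound the two summands of $\phi^*(K) = \E\phi(Z\cap K) - \phi(K)$ separately: the random part by the displayed inequality, and the deterministic part by the first inequality of Lemma \ref{lem:bound_phi}, $|\phi(K)| \le c_{1,d}\,M(\phi)\,\overline{\Vol}(K)$. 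Summing the two bounds gives the assertion with $C\defeq c_{1,d} + c_{2,d}\,c_3\,c_d/\Vol(\BB_{1/2})$, which depends only on $d$ and, through the factor $c_3 = e^{\Lambda([\BB_{1/2}])}$, on $\gamma$ and $\BQ$ (recall that $\Lambda$, hence $\Lambda([\BB_{1/2}])$, is determined by $\gamma$ and $\BQ$ via Theorem \ref{thmdisintegration}).

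I do not anticipate a genuine obstacle here; the only point that needs a little care is the uniformity in $x$ of $\E\,2^{\eta([\BB(x,1/2)])}$, which rests on the isometry invariance and local finiteness of $\Lambda$, together with the observation that $\mathcal{M}_K$ is independent of $\eta$, so that the expectation passes through the finite sum without any domination argument.
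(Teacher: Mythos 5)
Your proposal is correct and follows essentially the same route as the paper: expectation of \eqref{eqn:formula_D_n} together with \eqref{eqn:phi_star} for part (a), and the two bounds of Lemma \ref{lem:bound_phi} combined with Lemma \ref{lem:bound_covering_K} for part (b). The only cosmetic difference is that you evaluate $\E\,2^{\eta([\mathbb{B}(x,1/2)])}=e^{\Lambda([\mathbb{B}_{1/2}])}$ explicitly via the Poisson generating function and isometry invariance of $\Lambda$, where the paper simply invokes stationarity and keeps $\E\,2^{\eta([\mathbb{B}_{1/2}])}$ as a finite constant, leading to the same constant $C$.
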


\begin{proof}
Part (a) follows directly by taking the expectation in \eqref{eqn:formula_D_n} from the definition of $\phi^*$. For $K\in\mathcal{K}^d$ we deduce from Lemma \ref{lem:bound_phi} and Lemma \ref{lem:bound_covering_K} that
\begin{align*}
|\phi^*(K)| & = | \E\phi(Z\cap K) - \phi(K)| \\
& \leq \E|\phi(Z\cap K)| + |\phi(K)| \\
& \leq M(\phi) \left( c_{2,d} \sum_{x\in \mathcal{M}_K} \E 2^{\eta([\mathbb{B}(x,1/2)])}
 + c_{1,d} \overline{\Vol}(K)\right) \\
& \leq M(\phi) \left( \frac{ c_{2,d} c_d}{\Vol(\mathbb{B}_{1/2})} \E 2^{\eta([\mathbb{B}_{1/2}])} + c_{1,d}\right) \overline{\Vol}(K)\\
&=c_{1,d} \left(   \E 2^{\eta([\mathbb{B}_{1/2}])} + 1\right)M(\phi) \overline{\Vol}(K).
\end{align*} 
This finally proves part (b) of the lemma.
\end{proof}

\begin{lemma}\label{lem:moments_D}
Let $\phi: \mathcal{R}^d\to\mathbb{R}$ be measurable, additive, and locally bounded. Let $m\in\N$. Then there exists a constant $C_{m}\in(0,\infty)$, depending only on $m$, $d$, $\gamma$, and $\BQ$, such that,
\begin{itemize}
\item[{\rm (a)}] for all  $W\in\mathcal{K}^d$,
$$
\E |\phi(Z\cap W)|^m\le c_{1,d}^mM(\phi)^m \E 2^{m\eta([\mathbb{B}_{1/2}])}\,\overline{\Vol}( W)^m,
$$

\item[{\rm (b)}] for all  $K,W\in\mathcal{K}^d$,
$$
\E |D_{K}\phi(Z\cap W)|^m \leq C_{m} M(\phi)^m \, \overline{\Vol}(K\cap W)^m,
$$

\item[{\rm (c)}] for all  $K_1,K_2,W\in\mathcal{K}^d$,
$$
\E |D^2_{K_1,K_2}\phi(Z\cap W)|^m \leq C_{m} M(\phi)^m \, \overline{\Vol}(K_1\cap K_2\cap W)^m.
$$
\end{itemize}
\end{lemma}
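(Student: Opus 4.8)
The plan is to bound the $m$-th absolute moments of $\phi(Z\cap W)$ and of its first and second order difference operators by exploiting the covering $\mathcal{M}$ from Lemma \ref{lem:covering} together with the pointwise estimates already established in Lemma \ref{lem:bound_phi}. For part (a), I would start from the bound $|\phi(Z\cap W)|\le c_{2,d}M(\phi)\sum_{x\in\mathcal{M}_W}2^{\eta([\mathbb{B}(x,1/2)])}$ of Lemma \ref{lem:bound_phi}. Raising this to the $m$-th power and using the convexity inequality $(\sum_{i=1}^N a_i)^m\le N^{m-1}\sum_{i=1}^N a_i^m$ for nonnegative $a_i$, with $N=|\mathcal{M}_W|$, gives $|\phi(Z\cap W)|^m\le c_{2,d}^mM(\phi)^m|\mathcal{M}_W|^{m-1}\sum_{x\in\mathcal{M}_W}2^{m\eta([\mathbb{B}(x,1/2)])}$. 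Taking expectations, using stationarity of $\eta$ so that $\E 2^{m\eta([\mathbb{B}(x,1/2)])}=\E 2^{m\eta([\mathbb{B}_{1/2}])}$ is a finite constant (finiteness because $\eta([\mathbb{B}_{1/2}])$ is Poisson distributed with parameter $\Lambda([\mathbb{B}_{1/2}])<\infty$), and then bounding $|\mathcal{M}_W|\le (c_d/\Vol(\mathbb{B}_{1/2}))\overline{\Vol}(W)$ via Lemma \ref{lem:bound_covering_K}, yields the claimed estimate with $c_{1,d}^m=c_{2,d}^m(c_d/\Vol(\mathbb{B}_{1/2}))^m$ absorbed appropriately (matching the normalization $c_{1,d}=c_{2,d}c_d/\Vol(\mathbb{B}_{1/2})$ from Lemma \ref{lem:bound_phi}).

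For part (b), the key observation is the identity \eqref{eqn:formula_D_n}, which for $n=1$ reads $D_K\phi(Z\cap W)=-(\phi(Z\cap K\cap W)-\phi(K\cap W))$. Both terms are controlled: $\phi(K\cap W)$ is a deterministic functional of the convex body $K\cap W$ and is bounded by $c_{1,d}M(\phi)\overline{\Vol}(K\cap W)$ by Lemma \ref{lem:bound_phi}, while $\phi(Z\cap K\cap W)$ is of the form treated in part (a) applied with the window $K\cap W$. Hence $|D_K\phi(Z\cap W)|^m\le 2^{m-1}(|\phi(Z\cap K\cap W)|^m+|\phi(K\cap W)|^m)$, and taking expectations and invoking part (a) with $W$ replaced by $K\cap W$ gives $\E|D_K\phi(Z\cap W)|^m\le C_m M(\phi)^m\overline{\Vol}(K\cap W)^m$ with $C_m=2^{m-1}c_{1,d}^m(\E 2^{m\eta([\mathbb{B}_{1/2}])}+1)$. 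One subtlety: if $K\cap W=\varnothing$ then both sides vanish, so we may assume $K\cap W\in\mathcal{K}^d$ is nonempty; note also $\overline{\Vol}(\varnothing)$ should be read as $0$, consistently with the conventions of the paper.

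For part (c), I would use the recursive structure $D^2_{K_1,K_2}\phi(Z\cap W)=D_{K_2}(D^1_{K_1}\phi(Z\cap W))$ together with \eqref{eqn:formula_D_n} for $n=2$: $D^2_{K_1,K_2}\phi(Z\cap W)=\phi(Z\cap K_1\cap K_2\cap W)-\phi(K_1\cap K_2\cap W)$. This is exactly the $n=1$ situation of part (b) but with window $K_1\cap K_2\cap W$ in place of $K\cap W$; so the same argument — convexity inequality to split off the two summands, then part (a) with the window $K_1\cap K_2\cap W$ and the deterministic bound of Lemma \ref{lem:bound_phi} — delivers $\E|D^2_{K_1,K_2}\phi(Z\cap W)|^m\le C_m M(\phi)^m\overline{\Vol}(K_1\cap K_2\cap W)^m$ with the same constant $C_m$. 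In fact all three parts follow from a single lemma: for any $L\in\mathcal{R}^d$ arising as an intersection of $W$ with finitely many convex bodies (so $L\in\mathcal{K}^d\cup\{\varnothing\}$), one has $\E|\phi(Z\cap L)-\phi(L)|^m\le C_m M(\phi)^m\overline{\Vol}(L)^m$; parts (b) and (c) are instances of this with $L=K\cap W$ and $L=K_1\cap K_2\cap W$ respectively (with the sign from \eqref{eqn:formula_D_n} being irrelevant inside the absolute value), and part (a) is the slightly simpler estimate without the $-\phi(L)$ correction term.

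The only genuinely delicate point is ensuring all constants depend only on the stated quantities; in particular $\E 2^{m\eta([\mathbb{B}_{1/2}])}=\exp((2^m-1)\Lambda([\mathbb{B}_{1/2}]))=\exp((2^m-1)\gamma\E\overline{\Vol}(\TG))$ depends only on $m$, $\gamma$, and $\BQ$ (through $\E\overline{\Vol}(\TG)=\int\overline{\Vol}(K)\,\BQ(\dint K)$, which is finite by Theorem \ref{thmchar}), while $c_{1,d},c_{2,d},c_d,\Vol(\mathbb{B}_{1/2})$ depend only on $d$; so $C_m$ indeed depends only on $m,d,\gamma,\BQ$ as claimed. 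No step here is a serious obstacle — the proof is an assembly of the covering estimate, the $L^p$-type convexity inequality $(\sum a_i)^m\le N^{m-1}\sum a_i^m$, and the Poisson moment generating function — but the bookkeeping of which constant depends on what, and the correct reading of $\overline{\Vol}$ on empty intersections, are the places where care is needed.
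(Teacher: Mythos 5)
Your proposal is correct and follows essentially the same route as the paper: the bound of Lemma \ref{lem:bound_phi} combined with the convexity inequality $(\sum a_i)^m\le N^{m-1}\sum a_i^m$, stationarity, and Lemma \ref{lem:bound_covering_K} for part (a), then \eqref{eqn:formula_D_n} with the $2^{m-1}$-splitting and part (a) applied to the windows $K\cap W$ resp.\ $K_1\cap K_2\cap W$ for parts (b) and (c), yielding the same constant $C_m=2^{m-1}c_{1,d}^m\bigl(\E 2^{m\eta([\mathbb{B}_{1/2}])}+1\bigr)$. One trivial slip in your side remark: $\Lambda([\mathbb{B}_{1/2}])=\gamma\,\E V_d(\mathbb{B}(\TG,1/2))\le\gamma\,\E\overline{\Vol}(\TG)$ rather than being equal to $\gamma\,\E\overline{\Vol}(\TG)$, which does not affect finiteness of $\E 2^{m\eta([\mathbb{B}_{1/2}])}$ nor the claimed dependence of $C_m$ on $m,d,\gamma,\BQ$ only.
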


\begin{proof}
From  Lemma \ref{lem:bound_phi}, Jensen's inequality, and Lemma \ref{lem:bound_covering_K} we obtain
\begin{align*}
 \E |\phi(Z\cap W)|^m
& \le c_{2,d}^m M(\phi)^m \E\left(\sum_{x\in \mathcal{M}_{W}} 2^{\eta([\mathbb{B}(x,1/2)])} \right)^m\\
& \le c_{2,d}^m M(\phi)^m |\mathcal{M}_{W}|^{m-1}\, \E \sum_{x\in \mathcal{M}_{W}} 2^{m\eta([\mathbb{B}(x,1/2)])} \\
& = c_{2,d}^m M(\phi)^m |\mathcal{M}_{W}|^m \, \E   2^{m\eta([\mathbb{B}_{1/2}])} \\
&\le c_{1,d}^m M(\phi)^m \, \E 2^{m\eta([\mathbb{B}_{1/2}])}\, \overline{\Vol}(W)^m,
\end{align*}
which proves (a).

From \eqref{eqn:formula_D_n}, Jensen's inequality, part (a), and Lemma \ref{lem:bound_phi} it follows that
\begin{align*}
\E |D_{K}\phi(Z\cap W)|^m
& = \E |\phi(Z\cap K\cap W) - \phi(K\cap W)|^m \\
& \leq 2^{m-1} \E |\phi(Z\cap K\cap W)|^m + 2^{m-1} |\phi(K\cap W)|^m \\
& \leq 2^{m-1} c_{1,d}^m M(\phi)^m \, \E 2^{m\eta([\mathbb{B}_{1/2}])}\, \overline{\Vol}(K\cap W)^m \\
& \quad + 2^{m-1} c_{1,d}^m M(\phi)^m\, \overline{\Vol}(K\cap W)^m \\
& = 2^{m-1}c_{1,d}^m\left(   \E 2^{m\eta([\mathbb{B}_{1/2}])} +1  \right) M(\phi)^m \, \overline{\Vol}(K\cap W)^m,
\end{align*}
which proves (b), and similarly
\begin{align*}
\E |D^2_{K_1,K_2}\phi(Z\cap W)|^m
& = \E |\phi(Z\cap K_1\cap K_2 \cap W) - \phi(K_1\cap K_2 \cap W)|^m \\
& \leq  2^{m-1}c_{1,d}^m\left(   \E 2^{m\eta([\mathbb{B}_{1/2}])} +1  \right) M(\phi)^m \,\overline{\Vol}(K_1\cap K_2 \cap W)^m,
\end{align*}
which proves the assertion in (c).
\end{proof}

\section{Proofs of the results for mean values}\label{sec:10}

\begin{proof}[Proof of Theorem \ref{thm:mean}]
Let $W\in\cK^d$ be a fixed observation window. It follows from the additivity of $\phi$ and the inclusion-exclusion formula that
\begin{equation}\label{eq:premeanvalue}
\phi(Z\cap W)=\sum_{n=1}^\infty\frac{(-1)^{n-1}}{n!}\int_{(\mathcal{K}^d)^n} \phi(K_1\cap\ldots\cap K_n\cap W)\, \eta^n_{\neq}(\dint (K_1,\ldots,K_n)),
\end{equation}
where the factorial moment measure $\eta^n_{\neq}$ corresponds to the summation over all $n$-tuples of distinct particles of $\eta$. Since $W$ is compact, for each realisation of the Poisson particle process $\eta$, only finitely many particles hit $W$, that is, $\eta([W])<\infty$ holds $\BP$-a.s. Therefore, the sum on the right side of \eqref{eq:premeanvalue} has $\BP$-a.s.~only finitely many non-zero summands. In the following, we use the shorthand notation
$$
A^{(1)}\defeq \mathbb{B}(A,1)\quad \text{for $A\in\mathcal{K}^d$.}
$$
For $\varrho\in \calI_d$ we have $(\varrho A)^{(1)}=\varrho A^{(1)}$. Let $\TG_1,\ldots,\TG_n$ denote independent copies of the typical particle with distribution $\BQ$. By \eqref{eq:lem:bound_phi:1}, Theorem \ref{thmdisintegration}, and Lemma \ref{lem:intelem} we obtain
\begin{align}
&\int_{(\mathcal{K}^d)^n} |\phi(K_1\cap\ldots\cap K_n\cap W)|\, \Lambda^n(\dint (K_1,\ldots,K_n))\nonumber\\
&\le c_{1,d}M(\phi)\int_{(\mathcal{K}^d)^n} \overline{\Vol}(K_1\cap\ldots\cap K_n\cap W) \, \Lambda^n(\dint (K_1,\ldots,K_n))\nonumber\\
&\le c_{1,d}M(\phi) \E \int_{(\mathcal{I}_d)^n} \Vol\left(\varrho_1 \TG_1^{(1)}\cap\ldots\cap\varrho_n \TG_n^{(1)}\cap W^{(1)}\right)\, \lambda^n(\dint(\varrho_1,\ldots,\varrho_n))\nonumber\\
&=c_{1,d}M(\phi)\left(\E \overline{\Vol}(\TG)\right)^n\overline{\Vol}(W).\label{eq:bound1}
\end{align}
Since $\E\eta^n_{\neq}=\Lambda^n$ by the multivariate Mecke formula, summation and expected value can be interchanged when $\E$ is applied to \eqref{eq:premeanvalue}. Thus we get
the assertion of Theorem \ref{thm:mean} (a).

\medskip

For the proof of Theorem \ref{thm:mean} (b), we assume that $\phi$ is continuous and choose $W=\BB_R$ for $R>0$. There exists a constant $c_{3,d}$ only depending on $d$ such that $\overline{\Vol}(\BB_R)/ {\Vol}(\BB_R)\le c_{3,d}$ for all $R\ge 1$. Thus, it follows from \eqref{eq:bound1} with $W=\BB_R$ that summation and limit $R\to\infty$ can be interchanged (provided the limit on the right-hand side exists) so that
\begin{align*}
&\lim_{R\to\infty}
\frac{\E \phi(Z\cap \BB_R)}{\Vol(\BB_R)}\\
&=\sum_{n=1}^\infty\frac{(-1)^{n-1}}{n!}\lim_{R\to\infty}\frac{1}{\Vol(\BB_R)}\int_{(\mathcal{K}^d)^n} \phi(K_1\cap\ldots\cap K_n\cap \BB_R)\, \Lambda^n(\dint (K_1,\ldots,K_n)).
\end{align*}

In the following, we confirm that the limit exists and determine its value. Next we repeatedly apply  Fubini's theorem (without further mention), which is justified by \eqref{eq:bound1} and use Theorem \ref{thmdisintegration}, the invariance of $\Lambda$, the invariance of $\phi$, the inversion invariance of $\lambda$, and \eqref{eq:hdlambda} to get
\begin{align}
&\frac{1}{\Vol(\BB_R)}\int_{(\mathcal{K}^d)^n} \phi(K_1\cap\ldots\cap K_n\cap \BB_R)\, \Lambda^n(\dint (K_1,\ldots,K_n))\nonumber\\
&=\frac{\gamma}{\Vol(\BB_R)}\int_{(\mathcal{K}^d)^{n-1}} \int_{\mathcal{K}^d_{\mathsf{p}}} \int_{\mathcal{I}_d} \phi(\varrho G\cap K_2\cap\ldots\cap K_n\cap \BB_R)\, \lambda(\dint \varrho)\, \BQ(\dint G)\, \Lambda^{n-1}(\dint (K_2,\ldots,K_n))\nonumber \allowdisplaybreaks\\
&=\frac{\gamma}{\Vol(\BB_R)} \int_{\mathcal{I}_d} \int_{(\mathcal{K}^d)^{n-1}} \int_{\mathcal{K}^d_{\mathsf{p}}} \phi(\varrho G\cap \varrho K_2\cap\ldots\cap \varrho K_n\cap \BB_R)\, \BQ(\dint G)\, \Lambda^{n-1}(\dint (K_2,\ldots,K_n))\, \lambda(\dint \varrho)\nonumber \allowdisplaybreaks\\
&=\frac{\gamma}{\Vol(\BB_R)}\int_{(\mathcal{K}^d)^{n-1}} \int_{\mathcal{K}^d_{\mathsf{p}}} \int_{\mathcal{I}_d} \phi(  G\cap   K_2\cap\ldots\cap  K_n\cap \varrho^{-1}\BB_R)\, \lambda(\dint \varrho)\, \BQ(\dint G)\, \Lambda^{n-1}(\dint (K_2,\ldots,K_n))\nonumber \allowdisplaybreaks\\
&=\frac{\gamma}{\Vol(\BB_R)}\int_{(\mathcal{K}^d)^{n-1}}  \int_{\mathcal{K}^d_{\mathsf{p}}} \int_{\mathcal{I}_d} \phi(  G\cap   K_2\cap\ldots\cap  K_n\cap \BB(\varrho\sfp,R))\, \lambda(\dint \varrho)\, \BQ(\dint G)\, \Lambda^{n-1}(\dint (K_2,\ldots,K_n))\nonumber\\
&=\frac{\gamma}{\Vol(\BB_R)}\int_{(\mathcal{K}^d)^{n}} \int_{\BHd} \phi(G\cap   K_2\cap\ldots\cap  K_n  \cap  \BB(x,R))\, \mathcal{H}^d(\dint x)\, (\BQ\otimes\Lambda^{n-1})(\dint (G,K_2,\ldots,K_n))\nonumber.
\end{align}
In order to apply the dominated convergence theorem to interchange the limit for $R\to\infty$ and the first integral, we provide an integrable upper bound for
$$
(G,K_2,\ldots,K_n)\mapsto  \int_{\BHd} \frac{1}{\Vol(\BB_R)}|\phi(G\cap   K_2\cap\ldots\cap  K_n \cap  \BB(x,R))|\, \mathcal{H}^d(\dint x)
$$
which is independent of $R$. Using the shorthand notation $L= G\cap   K_2\cap\ldots\cap  K_n$, \eqref{eq:hdlambda}, \eqref{eq:lem:bound_phi:1}, and Lemma \ref{lem:intelem}, we get
\begin{align}
&\int_{\BHd} \frac{1}{\Vol(\BB_R)}|\phi(G\cap   K_2\cap\ldots\cap K_n  \cap  \BB(x,R))|\, \mathcal{H}^d(\dint x)\nonumber\\
&\le c_{1,d}M(\phi)\int_{\mathcal{I}_d}  {\Vol(\BB_R)}^{-1}{\Vol}(L^{(1)}  \cap  \varrho\BB_{R+1})\, \lambda(\dint \varrho)\nonumber\\
&=c_{1,d}M(\phi)  \Vol(L^{(1)})\Vol(\BB_{R+1})\Vol(\BB_R)^{-1}\nonumber\\
&\le c_{1,d} c_{3,d}M(\phi)\overline{\Vol}(G\cap   K_2\cap\ldots\cap  K_n )\label{eq:modhelp2}
\end{align}
with the constant $c_{3,d}$ from above.
The integrability with respect to $\BQ\otimes \Lambda^{n-1}$ now follows as in the derivation of \eqref{eq:bound1}. Thus, we have shown that
\begin{align*}
&\lim_{R\to\infty}\frac{1}{\Vol(\BB_R)}\int_{(\mathcal{K}^d)^n} \phi(K_1\cap\ldots\cap K_n\cap \BB_R)\, \Lambda^n(\dint (K_1,\ldots,K_n)) \\
&=\int_{(\mathcal{K}^d)^n} \lim_{R\to\infty} \frac{\gamma}{\Vol(\BB_R)} \int_{\BHd} \phi(G\cap   K_2\cap\ldots\cap  K_n  \cap  \BB(x,R))\,  \mathcal{H}^d(\dint x)\\
& \qquad\qquad \times \,  (\BQ\otimes\Lambda^{n-1})(\dint (G,K_2,\ldots,K_n)).
\end{align*}
Applying Lemma \ref{lem:limits_intersection_horoballs} with $L= G\cap   K_2\cap\ldots\cap  K_n$ yields
\begin{align*}
& \lim_{R\to\infty}\frac{1}{\Vol(\BB_R)}\int_{(\mathcal{K}^d)^n} \phi(K_1\cap\ldots\cap K_n\cap \BB_R)\, \Lambda^n(\dint (K_1,\ldots,K_n)) \\
& = \gamma \int_{(\mathcal{K}^d)^n} \int_{\hb} \phi(G\cap K_2 \cap \hdots\cap K_n\cap B) \, \mu_{\rm hb}(\dint B) \, (\BQ\otimes\Lambda^{n-1})(\dint(G,K_2,\hdots,K_n)).
\end{align*}

Finally, note that the previous identity remains true with $\phi$ replaced by $|\phi|$ by the same arguments as above. In combination with \eqref{eq:modhelp2}, it therefore follows that
\begin{align*}
 &\int_{(\mathcal{K}^d)^{n-1}}\int_{\mathcal{K}^d_\sfp}  \int_{\hb}
  \left|\phi(K_1\cap K_2 \cap \hdots\cap K_n\cap B)\right|
  \, \mu_{\rm hb}(\dint B) \, \BQ(\dint K_1)\,
 \Lambda^{n-1}(\dint(K_2,\hdots,K_n))<\infty .
\end{align*}
Hence, Fubini's theorem can be applied to change the order in the iterated integral. This observation completes the proof of Theorem \ref{thm:mean}.
\end{proof}

\begin{proof}[Proof of Corollary \ref{cor:expectation_volume_surface_area}]
Theorem \ref{thmdisintegration}, Fubini's theorem, and Lemma \ref{lem:intelem} imply
\begin{align*}
& \int_{(\mathcal{K}^d)^n} \Vol(K_1\cap\hdots\cap K_n\cap W) \, \Lambda^n(\dint(K_1,\hdots,K_n)) \\
& = \gamma^n \int_{(\mathcal{K}_{\sfp}^d)^n} \int_{(\mathcal{I}_d)^n} \Vol(\varrho_1 G_1\cap\hdots\cap \varrho_n G_n\cap W) \, \lambda^n(\dint(\varrho_1,\hdots,\varrho_n)) \, \BQ^n(\dint(G_1,\hdots,G_n))\\
&  = \gamma^n (\E\Vol(\TG))^n \Vol(W)=V_d(W)(\gamma v_d)^n.
\end{align*}
Thus the mean value formula for the volume involving a bounded observation window follows from  Theorem \ref{thm:mean}.  The asymptotic formula is a direct consequence.

Now we aim at   the formulas for the expected value for $V_{d-1}$.
 By Lemma \ref{lem:intelem2},
 \begin{align*}
&\int_{(\mathcal{K}^d)^{n}} V_{d-1}(K_1\cap\ldots\cap K_n\cap W)\, \Lambda^n(\dint(K_1,\ldots,K_n))\\
&=\gamma^n\int_{(\mathcal{K}^d_\sfp)^{n}}\int_{(\Ih)^n}
V_{d-1}(\varrho_1 G_1\cap\ldots\cap\varrho_n G_n\cap W)\,
\lambda^n(\dint(\varrho_1,\ldots,\varrho_n))\, \BQ^n(\dint(G_1,\ldots,G_n))\\
&=\gamma^n\int_{(\mathcal{K}^d_\sfp)^{n}} \left(
V_{d-1}(W)V_d(G_1)\cdots V_d(G_n)+nV_{d-1}(G_1)V_d(W)V_d(G_2)\cdots V_d(G_n)
\right)\\
&\qquad\qquad\qquad \times \, \BQ^n(\dint(G_1,\ldots,G_n))\\
&=\gamma^nV_{d-1}(W)(\E V_d(\TG))^n+nV_d(W)\gamma\E V_{d-1}(\TG) (\gamma\E V_d(\TG))^{n-1}\\
&=V_{d-1}(W)(\gamma v_d)^n+V_d(W)n\gamma v_{d-1}(\gamma v_d)^{n-1}.
\end{align*}
Plugging this into \eqref{eqn:limit_meana}, we obtain the assertion for $\E V_{d-1}(Z\cap W)$. The asymptotic formula is now implied by \eqref{eq:limit2}.
\end{proof}

\begin{remark}{\rm The asymptotic formulas in Corollary \ref{cor:expectation_volume_surface_area} can also be derived directly from Theorem \ref{thm:mean} (b). In that case, Corollary \ref{cor:horint1} (a) is useful in the proof of the formula for $m_{V_d,Z}$, whereas Corollary  \ref{lem:LimitEW2} and Lemma \ref{lem:limits_intersection_horoballs} can be used in the proof of the formula for $m_{V_{d-1},Z}$.}
\end{remark}

\begin{proof}[Proof of Theorem  \ref{Thmmeanvalues}]
  We wish  to apply Theorem \ref{thm:mean} (a). For $n\in\N$ and $K_0,\ldots,K_n\in\mathcal{K}^d$, iterated application of the the kinematic formula \eqref{eq:kinematic} yields
\begin{align*}
\int_{(\mathcal{I}_d)^n} V_k^0(K_0\cap\varrho_1K_1\cap\hdots\cap \varrho_nK_n)
\, \lambda^n(\dint(\varrho_1,\hdots,\varrho_n))
=\sum_{\substack{m_0,\dots ,m_n=k\\ m_0+\cdots +m_n=nd+k}}^{d}
V_{m_0}^0(K_0)\cdots V_{m_n}^0(K_n).
\end{align*}
Therefore,
\begin{align*}
&\int_{(\mathcal{K}^d)^n} V_k^0(K_1\cap\hdots\cap K_n\cap W) \, \Lambda^n(\dint(K_1,\hdots,K_n))\\
&=\gamma^n \int_{(\mathcal{K}^d_{\sfp})^n} \int_{(\mathcal{I}_d)^n} V_k^0(\varrho_1G_1\cap\hdots\cap \varrho_nG_n\cap W) \, \lambda^n(\dint(\varrho_1,\hdots,\varrho_n))
\, \BQ^n(\dint(G_1,\hdots,G_n))\\
&=\gamma^n\sum_{\substack{m_0,\dots ,m_n=k\\ m_0+\cdots +m_n=nd+k}}^{d}
V^0_{m_0}(W)c_{m_1}\cdots c_{m_n}.
\end{align*}
Hence we obtain that
\begin{align*}
S&\defeq \sum^\infty_{n=1}\frac{(-1)^{n-1}}{n!}\int_{(\mathcal{K}^d)^n} V_k^0(K_1\cap\hdots\cap K_n\cap W) \, \Lambda^n(\dint(K_1,\hdots,K_n))\\
&=\sum^d_{m=k}V^0_m(W)\sum^\infty_{n=1}\frac{(-1)^{n-1}\gamma^n}{n!}
\sum_{\substack{m_1,\dots ,m_n=k\\ m_1+\cdots +m_n=nd+k-m}}^{d} c_{m_1}\cdots c_{m_n}.
\end{align*}
We can now follow the proof of Theorem 9.1.3 in \cite{SW08}. For $m=k$ the inner
series above equals
\begin{align*}
\sum^\infty_{n=1}\frac{(-1)^{n-1}\gamma^n}{n!}c_d^n=1-e^{-\gamma c_d}.
\end{align*}
Next we consider $m\in\{k+1,\ldots,d\}$ and let $s\in\{1,\ldots,m-k\}$
denote the number of indices (in the inner series) smaller than $d$ (no other values of $s$ can occur). Then
this series equals
\begin{align*}
\sum^\infty_{n=1}&\frac{(-1)^{n-1}\gamma^n}{n!}\sum^{(m-k)\wedge n}_{s=1}\binom{n}{s}c^{n-s}_d
\sum_{\substack{m_1,\dots ,m_s=k\\ m_1+\cdots +m_s=sd+k-m}}^{d-1} c_{m_1}\cdots c_{m_s}\\
&=\sum^{m-k}_{s=1}\frac{(-1)^{s-1}\gamma^s}{s!}\sum^\infty_{n=s}\frac{(-1)^{n-s}\gamma^{n-s}}{(n-s)!}c_d^{n-s}
\sum_{\substack{m_1,\dots ,m_s=k\\ m_1+\cdots +m_s=sd+k-m}}^{d-1} c_{m_1}\cdots c_{m_s}.
\end{align*}
Together with $c_d=v_d$ this yields the asserted formula for $S$.
\end{proof}

\section{Proofs of the results for variances and covariances}

\subsection{Proofs of the covariance formulas for general geometric functionals}\label{sec:11}

\begin{proof}[Proof of Theorem \ref{Thmcovasymp}]
Recall that $K^{(1)}=\mathbb{B}(K,1)$ denotes the parallel set of $K\in\cK^d$ with distance parameter $1$.
Lemma \ref{lem:moments_D} (a) yields that $\E \phi(Z\cap W)^2, \E \psi(Z\cap W)^2<\infty$. Hence, Theorem 18.6 and (18.8) in \cite{LP18} together with Lemma \ref{Lemma5.5} (a) show that
\begin{equation}\label{eq:standard}
 \Cov(\phi(Z\cap W),\psi(Z\cap W))
 =  \sum_{n=1}^\infty \frac{1}{n!}     \int_{(\mathcal{K}^d)^{n}} (\phi^*\cdot\psi^*)(K_1\cap\hdots\cap K_n \cap W) \,\Lambda^{n}(\dint (K_1,\hdots,K_n)),
\end{equation}
which is Theorem \ref{Thmcovasymp} (a).

For the proof of Theorem \ref{Thmcovasymp} (b), we first observe that
\begin{equation}\label{trivialbound}
 \overline{\Vol}(\varrho G\cap K_2\cap \hdots\cap K_n\cap W )^2\le \overline{\Vol}(G)\Vol(\varrho G^{(1)}\cap K_2^{(1)}\cap\ldots\cap  K_n^{(1)}\cap W^{(1)}),
\end{equation}
since $\overline{\Vol}$ is isometry invariant and increasing (with respect to set inclusion).
Using   Lemma \ref{Lemma5.5} (b), Theorem \ref{thmdisintegration}, \eqref{trivialbound}, and Lemma \ref{lem:intelem},
we get
\begin{align*}
& \int_{(\mathcal{K}^d)^{n}} |\phi^*\cdot\psi^*|(K_1\cap\hdots\cap K_n\cap W ) \, \Lambda^{n}(\dint (K_1,\hdots,K_n))\\
&\le \gamma    \int_{\Ih}\int_{\mathcal{K}^d_{\sfp}} \int_{(\mathcal{K}^d)^{n-1}} C^2M(\phi) M(\psi)\, \overline{\Vol}(\varrho G\cap K_2\cap \hdots\cap K_n\cap W )^2\\
&\qquad \times \Lambda^{n-1}(\dint (K_2,\hdots,K_n))\, \BQ(\dint G)\, \lambda(\dint\varrho)  \allowdisplaybreaks\\
&\le C^2M(\phi) M(\psi) \gamma \int_{\Ih}\int_{\mathcal{K}^d_{\sfp}} \int_{(\mathcal{K}^d)^{n-1}} \, \overline{\Vol}( G )\Vol(\varrho G^{(1)}\cap K_2^{(1)}\cap \hdots\cap K_n^{(1)}\cap W^{(1)} )\\
&\qquad \times \Lambda^{n-1}(\dint (K_2,\hdots,K_n))\, \BQ(\dint G)\, \lambda(\dint\varrho) \allowdisplaybreaks\\
& = C^2M(\phi) M(\psi) \gamma^n \left(\E\overline{\Vol}(\TG)\right)^{n-1}\E \overline{\Vol}(\TG)^2\,\overline{\Vol}(W) .
\end{align*}
Summation yields
\begin{align}
 &\sum_{n=1}^\infty \frac{1}{n!}     \int_{(\mathcal{K}^d)^{n}} |\phi^*\cdot\psi^*|(K_1\cap\hdots\cap K_n\cap W )
\, \Lambda^{n}(\dint (K_1,\hdots,K_n))\nonumber\\
  &\le C^2M(\phi) M(\psi) \sum_{n=1}^\infty \frac{\gamma^n}{n!}\left(\E\overline{\Vol}(\TG)\right)^{n-1}\E \overline{\Vol}(\TG)^2\,\overline{\Vol}(W)\nonumber\\
  &\le C^2M(\phi) M(\psi)\left(e^{\gamma\E\overline{\Vol}(\TG)}-1\right)
    \left(\E\overline{\Vol}(\TG)\right)^{-1}{\E\overline{\Vol}(\TG)^2}\, \overline{\Vol}(W).\label{eq11:upperbound}
\end{align}
From this the assertion in (b) follows with the choices $\psi=\phi$ and $W=\BB_R$.

For the proof of (c), we first fix $n\in\N$. By Fubini's theorem, Theorem \ref{thmdisintegration},
and the isometry invariance of $\Lambda$, $\phi^*$, and $\psi^*$ we have (similarly as before)
\begin{align}\label{e:865}
& \int_{(\mathcal{K}^d)^{n}} (\phi^*\cdot\psi^*)(K_1\cap\hdots\cap K_n\cap W ) \, \Lambda^{n}(\dint (K_1,\hdots,K_n))\\
\notag
&=\gamma\int_{(\mathcal{K}^d)^{n-1}} \int_{\mathcal{K}^d_{\sfp}}\int_{\Ih}
(\phi^*\cdot\psi^*)(G\cap K_2\cap\ldots\cap K_n\cap  \varrho^{-1}W)
\, \lambda(\dint\varrho)\,\BQ(\dint G)\,   \Lambda^{n-1}(\dint (K_2,\hdots,K_n)).
\end{align}
Now we choose $W=\BB_R$ in \eqref{eq:standard}. The
upper bound \eqref{eq11:upperbound} and the boundedness of $\overline{\Vol}(\mathbb{B}_R)/\Vol(\mathbb{B}_R)$ for $R\ge 1$ allow us to interchange limit
$R\to\infty$ and summation over $n$ if we divide by $\Vol(\mathbb{B}_R)$ in \eqref{eq:standard}.
Using \eqref{e:865} and also the inversion invariance of $\lambda$
and \eqref{eq:hdlambda}, we hence arrive at
\begin{align}
& \lim_{R\to\infty} \frac{\Cov(\phi(Z\cap \mathbb{B}_R),\psi(Z\cap \mathbb{B}_R))}{\Vol(\mathbb{B}_R)} \nonumber \\
&=\sum_{n=1}^\infty\frac{\gamma}{n!}\lim_{R\to\infty}\frac{1}{\Vol(\BB_R)}
\int_{(\mathcal{K}^d)^{n-1}} \int_{\mathcal{K}^d_{\sfp}}\int_{\BHd} (\phi^*\cdot\psi^*)(G\cap K_2\cap\ldots\cap K_n\cap  \BB(x,R)) \label{eqn:Intermediate_Asymptotic_Variance}\\
&\qquad\times \mathcal{H}^d(\dint x)\, \BQ(\dint G)\,   \Lambda^{n-1}(\dint (K_2,\hdots,K_n)). \nonumber
\end{align}

To justify that the limit $R\to\infty$ can be applied directly to the
inner integral over $\BHd$ in
\eqref{eqn:Intermediate_Asymptotic_Variance}, we can argue as at
\eqref{eq:modhelp2}, using in addition a trivial upper bound as in
\eqref{trivialbound} and the second moment assumption.

Recall that
$\phi$ and $\psi$ are assumed to be continuous for part (c). Because
of \eqref{eqn:limit_meana} and the absolute convergence of
\eqref{eqn:limit_meana}, due to the dominated convergence theorem
(recall \eqref{eq:bound1}), the maps
$\cK^d\ni K\mapsto \E\nu(Z\cap G\cap K_2\cap \hdots\cap K_n \cap K)$
and $\cK^d\ni K\mapsto \nu(G\cap K_2\cap \hdots\cap K_n \cap K)$ with
$\nu\in\{\phi,\psi\}$ satisfy \eqref{eqn:continuous_ae} (confer the
proof of Lemma \ref{lem:limits_intersection_horoballs}) and
\eqref{eqn:assumption_ball}, where $r_0$ can be chosen as the radius
of the smallest ball centered at $\sfp$ and containing
$G\cap K_2\cap\hdots\cap K_n$. By \eqref{eqn:phi_star} it follows that
$\cK^d\cup \hb\ni K\mapsto \phi^*(G\cap K_2\cap\hdots\cap K_n\cap K)$ and
$\cK^d\cup \hb\ni K\mapsto \psi^*(G\cap K_2\cap\hdots\cap K_n\cap K)$ also
satisfy \eqref{eqn:continuous_ae} and \eqref{eqn:assumption_ball} and
are bounded due to Lemma \ref{Lemma5.5} (b). Hence an application of
Lemma \ref{lem:LimitEW_New} completes the proof of (c).
\end{proof}

\subsection{Proofs of the variance formulas for the volume}\label{sec:12}

In this section, we establish the variance formulas for the volume. We will first establish the local variance formula, from which we derive the asymptotic formula. In addition, we describe how the asymptotic formula can be proved by a direct approach and show how the local variance formula for the volume functional can be verified for Boolean models with compact particles and a  general compact observation window $W$.

\begin{proof}[Proof of Theorem \ref{cor:variance_volume}]
By \eqref{eqn:phi_star} and Corollary \ref{cor:expectation_volume_surface_area}, we have
$$
V_d^*(K)= \E V_d(Z\cap K) - V_d(K) =-V_d(K)e^{-\gamma v_d}
$$
for $K\in\cK^d$. Using the covariogram function $C$ introduced in \eqref{eqn:covariogram} and denoting by $(\TG_i)_{i\in\mathbb{N}}$ independent copies of the typical particle $\TG$, we get from \eqref{eqn:Exact_Covariance}, \eqref{eq:hdlambda}, the inversion invariance of $\lambda$, the isometry invariance of the mean covariogram, and the isometry invariance of $\mathcal{H}^d$ that
\begin{align}\label{eq:reftoproof5.4}
&\Var V_d(Z\cap W)\notag\\
&= e^{-2\gamma v_d}\sum_{n=1}^\infty \frac{\gamma^n}{n!}    \E \int_{(\Ih)^{n} } V_d(\varrho_1 \TG_1\cap\hdots\cap \varrho_n \TG_n \cap W)^2 \,\lambda^{n}(\dint (\varrho_1,\hdots,\varrho_n))\notag\\
&= e^{-2\gamma v_d}\sum_{n=1}^\infty \frac{\gamma^n}{n!}    \E \int_{\BHd} \int_{\BHd} \prod_{i=1}^n\lambda(\{\varrho_i\in\Ih:\varrho_i x\in \TG_i,\varrho_i z\in\TG_i\})\mathbf{1}\{x,z\in W\}\,
\mathcal{H}^d(\dint x)\, \mathcal{H}^d(\dint z) \allowdisplaybreaks\notag\\
&=e^{-2\gamma v_d}\sum_{n=1}^\infty \frac{\gamma^n}{n!} \int_{\BHd} \int_{\BHd} C(x,z)^n \mathbf{1}\{x,z\in W\}\,
\mathcal{H}^d(\dint x)\, \mathcal{H}^d(\dint z) \allowdisplaybreaks \notag\\
&=e^{-2\gamma v_d} \int_{\BHd} \int_{\BHd} \left(e^{\gamma C(x,z)}-1\right)\mathbf{1}\{x,z\in W\}\,
\mathcal{H}^d(\dint x)\, \mathcal{H}^d(\dint z) \allowdisplaybreaks \notag\\
&=e^{-2\gamma v_d} \int_{\Ih} \int_{\BHd} \left(e^{\gamma C(\tau^{-1}\sfp,  z)}-1\right)\mathbf{1}\{\tau^{-1}\sfp,z \in W \}\,\mathcal{H}^d(\dint z)\, \lambda(\dint \tau) \allowdisplaybreaks \notag\\
&=e^{-2\gamma v_d} \int_{\Ih} \int_{\BHd} \left(e^{\gamma C(\sfp, \tau  z)}-1\right)\mathbf{1}\{\tau^{-1}\sfp,z \in W \}\,
\mathcal{H}^d(\dint z) \,\lambda(\dint \tau) \allowdisplaybreaks\notag\\
&=e^{-2\gamma v_d} \int_{\Ih} \int_{\BHd} \left(e^{\gamma C(\sfp,    z)}-1\right)\mathbf{1}\{\tau^{-1}\sfp,\tau^{-1}z \in W \}\,
\mathcal{H}^d(\dint z) \,\lambda(\dint \tau)\notag\\
&=e^{-2\gamma v_d}  \int_{\BHd} \int_{\Ih}\left(e^{\gamma C(\sfp,    z)}-1\right)\mathbf{1}\{\tau \sfp, \tau z \in  W \}\,\lambda(\dint \tau)\,
\mathcal{H}^d(\dint z) ,
\end{align}
which is \eqref{eqn:variance_volume}.

\medskip

The special choice $W=\BB_R=\BB(\sfp,R)$ in \eqref{eqn:variance_volume} and the fact that
for $z\in\BH^d$ we have
\begin{align*}
\int\I\{ \tau\sfp \in \mathbb{B}_R, \tau z\in \mathbb{B}_R \}\,\lambda(\dint \tau) & = \int\I\{ \sfp \in \tau \mathbb{B}_R, z\in \tau \mathbb{B}_R \}\,\lambda(\dint \tau) \\
    &=\int\I\{\sfp \in \mathbb{B}(x,R),z \in \mathbb{B}(x,R) \}\,\mathcal{H}^d(\dint x) =V_d(\mathbb{B}(\sfp,R)\cap \mathbb{B}(z,R))
\end{align*}
yield relation \eqref{eqn:variance_volume_ball}.

\medskip

Note that for all $x,z\in\BHd$ we have
$$
C(x,    z) = \E \lambda(\{\varrho\in \Ih: \varrho x\in\TG, \varrho z\in\TG\}) \leq \E \lambda(\{\varrho\in \Ih: \varrho x\in\TG\})=\E\Vol(\TG)=v_d
$$
so that
\begin{align*}
0 \leq \left( e^{\gamma C(\sfp,    z)}-1 \right) \frac{\mathcal{H}^d\left(\BB_R\cap \BB(z,R)\right)}{\Vol(\BB_R)} & \leq e^{\gamma C(\sfp,    z)}-1 = \sum_{n=1}^\infty \frac{\gamma^n C(\sfp,z)^n}{n!} \nonumber\\
& \leq \gamma C(\sfp,z) \sum_{n=1}^\infty \frac{(\gamma v_d)^{n-1}}{n!}\nonumber\\
&\leq \gamma C(\sfp,z) e^{\gamma v_d}.
\end{align*}
Since for any $x\in\mathbb{H}^d$ we have
\begin{align*}
\int_{\BHd} C(x,z) \, \mathcal{H}^d(\dint z) & = \int_{\BHd} \E \lambda(\{\varrho\in \Ih: \varrho x\in\TG, \varrho z\in\TG\}) \, \mathcal{H}^d(\dint z) \nonumber\\
& = \E \int_{\Ih} \int_{\BHd}  \mathbf{1}\{\varrho x\in\TG\} \mathbf{1}\{z\in\varrho^{-1} \TG\}) \, \mathcal{H}^d(\dint z) \, \lambda(\dint\varrho)\nonumber\\
&= \E \Vol(\TG)^2,
\end{align*}
it follows from \eqref{eqn:variance_volume_ball}, the dominated convergence theorem, and Corollary \ref{cor:horint1} (c) that
\begin{align*}
\lim_{R\to\infty} \frac{\Var V_d(Z\cap \mathbb{B}_R)}{\Vol(\mathbb{B}_R)}
& = e^{-2 v_d}  \int_{\BHd}  \left(e^{\gamma C(\sfp, z)}-1\right) \int_{\hb} \mathbf{1}\{ \sfp,z \in B \} \, \mu_{\rm hb}(\dint B) \, \mathcal{H}^d(\dint z).
\end{align*}
By the definition of $\mu_{\rm hb}$ in \eqref{eq:muhb}, the right-hand side can be rewritten as
\begin{align*}
& \frac{(d-1) e^{-2\gamma v_d}}{\omega_d}  \int_{\BHd}  \left(e^{\gamma C(\sfp, z)}-1\right) \int_{\mathbb{S}^{d-1}_\sfp}\int_\R \mathbf{1}\{ \sfp,z \in \horo_{u,t} \} e^{(d-1)t}\, \dint t\, \mathcal{H}^{d-1}_\sfp(\dint u) \, \mathcal{H}^d(\dint z)\\
& = e^{-2\gamma v_d} \int_{\BHd} \left(e^{\gamma C(\sfp, z)}-1\right) \int_{\mathbb{S}^{d-1}_\sfp}\int_{-\infty}^0  \mathbf{1}\{ z \in \horo_{u,t} \} \frac{(d-1) }{\omega_d} e^{(d-1)t}\, \dint t\, \mathcal{H}^{d-1}_\sfp(\dint u) \, \mathcal{H}^d(\dint z),
\end{align*}
where we used  that $\sfp\in \horo_{u,t}$ if and only if $t\leq 0$. Since  $1/\omega_d$ is the density of the uniform distribution on $\mathbb{S}^{d-1}_\sfp$ and   $t\mapsto \mathbf{1}\{t\leq 0\} e^{(d-1)t}$ is the density of an exponentially distributed random variable with parameter $d-1$ multiplied by $-1$,  relation \eqref{eqn:asymptotic_variance_volume} follows.
\end{proof}

The formula for the asymptotic variance of the volume can be also derived from the general formula for asymptotic covariances in Theorem \ref{Thmcovasymp} (c).

\begin{proof}[Alternative proof of \eqref{eqn:asymptotic_variance_volumeproto}]
We obtain from Theorem \ref{Thmcovasymp} (c) and rewriting the expressions similarly as in the proof of Theorem  \ref{cor:variance_volume} that
\begin{align*}
&\lim_{R\to\infty} \frac{\Var V_d(Z\cap \mathbb{B}_R)}{\Vol(\mathbb{B}_R)}\notag\\
& = e^{-2\gamma v_d} \sum_{n=1}^\infty \frac{\gamma}{n!} \int_{\hb} \int_{\mathcal{K}^d_\sfp} \int_{(\mathcal{K}^d)^{n-1}} V_d(G\cap K_2\cap\hdots\cap K_n\cap B)^2  \notag\\
& \hspace{4cm} \times \Lambda^{n-1}(\dint (K_2,\hdots,K_n)) \, \BQ(\dint G) \, \mu_{\rm hb}(\dint B)\notag\\
& = e^{-2\gamma v_d}\sum_{n=1}^\infty \frac{\gamma^n}{n!} \int_{\hb} \int_{\mathbb{H}^d} \int_{\mathbb{H}^d} \int_{\cK^d_{\sfp}} C(x,z)^{n-1} \mathbf{1}\{x,z\in G\cap B\}\,
 \BQ(\dint G)\, \mathcal{H}^d(\dint x)\, \mathcal{H}^d(\dint z) \,  \mu_{\rm hb}(\dint B) \notag\\
& = e^{-2\gamma v_d}\sum_{n=1}^\infty \frac{\gamma^n}{n!} \int_{\hb} \int_{\mathcal{I}_d} \int_{\mathbb{H}^d} \int_{\cK^d_\sfp} C(\sfp,z)^{n-1} \mathbf{1}\{\sfp,z\in \tau G\cap \tau B\}\,
\BQ(\dint G)\,
\mathcal{H}^d(\dint z)\,  \lambda(\dint \tau) \, \mu_{\rm hb}(\dint B) \allowdisplaybreaks \notag\\
& =  e^{-2\gamma v_d}\sum_{n=1}^\infty \frac{\gamma^n}{n!} \int_{\mathbb{H}^d} C(\sfp,z)^{n-1} \int_{\mathcal{I}_d} \int_{\cK^d_\sfp} \mathbf{1}\{\sfp,z\in \tau G\}\, \BQ(\dint G)\notag\\
&\hspace{4cm} \times  \int_{\hb}  \mathbf{1}\{\sfp,z\in \tau B\}  \, \mu_{\rm hb}(\dint B) \,
\lambda(\dint \tau) \, \mathcal{H}^d(\dint z).
\end{align*}
Finally, an application of Corollary \ref{cor:horint1} (b) (that is, the isometry invariance of $\mu_{\rm hb}$) yields
\begin{align}
\lim_{R\to\infty} \frac{\Var V_d(Z\cap \mathbb{B}_R)}{\Vol(\mathbb{B}_R)}
& = e^{-2\gamma v_d}\sum_{n=1}^\infty \frac{\gamma^n}{n!} \int_{\mathbb{H}^d} C(\sfp,z)^{n-1} \int_{\mathcal{I}_d} \int_{\cK^d_\sfp} \mathbf{1}\{\sfp,z\in \tau G\}\, \BQ(\dint G)\notag\\
&\hspace{4cm} \times  \int_{\hb}  \mathbf{1}\{\sfp,z\in B\} \, \mu_{\rm hb}(\dint B) \,
\lambda(\dint \tau) \, \mathcal{H}^d(\dint z) \allowdisplaybreaks \notag\\
& = e^{-2\gamma v_d}\sum_{n=1}^\infty \frac{\gamma^n}{n!} \int_{\mathbb{H}^d} C(\sfp,z)^{n} \int_{\hb}  \mathbf{1}\{\sfp,z\in B\} \, \mu_{\rm hb}(\dint B) \,
 \mathcal{H}^d(\dint z) \notag\\
& = e^{-2\gamma v_d} \int_{\mathbb{H}^d} \left(e^{\gamma C(\sfp,z)} -1 \right) \int_{\hb}  \mathbf{1}\{\sfp,z\in B\} \, \mu_{\rm hb}(\dint B) \,
 \mathcal{H}^d(\dint z),\label{eq:citelater3}
\end{align}
which completes the proof.
\end{proof}

The formula for the variance of the volume within an arbitrary compact observation window $W$ can be derived as follows for general compact particles, as announced in Remark \ref{rem:general_volume}.

\begin{proof}[Proof of Theorem \ref{cor:variance_volume} for compact sets $W\subset \mathbb{H}^d$ and compact particles]
First, we have
\begin{align*}
\E V_d(Z\cap W)^2
&=\E \int_W\int_W \1\{x,y\in Z\}\, \mathcal{H}^d(\dint x)\, \mathcal{H}^d(\dint y)\\
&=\E \int_W\int_W \left(\1\{x\in Z\}+\1\{y\in Z\}-1+\1\{\{x,y\}\cap Z=\varnothing\}\right)\, \mathcal{H}^d(\dint x)\, \mathcal{H}^d(\dint y)\\
&=2V_d(W)\E V_d(Z\cap W)-V_d(W)^2+\int_W\int_W\BP(\{x,y\}\cap Z=\varnothing)\, \mathcal{H}^d(\dint x)\, \mathcal{H}^d(\dint y).
\end{align*}
Second, by the void probability of the Poisson process $\eta$ and Theorem \ref{thmdisintegration},
\begin{align*}
\BP(\{x,y\}\cap Z=\varnothing)
&=\exp\left(-\gamma\int_{\cK^d_\sfp} \int_{\Ih} \1\{\{x,y\}\cap\varrho G\neq\varnothing\}\, \lambda(\dint \varrho)\, \BQ(\dint G)\right)\\
&=\exp\left(-\gamma\int_{\cK^d_\sfp} \int_{\Ih} \left(\1\{x\in \varrho G\} + \1\{y\in \varrho G\} - \1\{x,y\in \varrho G\} \right)\, \lambda(\dint \varrho)\, \BQ(\dint G)\right)\\
&=e^{-2\gamma v_d}e^{\gamma C(x,y)}.
\end{align*}
Moreover, we obtain from Corollary \ref{cor:expectation_volume_surface_area} and the discussion thereafter
$$
\E V_d(Z\cap W)=V_d(W)\left(1-e^{-\gamma v_d}\right).
$$
Combining these ingredients, we get
\begin{align*}
\Var V_d(Z\cap W)
&=\E V_d(Z\cap W)^2-\left(\E V_d(Z\cap W)\right)^2\\
&=2V_d(W)V_d(W)\left(1-e^{-\gamma v_d}\right)-V_d(W)^2-V_d(W)^2\left(1-e^{-\gamma v_d}\right)^2\\
&\quad +
e^{-2\gamma v_d}\int_W\int_We^{\gamma C(x,y)}\, \mathcal{H}^d(\dint x)\, \mathcal{H}^d(\dint y)\\
&=-V_d(W)^2e^{-2\gamma v_d}+
e^{-2\gamma v_d}\int_W\int_We^{\gamma C(x,y)}\, \mathcal{H}^d(\dint x)\, \mathcal{H}^d(\dint y)\\
&=e^{-2\gamma v_d}\int_W\int_W\left(e^{\gamma C(x,y)}-1\right)\, \mathcal{H}^d(\dint x)\, \mathcal{H}^d(\dint y), 
\end{align*}
which proves  \eqref{eqn:variance_volume}. From this point onwards the asymptotic formulas for increasing balls can be derived as in the proof of Theorem  \ref{cor:variance_volume} for compact convex particles.
\end{proof}

\subsection{Proofs of Theorems \ref{th:localcovariance} and  \ref{th:asympcovariance}}\label{sec:cov_vol_surf}

Next we turn to the proofs of the local and the asymptotic variances and covariances of volume and surface area.

\begin{proof}[Proof of Theorem \ref{th:localcovariance}]
By \eqref{eqn:phi_star}  and Corollary \ref{cor:expectation_volume_surface_area}, for $A\in\cK^d$ we have
\begin{align}\label{eq:Vstar}
V_d^*(A)=-V_d(A)e^{-\gamma v_d},\quad V_{d-1}^*(A)=V_d(A)\gamma v_{d-1}e^{-\gamma v_d}-V_{d-1}(A)e^{-\gamma v_d} .
\end{align}
Let $W\in\cK^d$. By Theorem \ref{Thmcovasymp} (a) and the proof of Theorem \ref{cor:variance_volume}
we need to consider
\begin{align*}
S(W)&\defeq\sum_{n=1}^\infty \frac{1}{n!}     \int_{(\cK^d)^n} V_{d-1}(K_1\cap\hdots\cap K_n \cap W)V_d(K_1\cap\hdots\cap K_n \cap W)
\,\Lambda^{n}(\dint (K_1,\hdots,K_n))\\
&=\sum_{n=1}^\infty \frac{1}{n!}     \int_{\mathbb{H}^d}\int_{(\cK^d)^n} \int_{\mathbb{H}^d} \I\{y,z\in K_1\cap\hdots\cap K_n \cap W)\\
&\qquad\qquad\qquad\qquad  \times C_{d-1}(K_1\cap\hdots\cap K_n\cap W,dy)
\,\Lambda^{n}(\dint (K_1,\hdots,K_n))\,\mathcal{H}^d(\dint z).
\end{align*}
In the following, we repeatedly apply Theorem \ref{thmdisintegration} (without further mention). By Lemma \ref{lem:surface}
and a symmetry argument we have $S(W)=S_1(W)+S_2(W)$,
where
\begin{align*}
S_1(W)\defeq\sum_{n=1}^\infty &\frac{1}{(n-1)!}    \int_{\mathbb{H}^d}\int_{\cK^d}\int_{(\cK^d)^{n-1}} \int_{\mathbb{H}^d}  \I\{z\in K\cap W\}\\
&\qquad \qquad \times\,
\I\{y\in K_{1}\cap\hdots\cap K_{n-1}\cap W,z\in K_{1}\cap\cdots\cap K_{n-1})\\
&\qquad\qquad\qquad \times\, C_{d-1}(K,\dint y)\,
\,\Lambda^{n-1}(\dint (K_1,\hdots,K_{n-1}))\,\Lambda(\dint K)\,\mathcal{H}^d(\dint z)
\end{align*}
and
\begin{align*}
S_2(W)\defeq\sum_{n=1}^\infty &\frac{1}{n!}     \int_{\mathbb{H}^d}\int_{(\cK^d)^n} \int_{\mathbb{H}^d} \I\{z\in W\}
\I\{y\in K_{1}\cap\hdots\cap K_{n},z\in K_{1}\cap\hdots\cap K_n)\\
&\qquad\qquad\qquad\qquad \times\, C_{d-1}(W,\dint y)\,
\,\Lambda^n(\dint (K_1,\hdots,K_{n}))\,\mathcal{H}^d(\dint z).
\end{align*}
Thus we get
\begin{align*}
S_1(W)&=\sum_{n=1}^\infty \frac{\gamma^{n-1}}{(n-1)!}   \int_{\mathbb{H}^d}\int_{\cK^d}\int_{(\cK^d_\sfp)^{n-1}} \int_{(\Ih)^{n-1}}\int_{\mathbb{H}^d}  \I\{y\in W,z\in K\cap W\}\\
&\qquad\times\,
\I\{y\in\varrho_1 G_{1}\cap\hdots\cap \varrho_{n-1} G_{n-1}\}\I\{z\in \varrho_1G_{1}\cap\hdots\cap \varrho_{n-1}G_{n-1})\\
&\qquad\qquad\times\,
\,C_{d-1}(K,\dint y)\,
\,\lambda^{n-1}(\dint (\varrho_1,\hdots,\varrho_{n-1}))\,\BQ^{n-1}(\dint (G_1,\hdots,G_{n-1}))\,\Lambda(\dint K)\,\mathcal{H}^d(\dint z).
\end{align*}
Since $\lambda$ is inversion invariant, we obtain as in the proof of Theorem \ref{cor:variance_volume} that
\begin{align*}
S_1(W)=\sum_{n=1}^\infty \frac{\gamma^{n-1}}{(n-1)!}\int_{\mathbb{H}^d} \int_{\cK^d}\int_{\mathbb{H}^d}C(y,z)^{n-1}  \I\{y\in W,z\in K\cap W\}
\,C_{d-1}(K,\dint y)\, \Lambda(\dint K)\,\mathcal{H}^d(\dint z).
\end{align*}
Interchanging summation and integration and using that $C(\cdot,\cdot)$ is covariant under isometries, we get
\begin{align*}
S_1(W)&=  \int_{\mathbb{H}^d}\int_{\cK^d}\int_{\mathbb{H}^d} e^{\gamma C(y,z)}\I\{y\in W, z\in K\cap W\}
\,C_{d-1}(K,\dint y)\,\Lambda(\dint K)\,\mathcal{H}^d(\dint z)\\
&= \gamma\int_{\mathbb{H}^d}\int_{\cK^d_\sfp}\int_{\Ih}\int_{\mathbb{H}^d} e^{\gamma C(y,z)}\I\{y\in W, z\in \varrho G,z\in W\}
\,C_{d-1}(\varrho G,\dint y)\,\lambda(\dint \varrho)\,\BQ(\dint G)\,\mathcal{H}^d(\dint z)\\
&= \gamma\int_{\mathbb{H}^d}\int_{\cK^d_\sfp}\int_{\Ih}\int_{\mathbb{H}^d} e^{\gamma C(\varrho y,z)}\I\{\varrho y \in W,z\in W,z\in \varrho G\}
\,C_{d-1}(G,\dint y)\,\lambda(\dint \varrho)\,\BQ(\dint G)\,\mathcal{H}^d(\dint z).
\end{align*}
By Fubini's theorem, the isometry invariance of
the volume measure and the definition of the covariogram function $C_W$   we arrive at
\begin{align}\label{eq:localcov}
S_1(W)=\gamma \int_{\cK^d_\sfp}\int_{\mathbb{H}^d}\int_{\mathbb{H}^d}  e^{\gamma C(y,z)}C_W(y,z) \I\{z\in  G\}
\,C_{d-1}(G,\dint y)\,\mathcal{H}^d(\dint z)\,\BQ(\dint G).
\end{align}
Up to a factor this is the second term on the right-hand side of \eqref{eq:987}.

Similarly,  for the term $S_2(W)$ we obtain that
\begin{align}\label{eq:sec13A}
S_2(W)&= \int_{\mathbb{H}^d}\int_{\mathbb{H}^d}  \big(e^{\gamma C(y,z)}-1\big)\I\{z\in W\} \,C_{d-1}(W,\dint y)\,\mathcal{H}^d(\dint z).
\end{align}
Combination of \eqref{eq:Vstar}, \eqref{eq:localcov},  \eqref{eq:sec13A}, and \eqref{eq:reftoproof5.4} yields relation  \eqref{eq:987}. 

For the proof of \eqref{eq:996} we need to consider
\begin{align*}
V^*_{d-1}(K)^2=e^{-2\gamma v_d}\gamma^2v_{d-1}^2V_d(K)^2
-2 e^{-2\gamma v_d}\gamma v_{d-1}V_{d-1}(K)V_d(K)
+e^{-2\gamma v_d}V_{d-1}(K)^2
\end{align*}
for $K=K_1\cap\ldots \cap K_n\cap W$ and then integrate with respect to $\Lambda^n$.
Our previous calculations lead to the first three terms on the right-hand side
of \eqref{eq:996}. It remains to determine
\begin{align*}
\sum_{n=1}^\infty \frac{1}{n!}     \int_{(\cK^d)^n} V_{d-1}(K_1\cap\hdots\cap K_n \cap W)^2
\,\Lambda^{n}(\dint (K_1,\hdots,K_n)).
\end{align*}
We apply Lemma \ref{lem:surface} and symmetry arguments. This leads to the following four cases. We use repeatedly Theorem \ref{thmdisintegration} and invariance (and covariance) arguments as well as Fubini's theorem.

First, we have
\begin{align*}
&\sum_{n=1}^\infty \frac{n}{n!}\int_{\cK^d}\int_{(\cK^d)^{n-1}}\int_{\mathbb{H}^d}\int_{\mathbb{H}^d}
  \I\{y,z\in W\}\I\{y,z\in K_{1}\cap\hdots\cap K_{n-1}\}\\
&\qquad\qquad\times\,
\,C_{d-1}(K,\dint y)\,C_{d-1}(K,\dint z)\,\Lambda^{n-1}(\dint (K_1,\hdots,K_{n-1}))
\,\Lambda(\dint K)\\
&=\sum_{n=1}^\infty \frac{1}{(n-1)!}\int_{\cK^d}\int_{\mathbb{H}^d}\int_{\mathbb{H}^d} (\gamma C(y,z))^{n-1}\I\{y,z\in W\}\,C_{d-1}(K,\dint y)\,C_{d-1}(K,\dint z)\, \Lambda(\dint K)\\
&=\gamma\int_{\cK^d_\sfp}\int_{\mathbb{H}^d}\int_{\mathbb{H}^d} e^{\gamma C(y,z)}\int_{\Ih}\I\{\varrho y,\varrho z\in W\}
 \, \lambda(\dint\varrho)   \,C_{d-1}(G,\dint y)\,C_{d-1}(G,\dint z)\, \BQ(\dint G).
\end{align*}
Up to the factor $e^{-2\gamma v_d}$, this equals the fourth summand on the right-hand side of \eqref{eq:996}.

For the derivation of the fifth term on the right side of \eqref{eq:996}, we denote by $\TG_1$, $\TG_2$   independent copies of the typical particle. Then we consider\begin{align*}
&\sum_{n=2}^\infty \frac{n(n-1)}{n!}\int_{(\cK^d)^2}\int_{(\cK^d)^{n-2}}
 \int_{\mathbb{H}^d}\int_{\mathbb{H}^d}
  \I\{y,z\in W\}\I\{y,z\in K_{1}\cap\hdots\cap K_{n-2}\}
\I\{y\in L,z\in K\}\\
&\qquad\qquad\qquad \times\,C_{d-1}(K,\dint y)\,C_{d-1}(L,\dint z)\,\Lambda^{n-2}(\dint (K_1,\hdots,K_{n-2}))
\,\Lambda^2(\dint (K,L))\\
&= \int_{(\cK^d)^2} \int_{\mathbb{H}^d}\int_{\mathbb{H}^d}e^{\gamma C(y,z)}\I\{y,z\in W,y\in L,z\in K\}
\,C_{d-1}(K,\dint y)\,C_{d-1}(L,\dint z)\,\,\Lambda^2(\dint (K,L))\\
&= \gamma^2 \E \int_{(\Ih)^2}\int_{\mathbb{H}^d}\int_{\mathbb{H}^d} e^{\gamma C(y,z)}\I\{y,z\in W,y\in \tau \TG_2,z\in \varrho \TG_1\}\\
&\qquad\qquad\qquad \times\,
\,C_{d-1}(\varrho \TG_1,\dint y)\,C_{d-1}(\tau \TG_2,\dint z)\,\lambda^2(\dint (\varrho,\tau))\\
&= \gamma^2 \E \int_{(\Ih)^2}\int_{\mathbb{H}^d}\int_{\mathbb{H}^d} e^{\gamma C(\varrho y,\tau z)}
\I\{\varrho y,\tau z\in W,\varrho y\in \tau \TG_2,\tau z\in \varrho \TG_1\}\\
&\qquad\qquad\qquad \times\,
\,C_{d-1}(\TG_1,\dint y)\,C_{d-1}(\TG_2,\dint z)\,\lambda^2(\dint (\varrho,\tau))\\
&= \gamma^2 \E \int_{(\Ih)^2}\int_{\mathbb{H}^d}\int_{\mathbb{H}^d} e^{\gamma C(\varrho y,z)}
\I\{\tau \varrho y,\tau z\in W,\varrho y\in \TG_2,z\in \varrho \TG_1\}\\
&\qquad\qquad\qquad \times\,
\,C_{d-1}(\TG_1,\dint y)\,C_{d-1}(\TG_2,\dint z)\,\lambda^2(\dint (\varrho,\tau)),
\end{align*}
where we used Fubini's theorem and the left invariance of $\lambda$ in the last step.
Up to the factor $e^{-2\gamma v_d}$, this equals the fifth term on the right-hand side of \eqref{eq:996}.

The seventh summand (up to the exponential factor) comes from
\begin{align*}
&\sum_{n=1}^\infty \frac{1}{n!}
\int_{(\cK^d)^n}\int_{\mathbb{H}^d}\int_{\mathbb{H}^d} \I\{y,z\in K_{1}\cap\ldots\cap K_{n}\}
\,C_{d-1}(W,\dint y)\,C_{d-1}(W,\dint z)\,\Lambda^{n}(\dint (K_1,\hdots,K_{n}))
\end{align*}
and can be derived by similar arguments as above.

The remaining sixth summand is due to
\begin{align*}
&2\sum_{n=1}^\infty \frac{n}{n!}\int_{\cK^d}\int_{(\cK^d)^{n-1}}\int_{\mathbb{H}^d}\int_{\mathbb{H}^d} \I\{y,z\in K_1\cap\ldots\cap K_{n-1}\}\I\{y\in K ,z\in W \} \\
&\qquad\qquad\times\, C_{d-1}(W,\dint y)\,  C_{d-1}(K,\dint z)\,\Lambda^{n-1}(\dint(K_1,\ldots,K_{n-1}))\, \Lambda(\dint K),
\end{align*}
which can be dealt with as before.
\end{proof}

\bigskip

\begin{proof}[Proof of Theorem \ref{th:asympcovariance}]
The proof will be based on Theorem \ref{Thmcovasymp} (c). In view of relation \eqref{eq:Vstar}, it is sufficient to determine for $i,j\in\{d-1,d\}$ the quantities
\begin{align*}
 \widehat\sigma_{i,j}
& \defeq \sum_{n=1}^\infty \frac{\gamma}{n!} \int_{\hb}  \int_{\mathcal{K}^d_\sfp} \int_{(\mathcal{K}^d)^{n-1}} (V_i\cdot V_j)(G\cap K_2\cap\hdots\cap K_n\cap B)  \\
& \qquad \times \Lambda^{n-1}(\dint (K_2,\hdots,K_n)) \, \BQ(\dint G) \, \mu_{\rm hb}(\dint B).
\end{align*}
In the following, we repeatedly use Fubini's theorem, the invariance properties of $\lambda$,  $\Lambda$, and $\mu_{\rm hb}$, and Theorem \ref{thmdisintegration}.

By the derivation of \eqref{eq:citelater3}, we have
\begin{align*}
\widehat{\sigma}_{d,d}& = \int_{\hb} \int_{\BH^d} \mathbf{1}\{\sfp,z\in B\} (e^{\gamma C(\sfp,z)}-1) \, \mathcal{H}^{d}(\dint z) \, \mu_{\rm hb}(\dint B)<\infty.
\end{align*}

Next we turn to $ \widehat\sigma_{d-1,d}$. Let $u\in\mathbb{S}^{d-1}_\sfp$ be fixed. For $n\in\mathbb{N}$, \eqref{eq:Integration_horoball_1} in Lemma \ref{lem:integration_horoball} implies that
\begin{align*}
& \int_{\hb} \int_{\cK_\sfp^d} \int_{(\cK^d)^{n-1}} V_{d}(G\cap K_2\cap \hdots\cap K_n\cap B) V_{d-1}(G\cap K_2\cap \hdots\cap K_n\cap B)\\
& \hspace{3cm} \times \Lambda^{n-1}(\dint(K_2,\hdots,K_n)) \, \BQ(\dint G) \, \mu_{\rm hb}(\dint B) =I_1(n)+I_2(n),
\end{align*}
where
\begin{align*}
I_1(n)\defeq& \int_{\hb} \int_{\cK_\sfp^d} \int_{(\cK^d)^{n-1}} V_{d}(G\cap K_2\cap \hdots\cap K_n\cap B) C_{d-1}( G\cap K_2\cap \hdots\cap K_n, B) \\
& \hspace{3cm} \times \Lambda^{n-1}(\dint(K_2,\hdots,K_n)) \, \BQ(\dint G)\, \mu_{\rm hb}(\dint B),\\
I_2(n)\defeq&  (d-1) \int_{\mathcal{I}_d} \int_{\cK_\sfp^d} \int_{(\cK^d)^{n-1}} \mathbf{1}\{\sfp\in \varrho (G\cap K_2\cap\hdots\cap K_n)\} V_{d}(\varrho (G\cap K_2\cap\hdots\cap K_n)\cap \BB_{u,0}) \\ &\hspace{3cm} \times \Lambda^{n-1}(\dint(K_2,\hdots,K_n)) \, \BQ(\dint G) \, \lambda(\dint \varrho).
\end{align*}
Using Lemma \ref{lem:surface}, the isometry invariance of $\mu_{\rm hb}$, $\Lambda$, and $V_d$, as well as the isometry covariance of $C_{d-1}$, we can rewrite $I_1(n)$ as
\begin{align*}
&\int_{\hb} \int_{\cK_\sfp^d} \int_{(\cK^d)^{n-1}} \left(C_{d-1}(  G, K_2\cap \hdots\cap K_n\cap B) + \cdots + C_{d-1}(K_n,G\cap K_2\cap \hdots\cap   K_{n-1}\cap B) \right) \\
& \hspace{3cm} \times V_{d}(G\cap K_2\cap \hdots\cap K_n\cap B) \, \Lambda^{n-1}(\dint(K_2,\hdots,K_n)) \, \BQ(\dint G)\, \mu_{\rm hb}(\dint B) \\
& = n \int_{\hb} \int_{\cK_\sfp^d} \int_{(\cK^d)^{n-1}} C_{d-1}(  G, K_2\cap \hdots\cap K_n\cap B) V_{d}(G\cap K_2\cap \hdots\cap K_n\cap B) \\
& \hspace{3cm} \times  \Lambda^{n-1}(\dint(K_2,\hdots,K_n)) \, \BQ(\dint G)\, \mu_{\rm hb}(\dint B) \\
& = n \E \int_{\hb}  \int_{\mathbb{H}^d} \int_{  {\rm G}}\int_{(\cK^d)^{n-1}} \mathbf{1}\{y,z\in B\} \mathbf{1}\{ y,z \in K_2\cap \hdots\cap K_n\} \\
& \hspace{3cm} \times  \Lambda^{n-1}(\dint(K_2,\hdots,K_n)) \, \mathcal{H}^d(\dint z) \, C_{d-1}(\TG,\dint y) \, \mu_{\rm hb}(\dint B) \\
& = n \E \int_{\hb} \int_{\mathbb{H}^d} \int_{{\rm G}} \mathbf{1}\{y,z\in B\} \bigg(\int_{\cK^d} \mathbf{1}\{y,z\in K\} \, \Lambda(\dint K) \bigg)^{n-1} \, \mathcal{H}^d(\dint z) \, C_{d-1}(\TG,\dint y) \, \mu_{\rm hb}(\dint B) ,
\end{align*}
hence
$$
I_1(n)
 = n \int_{\hb} \int_{(\BH^d)^2}  \mathbf{1}\{y,z\in B\} (\gamma C(y,z))^{n-1} \, M_{d-1,d}(\dint(y,z)) \, \mu_{\rm hb}(\dint B).
$$

For $I_2(n)$ we get
\begin{align*}
I_2(n) & = \frac{d-1}{\gamma} \int_{(\cK^d)^{n}} \mathbf{1}\{\sfp\in K_1\cap\hdots\cap K_n\} V_{d}(K_1\cap\hdots\cap K_n\cap \BB_{u,0}) \, \Lambda^{n}(\dint(K_1,\hdots,K_n)) \\
& = \frac{d-1}{\gamma} \int_{\BB_{u,0}} \int_{(\cK^d)^{n}} \mathbf{1}\{\sfp,y\in K_1\cap\hdots\cap K_n\} \, \Lambda^{n}(\dint(K_1,\hdots,K_n)) \, \mathcal{H}^d(\dint y) \\
& = \frac{d-1}{\gamma} \int_{\BB_{u,0}} \bigg(\int_{\cK^d} \mathbf{1}\{\sfp,y\in K\} \, \Lambda(\dint K) \bigg)^n \, \mathcal{H}^d(\dint y) = \frac{d-1}{\gamma} \int_{\BB_{u,0}} (\gamma C(\sfp,y))^n \, \mathcal{H}^d(\dint y).
\end{align*}
Summation over $n\in\N$ and addition of these two contributions yields
\begin{align}
\widehat{\sigma}_{d-1,d}
& = \gamma \int_{\hb} \int_{(\BH^d)^2}  \mathbf{1}\{y,z\in B\} e^{\gamma C(y,z)} \, M_{d-1,d}(\dint(y,z)) \, \mu_{\rm hb}(\dint B)\notag\\
&\qquad + (d-1) \int_{\BB_{u,0}} (e^{\gamma C(\sfp,y)}-1) \, \mathcal{H}^d(\dint y).
\label{summandsrhs}
\end{align}
Since, by \eqref{eq:Vstar},
$$\sigma_{d-1,d}=-\gamma v_{d-1}e^{-2\gamma v_d}\widehat{\sigma}_{d,d}+e^{-2\gamma v_d}\widehat{\sigma}_{d-1,d},$$
the asserted formula for $\sigma_{d-1,d}$ is easy to deduce from $\widehat{\sigma}_{d-1,d}$ and $\widehat{\sigma}_{d,d}$.
Moreover, since $\widehat{\sigma}_{d,d}<\infty$ and $ \sigma_{d-1,d}<\infty$, we conclude that $\widehat{\sigma}_{d-1,d}<\infty$ and both (nonnegative) summands on the right side of \eqref{summandsrhs} are  finite as well.

For the determination of
$$\sigma_{d-1,d-1}=\gamma^2 v_{d-1}^2e^{-2\gamma v_d}\widehat{\sigma}_{d,d}-2\gamma v_{d-1}e^{-2\gamma v_d}\widehat{\sigma}_{d-1,d}+e^{-2\gamma v_d}\widehat{\sigma}_{d-1,d-1},
$$
it remains to consider $\widehat{\sigma}_{d-1,d-1}$. The preceding equation already shows that $\widehat{\sigma}_{d-1,d-1}<\infty$. 
Let $n\in\N$. From \eqref{eq:Integration_horoball_2} in Lemma \ref{lem:integration_horoball}  we deduce  that
\begin{align*}
&\int_{\hb} \int_{\cK_\sfp^d} \int_{(\cK^d)^{n-1}} V_{d-1}(G\cap K_2\cap \hdots\cap K_n\cap B)^2 \, \Lambda^{n-1}(\dint(K_2,\hdots,K_n)) \, \BQ(\dint G) \, \mu_{\rm hb}(\dint B)\\
&\qquad=J_1(n)+J_2(n),
\end{align*}
where
\begin{align*}
    J_1(n)&\defeq
 \int_{\cK_\sfp^d} \int_{(\cK^d)^{n-1}} \int_{\hb} C_{d-1}(  G\cap K_2\cap\hdots\cap K_n,B)^2 \,\mu_{\rm hb}(\dint B)  \, \Lambda^{n-1}(\dint(K_2,\hdots,K_n)) \, \BQ(\dint G), \\
J_2(n)&\defeq   (d-1) \int_{\cK_\sfp^d} \int_{(\cK^d)^{n-1}} \int_{\mathcal{I}_d} \mathbf{1}\{\sfp\in \varrho (G\cap K_2\cap\hdots\cap K_n)\} \\
& \qquad \times  \left(2 C_{d-1}(\varrho  (G\cap K_2\cap\hdots\cap K_n), \BB_{u,0}) + C_{d-1}(\BB_{u,0},\varrho (G\cap K_2\cap\hdots\cap K_n) )\right) \\
& \qquad  \times \lambda(\dint \varrho)  \, \Lambda^{n-1}(\dint(K_2,\hdots,K_n)) \, \BQ(\dint G) .
\end{align*}
By Lemma \ref{lem:surface}, the first integral on the right-hand side can be rewritten as
\begin{align*}
&J_1(n)\\
& = \int_{\hb} \int_{\cK_\sfp^d} \int_{(\cK^d)^{n-1}}
C_{d-1}(  G, K_2\cap\hdots\cap K_n\cap B)^2  \\
& \quad  +  (n-1) C_{d-1}(K_2,G\cap  K_3\cap\hdots\cap K_n\cap B)^2\\
& \quad  + 2(n-1) C_{d-1}(  G, K_2\cap\hdots\cap K_n\cap B)C_{d-1}(K_2,G\cap   K_3\cap\hdots\cap K_n\cap B)\\
& \quad  + (n-1)(n-2) C_{d-1}(K_2,G\cap  K_3\cap\hdots\cap K_n\cap B) C_{d-1}(K_3,G\cap K_2\cap K_4\cap\hdots\cap K_n\cap B)\\
& \quad\quad \times \Lambda^{n-1}(\dint(K_2,\hdots,K_n)) \, \BQ(\dint G) \, \mu_{\rm hb}(\dint B) \allowdisplaybreaks \\
& = \int_{\hb} \int_{\cK_\sfp^d} \int_{(\cK^d)^{n-1}} nC_{d-1}(  G, K_2\cap\hdots\cap K_n\cap B)^2 \\
& \quad  + n (n-1) C_{d-1}(  G, K_2\cap\hdots\cap K_n\cap B)C_{d-1}(K_2,G\cap   K_3\cap\hdots\cap K_n\cap B)\\
&\quad\quad \times \Lambda^{n-1}(\dint(K_2,\hdots,K_n)) \, \BQ(\dint G) \, \mu_{\rm hb}(\dint B) \allowdisplaybreaks \\
& = n \int_{\hb} \int_{\cK_\sfp^d} \int_{(\mathbb{H}^d)^2} \mathbf{1}\{y,z\in B\} \left( \int_{\cK^d} \mathbf{1}\{ y,z\in K \} \, \Lambda(\dint K)\right)^{n-1} \, C_{d-1}(G,\cdot)^2(\dint(y,z)) \, \BQ(\dint G) \, \mu_{\rm hb}(\dint B) \\
& \quad + n (n-1) \gamma \int_{\hb} \int_{(\cK_\sfp^d)^2} \int_{\mathcal{I}_d} \int_{\mathbb{H}^d} \int_{\mathbb{H}^d} \mathbf{1}\{y\in B\cap \varrho G_2, \varrho z\in B\cap G_1\} \\
& \quad\quad\times \left( \int_{\cK^d} \mathbf{1}\{ y,\varrho z\in K \} \, \Lambda(\dint K) \right)^{n-2} \,C_{d-1}(G_2,\dint z) \, \,C_{d-1}(G_1,\dint y) \, \lambda(\dint \varrho) \, \BQ^2(\dint (G_1,G_2)) \, \mu_{\rm hb}(\dint B) \allowdisplaybreaks \\
& = n \int_{\hb} \int_{(\mathbb{H}^d)^2} \mathbf{1}\{y,z\in B\} (\gamma C(y,z))^{n-1} \, M_{d-1,d-1}(\dint(y,z)) \,  \mu_{\rm hb}(\dint B)\\
& \quad+ n (n-1) \gamma \E \int_{\hb} \int_{\mathbb{H}^d} \int_{\mathbb{H}^d} \int_{\mathcal{I}_d} \mathbf{1}\{y\in B\cap \varrho {\rm G_2}, \varrho z\in B\cap {\rm G_1}\} (\gamma C(y,\varrho z))^{n-2}\\
& \quad\quad \times  \lambda(\dint \varrho) \, C_{d-1}({\rm G_2},\dint z) \, C_{d-1}({\rm G_1},\dint y) \, \mu_{\rm hb}(\dint B),
\end{align*}
where $\TG_1$ and $\TG_2$ are independent copies of the typical particle.

From Theorem \ref{thmdisintegration} and Lemma \ref{lem:surface} we obtain
\begin{align*}
J_2(n)& =  \frac{d-1}{\gamma}\int_{(\cK^d)^{n}}  \mathbf{1}\{\sfp\in K_1\cap\hdots\cap K_n\} \\
& \qquad \times  \left(2 C_{d-1}( K_1\cap\hdots\cap K_n, \BB_{u,0}) + C_{d-1}( \BB_{u,0},K_1\cap\hdots\cap K_n )\right) \, \Lambda^{n}(\dint(K_1,\hdots,K_n)) \\
& =  \frac{d-1}{\gamma}\int_{(\cK^d)^{n}} \mathbf{1}\{\sfp\in K_1\cap\hdots\cap K_n\} \big(2n C_{d-1}( K_1,K_2\cap\hdots\cap K_n\cap \BB_{u,0}) \\
& \hspace{1.5cm}  + C_{d-1}( \BB_{u,0},K_1\cap\hdots\cap K_n )\big) \, \Lambda^{n}(\dint(K_1,\hdots,K_n)).
\end{align*}
Since
\begin{align*}
& \int_{(\cK^d)^{n}} \mathbf{1}\{\sfp\in K_1\cap\hdots\cap K_n\} C_{d-1}(  K_1, K_2\cap \hdots\cap K_n\cap \BB_{u,0}) \, \Lambda^{n}(\dint(K_1,\hdots,K_n))  \\
& = \gamma \E  \int_{\mathbb{H}^d} \int_{\mathcal{I}_d} \mathbf{1}\{\sfp\in \varrho {\rm G}, \varrho y\in \BB_{u,0}\} \left(\int_{\cK^d} \mathbf{1}\{\sfp\in K, \varrho y \in K\} \, \Lambda(\dint K) \right)^{n-1} \, \lambda(\dint\varrho) \, C_{d-1}(\TG,\dint y)  \\
& = \gamma \E \int_{\mathbb{H}^d} \int_{\mathcal{I}_d} \mathbf{1}\{\varrho \sfp\in {\rm G}, y\in \varrho \BB_{u,0}\} (\gamma C(\varrho \sfp, y))^{n-1} \, \lambda(\dint\varrho) \, C_{d-1}({\rm G},\dint y)
\end{align*}
and
\begin{align*}
& \int_{(\cK^d)^{n}} \mathbf{1}\{\sfp\in K_1\cap\hdots\cap K_n\} C_{d-1}(\mathbb{B}_{u,0},K_1\cap\hdots\cap K_n ) \, \Lambda^{n}(\dint(K_1,\hdots,K_n)) \\
& = \int_{\mathbb{H}^d} \bigg( \int_{\cK^d} \mathbf{1}\{\sfp\in K, z\in K\} \, \Lambda(\dint K) \bigg)^n \, C_{d-1}(\BB_{u,0},\dint z) \\
& = \int_{\mathbb{H}^d} (\gamma C(\sfp,z))^n \, C_{d-1}(\BB_{u,0},\dint z),
\end{align*}
we obtain
\begin{align*}
J_2(n) & = 2(d-1)n \E \int_{\mathbb{H}^d} \int_{\mathcal{I}_d} \mathbf{1}\{\varrho \sfp\in {\rm G}, y\in \varrho \BB_{u,0}\} (\gamma C(\varrho \sfp, y))^{n-1} \, \lambda(\dint\varrho) \, C_{d-1}({\rm G},\dint y) \\
& \quad + \frac{d-1}{\gamma} \int_{\mathbb{H}^d} (\gamma C(\sfp,z))^n \, C_{d-1}(\BB_{u,0},\dint z).
\end{align*}
This implies that
\begin{align*}
\widehat{\sigma}_{d-1,d-1}& = \gamma \int_{\hb} \int_{(\mathbb{H}^d)^2} \mathbf{1}\{y,z\in B\} e^{\gamma C(y,z)} \, M_{d-1,d-1}(\dint(y,z)) \,  \mu_{\rm hb}(\dint B)\\
& \quad + \gamma^2 \E \int_{\hb} \int_{\BH^d} \int_{\BH^d} \int_{\mathcal{I}_d} \mathbf{1}\{y\in B\cap \varrho {\rm G_2}, \varrho z\in B\cap {\rm G_1}\} e^{\gamma C(y,\varrho z)}\\
& \hspace{5cm} \times  \lambda(\dint \varrho) \, C_{d-1}({\rm G_2},\dint z) \, C_{d-1}({\rm G_1},\dint y) \, \mu_{\rm hb}(\dint B) \\
& \quad +2 (d-1) \gamma \E \int_{\BH^d} \int_{\mathcal{I}_d} \mathbf{1}\{\varrho \sfp\in {\rm G}, y\in \varrho \mathbb{B}_{u,0}\} e^{\gamma C(\varrho \sfp, y)}   \, \lambda(\dint\varrho) \, C_{d-1}({\rm G},\dint y) \\
& \quad + (d-1) \int_{\BH^d} (e^{\gamma C(\sfp,z)} -1)  \, C_{d-1}(\BB_{u,0},\dint z).
\end{align*}
Since $\widehat{\sigma}_{d-1,d-1}<\infty$ and the four summands are nonnegative, each of them is finite. 
Now the asserted expression for $\sigma_{d-1,d-1}$ can easily be confirmed.
\end{proof}

\section{Proof of the lower variance bound}\label{sec:14}

\begin{proof}[Proof of Theorem \ref{thm:lower_bound_variance}]
 By the assumption \eqref{eqn:assumption_intersection_phi}, we can choose $r_0\in(0,\infty)$ such that
\begin{equation}\label{eqn:assumption_intersection_phi_r_0}
\int_{\mathcal{K}^d} \int_{(\mathcal{K}^d)^{m}} \mathbf{1}\{K_0\subseteq\mathbb{B}_{r_0}, \phi(K_0\cap K_1\cap\hdots\cap K_m)\neq 0 \} \, \Lambda^{m}(\dint (K_1,\hdots,K_m)) \, \BQ(\dint K_0)>0.
\end{equation}
Moreover, we can assume without loss of generality that $m$ is the smallest nonnegative integer such that \eqref{eqn:assumption_intersection_phi_r_0} holds for the chosen $r_0>0$.
In the sequel, we always assume that $R\geq 2r_0$. 

Since we want to apply \cite[Theorem 1.1]{ST},  we first show that the hypothesis of that theorem is satisfied. 
From \eqref{eqn:formula_D_n} and the invariance of $\phi$ and $Z$ we see that
$$
\E (D_{\varrho K_0}\phi(Z\cap\mathbb{B}_R))^2 = \E (D_{K_0}\phi(Z\cap\mathbb{B}_R))^2
$$
for all $K_0\in\mathcal{K}^d$ and $\varrho\in\mathcal{I}_d$ such that $\varrho K_0\subseteq\mathbb{B}_R$ and $K_0\subseteq \mathbb{B}_R$. Thus, we obtain, by Theorem \ref{thmdisintegration},
\begin{align}\label{eq:sec14A}
J_R  &\defeq\int_{\mathcal{K}^d} \E (D_{K}\phi(Z\cap\mathbb{B}_R))^2 \, \Lambda(\dint K) \notag\\
& = \gamma \int_{\mathcal{I}_d} \int_{\mathcal{K}^d_\sfp} \E (D_{\varrho K_0}\phi(Z\cap\mathbb{B}_R))^2 \, \BQ(\dint K_0)\, \lambda(\dint\varrho) \allowdisplaybreaks \notag\\
& \geq \gamma \int_{\mathcal{K}^d_\sfp} \int_{\mathcal{I}_d} \mathbf{1}\{ K_0\subseteq\mathbb{B}_{r_0}, \varrho K_0\subseteq \mathbb{B}_R \} \E (D_{\varrho K_0}\phi(Z\cap\mathbb{B}_R))^2 \, \lambda(\dint\varrho) \, \BQ(\dint K_0)\notag \\
& \geq \gamma \int_{\mathcal{K}^d_\sfp} \int_{\mathcal{I}_d} \mathbf{1}\{ K_0\subseteq\mathbb{B}_{r_0}, \varrho \mathbb{B}_{r_0}\subseteq \mathbb{B}_R \} \E (D_{K_0}\phi(Z\cap\mathbb{B}_R))^2 \, \lambda(\dint\varrho) \, \BQ(\dint K_0) \allowdisplaybreaks \notag\\
& = \gamma \int_{\mathcal{K}^d_\sfp} \mathbf{1}\{ K_0\subseteq\mathbb{B}_{r_0} \} \lambda(\{\varrho\in\mathcal{I}_d\colon \varrho \mathbb{B}_{r_0}\subseteq  \mathbb{B}_R \})  \E (D_{K_0}\phi(Z\cap\mathbb{B}_R))^2 \, \BQ(\dint K_0) \notag\\
& = \gamma \Vol(\mathbb{B}_{R-r_0}) \int_{\mathcal{K}^d_\sfp} \mathbf{1}\{ K_0\subseteq\mathbb{B}_{r_0} \} \E (D_{K_0}\phi(Z\cap\mathbb{B}_R))^2 \, \BQ(\dint K_0) =: \gamma \Vol(\mathbb{B}_{R-r_0}) \widetilde{J}_R.
\end{align}
Let $K_0\in\mathcal{K}^d$ satisfy $K_0\subseteq\mathbb{B}_R$. If $m\in\mathbb{N}$ and $K_0\in\mathcal{K}^d$ is only intersected by $K_1,\hdots,K_m\in\eta$, we obtain from \eqref{eqn:formula_D_n} that
$$
D_{K_0}\phi(Z\cap \mathbb{B}_R) = \phi(K_0 \cap \mathbb{B}_R) - \phi(K_0 \cap Z\cap \mathbb{B}_R) = \phi(K_0) - \phi(K_0 \cap \bigcup_{j=1}^m K_j )
$$
so that
\begin{align*}
\E (D_{K_0}\phi(Z\cap\mathbb{B}_R))^2
& \geq \E \mathbf{1}\{ \eta([K_0])=m \} (D_{K_0}\phi(Z\cap\mathbb{B}_R))^2 \\
& = \frac{e^{-\Lambda([K_0])}}{m!} \int_{[K_0]^{m}} \big(\phi(K_0) - \phi(K_0 \cap \bigcup_{j=1}^m K_j )\big)^2 \, \Lambda^m(\dint(K_1,\hdots,K_m)),
\end{align*}
where \cite[Theorem 3.2.3]{SW08} was used.
This leads to
\begin{align*}
\widetilde{J}_R& \geq \int_{\mathcal{K}^d_\sfp} \mathbf{1}\{K_0\subseteq \mathbb{B}_{r_0}\} \frac{e^{-\Lambda([K_0])}}{m!} \int_{[K_0]^{m}} \big(\phi(K_0) - \phi(K_0 \cap \bigcup_{j=1}^m K_j )\big)^2 \\
&\qquad \times \Lambda^m(\dint(K_1,\hdots,K_m))   \, \BQ(\dint K_0).
\end{align*}
The additivity of $\phi$ yields
$$
\phi(K_0) - \phi(K_0 \cap \bigcup_{j=1}^m K_j ) = \phi(K_0) - \sum_{\varnothing\neq I\subseteq \{1,\hdots,m\}} (-1)^{|I|-1} \phi(K_0\cap\bigcap_{i\in I}K_i).
$$
By the assumption that $m$ is the minimal number satisfying \eqref{eqn:assumption_intersection_phi_r_0}, the terms of the right-hand side with the exception of the one for $I=\{1,\hdots,m\}$ become zero for $\BQ\otimes\Lambda^{m}$-a.a.\ $(K_0,K_1,\hdots,K_m)\in \cK^d_\sfp\times (\mathcal{K}^d)^{m}$ such that $K_0\subseteq  \mathbb{B}_{r_0}$. For $m=0$, we have
$$
\E (D_{K_0}\phi(Z\cap\mathbb{B}_R))^2 \geq \E \mathbf{1}\{ \eta([K_0])=0 \} (D_{K_0}\phi(Z\cap\mathbb{B}_R))^2 = e^{-\Lambda([K_0])} \phi(K_0)^2.
$$
Hence we get
\begin{align}\label{eq:sec14B}
\widetilde{J}_R &\geq \int_{\mathcal{K}^d_\sfp} \mathbf{1}\{K_0\subseteq \mathbb{B}_{r_0}\} \frac{e^{-\Lambda([K_0])}}{m!} \int_{(\mathcal{K}^d)^{m}} \phi(K_0\cap K_1\cap\hdots\cap K_m)^2 \notag\\
&\qquad \times \Lambda^m(\dint(K_1,\hdots,K_m))   \, \BQ(\dint K_0) =: v_{\phi,\BQ}.
\end{align}
From \eqref{eq:sec14A} and \eqref{eq:sec14B} it follows that 
\begin{equation}\label{eq:sec14C}
J_R =\int_{\mathcal{K}^d} \E (D_{K}\phi(Z\cap\mathbb{B}_R))^2 \, \Lambda(\dint K)\ge \gamma v_{\phi,\BQ}\Vol(\mathbb{B}_{R-r_0}).
\end{equation}

Let $\TG_1$ and $\TG_2$ be independent copies of the typical particle $\TG$. From Lemma \ref{lem:moments_D} (c), Theorem \ref{thmdisintegration}, and an application of Lemma \ref{lem:intelem} it follows that
\begin{align}\label{eq:sec14D}
& \int_{(\mathcal{K}^d)^2} \E (D^2_{K_1,K_2}\phi(Z\cap\mathbb{B}_R))^2 \, \Lambda^2(\dint(K_1,K_2)) \notag\\
& \leq  C_{2} M(\phi)^2 \int_{(\mathcal{K}^d)^2} \overline{\Vol}(K_1\cap K_2 \cap \mathbb{B}_R)^2 \, \Lambda^2(\dint(K_1,K_2)) 
\notag\\
& \leq \gamma^2 C_{2} M(\phi)^2 \E \overline{\Vol}(\TG_1) \int_{(\mathcal{I}_d)^2} \Vol(\varrho_1 \mathbb{B}(\TG_1,1) \cap \varrho_2 \mathbb{B}(\TG_2,1) \cap \mathbb{B}_{R+1}) \, \lambda^2(\dint(\varrho_1,\varrho_2)) \notag\\
& = \gamma^2 C_{2} M(\phi)^2 \E \overline{\Vol}(\TG_1) \Vol(\mathbb{B}(\TG_1,1)) \Vol(\mathbb{B}(G_2,1)) \Vol(\mathbb{B}_{R+1}) \notag\\
& = \gamma^2 C_{2} M(\phi)^2 \E \overline{\Vol}(\TG)^2 \E\overline{\Vol}(\TG) \Vol(\mathbb{B}_{R+1}).
\end{align}
Relations \eqref{eq:sec14C} and \eqref{eq:sec14D} imply that there exists a constant $\overline{C}\in(0,\infty)$ such that
$$
\int_{(\mathcal{K}^d)^2} \E (D^2_{K_1,K_2}\phi(Z\cap\mathbb{B}_R))^2 \, \Lambda^2(\dint(K_1,K_2)) \leq \overline{C} \int_{\mathcal{K}^d} \E (D_{K}\phi(Z\cap\mathbb{B}_R))^2 \, \Lambda(\dint K)
$$
for $R$ sufficiently large.

Hence, by an application of  \cite[Theorem 1.1]{ST} and using \eqref{eq:sec14C}, we obtain  
$$
\liminf_{R\to\infty}\frac{\Var \phi(Z\cap \mathbb{B}_R)}{V_d(\mathbb{B}_R)}\ge \frac{4}{(\overline{C}+2)^2} \liminf_{R\to\infty}\frac{J_R}{V_d(\mathbb{B}_R)}\ge \frac{4}{(\overline{C}+2)^2}
  \gamma e^{-(d-1)r_0}v_{\phi,\BQ}>0,$$
which proves the assertion of Theorem \ref{thm:lower_bound_variance}.
\end{proof}

\section{Proofs of the central limit theorems}\label{sec:15}

The following proof adapts the proof of \cite[Theorem 3.5]{BST} for geometric functionals of Poisson cylinder processes in $\mathbb{R}^d$ to Boolean models in hyperbolic space.

\begin{proof}[Proof of Theorem \ref{thm:CLT_univariate}]
Throughout this proof we use the abbreviations
$$\sigma_R\defeq\sqrt{\Var \phi(Z\cap \mathbb{B}_R)}\qquad \text{and}\qquad F_R\defeq \frac{\phi(Z\cap \mathbb{B}_R) - \E \phi(Z\cap \mathbb{B}_R)}{\sigma_R}.
$$
We first show part (b). It follows from \cite[Theorem 1.1]{LPS} that
$$
\mathbf{d}_{\rm Wass}(F_R,N) \leq T_1+T_2+T_3
$$
with
\begin{align*}
T_1 & = 2 \bigg( \int_{(\mathcal{K}^d)^3} \sqrt{\E (D_{K_1}F_R)^2 (D_{K_2}F_R)^2 } \sqrt{\E (D^2_{K_1,K_3}F_R)^2 (D^2_{K_2,K_3}F_R)^2 } \, \Lambda^3(\dint(K_1,K_2,K_3)) \bigg)^{1/2} ,\\
T_2 & = \bigg( \int_{(\mathcal{K}^d)^3} \E (D^2_{K_1,K_3}F_R)^2 (D^2_{K_2,K_3}F_R)^2  \, \Lambda^3(\dint(K_1,K_2,K_3)) \bigg)^{1/2} ,\\
T_3 & = \int_{\mathcal{K}^d} \E |D_KF_R|^3 \, \Lambda(\dint K).
\end{align*}
From the Cauchy--Schwarz inequality and Lemma \ref{lem:moments_D} (b), (c) it follows that
\begin{equation}\label{eqn:intermediate_bound_T_1}
\begin{split}
T_1^2 \leq  \frac{4   C_{4} M(\phi)^4}{\sigma_R^4} \int_{(\mathcal{K}^d)^3} & \overline{\Vol}(K_1\cap \mathbb{B}_R) \overline{\Vol}(K_2\cap \mathbb{B}_R) \overline{\Vol}(K_1\cap K_3\cap \mathbb{B}_R)\\
& \times \overline{\Vol}(K_2\cap K_3\cap \mathbb{B}_R) \, \Lambda^3(\dint(K_1,K_2,K_3)).
\end{split}
\end{equation}
By Theorem \ref{thmdisintegration}, which will be also applied in the sequel without further mention, the integral on the right-hand side can be rewritten as
\begin{align*}
J_1\defeq \gamma^3\E\int_{(\mathcal{I}_d)^3} & \overline{\Vol}(\varrho_1 \TG_1\cap \mathbb{B}_R) \overline{\Vol}(\varrho_2 \TG_2\cap \mathbb{B}_R) \overline{\Vol}(\varrho_1 \TG_1\cap \varrho_3 \TG_3\cap \mathbb{B}_R) \overline{\Vol}(\varrho_2 \TG_2\cap \varrho_3 \TG_3\cap \mathbb{B}_R)\\
& \times
 \, \lambda^3(\dint(\varrho_1,\varrho_2,\varrho_3))
\end{align*}
with independent copies $\TG_1$, $\TG_2$ and $\TG_3$ of the typical particle. Recall the shorthand notation $A^{(1)}=\mathbb{B}(A,1)$ for $A\in\mathcal{K}^d$. From the monotonicity and invariance of $\overline{\Vol}$ we deduce
\begin{align*}
J_1 & \leq \gamma^3\E \overline{\Vol}(\TG_1) \overline{\Vol}(\TG_2) \int_{\mathcal{I}_d^3} \overline{\Vol}(\varrho_1 \TG_1\cap \varrho_3 \TG_3\cap \mathbb{B}_R) \overline{\Vol}(\varrho_2 \TG_2\cap \varrho_3 \TG_3\cap \mathbb{B}_R) \, \lambda^3(\dint(\varrho_1,\varrho_2,\varrho_3)) \\
& \leq \gamma^3\E \overline{\Vol}(\TG_1) \overline{\Vol}(\TG_2) \int_{\mathcal{I}_d} \int_{\mathcal{I}_d} \Vol(\varrho_1 \TG_1^{(1)}\cap \varrho_3 \TG_3^{(1)}\cap \mathbb{B}_{R+1}) \, \lambda(\dint\varrho_1) \\
& \hspace{4cm} \times \int_{\mathcal{I}_d} \Vol(\varrho_2 \TG_2^{(1)}\cap \varrho_3 \TG_3^{(1)}\cap \mathbb{B}_{R+1}) \, \lambda(\dint\varrho_2) \, \lambda(\dint\varrho_3) .
\end{align*}
Applying Lemma \ref{lem:intelem} (three times) and using the monotonicity of $\overline{\Vol}$, we get
\begin{align*}
J_1 & \leq \gamma^3\E \overline{\Vol}(\TG_1) \overline{\Vol}(\TG_2) \int_{\mathcal{I}_d} \Vol(\TG_1^{(1)}) \Vol(\varrho_3 \TG_3^{(1)}\cap \mathbb{B}_{R+1}) \Vol(\TG_2^{(1)}) \Vol(\varrho_3 \TG_3^{(1)}\cap \mathbb{B}_{R+1}) \, \lambda(\dint\varrho_3) \\
& \leq \gamma^3\E \overline{\Vol}(\TG_1)^2 \overline{\Vol}(\TG_2)^2 \overline{\Vol}(\TG_3) \int_{\mathcal{I}_d} \Vol(\varrho_3 \TG_3^{(1)}\cap \mathbb{B}_{R+1}) \, \lambda(\dint\varrho_3) \\
& = \gamma^3\E \overline{\Vol}(G_1)^2 \overline{\Vol}(\TG_2)^2 \overline{\Vol}(\TG_3)^2 \overline{\Vol}(\mathbb{B}_{R}).
\end{align*}
Thus we have shown that
$$
T_1^2 \leq \frac{4 \gamma^3  C_{4}  M(\phi)^4}{\sigma_R^4} (\E \overline{\Vol}(\TG)^2)^3 \, \overline{\Vol}(\mathbb{B}_{R}).
$$
By the assumption \eqref{eqn:liminf_var} there exist constants $\underline{\sigma},r_0\in(0,\infty)$ such that
\begin{equation}\label{eqn:lower_bound_sigma_R}
\sigma_R^2 \geq \underline{\sigma} \Vol(\mathbb{B}_R)
\end{equation}
for all $R\geq r_0$. Together with
\begin{equation}\label{eq:ratio_volumes_balls}
\lim_{R\to\infty} \frac{\overline{\Vol}(\mathbb{B}_R)}{\Vol(\mathbb{B}_R)}=e^{d-1},
\end{equation}
there exists a constant $C_{T_1}\in(0,\infty)$ such that
\begin{equation}\label{eqn:bound_T_1}
T_1 \leq \frac{C_{T_1}}{\sqrt{\Vol(\mathbb{B}_R)}}
\end{equation}
for $R\geq r_0$. Applying the Cauchy--Schwarz inequality and Lemma \ref{lem:moments_D} (c) to $T_2^2$, and using the monotonicity of $ \overline{\Vol}$, we see that the right-hand side of \eqref{eqn:intermediate_bound_T_1} without the factor $4$ is an upper bound for $T_2^2$, whence
\begin{equation}\label{eqn:bound_T_2}
T_2 \leq \frac{C_{T_1}}{2\sqrt{\Vol(\mathbb{B}_R)}}
\end{equation}
for $R\geq r_0$. Instead of bounding $T_3$ directly, we study the more general term
$$
T_{m,q} = \int_{\mathcal{K}^d} (\E |D_KF_R|^m)^q \, \Lambda(\dint K)
$$
for $m\in\mathbb{N}$ and $q\in(0,1]$ with $mq\geq 1$. From Lemma \ref{lem:moments_D} (b) and by Lemma \ref{lem:intelem} we obtain
\begin{align*}
T_{m,q} & \leq \frac{ C_{m}^q M(\phi)^{mq}}{\sigma_R^{mq}} \int_{\mathcal{K}^d} \overline{\Vol}(K\cap \mathbb{B}_R)^{mq} \, \Lambda(\dint K) \\
& = \frac{\gamma C_{m}^q M(\phi)^{mq}}{\sigma_R^{mq}} \E \int_{\mathcal{I}_d} \overline{\Vol}(\varrho \TG\cap \mathbb{B}_R)^{mq} \, \lambda(\dint \varrho) \\
& \leq \frac{\gamma C_{m}^q M(\phi)^{mq}}{\sigma_R^{mq}} \E \overline{\Vol}(\TG)^{mq-1} \int_{\mathcal{I}_d} \Vol(\varrho \TG^{(1)}\cap \mathbb{B}_{R+1}) \, \lambda(\dint \varrho) \\
& = \frac{\gamma C_{m}^q M(\phi)^{mq}}{\sigma_R^{mq}} \E \overline{\Vol}(\TG)^{mq}\, \overline{\Vol}(\mathbb{B}_R).
\end{align*}
Consequently, there exists a constant $C_{T_{m,q}}\in(0,\infty)$ such that
\begin{equation}\label{eqn:bound_T_M_q}
T_{m,q} \leq \frac{C_{T_{m,q}}}{\Vol(\mathbb{B}_R)^{mq/2-1}}
\end{equation}
for $R\geq r_0$. For $m=3$ and $q=1$ this means that there exists a constant $C_{T_3}\in(0,\infty)$ such that
\begin{equation}\label{eqn:bound_T_3}
T_3 \leq \frac{C_{T_3}}{\sqrt{\Vol(\mathbb{B}_R)}}
\end{equation}
for $R\geq r_0$. Combination of \eqref{eqn:bound_T_1}, \eqref{eqn:bound_T_2}, and \eqref{eqn:bound_T_3} proves part (b).

\medskip

Next we establish part (a). From \cite[Theorem 3.1]{BPT} it follows that
$$
\mathbf{d}_{\rm Wass}(F_R,N) \leq T_1+T_2+T_3'
$$
with $T_1$ and $T_2$ as above and
$$
T_3' = \int_{\mathcal{K}^d} (\E |D_KF_R|^3 )^{1/3} (\E \min\{  |D_KF_R|^3 ,\sqrt{8} |D_KF_R|^{3/2} \})^{2/3} \, \Lambda(\dint K).
$$
For a fixed $u>0$, the H\"older inequality and Lemma \ref{lem:moments_D} imply
\begin{align}\label{eq:sec15A}
T_3' & \leq \int_{\mathcal{K}^d} \mathbf{1}\{\overline{\Vol}(K)\leq u\} \E |D_KF_R|^3 + 8^{1/3} \mathbf{1}\{\overline{\Vol}(K) > u\} (\E |D_KF_R|^3)^{2/3} \, \Lambda(\dint K) \notag\\
& \leq \frac{ C_{3} M(\phi)^3}{\sigma_R^3} \int_{\mathcal{K}^d} \mathbf{1}\{\overline{\Vol}(K)\leq u\} \overline{\Vol}(K\cap\mathbb{B}_R)^3 \, \Lambda(\dint K)\notag \\
& \qquad + \frac{8^{1/3}  C_{3}^{2/3} M(\phi)^2}{\sigma_R^2} \int_{\mathcal{K}^d} \mathbf{1}\{\overline{\Vol}(K) > u\} \overline{\Vol}(K\cap\mathbb{B}_R)^2 \, \Lambda(\dint K).
\end{align}
Furthermore, we have
\begin{align}\label{eq:sec15B}
& \int_{\mathcal{K}^d} \mathbf{1}\{\overline{\Vol}(K)\leq u\} \overline{\Vol}(K\cap\mathbb{B}_R)^3  \Lambda(\dint K) = \gamma \E \int_{\mathcal{I}_d} \mathbf{1}\{\overline{\Vol}(\TG)\leq u\} \overline{\Vol}(\varrho \TG\cap\mathbb{B}_R)^3 \, \lambda(\dint \varrho) \notag\\
& \leq \gamma \E \mathbf{1}\{\overline{\Vol}(\TG)\leq u\} \overline{\Vol}(\TG)^2 \int_{\mathcal{I}_d} \Vol(\varrho \TG^{(1)}\cap\mathbb{B}_{R+1}) \, \lambda(\dint \varrho) \notag\\
& = \gamma \E \mathbf{1}\{\overline{\Vol}(\TG)\leq u\}  \overline{\Vol}(\TG)^3\, \overline{\Vol}(\mathbb{B}_R) \leq \gamma u^3 \, \overline{\Vol}(\mathbb{B}_R)
\end{align}
and, similarly, 
\begin{align}\label{eq:sec15C}
\int_{\mathcal{K}^d} \mathbf{1}\{\overline{\Vol}(K) > u\} \overline{\Vol}(K\cap\mathbb{B}_R)^2 \, \Lambda(\dint K) \leq \gamma \E \mathbf{1}\{\overline{\Vol}(\TG)> u\} \overline{\Vol}(\TG)^2\, \overline{\Vol}(\mathbb{B}_R).
\end{align}
Combining \eqref{eq:sec15A}, \eqref{eq:sec15B},  \eqref{eq:sec15C} and using \eqref{eqn:lower_bound_sigma_R} as well as \eqref{eq:ratio_volumes_balls}, we obtain 
$$
\limsup_{R\to\infty} T_3' \leq \frac{8^{1/3} e^{d-1} \gamma C_{3}^{2/3} M(\phi)^2}{\underline{\sigma}^2} \E \mathbf{1}\{\overline{\Vol}(\TG)> u\} \overline{\Vol}(\TG)^2.
$$
As the right-hand side vanishes for $u\to\infty$, we see that $\lim_{R\to\infty} T_3'=0$. Note that this argument requires only $\E\overline{\Vol}(\TG)^2<\infty$. Together with \eqref{eqn:bound_T_1} and \eqref{eqn:bound_T_2}, which rely also only on $\E \overline{\Vol}(\TG)^2<\infty$, we derive $\lim_{R\to\infty} \mathbf{d}_{\rm Wass}(F_R,N)=0$. Since convergence in Wasserstein distance implies convergence in distribution, part (a) follows.

For the Kolmogorov distance in part (c), \cite[Theorem 1.2 and Lemma 4.2]{LPS} yield
$$
\mathbf{d}_{\rm Kol}(F_R,N) \leq T_1+T_2+T_3+T_4+T_5+T_6
$$
with $T_1$, $T_2$, and $T_3$ as above and
\begin{align*}
T_4 & = \frac{1}{2} \max\bigg\{ 4 \bigg( \int_{\mathcal{K}^d} (\E (D_KF_R)^4 )^{1/2} \, \Lambda(\dint K) \bigg)^{1/2}, \bigg( 4 \int_{\mathcal{K}^d} \E (D_KF_R)^4 \, \Lambda(\dint K) + 2 \bigg)^{1/4} \bigg\} \\
& \quad \times \int_{\mathcal{K}^d} (\E (D_KF_R)^4 )^{3/4} \, \Lambda(\dint K), \\
T_5 & = \bigg( \int_{\mathcal{K}^d} \E (D_KF_R)^4 \, \Lambda(\dint K) \bigg)^{1/2}, \\
T_6 & = \bigg( \int_{(\mathcal{K}^d)^2} 6 (\E (D_{K_1}F_R)^4)^{1/2} (\E (D^2_{K_1,K_2}F_R)^4)^{1/2} + 3 \E (D^2_{K_1,K_2}F_R)^4 \, \Lambda^2(\dint (K_1,K_2)) \bigg)^{1/2}.
\end{align*}
From \eqref{eqn:bound_T_M_q} we derive that there exist constants $C_{T_{4}},C_{T_5}\in(0,\infty)$ such that
\begin{equation}\label{eqn:bound_T_4_and_T_5}
T_4 \leq \frac{C_{T_4}}{\sqrt{\Vol(\mathbb{B}_R)}} \quad \text{and} \quad T_5 \leq \frac{C_{T_5}}{\sqrt{\Vol(\mathbb{B}_R)}}
\end{equation}
for $R\geq r_0$. From Lemma \ref{lem:moments_D} (b), (c) and the monotonicity of $\overline{\Vol}$, we obtain
\begin{align*}
T_6^2 &\leq  \frac{(6C_{4}+3C_{4}) M(\phi)^4}{\sigma_R^4} \\
& \qquad\times \int_{(\mathcal{K}^d)^2} \overline{\Vol}(K_1\cap\mathbb{B}_R)^2 \,\overline{\Vol}(K_1\cap K_2\cap\mathbb{B}_R)^2 \, \Lambda^2(\dint(K_1,K_2)).
\end{align*}
Since the integral on the right-hand side can be bounded from above by
\begin{align*}
& \gamma^2 \E \overline{\Vol}(\TG_1)^2 \,\overline{\Vol}(\TG_2) \int_{(\mathcal{I}_d)^2} \Vol(\varrho_1 \TG_1^{(1)}\cap\varrho_2 \TG_2^{(1)} \cap \mathbb{B}_{R+1}) \, \lambda^2(\dint (\varrho_1,\varrho_2))\\ & = \gamma^2 \E \overline{\Vol}(\TG_1)^3 \,\overline{\Vol}(\TG_2)^2 \,\overline{\Vol}(\mathbb{B}_{R})
= \gamma^2 (\E \overline{\Vol}(\TG)^3) (\E \overline{\Vol}(\TG)^2) \,\overline{\Vol}(\mathbb{B}_{R}),
\end{align*}
with independent copies $\TG_1$ and $\TG_2$ of the typical particle $\TG$, there exists a constant $C_{T_6}\in(0,\infty)$ such that
\begin{equation}\label{eqn:bound_T_6}
T_6 \leq \frac{C_{T_6}}{\sqrt{\Vol(\mathbb{B}_R)}}
\end{equation}
for $R\geq r_0$. Combination of \eqref{eqn:bound_T_1}, \eqref{eqn:bound_T_2}, \eqref{eqn:bound_T_3}, \eqref{eqn:bound_T_4_and_T_5}, and \eqref{eqn:bound_T_6} completes the proof of part (c).
\end{proof}

\begin{proof}[Proof of Theorem \ref{thm:CLT_multivariate}]
For $\boldsymbol{\alpha}=(\alpha_1,\hdots,\alpha_m)\in\mathbb{R}^m$ let
$$
S_{\boldsymbol{\alpha},R}\defeq 
\sum_{i=1}^m \alpha_i (\phi_i(Z\cap\mathbb{B}_R) - \E \phi_i(Z\cap\mathbb{B}_R) )
$$
and $\psi\defeq\sum_{i=1}^m \alpha_i \phi_i$. Note that $\psi$ is a geometric functional and that
$$
 S_{\boldsymbol{\alpha},R} = \psi(Z\cap\mathbb{B}_R)- \E \psi(Z\cap\mathbb{B}_R).
$$
Because of
$$
\Var S_{\boldsymbol{\alpha},R} = \sum_{i,j=1}^m \alpha_i \alpha_j \Cov(\phi_i(Z\cap\mathbb{B}_R),\phi_j(Z\cap\mathbb{B}_R)),
$$
the assumption \eqref{eqn:Assumption_Convergence_Covariances} implies
$$
\lim_{R\to\infty} \frac{\Var S_{\boldsymbol{\alpha},R}}{\Vol(\mathbb{B}_R)} = \boldsymbol{\alpha}^\top \Sigma \boldsymbol{\alpha}.
$$
If $\boldsymbol{\alpha}^\top \Sigma \boldsymbol{\alpha}>0$, we obtain from Theorem \ref{thm:CLT_univariate} that
$$
\frac{S_{\boldsymbol{\alpha},R}}{\sqrt{\Vol(\mathbb{B}_R)}} \overset{d}{\longrightarrow} \langle \boldsymbol{\alpha}, \mathbf{N}_\Sigma \rangle \quad \text{as} \quad R\to\infty,
$$
where $\langle\cdot,\cdot\rangle$ denotes the inner product in $\mathbb{R}^m$. In case that $\boldsymbol{\alpha}^\top \Sigma \boldsymbol{\alpha}=0$, we have $S_{\boldsymbol{\alpha},R}/\sqrt{\Vol(\mathbb{B}_R)} \overset{d}{\longrightarrow} 0$ as $R\to\infty$ and $\langle \boldsymbol{\alpha}, \mathbf{N}_\Sigma \rangle=0$, so that the previous identity is still valid. Thus, the assertion follows from the Cram\'er--Wold device.
\end{proof}

\subsection*{Acknowledgements}
DH, GL, and MS have been funded by the German Research Foundation (DFG) through the Research Unit ``Geometry and Physics of Spatial Random Systems'' and the Priority Programme ``Random Geometric Systems''.  DH and MS  express their gratitude to the Hausdorff Research Institute for Mathematics in Bonn, Germany, where this work has been presented in Spring 2024 within the Dual Trimester Program: ``Synergies between modern probability, geometric analysis and stochastic geometry".

\end{document}